\newcommand{\Ch}{{\cal C}}
\newcommand{\N}{{\rm l}\!{\rm N}}
\newcommand{\ind}{{\rm ind}\,}
\newtheorem{theorem}{Theorem}[section]
\newtheorem{deflem}[theorem]{Definition and Lemma}
\newtheorem{definition}[theorem]{Definition}
\newtheorem{prop}[theorem]{Proposition}
\newtheorem{cor}[theorem]{Corollary}
\newtheorem{lemma}[theorem]{Lemma}
\renewenvironment{proof}{{\bf Proof:}}{\mbox{}\hfill $\Box$}
\theoremstyle{definition}
\date{}
\title{Local cyclic homology of group Banach algebras I: 
 Hyperbolic groups}
\author{Michael Puschnigg}
\date{}
\begin{document}

\maketitle

\section{Introduction}
 
Cyclic homology of Banach algebras has been invented by Alain Connes in the early eighties \cite{Co},\cite{Co1}. 
It is supposed to provide an explicitely calculabe approximation of topological $K$-theory $K_*^{top}$. This is of interest because the topological $K$-functor is known on the one hand to provide a lot of information about Banach- and in particular $C^*$-algebras. On the other hand it is, despite its simple and elementary definition, notoriously difficult to calculate.\\
\\
Cyclic homology theories $HC_*$ appear usually as derived functors and are thus accesible to calculation by classical homological algebra. They are the target of a natural transformation $ch:K_*^{top}\to HC_*$, called the Chern-Connes character. One would like the Chern-Connes character to be as close to a natural equivalence as possible. The cyclic theory which comes closest to this goal is local cyclic cohomology $HC^{loc}_*$ \cite{Pu1}. This is a ${\mathbb Z}/2{\mathbb Z}$-graded bifunctor from the category of Banach algebras to the category of complex vector spaces. Its formal properties are quite similar to those of topological $K$-Theory: continuous homotopy invariance, stability and more generally topological Morita-invariance, excision and compatibility with topological direct limits (on the subcategory of Banach algebras with approximation property) \cite{Pu1}. Moreover, there exists a bivariant Chern-Connes character 
$ch_{biv}:KK_*(-,-)\to HC^{loc}_*(-,-)$ on Kasparov's bivariant $K$-theory of $C^*$-algebras, 
which is uniquely characterized by its multiplicativity. Its complexification turns out to be an isomorphism on the bootstrap category of separable $C^*$-algebras $KK$-equivalent to commutative ones.
\\
\\
In this paper we calculate bivariant local cyclic cohomology for the Banach convolution algebras of summable functions on word-hyperbolic groups. This considerably extends our knowledge about local cyclic cohomology as up to now only the case of group Banach algebras of free and polycyclic groups was known. \\
We develop a setup for studying local cyclic cohomology of group Banach algebras, which is inspired by Burghelea's \cite{Bu} and Nistor's \cite{Ni} well known analysis of  periodic cyclic (co)homology of group rings. Our main tool, which allows to carry out the calculation explicitely for hyperbolic groups, is the efficient subdivision algorithm for hyperbolic metric spaces introduced by Bader, Furman and Sauer \cite{BFS}.\\
\\
In order to present our results in more detail we recall some facts 
about cyclic cohomology. The cyclic bicomplex of an associative complex algebra $A$ is the 
${\mathbb Z}/2{\mathbb Z}$-graded chain complex $(\Omega^*A,b+B)$ of algebraic differential forms 
(graded by parity) over $A$. The differential is the sum of the Hochschild differential $b$ and Connes' cyclic differential $B$. The closely related Hochschild complex $(\Omega^* A,b)$ can be made explicit in the case of a group ring. It equals the Bar-complex of the group with coefficients in the adjoint representation, which brings group (co)homology into play. Moreover the adjoint representation decomposes as direct sum of subrepresentations given by the linear span of the conjugacy classes in the group.
$$
(\Omega^*{\mathbb C}\Gamma,b)\,\simeq\, C_*^{Bar}(\Gamma,Ad({\mathbb C}\Gamma))\,\simeq\,
\underset{\langle g\rangle}{\bigoplus}\,C_*^{Bar}(\Gamma,Ad({\mathbb C}\langle g\rangle)).
\eqno(1.1)
$$
This decomposition carries over to the cyclic bicomplex. \\
The cyclic chain complex of a Banach- or abstract algebra is actually acyclic, but the homology of its completion with respect to various natural adic or locally convex topologies gives rise to interesting cyclic homology theories like periodic, entire or analytic cyclic homology. Bivariant local cyclic cohomology of a pair of algebras equals the group of morphisms between their analytic cyclic bicomplexes in a suitable derived category \cite{Co},\cite{Co2},\cite{Pu1},\cite{Me}. The homogeneous decomposition (0.1) carries over to the analytic cyclic bicomplex of 
$\ell^1(\Gamma)$ (this is not the case for other Banach completions of ${\mathbb C}\Gamma$). \\
For simplicity we restrict our attention for a moment to the contribution\\ $(\Omega^*{\mathbb C}\Gamma,b+B)_{\langle e\rangle}$ of the conjugacy class of the unit. For an abstract group its contribution to periodic cyclic homology equals group homology with complex coefficients: 
$$
HP_*({\mathbb C}\Gamma)_{\langle e\rangle}\,\simeq\,H_*(\Gamma,{\mathbb C}).
\eqno(1.2)
$$
The passage from the group ring to the group Banach algebra corresponds then to the passage from ordinary group homology to $\ell^1$-homology (or dually to bounded cohomlogy), which is quite inaccessibe. \\
It is here that it becomes crucial to work with local cyclic cohomology. If $\ell_S(-)$ denotes a word length function on $\Gamma$, then the group Banach algebra appears as the topological (not the algebraic) direct limit of the convolution algebras of summable functions of 
exponential decay (w.r.t. the word length). Local cyclic (co)homology is compatible with topological direct limits, so that
$$
\underset{\lambda\to 1^+}{\lim}\,\,HC_*^{loc}(\ell^1_\lambda(\Gamma,S))\,\overset{\simeq}{\longrightarrow}\,HC_*^{loc}(\ell^1(\Gamma)).
\eqno(1.3)
$$
where 
$$
\ell^1_\lambda(\Gamma,S)\,=\,\{\,\underset{g\in\Gamma}{\sum}\,a_g u_g,\,
\underset{g\in\Gamma}{\sum}\,\vert a_g\vert\,\lambda^{\ell_S(g)}<\infty\,\}.
\eqno(1.4)
$$
The individual groups on the left hand side of (0.3) are still inaccessible at this point but the transition maps are amenable to study. For a hyperbolic group the Rips complex
spanned by Bar-simplices of uniformly bounded (large) diameter is a deformation retract of the full Bar complex. A particularly nice retraction is provided by the subdivision technique of Bader, Furman and Sauer \cite{BFS}.

\newpage

The Rips complex is finite dimensional, and thus complete in any norm, so that we may deduce
$$
HC_*^{loc}(\ell^1(\Gamma))_{\langle e\rangle}\,\simeq\,\underset{\lambda\to 1^+}{\lim}\,\,HC_*^{loc}(\ell^1_\lambda(\Gamma,S))_{\langle e\rangle}\,\simeq\,H_*(\Gamma,{\mathbb C}).
\eqno(1.5)
$$
Getting back to the cyclic bicomplex as whole, we deduce from a similar analysis of the twisted Bar-complex that for a hyperbolic group 
$$
Fil_N^{Hodge}HC_*^{loc}(\ell^1(\Gamma))=0,\,\,\,N>>0
\eqno(1.6)
$$ 
where the Hodge filtration of the cyclic complexes is given by the subcomplexes generated by differential forms of degree bigger than $N$.\\
\\
From now on one has to consider the contributions of the various conjugacy classes $\langle g\rangle$  of $\Gamma$ separately from each other. Because $\langle g\rangle\,\simeq\,\Gamma/Z(g)$ as $\Gamma$-spaces, the structure of the centralizer $Z(g)$ plays a crucial role in the calculation of cyclic cohomology \cite{Bu},\cite{Ni}. For hyperbolic groups this structure is well understood. If $g\in\Gamma$ is an element of infinite order, then $Z(g)$ is virtually cyclic, whereas the centralizer of a torsion element is itself a hyperbolic group, whose word metric is quasiisometric to the one inherited from $\Gamma$. Moreover, there are only finitely many conjugacy classes of torsion elements. A quite detailed knowledge of the metric structure of $\Gamma$ is then required to deduce
$$
HC_*^{loc}(\ell^1(\Gamma))_{\langle g\rangle}\,\simeq\,
\begin{cases}
H_*(Z(g),{\mathbb C}) & \vert g\vert<\infty, \\
0 & \vert g\vert=\infty. \\
\end{cases}
\eqno(1.7)
$$
This coincides with the results of Burghelea and Nistor for the periodic cyclic homology of group rings. As all our norm estimates are uniform, i.e. independent of the individual choice of a conjugacy class, we obtain finally 
\begin{theorem}
Let $\Gamma$ and $\Gamma'$ be word hyperbolic groups. Then the bivariant local cyclic cohomology of their group Banach algebras equals 
$$
HC_*^{loc}(\ell^1(\Gamma),\ell^1(\Gamma'))\,\simeq\,Hom_{\mathbb C}(H_*(\Gamma,Ad({\mathbb C}\Gamma_{tors})),H_*(\Gamma',Ad({\mathbb C}\Gamma_{tors}')))
\eqno(1.8)
$$
where ${\mathbb C}\Gamma_{tors}$ denotes the linear span of the set of torsion elements in $\Gamma$, acted on by the adjoint action of $\Gamma$.
\end{theorem}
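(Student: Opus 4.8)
\begin{proofsketch}
The plan is to identify, inside the derived ind-category that carries local cyclic cohomology, the analytic cyclic bicomplex of $\ell^1(\Gamma)$ with a finite dimensional $\mathbb{Z}/2\mathbb{Z}$-graded $\mathbb{C}$-vector space, and then to read off the bivariant groups from formality over $\mathbb{C}$.

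Fix a word length function $\ell_S$ and present $\ell^1(\Gamma)=\underset{\lambda\to 1^+}{\lim}\,\ell^1_\lambda(\Gamma,S)$ as a topological direct limit. Since $\ell^1(\Gamma)$ has the approximation property, its analytic cyclic bicomplex is, in the derived ind-category, the ind-colimit of those of the $\ell^1_\lambda(\Gamma,S)$, and the homogeneous decomposition (1.1) passes to it; writing $\Omega^{an}$ for the analytic cyclic bicomplex, it therefore suffices to understand, conjugacy class by conjugacy class, the ind-complex $\underset{\lambda\to 1^+}{\lim}\,(\Omega^{an}\ell^1_\lambda(\Gamma,S))_{\langle g\rangle}$ in the derived ind-category, and then to control how the homogeneous pieces fit together.

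For each $\langle g\rangle$ one invokes the Bader--Furman--Sauer subdivision to produce a contracting homotopy retracting the analytic cyclic complex onto the subcomplex spanned by Bar-simplices of uniformly bounded diameter --- a twisted Rips complex --- with homotopy operators whose norms depend only on the hyperbolicity and subdivision constants of the ambient space, and hence \emph{not} on $\langle g\rangle$. As the Rips complex of a hyperbolic group is finite dimensional, hence complete, the Hochschild part is retracted onto a complex of bounded length, while the vanishing of the Hodge filtration in high degree (1.6), applied in the same homogeneous fashion, disposes of the remaining $B$-directions; so the limit $\lambda\to 1^+$ may be performed and the computations (1.5), (1.7) give, in the derived ind-category, that $(\Omega^{an}\ell^1(\Gamma))_{\langle g\rangle}$ is isomorphic to $H_*(Z(g),\mathbb{C})$ with zero differential if $\vert g\vert<\infty$, and is acyclic if $\vert g\vert=\infty$. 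Because there are only finitely many torsion conjugacy classes, and the contractions witnessing acyclicity of the infinite-order pieces are uniformly bounded, the (completed) direct sum of these isomorphisms is again an isomorphism in the derived ind-category; the analytic cyclic bicomplex of $\ell^1(\Gamma)$ is thus isomorphic there to the finite dimensional $\mathbb{Z}/2\mathbb{Z}$-graded complex $\underset{\langle g\rangle,\ \vert g\vert<\infty}{\bigoplus}H_*(Z(g),\mathbb{C})$ with zero differential. Shapiro's lemma, via the isomorphisms $Ad(\mathbb{C}\langle g\rangle)\cong\mathrm{Ind}_{Z(g)}^\Gamma\mathbb{C}$ and $\mathbb{C}\Gamma_{tors}\cong\underset{\vert g\vert<\infty}{\bigoplus}\mathbb{C}\langle g\rangle$ of $\Gamma$-modules, rewrites this space as $H_*(\Gamma,Ad(\mathbb{C}\Gamma_{tors}))$, which is finite dimensional since hyperbolic groups and the centralizers of their torsion elements have finite rational cohomological dimension.

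Finally, bivariant local cyclic cohomology is by definition the group of morphisms between analytic cyclic bicomplexes in the derived ind-category. Replacing the bicomplexes of $\ell^1(\Gamma)$ and $\ell^1(\Gamma')$ by the equivalent finite dimensional $\mathbb{Z}/2\mathbb{Z}$-graded $\mathbb{C}$-vector spaces found above, and using that every complex of $\mathbb{C}$-vector spaces is formal, so that derived morphisms between two such objects collapse to $\mathbb{C}$-linear maps of their homology with neither a $\lim^1$ nor a higher $\mathrm{Ext}$ term surviving, one arrives at (1.8). The main obstacle, and the reason the fine geometry of hyperbolic groups cannot be avoided, is the uniformity required in the preceding step: the subdivision estimates, the quasi-isometry between the word metric of a torsion centralizer and the one it inherits from $\Gamma$, and the contracting homotopies bounding the infinite-order pieces must all be uniform in the conjugacy class. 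Without such uniformity one recovers only the isomorphism (1.7) one class at a time and cannot assemble a single equivalence of ind-complexes --- precisely what is needed for the bivariant formula.
\end{proofsketch}
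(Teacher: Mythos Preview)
Your proof sketch is correct and follows essentially the same route as the paper: reduce to the ind-system $\ell^1_\lambda(\Gamma)$ via the approximation property (Lemma 6.2), use the homogeneous decomposition (Lemma 6.3), kill high Hodge filtration via the Bader--Furman--Sauer contracting homotopy with uniform norm bounds (Propositions 5.2, 5.3, 6.5), identify the elliptic pieces with $H_*(Z(v),\mathbb{C})$ and show the inhomogeneous part is contractible with estimates independent of $\langle v\rangle$ (Propositions 5.4, 6.6, 6.7), and finally compute the bivariant groups as $\mathrm{Hom}$'s between finite-dimensional objects in the derived ind-category (Theorems 7.1, 7.4, 7.6). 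Your emphasis on the uniformity of all estimates in the conjugacy class is exactly the crux of the argument.
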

Taking $\Gamma=1$ and $\Gamma'=1$, respectively, yields the local cyclic homology groups  and the local cyclic cohomology groups of $\ell^1(\Gamma))$.
A slightly more general version of this as well as some partial results about periodic and entire cyclic cohomology of group Banach algebras can be found in the final section 7 of this paper. It should be noted that the isomorphism (0.7) holds for any unconditional Banach algebra over ${\mathbb C}\Gamma$, where a homogeneous decomposition does not hold on the level of complexes but only in the derived category \cite{Pu5}.

\newpage

Let us finally compare our calculation of local cyclic cohomology with the known results about the $K$-theory of group Banach algebras. In his thesis, 
Vincent Lafforgue showed among other things that hyperbolic groups satisfy the Bost-conjecture, i.e. the $\ell^1$-assembly map
$$
K_*^\Gamma(\underline{E\Gamma})\,\,\,\overset{\simeq}{\longrightarrow}\,\,\,K_*(\ell^1(\Gamma))
\eqno(1.9)
$$
is an isomorphism of abelian groups. Here $\underline{E\Gamma}$ denotes a universal proper $\Gamma$-space, which in the case of a hyperbolic group may be taken to be the geometric realization of a Rips-complex of $\Gamma$.
Applying the Chern-Connes character with values in local cyclic cohomology yields then the commutative diagram
$$
\begin{array}{ccc}
K_*^\Gamma(\underline{E\Gamma}) & \longrightarrow & K_*(\ell^1(\Gamma))\\
& & \\
ch\downarrow & & \downarrow ch \\
 & & \\
H_*(\Gamma,Ad({\mathbb C}\Gamma_{tors})) & \longrightarrow & HC_*^{loc}(\ell^1(\Gamma)) \\
\end{array}
\eqno(1.10)
$$
Our main result implies that the lower horizontal arrow is an isomorphism. 
This checks with Lafforgue's result and shows that the Chern character map
$$
ch:\,K_*(\ell^1(\Gamma))\otimes_{\mathbb Z}{\mathbb C}\,\overset{\simeq}{\longrightarrow}\,HC_*^{loc}(\ell^1(\Gamma))
\eqno(1.11)
$$
is indeed an isomorphism. Note however that Lafforgue's result does not imply ours because there is no Chern-Connes character on Lafforgue's bivariant Banach $K$-theory. Nor does our result imply Lafforgue's.\\
\\ 
The content of the paper is as follows. In section 2 we recall the definition of various cyclic chain complexes and review Nistor's analysis of the cyclic complexes of group rings. Section 3 collects the facts we need about hyperbolic metric spaces and hyperbolic groups. They are used in section 4 to study the Bar-complex and in section 5 to bound the norm of various linear operators on it. In section 6 we extend Nistor's approach to the analytic cyclic bicomplex of group Banach algebras. Our main results are presented in section 7.\\
\\
Section 5 of the present paper generalizes the calculations of our paper \cite{Pu3}, where a partial analysis of the local cyclic homology of the group Banach algebra was the key step in  
verifying the Kadison-Kaplansky idempotent conjecture for word-hyperbolic groups. The proof of Proposition 2.9 of \cite{Pu3} contains however a mistake (the inequality on page 165, line 2 is wrong), which spoils the crucial Corollary 2.10. This corollary is however an immediate consequence of Corollary 5.3 of the present paper, so that all the results of \cite{Pu3} except Proposition 2.9 hold as stated.\\
\\
An essential part of this work was done during stays at the Mathematisches Institut der Universit\"at Heidelberg, the Schr\"odinger Institut f\"ur Mathematik ind Physik in Wien and the Banach Center for Mathematics at the Polish Academy of Sciences in Warsaw. I thank all these institutions for their hospitality and their support.

\newpage

\section{Cyclic homology of group algebras}

In this section we establish our notation and recall some basic facts about cyclic homology.

\subsection{Hochschild homology \cite{Co}, \cite{Co1}}

The universal differential graded algebra 
$\Omega A$ over an associative complex algebra 
$A$ is given by the space
of algebraic differential forms
$$
\Omega A\,=\,\underset{n\geq 0}{\bigoplus}\,\Omega^nA
$$
$$
\begin{array}{lcr}
\Omega^nA & := & \widetilde{A}\otimes A^{\otimes n},\\
 & & \\
a^0da^1\ldots da^n & \leftrightarrow & a^0\otimes a^1\otimes\ldots\otimes a^n,
\end{array}
\eqno(2.1)
$$
where $\widetilde{A}$ is the algebra obtained from $A$ by adjoining a unit. The differential equals
$$
d:\,\Omega^nA\,\to\,\Omega^{n+1}A,\,\,\,a^0da^1\ldots da^n\mapsto
 da^0da^1\ldots da^n.
\eqno(2.2)
$$
There are two basic linear operators on the space of algebraic differential forms:\\ the {\bf Hochschild boundary operator} 
$$
\begin{array}{cccc}
b: & \Omega^*A & \longrightarrow & \Omega^{*-1}A, \\
 & & & \\
 & a^0da^1\ldots da^{n-1}da^n & \mapsto & 
 (-1)^{n-1}[a^0da^1\ldots da^{n-1},\,a^n],
\end{array}
\eqno(2.3)
$$
and {\bf Connes' operator} 
$$
\begin{array}{cccc}
B: & \Omega^*A & \longrightarrow & \Omega^{*+1}A \\
 & & & \\
 & a^0da^1\ldots da^n & \mapsto &
 \underset{i=0}{\overset{n}{\sum}}\,(-1)^{in}da^i\ldots da^nda^0\ldots da^{i-1}.
\end{array}
\eqno(2.4)
$$
They satisfy the relations
$$\begin{array}{cccc}
b^2\,=\,0, & B^2\,=\,0, & \text{and} & bB\,+\,Bb\,=\,0.
\end{array}
\eqno(2.5)
$$
The {\bf Hochschild complex} of $A$ is 
$$
C_*(A)\,:=\,(\Omega^*A,\,b)
\eqno(2.6)
$$
The canonical identification
$\Omega^nA\simeq\widetilde{A}\otimes A^{\otimes n}\,\simeq\,
A^{\otimes (n+1)}\oplus A^{\otimes n}$ 
allows to view the Hochschild-complex as the total complex associated to a bicomplex with $C_{pq}(A)\,=\,A^{\otimes(q+1)}$ if $p=0,1,q\geq 0$ and $C_{pq}(A)=0$ otherwise. Its second column is contractible if $A$ is unital, so that the Hochschild complex $C_*(A)$ becomes chain homotopy equivalent to the first column of the bicomplex, given by
$$
\widetilde{C}_*(A)\,=\,(A^{\otimes(*+1)},b),
\eqno(2.7)
$$
$$
b(a^0\otimes\ldots\otimes a^n)\,=\,
\underset{i=0}{\overset{n-1}{\sum}}\,(-1)^i
a^0\otimes\ldots\otimes a^ia^{i+1}\otimes\ldots\otimes a^n
\,+\,(-1)^na^na^0\otimes\ldots\otimes a^{n-1}.
$$

The homology $HH_*(A,A):=H_*(C_*(A))$ of the Hochschild complex is called the {\bf Hochschild homology} and the cohomology of the dual complex $HH^*(A,A^*):=H^*(Hom_{{\mathbb C}-lin}(C_*(A),{\mathbb C}))$ is called the {\bf Hochschild cohomology} of $A$ (with values in the dual 
bimodule $A^*=Hom_{{\mathbb C}-lin}(A,{\mathbb C})$).
\\
Consider the bifunctors
$$
\otimes:\,_A Mod_A\times_A Mod_A\,\overset{\simeq}{\longrightarrow}\,
Mod_{A\otimes A^{op}}\times_{A\otimes A^{op}}Mod\,
\overset{\otimes_{A\otimes A^{op}}}{\longrightarrow}\,{\mathbb C}-Mod
$$
and
$$
Hom:\,_A Mod_A\times_A Mod_A\,\overset{\simeq}{\longrightarrow}\,
_{A\otimes A^{op}}Mod\times_{A\otimes A^{op}}Mod\,
\overset{Hom_{A\otimes A^{op}-Mod}}{\longrightarrow}\,{\mathbb C}-Mod
$$
on the abelian category of $A$-bimodules. (Here $A^{op}$ denotes the opposite algebra of $A$ and the left hand functors identify the category of $A$-bimodules with the category of $A\otimes A^{op}$-left-(or right)-modules.) There are canonical isomorphisms
$$
\begin{array}{cc}
HH_*(A,A)\,\simeq\,Tor_*^{\widetilde{A}\otimes\widetilde{A}^{opp}}(A,A),
&
HH_*(A,A^*)\,\simeq\,Ext^*_{\widetilde{A}\otimes\widetilde{A}^{opp}}(A,A^*),
\end{array}
\eqno(2.8)
$$ 
which allow to interprete Hochschild (co)homology as derived functors. In particular, these groups can be computed 
via projective resolutions of $A$, viewed as bimodule over itself.

\subsection{Cyclic homology \cite{Co}, \cite{Co1}}

The {\bf cyclic complex} $C^\lambda_*(A)$ of an associative complex algebra $A$ is
the ${\mathbb Z}_+$-graded chain complex $(C_*^\lambda(A),\partial)$ given by 
$$
C_n^\lambda(A)\,=\,\underset{k\geq 0}{\bigoplus}\,\Omega^{n-2k}A.
\eqno(2.9)
$$
Its differential $\partial:C^\lambda_n(A)\to C^\lambda_{n-1}(A)$ in degree $n$ is given by $\partial=b+B$ on 
$\underset{k>0}{\bigoplus}\,\Omega^{n-2k}A$ and by 
$\partial=b$ on $\Omega^n A$.
The homology of the cyclic complex is the {\bf cyclic homology} $HC_*(A)$ of $A$. The cohomology of the dual complex is the 
{\bf cyclic cohomology} $HC^*(A)$ of $A$.
There is a canonical natural extension of chain complexes
$$
\begin{array}{ccccccccc}
0 & \to & C_*(A) & \overset{I}{\longrightarrow} & 
C^\lambda_*(A) & \overset{S}{\longrightarrow} & 
C^\lambda_*(A)[-2] & \to & 0 \\
\end{array}
\eqno(2.10)
$$
The associated long exact homology sequence
$$
\begin{array}{ccccccccc}
\longrightarrow & HH_*(A) & \overset{I}{\longrightarrow} & HC_*(A) & 
\overset{S}{\longrightarrow} & HC_{*-2}(A) &  \overset{B}{\longrightarrow} & HH_{*-1}(A) & \longrightarrow
\end{array}
\eqno(2.11)
$$
is known as {\bf Connes' ISB-sequence}. Its connecting map is given by Connes' $B$-operator. It is often used as a tool to 
compute cyclic homology from the Hochschild homology groups.\\
\\
The canonical identification
$\Omega^nA\simeq\widetilde{A}\otimes A^{\otimes n}\,\simeq\,
A^{\otimes (n+1)}\oplus A^{\otimes n}$ 
allows to view also the cyclic complex as the total complex associated to a bicomplex with\\ $C^\lambda_{pq}(A)\,=\,A^{\otimes(q+1)},\,p,q\geq 0$. Its lines are contractible in positive degrees and the zero'th homology of the $n$-th line equals the space of coinvariants of $A^{\otimes(n+1)}$ under the {\bf cyclic operator} 
$$
\begin{array}{cccc}
T_*: & A^{\otimes(*+1)} & \longrightarrow & A^{\otimes(*+1)}\\
 & & & \\
 & a^0\otimes a^1\otimes\ldots\otimes a^n & \mapsto &
 (-1)^na^n\otimes a^0\otimes\ldots\otimes a^{n-1}.
\end{array}
\eqno(2.12)
$$
Note that $T_n^{n+1}=Id$ on $A^{\otimes(n+1)}$, so that $A^{\otimes(n+1)}$ becomes a ${\mathbb Z}/(n+1){\mathbb Z}$-module under the action of $T_n$ for all $n\geq 0$.\\ 
The cyclic complex $C^\lambda_*(A)$ is therefore chain homotopy equivalent
to the complex
$$
\widetilde{C}^\lambda_*(A)\,=\,(A^{\otimes(*+1)}/(Id-T_*)A^{\otimes(*+1)},\,b).
\eqno(2.13)
$$
For unital $A$ the canonical chain map $I:\widetilde{C}_*(A)\to \widetilde{C}^\lambda_*(A)$ from the Hochschild- to the cyclic complex appears in this picture as the passage
$$
I:\,A^{\otimes(*+1)}\,\longrightarrow\,
A^{\otimes(*+1)}/(Id-T_*)A^{\otimes(*+1)}
\eqno(2.14)
$$
from the tensor algebra over $A$ to its space of coinvariants under the action of the cyclic operator.

\subsection{Periodic cyclic homology \cite{Co}, \cite{Co1}}

The inverse limit under the $S$-operation of countably many copies of the cyclic complex yields the {\bf periodic cyclic bicomplex} 
$$
\widehat{CC}_*(A)\,=\,(\underset{n\geq 0}{\prod}\,\Omega^{n}A,\,b+B).
\eqno(2.15)
$$
It is not graded by the integers but only ${\mathbb Z}/2{\mathbb Z}$ graded (by the parity of forms). 
Its homology $HP_*(A)$ is the {\bf periodic cyclic homology}, 
the cohomology $HP^*(A)$ of the dual chain complex is the {\bf periodic cyclic cohomology} of $A$. It defines a ${\mathbb Z}/2{\mathbb Z}$-graded covariant(contravariant) 
functor on the category of complex algebras, which is Morita-invariant, homotopy-invariant w.r.t. polynomial homotopies, and 
satisfies excision, i.e. an extension of algebras gives rise to a six-term exact sequence of periodic cyclic (co)homology groups.\\
The periodic cyclic bicomplex appears as the adic completion of the naturally contractible 
${\mathbb Z}/2{\mathbb Z}$-graded cyclic bicomplex
$$
CC_*(A)\,=\,(\Omega^{ev}A\oplus\Omega^{odd}A,\,b+B),
\eqno(2.16)
$$
with respect to the {\bf Hodge filtration} defined by the subcomplexes
$$
Fil^n_{Hodge}CC_*(A)\,=\,(b\Omega^nA\oplus \Omega^{\geq n}A,
\,b+B)
\eqno(2.17)
$$
generated by the differential forms of degree at least $n$.
There are various cyclic homology theories attached to topological algebras. They are usually defined in terms of more sophisticated completions of the cyclic bicomplex $CC_*(A)$ of $A$.

\subsection{Reduced complexes}
Suppose that $A$ is an algebra with unit $1$. The two sided homogeneous differential ideal 
$$
(d1)\,=\,\Omega^*A\cdot d1\cdot\Omega^*A 
\eqno(2.18)
$$ 
of $\Omega ^*A$ generated by the differential of the unit is called the ideal of {\bf degenerate differental forms}. The quotient algebra 
$$
\overline{\Omega}^*A=\Omega^*A/(d1)
\eqno(2.19)
$$  
is called the algebra of {\bf reduced differential forms}. The space of degenerate differential forms is stable under the Hochschild operator $b$ and Connes' operator $B$, so that they descend both to operators on the space of reduced differential forms.
One may therefore introduce the {reduced Hochschild complex}
$$
{\overline C}_*(A)\,=\,(\overline{\Omega}^*A,b),
\eqno(2.20)
$$
the reduced cyclic complex 
$$
\overline{C}_*^\lambda(A)\,=\,(\underset{k\geq 0}{\bigoplus}\,\overline{\Omega}^{n-2k}A,\partial),
\eqno(2.21)
$$
and the reduced periodic cyclic complex
$$
\widehat{\overline{CC}}_*(A)\,=\,(\underset{n\geq 0}{\prod}\,\overline{\Omega}^{n}A,\,b+B).
\eqno(2.22)
$$
The canonical projection $\Omega^*A\to\overline{\Omega}^*A$ gives rise to natural chain homotopy equivalences 
$$
\begin{array}{ccc}
C_*(A)\overset{\sim}{\to}{\overline C}_*(A), & C_*^\lambda(A)\overset{\sim}{\to}\overline{C}_*^\lambda(A), & \widehat{CC}_*(A)\overset{\sim}{\to}\widehat{\overline{CC}}_*(A).
\end{array}
\eqno(2.23)
$$

\subsection{Group homology and Hochschild homology 
of group rings \cite{Ni}}

Let $\Gamma$ be a group and consider the commutative diagram of functors
$$
\begin{array}{rcccl}
 & & Ad & & \\
 & {\mathbb C}\Gamma-Bimod & \to & {\mathbb C}\Gamma-Mod & \\
 & & & & \\
-\otimes_{{\mathbb C}\Gamma\otimes{\mathbb C}\Gamma^{op}}{\mathbb C}\Gamma & \downarrow & & \downarrow & -\otimes_{{\mathbb C}\Gamma}\mathbb C \\
 & & & & \\
 & {\mathbb C}-Mod & = & {\mathbb C}-Mod & \\
\end{array}
$$
where the upper horizontal arrow associates to a $\Gamma$-bimodule 
 $M$ the same vector space $M$ with the adjoint action and the vertical arrows are given by taking the commutator quotient $[-,{\mathbb C}\Gamma]$ on the left hand side and by taking $\Gamma$-coinvariants on the right hand side, respectively. The functor $Ad$ is exact and sends projective objects to projective objects. This implies that there is a  canonical isomorphism of derived functors
$$
HH_*({\mathbb C}\Gamma,-)\,=\, Tor_*^{{\mathbb C}\Gamma\otimes{\mathbb C}\Gamma^{op}}({\mathbb C}\Gamma,-)\,
\simeq\, H_*(\Gamma,Ad(-))
\eqno(2.24)
$$
which identifies the Hochschild-homology of ${\mathbb C}\Gamma$ with coefficients in a ${\mathbb C}\Gamma$-bimodule $M$ with the group homology with coefficients in $Ad\,M$. Taking $M={\mathbb C}\Gamma$ one obtains a canonical isomorphism
$$
HH_*({\mathbb C}\Gamma,{\mathbb C}\Gamma)\,\simeq\, H_*(\Gamma, Ad({\mathbb C}\Gamma))
\eqno(2.25)
$$
This isomorphism can be described explicitely in terms of the Hochschild complex and a 
standard complex calculating group (co)homology.

\begin{definition}
The {\bf Bar-complex} $\Delta_\bullet(X)$ of a set $X$ is the simplicial set 
with $n$-simplices $\Delta_n(X)\,=\,X^{n+1}$, face maps
$$
\partial_i([x_0,\ldots,x_n])\,=\,[x_0,\ldots,x_{i-1},x_{i+1},\ldots,x_n],
\eqno(2.26)
$$
and degeneracy maps
$$
s_j([x_0,\ldots,x_n])=[x_0,\ldots,x_j,x_j,\ldots,x_n]\,\,\text{for}\,\,0\leq i,j\leq n\in{\mathbb N}.\eqno(2.27)
$$
The {\bf support} of a Bar-simplex is
$
Supp([x_0,\ldots,x_n])\,=\,\{x_0,\ldots,x_n\}\subset X.
$
\end{definition}

Every map of sets $f:X\to Y$ gives rise to a simplicial map 
$$
f_\bullet:\,\Delta_\bullet(X)\to\Delta_\bullet(Y),\,[x_0,\ldots,x_n]\mapsto [f(x_0),\ldots,f(x_n)].
\eqno(2.28)
$$
In particular, every group action on the set $X$ gives rise to a simplicial action on the Bar-complex $\Delta_\bullet(X)$.

\begin{definition}
The {\bf Bar chain complex} $C_*(X,{\mathbb C})$ of a set $X$ is the chain complex with complex coefficients associated to the Bar-complex. Its differentials are as usual given by the alternating sum 
of the linear operators induced by the face maps. The quotient $\overline{C}_*(X,{\mathbb C})$ of the Bar chain complex by the contractible subcomplex spanned by degenerate Bar-simplices is called 
the {\bf reduced Bar chain complex}.
\end{definition}

The augmentation map $C_0(X,{\mathbb C})\to{\mathbb C}$ of the Bar-complex sends any zero simplex to 1. The augmented Bar-complex is contractible (but there is no natural contraction). If $x\in X$ is a base point, then 
$$
\begin{array}{cccc}
s_x: & C_*(X,{\mathbb C}) & \to & C_{*+1}(X,{\mathbb C}) \\
& & & \\
& [x_0,\ldots,x_n] & \mapsto & [x,x_0,\ldots,x_n] \\
\end{array}
\eqno(2.29)
$$
is a contracting homotopy of the augmented Bar-complex:
$$
Id\,=\,\partial\circ s_x\,+\,s_x\circ\partial.
$$
In particular, the homology of the Bar-complex is one dimensional and concentrated in degree zero. 
The augmentation map identifies it canonically with $\mathbb C$.

\begin{lemma}
Let $\varphi_*,\psi_*:C_*(X,{\mathbb C})\to C_*(Y,{\mathbb C})$ be chain maps of Bar complexes
which induce the identity in homology. Then the linear operator
$$
\begin{array}{cccc}
h(\varphi,\psi): & C_*(X,{\mathbb C}) & \to & C_{*+1}(Y,{\mathbb C}) \\
& [x_0,\ldots, x_n] & \mapsto & \underset{i=0}{\overset{n}{\sum}}\,(-1)^i\,[\varphi_i(x_0,\ldots,x_i),
\psi_{n-i}(x_i,\ldots,x_n)] \\
\end{array}
\eqno(2.30)
$$
defines a natural chain homotopy between $\varphi$ and $\psi$:
$$
\psi_*-\varphi_*\,=\,\partial\circ h(\varphi,\psi)\,+\,h(\varphi,\psi)\circ\partial.
\eqno(2.31)
$$
If $\varphi_*$ and $\psi_*$ preserve the degenerate sub complexes, then so does $h(\varphi,\psi)$.
Moreover, if $G$ is a group acting on $X$ and $Y$, and if $\varphi_*$ and $\psi_*$ are $G$-equivariant, then $h(\varphi,\psi)$ is $G$-equivariant as well.
\end{lemma}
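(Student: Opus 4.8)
The plan is to verify the homotopy identity by a direct (if lengthy) computation, carried out in the \emph{augmented} Bar complexes, which is where the hypothesis on homology and the combinatorics of concatenation fit together cleanly; the formula itself is of the kind that arises in acyclic--models arguments, built from the join structure on the Bar complex. First I would introduce the augmented Bar chain complex $\widetilde{C}_*(Z,\mathbb{C})$ of a set $Z$: it has an extra term $\widetilde{C}_{-1}(Z,\mathbb{C})=\mathbb{C}$ spanned by the ``empty simplex'' $[\,]$, with $\partial[z]=[\,]$ on $0$-simplices, and by the contraction recalled above it is acyclic. The assumption that $\varphi_*,\psi_*$ induce the identity on $H_*(C_*(-,\mathbb{C}))\cong\mathbb{C}$ (which sits in degree $0$) says precisely that $\varepsilon_Y\circ\varphi_0=\varepsilon_X=\varepsilon_Y\circ\psi_0$, and this is exactly what is needed to extend $\varphi_*,\psi_*$ to chain maps $\widetilde{C}_*(X,\mathbb{C})\to\widetilde{C}_*(Y,\mathbb{C})$ that are the identity on the degree $-1$ term.

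Next I would record two elementary facts. (i) The \emph{join} of simplices, $[y_0,\ldots,y_p]\ast[z_0,\ldots,z_q]:=[y_0,\ldots,y_p,z_0,\ldots,z_q]$, extended bilinearly and by $[\,]\ast w=w=w\ast[\,]$, defines maps $\widetilde{C}_p\otimes\widetilde{C}_q\to\widetilde{C}_{p+q+1}$ satisfying the Leibniz rule $\partial(u\ast w)=(\partial u)\ast w+(-1)^{|u|+1}\,u\ast(\partial w)$. (ii) Writing $F_i\sigma=[x_0,\ldots,x_i]$ and $B_i\sigma=[x_i,\ldots,x_n]$ for the front and back faces of $\sigma=[x_0,\ldots,x_n]$, the operator in question is $h(\varphi,\psi)(\sigma)=\sum_{i=0}^{n}(-1)^i\,\varphi(F_i\sigma)\ast\psi(B_i\sigma)$, and the simplicial face maps act on front and back faces in the evident way: $\partial_i F_i\sigma=F_{i-1}\sigma$, $\partial_0 B_i\sigma=B_{i+1}\sigma$, while $F_i(\partial_j\sigma)=F_i\sigma$, $B_i(\partial_j\sigma)=\partial_{j-i}B_i\sigma$ for $i<j$, and $F_i(\partial_j\sigma)=\partial_j F_{i+1}\sigma$, $B_i(\partial_j\sigma)=B_{i+1}\sigma$ for $i\ge j$.

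The heart of the argument is the expansion of $(\partial h+h\partial)(\sigma)$. Using the Leibniz rule and $\partial\varphi=\varphi\partial$, $\partial\psi=\psi\partial$, one gets $\partial h(\sigma)=\sum_i(-1)^i\varphi(\partial F_i\sigma)\ast\psi(B_i\sigma)-\sum_i\varphi(F_i\sigma)\ast\psi(\partial B_i\sigma)$. The extreme indices $i=0$ and $i=n$ contribute $\varphi([\,])\ast\psi(\sigma)=\psi(\sigma)$ and $-\varphi(\sigma)\ast\psi([\,])=-\varphi(\sigma)$ --- this is the one place the homology hypothesis enters, through $\varphi([\,])=[\,]=\psi([\,])$. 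Expanding $\partial F_i\sigma$ and $\partial B_i\sigma$ into alternating sums of faces and isolating the terms $\partial_i F_i\sigma=F_{i-1}\sigma$ and $\partial_0 B_i\sigma=B_{i+1}\sigma$, the remaining terms split into a telescoping part $\sum_i\varphi(F_{i-1}\sigma)\ast\psi(B_i\sigma)-\sum_i\varphi(F_i\sigma)\ast\psi(B_{i+1}\sigma)=0$ plus two double sums $(A)$ (over inner faces $\partial_k F_i\sigma$ with $k<i$) and $(B)$ (over inner faces $\partial_k B_i\sigma$ with $k\ge1$), so that $\partial h(\sigma)=\psi(\sigma)-\varphi(\sigma)+(A)+(B)$. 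I would then expand $h\partial(\sigma)=\sum_j(-1)^j h(\partial_j\sigma)$ using the front/back-face identities from (ii), splitting the inner sum at $i=j$; after the re-indexings $l:=j-i$ on the first part and $m:=i+1$ on the second, one finds term for term that $h\partial(\sigma)=-(A)-(B)$. Adding, $(A)$ and $(B)$ cancel, leaving $(\partial h+h\partial)(\sigma)=\psi(\sigma)-\varphi(\sigma)$; the case $n=0$ is immediate from $h([\,])=0$.

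Finally, the three supplementary assertions are read off the formula. Naturality in $X$ and in $Y$ holds because front/back faces and the join commute with the simplicial maps $f_\bullet$ induced by maps of sets. If $\sigma=[x_0,\ldots,x_n]$ is degenerate with $x_j=x_{j+1}$, then for $i\le j$ the back face $B_i\sigma$ and for $i\ge j+1$ the front face $F_i\sigma$ already contains the repeated consecutive pair; since $\varphi_*,\psi_*$ preserve the degenerate subcomplexes and the join of a degenerate chain with anything is again degenerate, every summand of $h(\varphi,\psi)(\sigma)$, hence $h(\varphi,\psi)(\sigma)$ itself, is degenerate. Equivariance is immediate: $G$ acts diagonally on vertices, so it commutes with front/back faces and with the join, and $\varphi_*,\psi_*$ are $G$-equivariant by hypothesis. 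The main obstacle will be purely the bookkeeping --- keeping the signs and index ranges of $(A)$ and $(B)$ straight so that the cancellation between $\partial h(\sigma)=\psi(\sigma)-\varphi(\sigma)+(A)+(B)$ and $h\partial(\sigma)=-(A)-(B)$ is manifest; conceptually nothing is hidden once everything is placed in the augmented complex.
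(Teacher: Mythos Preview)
Your proof is correct and follows essentially the same route as the paper: a direct expansion of $\partial\circ h$ on a simplex, use of the chain-map identities $\partial\varphi=\varphi\partial$ and $\partial\psi=\psi\partial$, extraction of the extreme terms $\psi(\sigma)-\varphi(\sigma)$ from a telescoping sum, and a reindexing that matches the remaining double sums against $-h\circ\partial$. The only difference is organizational: you make the augmented complex and the join product with its Leibniz rule explicit, whereas the paper carries out the identical bookkeeping bare-handed with indices (and implicitly uses the same augmented convention when writing terms like $[\varphi(x_0,\ldots,x_{i-1}),\psi(x_i,\ldots,x_n)]$ at $i=0$). Your treatment also spells out the degeneracy and equivariance claims, which the paper leaves tacit.
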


\begin{proof}
Let $[x_0,\ldots,x_n]\in\Delta_n(X)$ be a Bar-simplex. One finds 
$$
\partial\circ h(\varphi,\psi)([x_0,\ldots,x_n])\,=\,\partial(\underset{i=0}{\overset{n}{\sum}}\,(-1)^i\,[\varphi(x_0,\ldots,x_i),
\psi(x_i,\ldots,x_n)])
$$
$$
=\underset{i=0}{\overset{n}{\sum}}\,(-1)^i\,[(\partial\circ\varphi)(x_0,\ldots,x_i),
\psi(x_i,\ldots,x_n)]\,-\,\underset{i=0}{\overset{n}{\sum}}\,[\varphi(x_0,\ldots,x_i),
(\partial\circ\psi)(x_i,\ldots,x_n)]
$$
$$
=\underset{i=0}{\overset{n}{\sum}}\,(-1)^i\,[(\varphi\circ\partial)(x_0,\ldots,x_i),
\psi(x_i,\ldots,x_n)]\,-\,\underset{i=0}{\overset{n}{\sum}}\,[\varphi(x_0,\ldots,x_i),
(\psi\circ\partial)(x_i,\ldots,x_n)]
$$
$$
=\underset{i=0}{\overset{n}{\sum}}\underset{j=0}{\overset{i-1}{\sum}}\,(-1)^{i+j}
[\varphi(x_0,\ldots,\check{x}_j,\ldots,x_i),\psi(x_i,\ldots,x_n)]
$$
$$
+\underset{i=0}{\overset{n}{\sum}}\,
[\varphi(x_0,\ldots,x_{i-1}),\psi(x_i,\ldots,x_n)]\,-\,
\underset{i=0}{\overset{n}{\sum}}\,
[\varphi(x_0,\ldots,x_{i}),\psi(x_{i+1},\ldots,x_n)]
$$
$$
+\underset{i=0}{\overset{n}{\sum}}\underset{j=i+1}{\overset{n}{\sum}}\,(-1)^{i+j+1}
[\varphi(x_0,\ldots,x_i),\psi(x_i,\ldots,\check{x}_j,\ldots,x_n)]
$$
$$
=\,-\,\underset{j=0}{\overset{n}{\sum}}\underset{i=j+1}{\overset{n}{\sum}}\,(-1)^{(i-1)+j}
[\varphi(x_0,\ldots,\check{x}_j,\ldots,x_i),\psi(x_i,\ldots,x_n)]\,+\,(\psi_n\,-\,\varphi_n)
$$
$$
-\underset{j=0}{\overset{n}{\sum}}\underset{i=0}{\overset{j-1}{\sum}}\,(-1)^{i+j}
[\varphi(x_0,\ldots,x_i),\psi(x_i,\ldots,\check{x}_j,\ldots,x_n)]
$$
$$
=\,(\psi_n\,-\,\varphi_n\,-\,h(\varphi,\psi)\circ\partial)([x_0,\ldots,x_n]).
$$
\end{proof}

Let the group $\Gamma$ act on $X=\Gamma$ by left translations. The Bar complex $C_*(\Gamma,{\mathbb C})$ defines then a free resolution of the trivial $\Gamma$-module. Consequently the complex
$C_*(\Gamma,M)\,=\,C_*(\Gamma,{\mathbb C})\otimes_{{\mathbb C}}M$ defines a free resolution of any given $\Gamma$-module $M$ and the group homology $H_*(\Gamma,M)$ of $\Gamma$ with coefficients in $M$ can be calculated as the homology of the complex of coinvariants $H_*(C_*(\Gamma,M)_\Gamma,\partial)$ of the Bar-resolution twisted by $M$.
The explicit formulas for the passage between group homology and Hochschild homology in terms of the Bar resolution are given in 

\begin{lemma}
There exists a canonial $\Gamma$-equivariant chain map
$$
\begin{array}{cccc}
p_*:  & C_*(\Gamma,\,Ad({\mathbb C}\Gamma)) & \to & \widetilde{C}_*({\mathbb C}\Gamma), \\
&&& \\
& [g_0,\ldots,g_n;v] & \mapsto & (g_n^{-1}vg_0)d(g_0^{-1}g_1)\ldots d(g_{n-1}^{-1}g_n).\\
\end{array}
\eqno(2.32)
$$
A $\Gamma$-equivariant linear section of $p$ is given by 
$$
\begin{array}{cccc}
q_*: & \widetilde{C}_*({\mathbb C}\Gamma) & \to & C_*(\Gamma,\,Ad({\mathbb C}\Gamma)) \\
&&& \\
& h_0dh_1\ldots dh_n & \mapsto & [h_0,h_0h_1,\ldots,h_0h_1\ldots h_n;h_0h_1\ldots h_n],\\
\end{array}
\eqno(2.33)
$$
The epimorphism $p$ identifies the canonical deformation retract $\widetilde{C}_*({\mathbb C}\Gamma)$ of the Hochschild complex of ${\mathbb C}\Gamma$ with the complex of 
coinvariants $C_*(\Gamma,\,Ad({\mathbb C}\Gamma))_\Gamma$ of the Bar-complex of $\Gamma$, twisted by the adjoint representation $Ad({\mathbb C}\Gamma)$.
The maps $p_*$ and $q_*$ send degenerate Bar-simplices to degenerate differential forms and vice-versa.
\end{lemma}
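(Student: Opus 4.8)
The plan is to verify every assertion directly from the explicit formulas $(2.32)$ and $(2.33)$; the only external inputs are the description of $H_*(\Gamma,M)$ as the homology of the $M$-twisted Bar resolution modulo its $\Gamma$-coinvariants (recalled just above) and the chain homotopy equivalence $C_*(\mathbb C\Gamma)\simeq\widetilde C_*(\mathbb C\Gamma)$ of $(2.7)$. First I would note that $p_*$ and $q_*$ are obviously linear and that $p_*$ is $\Gamma$\emph{-invariant}: acting on $[g_0,\dots,g_n;v]$ by $\gamma$ replaces it with $[\gamma g_0,\dots,\gamma g_n;\gamma v\gamma^{-1}]$, and the factors $\gamma,\gamma^{-1}$ cancel in each of the slots $g_n^{-1}vg_0$ and $g_{i-1}^{-1}g_i$ defining $p_*$. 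Hence $p_*$ descends to a map $\bar p_*\colon C_*(\Gamma,Ad(\mathbb C\Gamma))_\Gamma\to\widetilde C_*(\mathbb C\Gamma)$ out of the coinvariants.

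Next I would check that $p_*$ is a chain map, matching the alternating sum of Bar face operators term by term with the Hochschild boundary in the form $(2.7)$. Abbreviating $a^0=g_n^{-1}vg_0$ and $a^j=g_{j-1}^{-1}g_j$ for $1\le j\le n$: deleting the vertex $g_0$ produces the summand $a^0a^1\otimes a^2\otimes\dots\otimes a^n$ of $b$; deleting an interior vertex $g_i$ with $1\le i\le n-1$ produces the summand in which $a^ia^{i+1}=(g_{i-1}^{-1}g_i)(g_i^{-1}g_{i+1})=g_{i-1}^{-1}g_{i+1}$ has been merged; and deleting the vertex $g_n$ produces the cyclic wrap-around summand $(-1)^n a^na^0\otimes a^1\otimes\dots\otimes a^{n-1}$, since $(g_{n-1}^{-1}g_n)(g_n^{-1}vg_0)=g_{n-1}^{-1}vg_0$. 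All signs agree, so $p_*\partial=bp_*$. The same bookkeeping shows that $q_*$ intertwines the Bar differential with $b$ in all but the wrap-around summand, where the two sides differ by a single left translation; so $q_*$ is, as asserted, only a linear section and not a chain map.

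Then I would establish the section identity: $p_*(q_*(h_0dh_1\dots dh_n))$ telescopes via $(h_0\cdots h_{k-1})^{-1}(h_0\cdots h_k)=h_k$ and $(h_0\cdots h_n)^{-1}(h_0\cdots h_n)h_0=h_0$ back to $h_0dh_1\dots dh_n$, so $p_*\circ q_*=\mathrm{Id}$ and $p_*$ is an epimorphism. Conversely, evaluating $q_*\circ p_*$ on a basis element $[g_0,\dots,g_n;u_g]$ and telescoping again gives $[g_n^{-1}gg_0,\dots,g_n^{-1}gg_n;g_n^{-1}gg_n]=\gamma\cdot[g_0,\dots,g_n;u_g]$ with $\gamma=g_n^{-1}g$, so (extending linearly in $v$) $q_*\circ p_*$ is the identity \emph{modulo the $\Gamma$-coinvariance relations}. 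Thus $\bar p_*$ and the composite of $q_*$ with the projection onto the coinvariants are mutually inverse, $\bar p_*$ is an isomorphism of chain complexes identifying $\widetilde C_*(\mathbb C\Gamma)$ with $C_*(\Gamma,Ad(\mathbb C\Gamma))_\Gamma$, and on homology this recovers $(2.25)$ concretely. The degeneracy claims are then read off the same formulas: a coincidence $g_{i-1}=g_i$ of consecutive vertices is equivalent to the presence of a factor $d(g_{i-1}^{-1}g_i)=d1$, and for $q_*$ the condition $h_i=1$ with $i\ge1$ is equivalent to the equality of the $(i-1)$-st and $i$-th vertices of the image; so both maps carry degenerate elements to degenerate elements and, conversely, non-degenerate to non-degenerate.

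I expect no deep obstacle here: the only places needing care are the sign-accurate verification that $p_*$ is a chain map (where the cyclic wrap-around term of $b$ must be paired with deletion of the last Bar vertex) and the observation that $q_*\circ p_*$ equals the identity only after passing to $\Gamma$-coinvariants, which is precisely what makes $\bar p_*$ injective.
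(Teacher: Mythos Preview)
Your proof is correct and complete. The paper itself states this lemma without proof, treating it as a routine verification (it is essentially the explicit form of the standard identification $HH_*(\mathbb C\Gamma)\simeq H_*(\Gamma,Ad(\mathbb C\Gamma))$ going back to Burghelea and Nistor), so your direct term-by-term check of the chain-map identity, the section identity, and the coinvariance statement is exactly what is needed. One small remark: you do not verify that $q_*$ is ``$\Gamma$-equivariant'' as literally asserted, and indeed it is not equivariant for the trivial action on $\widetilde C_*(\mathbb C\Gamma)$; what is true---and what you correctly establish---is that the composite $\pi\circ q_*$ into the coinvariants is a chain map inverse to $\bar p_*$, which is all that is actually used later in the paper.
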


Following Nistor, one describes the cyclic permutation operator and Connes' operator on algebraic differential forms in terms of lifted operators on the Bar complex, twisted by the adjoint representation.

\begin{definition}.\\
\begin{itemize}
\item[a)]
Let $\widetilde{T}_*$ be the $\Gamma$-equivariant operator on the twisted Bar-complex given by
$$
\begin{array}{cccc}
\widetilde{T}_n: & C_n(\Gamma,Ad({\mathbb C}\Gamma)) & \to & C_n(\Gamma,Ad({\mathbb C}\Gamma)), \\
& & & \\
& [g_0,\ldots,g_n;v] & \mapsto & (-1)^n\,[v^{-1}g_n,g_0,\ldots,g_{n-1};v].\\
\end{array}
\eqno(2.34)
$$
\item[b)] Let $\widetilde{B}$ be the $\Gamma$-equivariant operator on the twisted Bar-complex given by
$$
\begin{array}{cccc}
\widetilde{B}: & C_n(\Gamma,Ad({\mathbb C}\Gamma)) & \to & C_{n+1}(\Gamma,Ad({\mathbb C}\Gamma)), \\
& & & \\
& [g_0,\ldots,g_n;v] & \mapsto & \underset{i=0}{\overset{n}{\sum}}
(-1)^{in}\,[v^{-1}g_i,\ldots,v^{-1}g_n,g_0,\ldots,g_{i};v].\\
\end{array}
\eqno(2.35)
$$
\end{itemize}
\end{definition}
Note that 
$$
\widetilde{T}_n^{n+1}([g_0,\ldots,g_n;v])\,=\,v^{-1}\cdot[g_0,\ldots,g_n;v],
\eqno(2.36)
$$
in degree $n$.
The operators (2.34), (2.35) lift the corresponding operators (2.12), (2.4) on algebraic differential forms in the sense that
$$
T_*\circ p_*\,=\,p_*\circ\widetilde{T}_*:\,
C_*(\Gamma,Ad({\mathbb C}\Gamma))\to \widetilde{C}_*({\mathbb C}\,\Gamma), 
\eqno(2.37)
$$
and
$$
 B\circ\overline{p}_*\,=\,\overline{p}_*\circ\,\widetilde{B}:\,C_*(\Gamma,Ad({\mathbb C}\Gamma))\to \overline{C}_{*+1}({\mathbb C}\,\Gamma)
\eqno(2.38)
$$
respectively, where $\overline{p}_*$ denotes the composition of $p_*$ and the reduction map to the reduced Hochschild complex.

\subsection{The homogeneous decomposition \cite{Bu},\cite{Ni}}

The group ring ${\mathbb C}\Gamma$, viewed as $\Gamma$-module under the adjoint action, decomposes as the direct sum of irreducible submodules
$$
Ad({\mathbb C}\Gamma)\,=\,\underset{\langle v\rangle}{\bigoplus}\,{\mathbb C}\Gamma_{\langle v\rangle}
\eqno(2.39)
$$
given by the linear span of the group elements in a fixed conjugacy class 
$$
\langle v\rangle\,=\,\{gvg^{-1},\,g\in\Gamma\}.
\eqno(2.40)
$$
The Hochschild homology of the group ring, viewed as bimodule over itself, decomposes therefore as
$$
HH_*({\mathbb C}\Gamma,{\mathbb C}\Gamma)\,\simeq\,
H_*(\Gamma,\,Ad({\mathbb C}\Gamma))\,\simeq\,\underset{\langle v\rangle}{\bigoplus}\,H_*(\Gamma,\,{\mathbb C}\Gamma_{\langle v\rangle})\,\simeq\,\underset{\langle v\rangle}{\bigoplus}\,HH_*({\mathbb C}\Gamma,{\mathbb C}\Gamma)_{\langle v\rangle}.
\eqno(2.41)
$$
The underlying decomposition of the Hochschild complex 
$$
C_*({\mathbb C}\Gamma)\,=\,(\Omega^*({\mathbb C}\Gamma),b)
$$ 
is given by
$$
\Omega^*({\mathbb C}\Gamma)\,=\,\underset{\langle v\rangle}{\bigoplus}\,\Omega^*({\mathbb C}\Gamma)_{\langle v\rangle},\,\,\,
\Omega^*({\mathbb C}\Gamma)_{\langle v\rangle}\,=\,Span\{\,g_0dg_1\ldots dg_n,\,g_0g_1\ldots g_n\in\langle v\rangle\}.
\eqno(2.42)
$$
This decomposition is preserved by the cyclic operator 
$T_*:\Omega^*({\mathbb C}\Gamma)\to\Omega^*({\mathbb C}\Gamma)$ and Connes' operator $B:\Omega^*({\mathbb C}\Gamma)\to
\Omega^{*+1}({\mathbb C}\Gamma)$ on the space of algebraic differential forms. 
The cyclic (bi)complexes attached to a group ring 
decompose therefore as
$$
\begin{array}{cc}
C_*^\lambda({\mathbb C}\Gamma)\,=\,\underset{\langle v\rangle}{\bigoplus}\,
C_*^\lambda({\mathbb C}\Gamma)_{\langle v\rangle}, & 
CC_*({\mathbb C}\Gamma)\,=\,\underset{\langle v\rangle}{\bigoplus}\,
CC_*({\mathbb C}\Gamma)_{\langle v\rangle}.
\end{array}
\eqno(2.43)
$$
The canonical deformation retracts $\widetilde{C}_*(A)$ and $\widetilde{C}^\lambda_*(A)$ decompose similarly.\\
The direct sum of the contributions of conjugacy classes of torsion elements is called the {\bf homogeneous part}, the direct sum of the contriburions of the remaining 
conjugacy classes is called the {\bf inhomogeneous part} of the complex under consideration.\\
\\
For $v\in\Gamma$ let $\langle v\rangle$ be its conjugacy class and denote by $Z(v)$ its centralizer 
$$
Z(v)\,=\,\{h\in\Gamma, hv=vh\}.
\eqno(2.44)
$$
The conjugation action of $\Gamma$ on $\langle v\rangle$ 
gives rise to an isomorphism 
$$
\Gamma/Z(v)\,\overset{\simeq}{\longrightarrow}\,\langle v\rangle,\,
\overline{g}\mapsto gvg^{-1}
\eqno(2.45)
$$
of $\Gamma$-sets. 
The restriction of $p_*$ to the subcomplex
$$
C_*(\Gamma,{\mathbb C}v):=C_*(\Gamma,{\mathbb C})\otimes{\mathbb C}v\subset C_*(\Gamma,\,Ad({\mathbb C}\Gamma))
\eqno(2.46)
$$ 
of $Z(v)$-modules defines an epimorphism of complexes
$$
\begin{array}{cccc}
p_{v*}: & C_*(\Gamma,{\mathbb C}v) & \longrightarrow & \widetilde{C}_*({\mathbb C}\Gamma)_{\langle v\rangle} \\
& & & \\
 & [g_0,\ldots, g_n;v] & \mapsto & g_n^{-1}vg_0\otimes g_0^{-1}g_1\otimes\ldots\otimes g_{n-1}^{-1}g_n\\
\end{array}
\eqno(2.47)
$$
which identifies the right hand side of (2.47) with the $Z(v)$-coinvariants of $C_*(\Gamma,{\mathbb C})\otimes{\mathbb C}v$.
Taking homology, one obtains
$$
HH_*({\mathbb C}\Gamma)_{\langle v\rangle}\,\simeq\,H_*(\Gamma,\,{\mathbb C}\langle v\rangle)\,\simeq\,H_*(\Gamma,\,Ind_{Z(v)}^\Gamma\,{\mathbb C})\,\simeq\,
H_*(Z(v),{\mathbb C}).
\eqno(2.48)
$$

\subsection{The contribution of elliptic conjugacy classes}

\begin{deflem}
\begin{itemize}
\item[a)] The linear antisymmetrization operator
$$
\begin{array}{cccc}
\pi_{as}: & C_*(\Gamma,Ad({\mathbb C}\Gamma)) & \to & C_*(\Gamma,Ad({\mathbb C}\Gamma)) \\
 & & & \\
 & [g_0,\ldots,g_n;v] & \mapsto & \frac{1}{(n+1)!}\,\underset{\sigma\in\Sigma_{n+1}}{\sum}\,(-1)^{\epsilon(\sigma)}\,[g_{\sigma(0)},\ldots,g_{\sigma(n)};v] \\
\end{array}
\eqno(2.49)
$$
is a $\Gamma$-equivariant chain endomorphism of the twisted Bar-chain complex compatible with the augmentation.

\item[b)] Suppose that $v\in\Gamma$ is a torsion element and let $U\subset\Gamma$ be the finite cyclic subgroup generated by $v$. Then the linear averaging operator
$$
\begin{array}{cccc}
\pi_{U}: & C_*(\Gamma,\,{\mathbb C}v) & \to & C_*(\Gamma,\,{\mathbb C}v) \\
 & & & \\
 & [g_0,\ldots,g_n;v] & \mapsto & \frac{1}{\vert U\vert^{n+1}}\,
 \underset{h_0,\ldots,h_n\in U}{\sum}\,[h_0g_0,\ldots,h_ng_n;v] \\
\end{array}
\eqno(2.50)
$$
is a $Z(v)$-equivariant chain endomorphism of the twisted Bar-chain complex compatible with the augmentation.

\item[c)] The composite operator
$$
\mu_{v}\,=\,\pi_{as}\circ\pi_{U}:\,C_*(\Gamma,\,{\mathbb C}v)\,\to\,C_*(\Gamma,\,{\mathbb C}v)
\eqno(2.51)
$$
is a $Z(v)$-equivariant chain endomorphism of the twisted Bar-complex compatible with the augmentation. It vanishes on degenerate Bar simplices.
\end{itemize}
\end{deflem}

The $Z(v)$-equivariant chain map $\mu_v$ is compatible with the augmentations and therefore canonically and $Z(v)$-equivariantly chain homotopic to the identity via the homotopy operator $h(\mu_{v},id):\,C_*(\Gamma,\,{\mathbb C}v)\,\to\,C_{*+1}(\Gamma,\,{\mathbb C}v)$ (2.30). The subcomplex generated by degenerate Bar-simplices is stable under $\mu_v$ and $h(\mu_v,id)$.
Taking $Z(v)$-coinvariants and passing to reduced complexes yields a chain map
$\overline{\mu}_{v}:\overline{C}_*({\mathbb C}\Gamma)_{\langle v\rangle}\,
\to\,\overline{C}_*({\mathbb C}\Gamma)_{\langle v\rangle},$ which is canonically chain homotopic to the identity via $\overline{h}(\mu_{v},id):\,\overline{C}_*({\mathbb C}\Gamma)_{\langle v\rangle}\,\to\,\overline{C}_{*+1}({\mathbb C}\Gamma)_{\langle v\rangle}.$\\
\\
Introduce the ${\mathbb Z}/2{\mathbb Z}$-graded periodic 
Hochschild complex of an algebra $A$ as
$$
\widehat{C}_*(A)\,=\,(\underset{n\in{\mathbb N}}{\prod}\,\Omega^nA,\,b)\,=\,(\widehat{\Omega}^*(A),\,b),
\eqno(2.52)
$$
the grading being given by the parity of forms. The Hodge filtration 
$Fil^N_{Hodge}\widehat{C}_*(A),$
$N\in{\mathbb N},$ is again defined by the subcomplexes generated by forms of degree at least $N$.
With these notations understood we have

\begin{prop} The linear operator
$$
\nu_{v}\,=\,id+\overline{h}(\mu_{v},id)\circ B:\,\overline{\Omega}^*({\mathbb C}\Gamma)_{\langle v\rangle}\,\longrightarrow\,\overline{\Omega}^*({\mathbb C}\Gamma)_{\langle v\rangle}
\eqno(2.53)
$$
induces an isomorphism of chain complexes 
$$
\nu_{v}:\,\widehat{\overline{CC}}_*({\mathbb C}\Gamma)_{\langle v\rangle}\,\overset{\simeq}{\longrightarrow}\,
\widehat{\overline{C}}_*({\mathbb C}\Gamma)_{\langle v\rangle},
\eqno(2.54)
$$
between elliptic components of the reduced periodic Hochschild complex and the reduced periodic cyclic bicomplex. It is strictly compatible with Hodge filtrations.
\end{prop}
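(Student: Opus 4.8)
\begin{proofsketch}
The plan is to verify three things in turn: that $\nu_v$ intertwines the cyclic differential $b+B$ with the Hochschild differential $b$, that $\nu_v$ is invertible, and that $\nu_v$ together with its inverse preserves the Hodge filtration; the only input that is not purely formal is the identity $\overline{\mu}_v\circ B=0$. Write $\overline{h}=\overline{h}(\mu_v,id)$. Since $\overline{\mu}_v$ is chain homotopic to the identity on $\overline{C}_*({\mathbb C}\Gamma)_{\langle v\rangle}$ via $\overline{h}$, we have $id-\overline{\mu}_v=b\overline{h}+\overline{h}b$ on $\overline{\Omega}^*({\mathbb C}\Gamma)_{\langle v\rangle}$. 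Using $B^2=0$ and $bB+Bb=0$ one then computes
$$
\nu_v(b+B)-b\,\nu_v
=(id+\overline{h}B)(b+B)-b(id+\overline{h}B)
=B+\overline{h}Bb-b\overline{h}B
=B-(b\overline{h}+\overline{h}b)B
=\overline{\mu}_v\,B ,
$$
so $\nu_v$ is a chain map $\widehat{\overline{CC}}_*({\mathbb C}\Gamma)_{\langle v\rangle}\to\widehat{\overline{C}}_*({\mathbb C}\Gamma)_{\langle v\rangle}$ precisely when $\overline{\mu}_v\circ B=0$; and $\nu_v$ is automatically well defined on the product complexes because $\overline{h}B$ strictly raises the degree of forms.

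The heart of the matter is thus the vanishing $\overline{\mu}_v\circ B=0$, which I would deduce from the stronger statement $\mu_v\circ\widetilde{B}=0$ on the twisted Bar complex $C_*(\Gamma,{\mathbb C}v)$, transporting it down via the surjection $p_{v*}$ and the intertwining relation $B\circ\overline{p}_*=\overline{p}_*\circ\widetilde{B}$ of (2.38). Recall $\mu_v=\pi_{as}\circ\pi_U$, where $U=\langle v\rangle$ is the finite cyclic group generated by $v$, and that the $i$-th summand of $\widetilde{B}([g_0,\dots,g_n;v])$ is, up to sign, the Bar simplex $[v^{-1}g_i,\dots,v^{-1}g_n,g_0,\dots,g_i;v]$, whose first vertex is $v^{-1}g_i$ and whose last vertex is $g_i$. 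Applying $\pi_U$ averages each vertex over left translation by $U$; since $v^{-1}\in U$, the twist in the first vertex is absorbed, and the $i$-th term becomes $\frac{1}{|U|^{n+2}}\sum_{h_0,\dots,h_{n+1}\in U}[h_0g_i,h_1g_{i+1},\dots,h_{n+1}g_i;v]$, a sum of simplices whose first and last vertices are $h_0g_i$ and $h_{n+1}g_i$. Now reindex this sum by the substitution $h_0\leftrightarrow h_{n+1}$: it is thereby identified with the same sum in which every simplex has had its first and last vertices exchanged, an odd permutation of the vertices. Since the antisymmetrizer $\pi_{as}$ changes by the sign of a permutation of vertices, the sum equals its own negative and hence is zero. (Equivalently: the diagonal contribution $h_0=h_{n+1}$ consists of simplices with a repeated vertex, which $\pi_{as}$ kills, and the remaining terms cancel in pairs.) Carrying this out for each $i$ gives $\mu_v\circ\widetilde{B}=0$, hence $\overline{\mu}_v\circ B=0$.

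It remains to prove invertibility and the strictness statement. Put $N:=\overline{h}B$; by the degree count above $N$ maps $\overline{\Omega}^k({\mathbb C}\Gamma)_{\langle v\rangle}$ into $\overline{\Omega}^{k+2}({\mathbb C}\Gamma)_{\langle v\rangle}$. On $\widehat{\overline{CC}}_*({\mathbb C}\Gamma)_{\langle v\rangle}$, whose underlying space is the product $\prod_n\overline{\Omega}^n({\mathbb C}\Gamma)_{\langle v\rangle}$, the Neumann series $\sum_{k\geq 0}(-1)^kN^k$ makes sense, since in each fixed form degree only finitely many powers of $N$ contribute, and it is a two-sided inverse of $\nu_v=id+N$. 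Both $\nu_v$ and $\nu_v^{-1}$ have the form $id+(\text{operator strictly raising the form degree})$, so each maps $Fil^M_{Hodge}$ into $Fil^M_{Hodge}$ for every $M$, whence $\nu_v$ restricts to an isomorphism of $Fil^M_{Hodge}\widehat{\overline{CC}}_*({\mathbb C}\Gamma)_{\langle v\rangle}$ onto $Fil^M_{Hodge}\widehat{\overline{C}}_*({\mathbb C}\Gamma)_{\langle v\rangle}$ for each $M$; that is, $\nu_v$ is strictly compatible with the Hodge filtrations. I expect the only real obstacle to lie in the combinatorial identity $\mu_v\circ\widetilde{B}=0$: it hinges on the averaging group of $\pi_U$ being exactly $\langle v\rangle$, so that it swallows the $v^{-1}$-twist produced by $\widetilde{B}$, together with the fact that $\widetilde{B}$ places the repeated generator $g_i$ at the two ends of each simplex, where symmetrization exchanges them by a transposition.
\end{proofsketch}
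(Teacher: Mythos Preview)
Your proof is correct and follows essentially the same approach as the paper: the same chain-map computation, the same Neumann series for the inverse, and the same observation that $\overline{h}B$ raises degree so the Hodge filtration is preserved. The only difference is that you give an explicit combinatorial verification of the identity $\mu_v\circ\widetilde{B}=0$ (absorbing the $v^{-1}$-twist into the $U$-average and then using that the resulting chain is invariant under the transposition of the first and last vertices), whereas the paper simply asserts this ``by construction''.
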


\begin{proof}
Note that the operator (2.53) preserves Hodge filtrations so that it extends to an operator on the corresponding completions (2.54).
By construction $\mu_{v}\circ B'=0$ on $C_*(\Gamma,\,{\mathbb C}v),$ so that $\overline{\mu}_{v}\circ B=0$ on $\overline{\Omega}^*({\mathbb C}\Gamma)_{\langle v\rangle}$. The identity
$$
b\circ\nu_{v}\,=\,b\circ(id+\overline{h}(\mu_{v},id)\circ B)\,=\,
b+b\circ \overline{h}(\mu_{v},id)\circ B
$$
$$
=\,b+\left(id-\overline{\mu}_{v}-\overline{h}(\mu_{v},id)\circ b\right)\circ B
\,=\,(b+B)-\overline{\mu}_{v}\circ B+\overline{h}(\mu_{v},id)\circ B\circ b
$$
$$
=(id+\overline{h}(\mu_{v},id)\circ B)\circ(b+B)\,=\,\nu_{v}\circ(b+B)
$$
shows that $\nu_{v}:\,\widehat{CC}_*({\mathbb C}\Gamma)_{\langle v\rangle}\,\overset{\simeq}{\longrightarrow}\,
\widehat{C}_*({\mathbb C}\Gamma)_{\langle v\rangle}$ is a chain map. In fact it is an isomorphism, 
its inverse being given by the convergent geometric series
$$
\nu_{v}^{-1}\,=\,(id+\overline{h}(\mu_{v},id)\circ B)^{-1}\,=\,
\underset{k=0}{\overset{\infty}{\sum}}\,(-\overline{h}(\mu_{v},id)\circ B)^k.
$$
\end{proof}

\begin{cor}
Let $\Gamma$ be a group and let $v\in\Gamma$ be a torsion element. Then
$$
HP_*({\mathbb C}\Gamma)_{\langle v\rangle}\,\simeq\,
\underset{n\in{\mathbb N}}{\prod}\,H_{*+2n}(Z(v),{\mathbb C})\,\simeq\,\underset{n\in{\mathbb N}}{\prod}\,H_{*+2n}(\Gamma,\,{\mathbb C}\langle v\rangle),\,\,\,*\in\{0,1\},
\eqno(2.55)
$$
where ${\mathbb C}\langle v\rangle\subset Ad({\mathbb C}\Gamma)$ denotes the $\Gamma$-sub module spanned by the conjugacy class $\langle v\rangle$.
\end{cor}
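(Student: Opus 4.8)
The plan is to deduce this from the preceding Proposition together with the reduction equivalences (2.23) and the identification (2.48) of the homogeneous components of Hochschild homology. By definition $HP_*({\mathbb C}\Gamma)_{\langle v\rangle}$ is the homology of $\widehat{CC}_*({\mathbb C}\Gamma)_{\langle v\rangle}$; since the ideal of degenerate forms respects the homogeneous decomposition, the chain homotopy equivalence $\widehat{CC}_*({\mathbb C}\Gamma)\overset{\sim}{\to}\widehat{\overline{CC}}_*({\mathbb C}\Gamma)$ of (2.23) restricts to the $\langle v\rangle$-component, so that $HP_*({\mathbb C}\Gamma)_{\langle v\rangle}\simeq H_*\bigl(\widehat{\overline{CC}}_*({\mathbb C}\Gamma)_{\langle v\rangle}\bigr)$. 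Because $v$ is a torsion element, the preceding Proposition applies and provides a chain isomorphism $\nu_v$ from $\widehat{\overline{CC}}_*({\mathbb C}\Gamma)_{\langle v\rangle}$ onto the reduced periodic Hochschild complex $\widehat{\overline{C}}_*({\mathbb C}\Gamma)_{\langle v\rangle}$. Hence it remains to compute the homology of the latter.

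The second step identifies $H_*\bigl(\widehat{\overline{C}}_*({\mathbb C}\Gamma)_{\langle v\rangle}\bigr)$ with the product $\prod_{k\geq 0}\overline{HH}_{*+2k}({\mathbb C}\Gamma)_{\langle v\rangle}$ of reduced Hochschild homology groups. Here $\widehat{\overline{C}}_*({\mathbb C}\Gamma)_{\langle v\rangle}=\bigl(\prod_{n\geq 0}\overline{\Omega}^n({\mathbb C}\Gamma)_{\langle v\rangle},\,b\bigr)$ is ${\mathbb Z}/2{\mathbb Z}$-graded by the parity of forms, and the crucial point is that the Hochschild differential on this product is componentwise: the component of $b(\omega_0,\omega_1,\ldots)$ in degree $n-1$ equals $b\omega_n$ and involves $\omega_n$ alone. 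Consequently the even cycles form $\overline{\Omega}^0({\mathbb C}\Gamma)_{\langle v\rangle}\times\prod_{k\geq 1}\ker\bigl(b|_{\overline{\Omega}^{2k}}\bigr)$, the even boundaries form $\prod_{k\geq 0}b\,\overline{\Omega}^{2k+1}({\mathbb C}\Gamma)_{\langle v\rangle}$, and taking the quotient factor by factor yields
$$
H_{ev}\bigl(\widehat{\overline{C}}_*({\mathbb C}\Gamma)_{\langle v\rangle}\bigr)\,\simeq\,\underset{k\geq 0}{\prod}\,\overline{HH}_{2k}({\mathbb C}\Gamma)_{\langle v\rangle},
$$
and likewise in odd degree. (Equivalently, one may run the Hodge filtration of $\widehat{\overline{C}}_*$: its quotients are the finite truncated Hochschild complexes, whose homology groups stabilise degreewise to $\overline{HH}_n({\mathbb C}\Gamma)_{\langle v\rangle}$, the resulting tower is Mittag--Leffler, and the Milnor $\varprojlim^1$-sequence collapses.) Finally, the reduction equivalence $C_*({\mathbb C}\Gamma)\overset{\sim}{\to}\overline{C}_*({\mathbb C}\Gamma)$ of (2.23) identifies $\overline{HH}_n({\mathbb C}\Gamma)_{\langle v\rangle}$ with $HH_n({\mathbb C}\Gamma)_{\langle v\rangle}$.

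The last step merely quotes (2.48): for every $v\in\Gamma$ one has $HH_n({\mathbb C}\Gamma)_{\langle v\rangle}\simeq H_n(\Gamma,{\mathbb C}\langle v\rangle)\simeq H_n(\Gamma,Ind_{Z(v)}^\Gamma{\mathbb C})\simeq H_n(Z(v),{\mathbb C})$, which are exactly the two intermediate groups appearing in the statement. Substituting $n=*+2k$ and re-indexing yields
$$
HP_*({\mathbb C}\Gamma)_{\langle v\rangle}\,\simeq\,\underset{k\geq 0}{\prod}\,H_{*+2k}(Z(v),{\mathbb C})\,\simeq\,\underset{k\geq 0}{\prod}\,H_{*+2k}(\Gamma,{\mathbb C}\langle v\rangle),\qquad *\in\{0,1\}.
$$
The only point that really requires care — everything else is bookkeeping with the homogeneous decomposition and with reduced versus unreduced complexes — is the passage in the second step from the ${\mathbb Z}_+$-graded Hochschild complex to its ${\mathbb Z}/2{\mathbb Z}$-graded product completion: one has to be sure that homology still commutes with the infinite product there, which is guaranteed precisely by the componentwise form of $b$ (equivalently, by the vanishing of the relevant $\varprojlim^1$-term).
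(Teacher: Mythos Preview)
Your proof is correct and follows exactly the route the paper intends: the corollary is stated without proof, and the natural deduction is precisely via the chain isomorphism $\nu_v$ of the preceding Proposition together with (2.23) and (2.48), which is what you do. The only substantive step you had to supply --- that the homology of the ${\mathbb Z}/2{\mathbb Z}$-graded product Hochschild complex $(\prod_n\overline{\Omega}^n,\,b)$ is the product of the Hochschild homology groups --- is handled correctly by your observation that $b$ acts componentwise.
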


\subsection{The contribution of hyperbolic conjugacy classes}

For a group element $v$ of infinite order we denote by $v^{\mathbb Z}$ the infinite cyclic subgroup generated by $v$. Let $\widetilde{Z}_n(v)$ be the subgroup of $GL(C_n(\Gamma,{\mathbb C}v))$ generated by the centralizer $Z(v)$ of $v$ and the lifted cyclic operator $\widetilde{T}_n$. It fits into a commutative diagram of groups 
$$
\begin{array}{ccccccccc}
 & & 0 & & 0 & & & & \\
 & & \uparrow & & \uparrow & & & & \\
 & & {\mathbb Z}/(n+1){\mathbb Z} & = & {\mathbb Z}/(n+1){\mathbb Z} & & & & \\
 & & \uparrow & & \uparrow & & & & \\
 0 & \to & \widetilde{T}_n^{\mathbb Z} & \to & \widetilde{Z}_n(v) & \to & N(v) & \to & 0 \\
 & & \uparrow & & \uparrow & & \parallel & & \\
 0 & \to & v^{\mathbb Z} & \to & Z(v) & \to & N(v) & \to & 0 \\
 & & \uparrow & & \uparrow & & & & \\
 & & 0 & & 0 & & & & \\
\end{array}
\eqno(2.56)
$$
with exact rows and columns.\\
Neither the cyclic permutation operator $T_*$ nor its lift $\widetilde{T}_*$ commute with the differentials in the corresponding complexes $C_*({\mathbb C}\Gamma)_{\langle v\rangle}$ and $C_*(\Gamma,\,{\mathbb C}v)$. It is true, however, that the subspace 
$(Id-\widetilde{T}_*)C_*(\Gamma,{\mathbb C}v)$ is a subcomplex of $C_*(\Gamma,{\mathbb C}v).$ Therefore one can form 
a commutative diagram of complexes of coinvariants
$$
\begin{array}{ccc}
C_*(\Gamma,{\mathbb C}v) & \to & C_*(\Gamma,{\mathbb C}v)_{\widetilde{T}_*^{\mathbb Z}} \\

\downarrow & & \downarrow \\

C_*(\Gamma,{\mathbb C}v)_{Z(v)} & \to & C_*(\Gamma,{\mathbb C}v)_{\widetilde{Z}(v)}
\end{array}
\eqno(2.57)
$$
whose left vertical and bottom horizontal arrows can be canonically identified with
$p_{v*}: C_*(\Gamma,{\mathbb C}v)\,\to\, \widetilde{C}_*({\mathbb C}\Gamma)_{\langle v\rangle}$ and 
$I:  \widetilde{C}_*({\mathbb C}\Gamma)_{\langle v\rangle}\to  \widetilde{C}_*^\lambda({\mathbb C}\Gamma)_{\langle v\rangle}$ respectively. This diagram can thus be identified with the commutative diagram of chain maps
$$
\begin{array}{ccccc}
& C_*(\Gamma,{\mathbb C}v) & \overset{I_v}{\longrightarrow} & C_*(\Gamma,{\mathbb C}v)_{\widetilde{T}_*^{\mathbb Z}}  & \\
 & & & & \\
p_{v*} & \downarrow & & \downarrow & p_{v*}^\lambda \\
 & & & & \\
& \widetilde{C}_*({\mathbb C}\Gamma)_{\langle v\rangle} & \underset{I}{\longrightarrow} &  \widetilde{C}_*^\lambda({\mathbb C}\Gamma)_{\langle v\rangle} &  \\
\end{array}
\eqno(2.58)
$$
If the centralizer of $v$ is virtually cyclic, then Nistor has shown that
$$
I_v:C_*(\Gamma,{\mathbb C}v)\to C_*(\Gamma,{\mathbb C}v)_{\widetilde{T}_*^{\mathbb Z}}
\eqno(2.59)
$$ 
is a quasiisomorphism and the epimorphism 
$$
p_v^\lambda:C_*(\Gamma,{\mathbb C}v)_{\widetilde{T}_*^{\mathbb Z}}\to  \widetilde{C}_*^\lambda({\mathbb C}\Gamma)_{\langle v\rangle}
$$ 
splits as map of complexes. Thus $ \widetilde{C}_*^\lambda({\mathbb C}\Gamma)_{\langle v\rangle}$ is acyclic in positive degrees and
$$
HC_*({\mathbb C}\Gamma)_{\langle v\rangle}\,=\,
HC^*({\mathbb C}\Gamma)_{\langle v\rangle}\,=\,
\begin{cases}
{\mathbb C} & *=0, \\
0 & *>0.
\end{cases}
\eqno(2.60)
$$
We want to generalize the results of Nistor to a topological setting. This is done in two steps. Instead of working with quasiisomorphisms and the derived category one has to use the chain homotopy category of complexes. At first all chain maps and chain homotopies coming up have to be given by explicit formulas in order to be able to check their continuity in a second step.\\
\\
Fix a set-theoretic section $\sigma:\langle v\rangle\to\Gamma$ of the quotient map $\Gamma\to\langle v\rangle,\,g\mapsto gvg^{-1}$.
By definition 
$$
g\,=\,\sigma(g)v\sigma(g)^{-1},\,\,\,\forall g\in\langle v\rangle.
\eqno(2.61)
$$ 
\begin{definition}
\begin{itemize}
\item[a)] The cyclic average map 
$$
\begin{array}{cccc}
N_{cyc}: & \widetilde{C}_*^\lambda({\mathbb C}\Gamma) & \longrightarrow & \widetilde{C}_*({\mathbb C}\Gamma) \\
 & & & \\
  & a^0\otimes a^1\otimes\ldots\otimes a^n & \mapsto & 
  \frac{1}{n+1}\underset{i=0}{\overset{n}{\sum}}\,(-1)^{in}
  a^i\otimes\ldots a^n\otimes a^0\otimes\ldots\otimes a^{i-1} \\
\end{array}
\eqno(2.62)
$$
defines a linear section of the projection $I:\widetilde{C}_*({\mathbb C}\Gamma)\to \widetilde{C}_*^\lambda({\mathbb C}\Gamma)$ (see (2.14)) onto the coinvariants under the cyclic operator. It is compatible with the homogeneous decomposition.

\item[b)] The linear map 
$$
\begin{array}{cccc}
\iota_{v,\sigma}: & \widetilde{C}_*({\mathbb C}\Gamma)_{\langle v\rangle} & \longrightarrow & C_*(\Gamma,{\mathbb C}v)\\
 & & & \\
& g_0\otimes\ldots\otimes g_n & \mapsto & \sigma(g_0g_1\ldots g_n)^{-1}\cdot
[g_0,g_0g_1,\ldots,g_0g_1\ldots g_n;g_0g_1\ldots g_n] \\
\end{array}
\eqno(2.63)
$$
is a linear section of $p_{v*}:\,C_*(\Gamma,{\mathbb C}v)\to
\widetilde{C}_*({\mathbb C}\Gamma)_{\langle v\rangle}$ (see (2.47)).

\item[c)] The composition $\iota_{v,\sigma}\circ N_{cyc}$ defines a linear section of $$I\circ p_{v*}:C_*(\Gamma,{\mathbb C}v)\to \widetilde{C}_*^\lambda({\mathbb C}\Gamma)_{\langle v\rangle}$$
\end{itemize}
\end{definition}

Suppose that $v$ is of infinite order. Consider the central extension 
$$
0  \to  v^{\mathbb Z}  \to  Z(v)  \to  N(v)  \to  0 
\eqno(2.64)
$$
and fix a set theoretic section  $\sigma':N(v)\to Z(v)$ of the quotient map. 

\begin{deflem}
Suppose that the group $N(v)$ is finite. Then
the chain map
$$
\begin{array}{cccc}
N_{\sigma'}: & C_*(\Gamma,{\mathbb C}v) & \to & C_*(\Gamma,{\mathbb C}v)\\
& & & \\
& [g_0,\ldots,g_n;v] & \mapsto & \frac{1}{\vert N(v)\vert}\,
\underset{h\in N(v)}{\sum}\,[\sigma'(h)g_0,\ldots,\sigma'(h)g_n;v]
\end{array}
\eqno(2.65)
$$
satisfies
$$
N_{\sigma'}(Ker(I\circ p_{v*}))\,\subset\,Ker(I_v).
$$
\end{deflem}

\begin{proof}
The averaging operator $N_{\sigma'}$ is obviously a $v^{\mathbb Z}$-equivariant chain map. According to (2.58) $p_{v*}^\lambda\circ I_v\,=\,
I\circ p_{v*}$, so that $I_v(Ker(I\circ p_{v*}))\subset Ker(p_{v*}^\lambda)$. The finite group $N(v)$ acts 
on the complex $C_*(\Gamma,{\mathbb C}v)_{\widetilde{T}_*^{\mathbb Z}}$ by chain maps. The canonical projection $\pi_{N(v)}$ onto its subcomplex of $N(v)$-invariants is a chain map as well. By construction 
$\pi_{N(v)}$ annihilates $Ker(p_{v*}^\lambda),$ so that $0\,=\,\pi_{N(v)}(I_v(Ker(I\circ p_{v*})))$
$=\,I_v(N_{\sigma'}(Ker(I\circ p_{v*}))),$ i.e. $N_{\sigma'}(Ker(I\circ p_{v*}))\subset Ker(I_v)$.
\end{proof}

As a consequence we see that the chain map of degree -1
$$
N_{\sigma'}\circ(\partial\circ \iota_{v,\sigma}\circ N_{cyc}-\iota_{v,\sigma}\circ N_{cyc}\circ\partial):C_*^\lambda({\mathbb C}\Gamma)_{\langle v\rangle}\to C_*(\Gamma,{\mathbb C}v)[-1],
$$
which measures the deviation of 
$$
s'_{v,\sigma,\sigma'}\,=\,N_{\sigma'}\circ\iota_{v,\sigma}\circ N_{cyc}:\, \widetilde{C}_*^\lambda({\mathbb C}\Gamma)_{\langle v\rangle}\to C_*(\Gamma,{\mathbb C}v)
$$ 
from being a chain map, takes its values in the subcomplex 
$Ker(I_v)\subset C_*(\Gamma,{\mathbb C}v)$. Nistor has analyzed this complex.

\begin{lemma} \cite{Ni}
The linear map
$$
Id-\widetilde{T}_*:\, C_*(\Gamma,{\mathbb C}v)[-1]\otimes_{\mathbb C}{\mathbb C}(\Gamma) \, \longrightarrow \, C_*(\Gamma,{\mathbb C}v),
\eqno(2.66)
$$
which is given on simplices by
$$
[g_0,\ldots g_{n-1};v]\otimes g_n\, \mapsto\, [g_0,\ldots g_{n-1},g_n;v]\,-(-1)^{n}[v^{-1}g_n, g_0,\ldots,g_{n-1};v]
\eqno(2.67)
$$
is a morphism of complexes of free $Z(v)$-modules. It is injective if $v$ is an element of infinite order in $\Gamma$.
\end{lemma}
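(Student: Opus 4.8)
The plan is to treat the three assertions separately, reducing the ``morphism of complexes'' and ``free $Z(v)$-module'' statements to the standard $b'$-complex formalism of cyclic homology, and the injectivity to the elementary fact that $Id$ minus a shift is injective on finitely supported functions on $\mathbb Z$. First I would make the source explicit. On basis elements the rule $j_n\colon [g_0,\ldots,g_{n-1};v]\otimes g_n\mapsto[g_0,\ldots,g_{n-1},g_n;v]$ is a bijection between the $\mathbb C$-bases of $C_{n-1}(\Gamma,\mathbb Cv)\otimes_{\mathbb C}\mathbb C(\Gamma)$ and of $C_n(\Gamma,\mathbb Cv)$, hence a linear isomorphism in each degree; its natural differential is $\partial\otimes Id$ (the Bar differential on the first factor), and one checks that under $j_\bullet$ this is carried to the $b'$-complex $(C_*(\Gamma,\mathbb Cv),b')$, where $b'=\sum_{i=0}^{n-1}(-1)^i\partial_i$ omits the last face map. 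Under the same identification the operator of $(2.66)$ becomes $(Id-\widetilde T_\bullet)\colon (C_*(\Gamma,\mathbb Cv),b')\to(C_*(\Gamma,\mathbb Cv),b)$, with $b=\partial$ the full Bar differential.

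The ``morphism of complexes'' assertion is therefore the identity
$$
\partial\circ(Id-\widetilde T_n)\;=\;(Id-\widetilde T_{n-1})\circ b'
$$
on $C_*(\Gamma,\mathbb Cv)$, the twisted analogue of the classical relation $(1-t)\,b'=b\,(1-t)$ --- the same identity invoked before $(2.57)$ to see that $(Id-\widetilde T_*)C_*(\Gamma,\mathbb Cv)$ is a subcomplex. I would check it by expanding both sides over face maps: applying $\partial$ to $[g_0,\ldots,g_{n-1},g_n;v]$ and to $(-1)^n[v^{-1}g_n,g_0,\ldots,g_{n-1};v]$, the two ``omitted last face'' contributions $(-1)^n[g_0,\ldots,g_{n-1};v]$ cancel, and what remains regroups precisely into $(Id-\widetilde T_{n-1})\bigl(b'[g_0,\ldots,g_{n-1},g_n;v]\bigr)$; the coefficient $v^{-1}$ dragged along by $\widetilde T$ is untouched by the face maps. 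This sign bookkeeping is the one place needing genuine (if brief) care, and it is where one must be careful to equip the source with the $b'$-differential rather than $b$.

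For the freeness, let $Z(v)$ act on $C_n(\Gamma,\mathbb Cv)$ by $h\cdot[g_0,\ldots,g_n;v]=[hg_0,\ldots,hg_n;v]$ --- well defined because $hvh^{-1}=v$ --- and on $C_{n-1}(\Gamma,\mathbb Cv)\otimes\mathbb C(\Gamma)$ by the diagonal left-translation action, including on the $\mathbb C(\Gamma)$-factor. Since $Z(v)$ acts freely on $\Gamma$ by left translation, it acts freely on the simplicial bases of both modules, which are hence free $\mathbb C Z(v)$-modules (a basis being any system of orbit representatives); the differentials commute with left translation, $j_\bullet$ is manifestly $Z(v)$-equivariant, and $\widetilde T_n$ is $Z(v)$-equivariant because $h(v^{-1}g_n)=v^{-1}(hg_n)$ for $h\in Z(v)$. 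So $Id-\widetilde T_*$ is a morphism of complexes of free $Z(v)$-modules.

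Finally, injectivity when $v$ has infinite order reduces, via $j_\bullet$, to injectivity of $Id-\widetilde T_n$ on $C_n(\Gamma,\mathbb Cv)$ for every $n$. The operator $\widetilde T_n$ permutes the basis of simplices up to signs, and by $(2.36)$ one has $\widetilde T_n^{\,n+1}[g_0,\ldots,g_n;v]=[v^{-1}g_0,\ldots,v^{-1}g_n;v]$, i.e. $\widetilde T_n^{\,n+1}$ is left translation by $v^{-1}$, which --- $v$ having infinite order --- acts freely and with infinite orbits on simplices. Hence $\langle\widetilde T_n\rangle$ is infinite cyclic, and a nonzero power of $\widetilde T_n$ fixing a simplex would, raised to the $(n+1)$st power, give a nonzero power of $v^{-1}$-translation fixing it, which is impossible; so $\langle\widetilde T_n\rangle$ acts freely on simplices with all orbits bi-infinite. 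Picking a $\mathbb Z$-indexing $(x_k)_{k\in\mathbb Z}$ of one orbit and absorbing the $\pm$ signs by rescaling the $x_k$, the operator $\widetilde T_n$ restricts on the span of that orbit to the shift $x_k\mapsto x_{k+1}$ on finitely supported functions on $\mathbb Z$, on which $Id-(\text{shift})$ is injective, since a finitely supported solution of $c_k=c_{k-1}$ vanishes. As $C_n(\Gamma,\mathbb Cv)$ is the direct sum over these orbits of their spans and $\widetilde T_n$ preserves each summand, $Id-\widetilde T_n$ is injective; the infinite-order hypothesis is indispensable, for a torsion $v$ the orbits are finite and the orbit sums are explicit kernel vectors. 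The only real obstacle in the whole argument is the signed combinatorial identity of the second paragraph; the remaining steps are formal.
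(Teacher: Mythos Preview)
Your argument is correct and follows essentially the same route as the paper. The paper's proof is terser: it declares the chain-map and $Z(v)$-equivariance properties ``easily checked'' without invoking the $b'$-formalism explicitly, and for injectivity it observes directly that $\ker(Id-\widetilde T_n)$ equals the fixed-point set of $\widetilde T_n$, which is contained in the fixed-point set of $\widetilde T_n^{\,n+1}=v^{-1}\cdot$, and the latter is zero when $v$ has infinite order. Your orbit-by-orbit shift argument unpacks the same observation; the paper simply short-circuits it by passing immediately to the $(n+1)$st power.
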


\begin{proof}
It is easily checked that $Id-\widetilde{T}_*$ is a $Z(v)$-equivariant chain map. Its kernel coincides with the set of fixed points of the automorphism $\widetilde{T}_*$. The operator $\widetilde{T}_n^{n+1}$ coincides with left multiplication by $v^{-1}$ on $C_n(\Gamma,{\mathbb C}v)\simeq{\mathbb C}(\Gamma^{n+1})$, which has no fixed points except zero if $v\in\Gamma$ is of infinite order. 
\end{proof}

\begin{lemma}
The complex $C_*^{bar}(\Gamma,\,{\mathbb C}v)[-1]\otimes_{\mathbb C}{\mathbb C}(\Gamma)$ is $\Gamma$-equivariantly contractible. An equivariant contracting homotopy is given by
$$
\chi([g_0,\ldots,g_{n-1};v]\otimes g_n)=[g_n,g_0,\ldots,g_{n-1};v]\otimes g_n,\,n\geq 0.
\eqno(2.68)
$$
\end{lemma}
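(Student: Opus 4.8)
The plan is to verify directly that the operator $\chi$ in (2.68) is a contracting homotopy, i.e.\ that $\partial\chi+\chi\partial=\mathrm{Id}$ on the complex $C_*^{bar}(\Gamma,{\mathbb C}v)[-1]\otimes_{\mathbb C}{\mathbb C}(\Gamma)$, and that $\chi$ is $\Gamma$-equivariant for the diagonal action. The equivariance is immediate: left translation by $h\in\Gamma$ sends $[g_0,\ldots,g_{n-1};v]\otimes g_n$ to $[hg_0,\ldots,hg_{n-1};v]\otimes hg_n$ (note the twisting element $v$ is untouched, since $C_*(\Gamma,{\mathbb C}v)$ is twisted by the single vector $v$, not by the adjoint action here), and the defining formula for $\chi$ commutes with this substitution termwise. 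So the only real content is the homotopy identity.

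First I would fix notation for the differential on the tensor product. Writing a generator as $w\otimes g_n$ with $w=[g_0,\ldots,g_{n-1};v]$ a Bar-simplex in degree $n-1$ (so the whole element sits in degree $n$ after the $[-1]$ shift and tensoring), the differential is $\partial(w\otimes g_n)=(\partial_{bar}w)\otimes g_n$, where $\partial_{bar}$ is the alternating sum of the face maps $\partial_i$ on the Bar-complex of $\Gamma$; the factor ${\mathbb C}(\Gamma)$ carries no differential. Then I would compute the two terms. For $\chi\partial$: apply $\partial_{bar}$ to $[g_0,\ldots,g_{n-1};v]$, getting $\sum_{i=0}^{n-1}(-1)^i[g_0,\ldots,\widehat{g_i},\ldots,g_{n-1};v]\otimes g_n$, and then prepend $g_n$ to each resulting simplex via $\chi$. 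For $\partial\chi$: first form $\chi(w\otimes g_n)=[g_n,g_0,\ldots,g_{n-1};v]\otimes g_n$, a simplex now in degree $n$, then apply $\partial_{bar}$, which is an alternating sum over the $n+1$ faces: the $i=0$ face removes $g_n$ and returns exactly $[g_0,\ldots,g_{n-1};v]\otimes g_n=w\otimes g_n$, and the faces $i=1,\ldots,n$ remove one of $g_0,\ldots,g_{n-1}$ while keeping $g_n$ in front, contributing (with a sign shift $(-1)^i=-(-1)^{i-1}$ relative to $\chi\partial$) exactly the negatives of the terms in $\chi\partial$. Summing, everything cancels except the $i=0$ face of $\partial\chi$, giving $(\partial\chi+\chi\partial)(w\otimes g_n)=w\otimes g_n$. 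This is the standard ``cone on a prepended base vertex'' contraction, here applied with the prepended vertex being the variable $g_n$ coming from the extra ${\mathbb C}(\Gamma)$ tensor factor, which is precisely what makes $\chi$ equivariant (an honest base vertex would break equivariance, as remarked after (2.29)).

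The main point to be careful about — and the step I expect to require the most bookkeeping — is the sign matching between the two sums, because of the degree shift $[-1]$ and the fact that in $\partial\chi$ the index $i$ on a face removing $g_{i-1}$ is one larger than the corresponding index in $\chi\partial$. One must check that $\chi$ genuinely lands in the shifted complex with the correct degree conventions so that $\partial\chi$ and $\chi\partial$ are composable and the total degree works out to $0$; once the shift conventions are pinned down the signs align as described. I would also remark that this lemma is the ``geometric'' input that, combined with Lemma preceding it (that $\mathrm{Id}-\widetilde T_*$ is an injection onto the subcomplex $Ker(I_v)$ when $v$ has infinite order), shows $Ker(I_v)$ is contractible, hence that $I_v$ is a quasiisomorphism in the infinite-order case.
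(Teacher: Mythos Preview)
Your argument is correct. The paper states this lemma without proof, treating it as a routine verification; your computation supplies exactly that verification, via the standard ``cone on a prepended vertex'' identity $\partial s_x + s_x \partial = \mathrm{Id}$ (compare (2.29)) with the base vertex replaced by the free variable $g_n$ from the extra tensor factor, which is what restores equivariance.
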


The chain map $Id-\widetilde{T}_*$ fits into an extension of complexes
$$
0\,\to\, C_*(\Gamma,{\mathbb C}v)[-1]\otimes_{\mathbb C}{\mathbb C}(\Gamma)\,\overset{Id-\widetilde{T}_*}{\longrightarrow}\,C_*(\Gamma,{\mathbb C}v) \,\overset{I_v}{\longrightarrow}\,C_*(\Gamma,{\mathbb C}v)_{\widetilde{T}_*^{\mathbb Z}}\,\to\,0
\eqno(2.69)
$$
The linear operator
$$
\chi_v\,=\,(Id-\widetilde{T}_*)\circ \chi\circ(Id-\widetilde{T}_*)^{-1}:\,Ker(I_v)\,\to\,Ker(I_v)[1]
\eqno(2.70)
$$
is therefore a contracting chain homotopy of this complex in positive degree.

\begin{prop}
The linear map
$$
s_{v,\sigma,\sigma'}\,=\,s'_{v,\sigma,\sigma'}\,- \chi_v\circ (\partial\circ s'_{v,\sigma,\sigma'}\,-\,s'_{v,\sigma,\sigma'}\circ\partial)
:\, \widetilde{C}_*^{\lambda}({\mathbb C}\Gamma)_{\langle x\rangle}\,\to\,C_*(\Gamma,{\mathbb C}v) 
\eqno(2.71)
$$
is a morphism of chain complexes splitting 
$$
I\circ p_v:\,C_*(\Gamma, {\mathbb C}v)\to  \widetilde{C}_*^{\lambda}({\mathbb C}\Gamma)_{\langle v\rangle}:\,\,\,
I\circ p_v\circ s_{v,\sigma,\sigma'}\,=\,Id_{ \widetilde{C}_*^\lambda({\mathbb C}\Gamma)_{\langle v\rangle}}.
$$
\end{prop}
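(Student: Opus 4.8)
The plan is to treat $s_{v,\sigma,\sigma'}$ as a correction of the linear section $s'_{v,\sigma,\sigma'}$ of $I\circ p_v$: one already has a linear (not yet chain) section, its failure to commute with $\partial$ is an operator landing in the acyclic subcomplex $\Ker(I_v)$, and the contracting homotopy $\chi_v$ of that subcomplex cancels the failure without disturbing the splitting. Throughout set $D:=\partial\circ s'_{v,\sigma,\sigma'}-s'_{v,\sigma,\sigma'}\circ\partial$, so that $s_{v,\sigma,\sigma'}=s'_{v,\sigma,\sigma'}-\chi_v\circ D$.

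First I would check that $s'_{v,\sigma,\sigma'}$ is a linear section of $I\circ p_v$. By construction $\iota_{v,\sigma}\circ N_{cyc}$ is a linear section of $I\circ p_v$, so it suffices to show $p_v\circ N_{\sigma'}=p_v$. This holds because each $\sigma'(h)$ ($h\in N(v)$) lies in $Z(v)$ and hence acts on $C_*(\Gamma,{\mathbb C}v)$ through the conjugation action, which fixes ${\mathbb C}v$; since $p_v$ realizes $\widetilde{C}_*({\mathbb C}\Gamma)_{\langle v\rangle}$ as the space of $Z(v)$-coinvariants of $C_*(\Gamma,{\mathbb C}v)$, all $\vert N(v)\vert$ summands of $N_{\sigma'}([g_0,\ldots,g_n;v])$ become equal after applying $p_v$, and the normalizing factor $1/\vert N(v)\vert$ yields the claim. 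Hence $I\circ p_v\circ s'_{v,\sigma,\sigma'}=I\circ p_v\circ\iota_{v,\sigma}\circ N_{cyc}=\Id$.

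Next I would establish the chain-map property. By the discussion preceding the statement, $D$ takes its values in the subcomplex $\Ker(I_v)\subseteq C_*(\Gamma,{\mathbb C}v)$; moreover $\Ker(I_v)=\Imm(\Id-\widetilde{T}_*)$ is concentrated in degrees $\geq 1$, and there $\chi_v$ is a contracting homotopy, $\partial\chi_v+\chi_v\partial=\Id$. Since $D$ has image in $\Ker(I_v)$, the operator $\chi_v\circ D$ is well defined and degree-preserving. From $\partial^2=0$ one gets $\partial D+D\partial=0$, hence, substituting $D\circ\partial=-\partial\circ D$,
$$
\partial\circ s_{v,\sigma,\sigma'}-s_{v,\sigma,\sigma'}\circ\partial
=D-\partial\circ\chi_v\circ D+\chi_v\circ D\circ\partial
=D-(\partial\chi_v+\chi_v\partial)\circ D
=D-D=0 .
$$
For the splitting, diagram (2.58) gives $I\circ p_v=p_v^\lambda\circ I_v$, so $\Ker(I_v)\subseteq\Ker(I\circ p_v)$; as $\chi_v$ maps into $\Ker(I_v)$, the correction term $\chi_v\circ D$ is annihilated by $I\circ p_v$, and therefore $I\circ p_v\circ s_{v,\sigma,\sigma'}=I\circ p_v\circ s'_{v,\sigma,\sigma'}=\Id$.

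The argument is essentially bookkeeping once all the pieces are on the table. The one step carrying real content — and the one I expect to need the most care — is the assertion that $D$ lands in $\Ker(I_v)$: this is precisely why the averaging operator $N_{\sigma'}$, and with it the hypothesis that $N(v)$ be finite, was built into $s'_{v,\sigma,\sigma'}$, through the earlier inclusion $N_{\sigma'}(\Ker(I\circ p_v))\subseteq\Ker(I_v)$. The grading bookkeeping around the shifted complex $\Ker(I_v)$ is the only other potential pitfall, and it is harmless because $\Ker(I_v)$ is supported in positive degrees, so $D$ vanishes in degrees $\leq 1$ while $\chi_v$ contracts $\Ker(I_v)$ in every degree in which $D$ is nonzero.
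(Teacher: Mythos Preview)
Your proof is correct and follows essentially the same route as the paper's: verify that $s'_{v,\sigma,\sigma'}$ is a linear section, use that the defect $D=\partial s'-s'\partial$ lands in $\Ker(I_v)$ (this is where the averaging $N_{\sigma'}$ enters), and kill $D$ with the contracting homotopy $\chi_v$. The paper's own proof is terser --- it records only that the correction term lies in $\Ker(I_v)\subset\Ker(I\circ p_v)$ and then declares the chain-map identity ``straightforward'' --- whereas you spell out the computation $\partial s-s\partial=D-(\partial\chi_v+\chi_v\partial)D=0$ and explain why the section property survives $N_{\sigma'}$; your final remark on low degrees is a side comment rather than a load-bearing step.
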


\begin{proof}
By construction the image of 
$(\partial\circ s'_{v,\sigma,\sigma'}\,-\,s'_{v,\sigma,\sigma'}\circ\partial)$ is contained in $Ker(I_v)$ so that 
$s_{v,\sigma,\sigma'}$ is well defined. We calculate the composition $I\circ p_v\circ s_{v,\sigma,\sigma'}$. As the image of $\chi_v\circ(\partial\circ s'_{v,\sigma,\sigma'}\,-\,s'_{v,\sigma,\sigma'}\circ\partial)$ is contained in $Ker(I_v)\subset Ker(I\circ p_v)$ 
$$
I\circ p_v \circ s_{v,\sigma,\sigma'}\,=\,I\circ p_v \circ s'_{v,\sigma,\sigma'}\,=\,Id_{C_*^\lambda({\mathbb C}\Gamma)_{\langle v\rangle}}
$$
as claimed. It is straightforward to check that $s_{v,\sigma,\sigma'}$ is a chain map.
\end{proof}

\begin{cor}
Let $v\in\Gamma$ be an element of infinite order and suppose that 
$v^{\mathbb Z}$ is of finite index in its centralizer. 
Then the inclusion 
$$
Fil^N_{Hodge}\widehat{CC}_*({\mathbb C}\Gamma)_{\langle v\rangle}
\,\hookrightarrow\,\widehat{CC}_*({\mathbb C}\Gamma)_{\langle v\rangle}
$$
is a chain homotopy equivalence for every $N\in{\mathbb N}$.
\end{cor}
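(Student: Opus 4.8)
The plan is to deduce everything from the section $s_{v,\sigma,\sigma'}$ of $I\circ p_v$ just established. It realises $\widetilde{C}^\lambda_*({\mathbb C}\Gamma)_{\langle v\rangle}$ as a direct summand of the twisted Bar complex, strictly compatibly with the Hodge filtration, and the twisted Bar complex has homology ${\mathbb C}$ concentrated in degree $0$; together with an inverse-limit computation this forces acyclicity of all the complexes occurring in the statement, and over ${\mathbb C}$ an acyclic complex of vector spaces is contractible.

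Concretely, I would first note that $e:=s_{v,\sigma,\sigma'}\circ I\circ p_v$ is an idempotent chain endomorphism of $C_*(\Gamma,{\mathbb C}v)$, because $I\circ p_v\circ s_{v,\sigma,\sigma'}=\mathrm{Id}$, and that $p_v$, $I$ and $s_{v,\sigma,\sigma'}$ are all homogeneous of degree $0$ for the filtration by simplicial (= Hodge) degree; hence $C_*(\Gamma,{\mathbb C}v)=e\,C_*(\Gamma,{\mathbb C}v)\oplus(\mathrm{Id}-e)\,C_*(\Gamma,{\mathbb C}v)$ as filtered complexes, with $e\,C_*(\Gamma,{\mathbb C}v)\cong\widetilde{C}^\lambda_*({\mathbb C}\Gamma)_{\langle v\rangle}$. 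Since $C_*(\Gamma,{\mathbb C}v)=C_*(\Gamma,{\mathbb C})\otimes{\mathbb C}v$ is the Bar complex of the set $\Gamma$ tensored with a line, it has homology ${\mathbb C}$ in degree $0$, with the explicit contracting homotopy $s_x$ of (2.29) in positive degrees; conjugating $s_x$ by $e$ then gives an explicit homotopy $h$ on $\widetilde{C}^\lambda_*({\mathbb C}\Gamma)_{\langle v\rangle}$ with $\partial h+h\partial=\mathrm{Id}-\pi_0$, where $\pi_0$ is a projection onto the surviving degree-$0$ class. In particular this re-proves Nistor's vanishing (2.60), namely $HC_n({\mathbb C}\Gamma)_{\langle v\rangle}=0$ for $n>0$.

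Next I would pass to the periodic bicomplex; by the Hodge-strict equivalences (2.23) I may work with the normalised cyclic complex. The quotient $\widehat{CC}_*({\mathbb C}\Gamma)_{\langle v\rangle}/Fil^N_{Hodge}$ involves only forms of degree $<N$ and so coincides with the bounded complex $CC_*({\mathbb C}\Gamma)_{\langle v\rangle}/Fil^N_{Hodge}$. From $HC_n({\mathbb C}\Gamma)_{\langle v\rangle}=0$ for $n>0$ and the vanishing of the periodicity operator $S$ below the surviving degree, the Milnor sequence for the $S$-tower of cyclic complexes gives $HP_*({\mathbb C}\Gamma)_{\langle v\rangle}=H_*(\widehat{CC}_*({\mathbb C}\Gamma)_{\langle v\rangle})=0$; the long exact sequence of the pair $(\widehat{CC}_*({\mathbb C}\Gamma)_{\langle v\rangle},Fil^N_{Hodge})$ then forces $Fil^N_{Hodge}\widehat{CC}_*({\mathbb C}\Gamma)_{\langle v\rangle}$ to be acyclic as well, once $N$ lies above the top degree in which $HC_*({\mathbb C}\Gamma)_{\langle v\rangle}$ survives (the remaining finitely many values of $N$ I would check by hand). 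As all complexes here are complexes of ${\mathbb C}$-vector spaces, the inclusion of the acyclic $Fil^N_{Hodge}$ into the acyclic $\widehat{CC}_*({\mathbb C}\Gamma)_{\langle v\rangle}$ is automatically a chain homotopy equivalence.

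The hard part is not the bare statement but making it explicit, which is what the rest of the paper needs: the homotopy $h$ lowers Hodge degree without bound and does not extend to the completion, so the contraction of each truncation $CC_*({\mathbb C}\Gamma)_{\langle v\rangle}/Fil^N_{Hodge}$ has to be reassembled from a finite geometric series in operators that act nilpotently on it — the operators measuring the failure of $h$, of Connes' $B$ and of the comparison $I$ to respect Hodge degree — exactly as $\nu_v^{-1}=\sum_k(-\overline{h}(\mu_v,\mathrm{Id})\circ B)^k$ was built in the proof of (2.54). Bounding the length of that series uniformly in $N$ and over the conjugacy class, and keeping the homotopy assembled only from the explicit operators $p_v$, $s_{v,\sigma,\sigma'}$, $N_{cyc}$, $\chi_v$, $s_x$, $B$, is the real content; it is precisely this that Section 5 redoes with operator norms for the completions $\ell^1_\lambda(\Gamma,S)$. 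Note that virtual cyclicity of $Z(v)$ is used only in the construction of $s_{v,\sigma,\sigma'}$, i.e. to make $I_v$ a quasi-isomorphism and $p^\lambda_v$ split.
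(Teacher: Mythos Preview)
Your strategy---show both $\widehat{CC}_*({\mathbb C}\Gamma)_{\langle v\rangle}$ and $Fil^N_{Hodge}$ are acyclic and then invoke that over a field any chain map between acyclic complexes is a homotopy equivalence---is a legitimate alternative to the paper's explicit construction, but the argument as written has a real gap. From the Milnor sequence you correctly get $HP_*({\mathbb C}\Gamma)_{\langle v\rangle}=0$, and the long exact sequence of the pair then says only that $H_*(Fil^N)\cong H_{*\mp 1}(\widehat{CC}_*/Fil^N)$; it does \emph{not} force either side to vanish. You still owe an independent computation of $H_*(\widehat{CC}_*/Fil^N)$, and the phrase ``once $N$ lies above the top degree in which $HC_*$ survives'' does not supply one. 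The relevant input is the \emph{Hochschild} groups, via the graded quotients $Fil^N/Fil^{N+1}$, whose homology is $HH_N\cong H_N(Z(v),{\mathbb C})$ in parity $N$. Since $Z(v)$ is virtually cyclic this vanishes for $N\ge 2$; an induction up the tower $Q^N=\widehat{CC}_*/Fil^N$ (using that $B:HH_0\to HH_1$ is an isomorphism, which follows from $HC_1=HC_2=0$) then gives $H_*(Q^N)=0$ for $N\ge 2$. Note that for $N=1$ the quotient is $\Omega^0/b\Omega^1=HH_0\cong{\mathbb C}$ with zero differential, so the inclusion is \emph{not} a homotopy equivalence there; the statement ``for every $N$'' is slightly too strong, and your ``check by hand'' would have uncovered this.

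The paper takes a different, constructive route: from the null-homotopy $\eta_*$ of $C^\lambda_*({\mathbb C}\Gamma)_{\langle v\rangle}$ in positive degrees (your $h$), it manufactures explicit degree-one operators $\theta^{(N)}$ on $\widehat{CC}_*$ out of $\eta$ and the truncation maps $\alpha_*,\beta_*$ between $C^\lambda_*$ and $\widehat{CC}_*$, and then shows that the chain map $\zeta^{(N)}=\psi^{(N-1)}\circ\psi^{(N)}$, with $\psi^{(N)}=\mathrm{id}-(\theta^{(N)}\partial+\partial\theta^{(N)})$, factors through $Fil^{N+1}_{Hodge}$ while remaining chain homotopic to the identity by a homotopy preserving the Hodge filtration. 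This produces the homotopy inverse directly, with every operator a finite word in $\eta$, $\alpha$, $\beta$ and $\partial$. Your approach buys conceptual economy; the paper's buys the explicit formulas that Sections~5--6 need for the norm estimates---as you rightly observe in your final paragraph.
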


\begin{proof}
We may suppose $N>0$. According to 2.13 the composition of chain maps
$$
\widetilde{C}_*^{\lambda}({\mathbb C}\Gamma)_{\langle v\rangle}\,\overset{s_{v,\sigma,\sigma'}}{\longrightarrow}\,C_*(\Gamma,{\mathbb C}v)
\,\overset{I\circ p_v}{\longrightarrow}\,\widetilde{C}_*^{\lambda}({\mathbb C}\Gamma)_{\langle v\rangle}
\eqno(2.72)
$$
equals the identity. Consequently the composition
$$
C_*^{\lambda}({\mathbb C}\Gamma)_{\langle v\rangle}\,\longrightarrow\,
\widetilde{C}_*^{\lambda}({\mathbb C}\Gamma)_{\langle v\rangle}\,\overset{s_{v,\sigma,\sigma'}}{\longrightarrow}\,C_*(\Gamma,{\mathbb C}v)
\,\overset{I\circ p_v}{\longrightarrow}\,\widetilde{C}_*^{\lambda}({\mathbb C}\Gamma)_{\langle v\rangle}\,\longrightarrow\,C_*^{\lambda}({\mathbb C}\Gamma)_{\langle v\rangle}
\eqno(2.73)
$$
is chain homotopic to the identity, where the map on the left hand side is the canonical projection and the map on the right hand side is a chain homotopy inverse of it (see (2.13)).
The contractibility of $C_*(\Gamma,{\mathbb C}v)$ in positive degrees implies the existence of a contracting chain homotopy $\eta_*$ of $C_*^{\lambda}({\mathbb C}\Gamma)_{\langle v\rangle}$ in strictly positive degrees. For an algebra $A$ and $*\in{\mathbb N}$ denote by 
$$
\alpha_*: C_*^{\lambda}(A)
=\underset{k\geq 0}{\bigoplus}\,\Omega^{*-2k}A
\,\longrightarrow\,\underset{l\in{\mathbb Z}}{\prod}\,\Omega^{*+2l}A
=\widehat{CC}_*(A)
\eqno(2.74)
$$
the canonical inclusion and by 
$$
\beta_*:\widehat{CC}_*(A)=
\underset{l\in{\mathbb Z}}{\prod}\,\Omega^{*+2l}A
\,\longrightarrow\,\underset{k\geq 0}{\bigoplus}\,\Omega^{*-2k}A
= C_*^{\lambda}(A)
\eqno(2.75)
$$
the canonical projection. Note that $\beta_*$ is a chain map while $\alpha_*$ is not.  Let
$$
\theta^{(N)}_*\,=\,
\begin{cases}
\alpha_{N+1}\circ\eta_N\circ \beta_N & *\equiv\, N\,(mod\,2), \\
\alpha_{N}\circ\eta_{N-1}\circ \beta_{N-1} & *\equiv\, N-1\,(mod\,2)
\end{cases}
:\widehat{CC}_*({\mathbb C}\Gamma)_{\langle v\rangle}\to \widehat{CC}_{*+1}({\mathbb C}\Gamma)_{\langle v\rangle},
\eqno(2.76)
$$
and put 
$$
\psi^{(N)}_*\,=\,id\,-\,(\theta^{(N)}_*\circ\partial\,+\,\partial\circ\theta^{(N)}_*).
\eqno(2.77)
$$ 
The chain map $\psi^{(N)}_*$ is canonically chain homotopic to the identity 
via a chain homotopy preserving Hodge filtrations and vanishing on $Fil^{N+2}_{Hodge}\widehat{CC}_*({\mathbb C}\Gamma)_{\langle v\rangle}$. Moreover
$$
\psi^{(N)}_*\,=\,id-(\alpha_N\circ\eta_{N-1}\circ \beta_{N-1}\circ\partial
\,+\,\partial\circ \alpha_{N+1}\circ\eta_{N}\circ \beta_{N})
$$
$$
=\,id-(\partial\circ \alpha_{N+1}-\alpha_N\circ\partial)\circ\eta_N\circ \beta_N
-\alpha_N\circ(\eta_{N-1}\circ\partial+\partial\circ\eta_N)\circ \beta_N
$$
on $\widehat{CC}_N({\mathbb C}\Gamma)_{\langle v\rangle}$ so that 
$$
\psi^{(N)}_*(\widehat{CC}_N({\mathbb C}\Gamma)_{\langle v\rangle})
\subset Fil^{N+2}_{Hodge}\widehat{CC}_N({\mathbb C}\Gamma)_{\langle v\rangle}.
$$ 
Thus the chain map 
$$
\zeta^{(N)}_*=\psi^{(N-1)}_*\circ\psi^{(N)}_*:\,\widehat{CC}_*({\mathbb C}\Gamma)_{\langle v\rangle}
\to\,\widehat{CC}_*({\mathbb C}\Gamma)_{\langle v\rangle}
\eqno(2.78)
$$
maps finally $\widehat{CC}_*({\mathbb C}\Gamma)_{\langle v\rangle}$ into $Fil^{N+1}_{Hodge}\widehat{CC}_*({\mathbb C}\Gamma)_{\langle v\rangle}$ and is canonically chain homotopic to the identity via a chain homotopy preserving the Hodge filtration. This proves the assertion of the corollary.
\end{proof}

\section{Hyperbolic spaces and hyperbolic groups}

\subsection{Hyperbolic spaces}

Recall that a metric space $(X,d)$ is {\bf proper} if its closed balls of finite radius are compact and that $(X,d)$ is {\bf geodesic} if any pair of points $x,y\in X$ can be joined by a {\bf geodesic segment}, i.e. if there exists an isometric map
$\gamma:I\to X$ from a bounded closed interval to $X$ such that $\gamma(\partial I)=\{x,\,y\}$.
A geodesic triangle with vertices $x,y,z\in X$ is given by 
three geodesic segments $[x,y],\,[y,z],\,[z,x]$, joining the denoted endpoints. 

\begin{definition} (Gromov) \cite{BH},(III,H)\\
A geodesic metric space $(X,d)$ is {\bf $\delta$-hyperbolic} (for some 
$\delta\geq 0$) if each edge in a geodesic triangle is contained in the tubular $\delta$-neighbourhood of the union of the two other edges.
\end{definition}

A key issue of hyperbolicity is its invariance under quasiisometries. Recall that a map $f:I\to X$ from an interval $I\subset{\mathbb R}$ to a metric space $(X,d)$ is called a
{\bf $(\lambda,C)$-quasigeodesic} for given $\lambda\geq 1$ and $C\geq 0$ if
$$
\lambda^{-1}d_{\mathbb R}(s,t)-C\,\leq\,d_X(f(s),f(t))\,\leq\,
\lambda\,d_{\mathbb R}(s,t)+C,\,\,\forall s,t\in I.
\eqno(3.1)
$$
It is called an {\bf $L$-local $(\lambda,C)$-quasigeodesic} for $L>0$ if its restriction to any subinterval of length at most $L$ satisfies (3.1). Recall that the (possibly infinite) Hausdorff distance between two subsets $Y,Z$ of a metric space $(X,d)$ 
is defined as 
$$
d_{{\mathcal H}}(Y,Z)\,=\,
\inf\,\{
r\in{\overline{\mathbb R}}_+,\,Y\subset B(Z,r),\,Z\subset B(Y,r)\},
\eqno(3.2)
$$
where $B(A,r)\,=\,\{x\in X,\,d(x,A)< r\}$ denotes the open tubular $r$-neighbourhood of $A\subset X$.

\begin{theorem}
\cite{GH},(Section 5, Theorem 25)\\
Let $\delta>0,\lambda\geq 1$ and $C\geq 0$. Then there exist constants 
$L(\delta,\lambda,C)>0$ and $R(\delta,\lambda,C)>0$ such that the following holds. If $(X,d)$ is a proper $\delta$-hyperbolic geodesic metric space, and if 
$f:I\to X$ is an $L(\delta,\lambda,C)$-local, $(\lambda,C)$-quasigeodesic in $X$, then there exists a geodesic
$f':I'\to X$ such that $\widetilde{f}(0)=f(0)$ and 
$$
d_{{\mathcal H}}(f(I),f'(I'))\,\leq\,R(\delta,\lambda,C).
\eqno(3.3)
$$
\end{theorem}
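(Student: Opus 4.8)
I would follow the classical two-step route. \emph{Step 1 (global Morse stability):} there is a constant $R_1=R_1(\delta,\lambda,C)$ so that every \emph{global} $(\lambda,C)$-quasigeodesic $c:[a,b]\to X$ lies at Hausdorff distance $\le R_1$ from each geodesic joining $c(a)$ to $c(b)$. \emph{Step 2 (local $\Rightarrow$ global):} if $L\ge L(\delta,\lambda,C)$ is large enough, then an $L$-local $(\lambda,C)$-quasigeodesic is automatically a global $(\lambda',C')$-quasigeodesic with $\lambda',C'$ depending only on $\delta,\lambda,C$. Granting both, one sets $R(\delta,\lambda,C):=R_1(\delta,\lambda',C')$; for $f$ as in the statement (we may assume the domain $I=[a,b]$ is bounded, with the parameter translated so that the left endpoint is $0$) one lets $I'=[0,d(f(a),f(b))]$ and $f':I'\to X$ be a geodesic with $f'(0)=f(a)=f(0)$, and Steps 1--2 give $d_{\mathcal H}(f(I),f'(I'))\le R$.

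\emph{Step 1.} An auxiliary ingredient is the \emph{logarithmic estimate}: in a $\delta$-hyperbolic geodesic space, if $\sigma$ is a path from $u$ to $v$ of length $\ell\ge1$ and $w\in[u,v]$, then $d(w,\operatorname{im}\sigma)\le\delta\log_2\ell+1$; this follows by bisecting $\sigma$ repeatedly and using the $\delta$-thinness of Definition 3.1, so it costs nothing beyond the hypotheses. Given $c$, I would first replace it by a continuous $(\lambda,C')$-quasigeodesic agreeing with $c$ at integer parameters and geodesic in between (this moves $\operatorname{im}c$ by at most $\lambda+C$). Fix a geodesic $[p,q]$ from $c(a)$ to $c(b)$, and let $x_0\in[p,q]$ realise $D:=\max_{x\in[p,q]}d(x,\operatorname{im}c)$. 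Choose $y,z\in[p,q]$ on opposite sides of $x_0$ at distance $2D$ from it (or the endpoints $p,q$), and $y',z'\in\operatorname{im}c$ with $d(y,y'),d(z,z')\le D$; the $2\delta$-thinness of the geodesic quadrilateral $y\,y'\,z'\,z$ then yields $x_0'\in[y',z']$ with $d(x_0,x_0')\le2\delta$ — unless $D\le4\delta$, in which case Step 1 is trivial. Since $d(y',z')\le6D$, quasigeodesicity bounds the parameter-length, hence the arc-length, of the subpath $\sigma$ of $c$ from $y'$ to $z'$ by an affine function of $D$, and the logarithmic estimate applied to $\sigma$ and $x_0'$ gives $D\le2\delta+1+\delta\log_2(O(D))$, which forces $D\le D_0(\delta,\lambda,C)$. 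Thus $[p,q]\subset B(\operatorname{im}c,D_0)$; the reverse inclusion $\operatorname{im}c\subset B([p,q],D_1)$ follows from the same bound together with the quasigeodesic inequality, which limits how far $c$ can travel while remaining inside a metric ball disjoint from $[p,q]$ (a standard first-exit/last-exit argument).

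\emph{Step 2.} Subdivide $a=t_0<t_1<\dots<t_k=b$ with $t_i-t_{i-1}=L/2$ (the last gap possibly shorter). Each restriction $c|_{[t_{i-1},t_{i+1}]}$ has parameter-length $\le L$, hence is an honest $(\lambda,C)$-quasigeodesic, so by Step 1 the geodesic $[c(t_{i-1}),c(t_{i+1})]$ passes within $R_1$ of $c(t_i)$, at a point at distance $\ge\Lambda:=L/(2\lambda)-C$ from each of $c(t_{i-1}),c(t_{i+1})$ (using that $d(c(t_{j-1}),c(t_j))\ge\Lambda$ for every $j$). Thus the broken geodesic $\Gamma=[c(t_0),c(t_1)]\cup\dots\cup[c(t_{k-1}),c(t_k)]$ has all edges of length $\ge\Lambda$ and all corners ``straight'' up to the constant $R_1+O(\delta)$. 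The decisive input is now the hyperbolic \emph{broken-geodesic lemma}: once $\Lambda$ exceeds a threshold depending only on $R_1$ and $\delta$, such a $\Gamma$ is a global quasigeodesic, satisfying $d(c(t_0),c(t_k))\ge\tfrac12\sum_i\operatorname{length}[c(t_{i-1}),c(t_i)]-\mathrm{const}$; combined with the obvious upper bound from the triangle inequality over the subdivision, this exhibits $c$ as a global $(\lambda',C')$-quasigeodesic with $\lambda'=O(\lambda)$ and $C'=C'(\delta,\lambda,C)$. Applying Step 1 with the new data then closes the argument.

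\emph{Where the difficulty lies.} The crux is the broken-geodesic lemma, and it is precisely here that $\delta$-hyperbolicity is indispensable: in the Euclidean plane a polygonal path with arbitrarily long edges, each turning by a tiny \emph{fixed} angle, spirals and is not a quasigeodesic at all, so bounds on edge-length and corner-straightness alone can never force global quasigeodesicity. In a $\delta$-hyperbolic space one runs an induction on $j$, showing simultaneously that the initial broken sub-geodesic through $c(t_0),\dots,c(t_j)$ stays within $O(\delta)$ of a geodesic $[c(t_0),c(t_j)]$ and that this geodesic together with the next edge still forms a ``long edge / straight corner'' configuration (using thinness of the relevant triangles), the long-edge hypothesis guaranteeing that the accumulated $O(\delta)$ error never swallows an edge. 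The self-referential inequality $D\le O(\delta)+\delta\log_2(O(D))$ in Step 1 is the only other delicate point, and it is routine once the logarithmic estimate is in hand.
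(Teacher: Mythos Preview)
The paper does not prove this theorem at all: it is quoted as a known result with the citation \cite{GH}, Section 5, Theorem 25, and the exposition moves on immediately to Corollary 3.3. So there is no ``paper's own proof'' to compare against; you have supplied an argument where the author simply defers to the literature.

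That said, your sketch is the standard two-step proof (Morse stability of global quasigeodesics, then the local-to-global principle via the broken-geodesic lemma) and is the approach of \cite{GH} and \cite{BH}, III.H.1. The outline is correct and the identification of the broken-geodesic lemma as the crux of Step 2 is apt. One small point: the statement allows $I$ to be unbounded (indeed Corollary 3.3 applies the result to a bi-infinite local geodesic $f_x:\mathbb{R}\to X$), so your reduction ``we may assume the domain $I=[a,b]$ is bounded'' deserves a word --- the passage from bounded to unbounded $I$ goes by exhausting $I$ by compact subintervals, using properness of $X$ and an Arzel\`a--Ascoli/diagonal argument to extract a limiting geodesic, with the uniform Hausdorff bound $R_1(\delta,\lambda',C')$ surviving the limit.
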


\begin{cor}
Let $(X,d)$ be a proper geodesic $\delta$-hyperbolic metric space and let 
$g:X\to X$ be a hyperbolic isometry, i.e. an isometry such that
$$
\rho_g\,=\,\underset{x\in X}{\inf}\,d(x,gx)\,>0
\eqno(3.4)
$$
is attained and strictly positive. Put
$$
Min(g)\,=\,\{y\in X,\,d(y,gy)=\rho_g\}
\eqno(3.5)
$$
the set of points of minimal displacement under $g$. 
For $x\in Min(g)$ let $f_x:{\mathbb R}\to X$ be a map 
satisfying
$$
\begin{array}{ccc}
i)\,\,\,f_x(0)=x, & ii)\,\,\,f(t+\rho_g)=g\cdot f(t),\,\forall t\in{\mathbb R}, & iii)\,\,\,
f_{x\vert[0,\rho_g]}\,\,\,\text{is a geodesic segment.} \\
\end{array}
$$
Then there exists a constant $C_0(\delta)$ such that
$$
Min(g)\,\subset\,B(f_x({\mathbb R}),\,C_0(\delta)).
\eqno(3.6)
$$
provided that $\rho_g\geq L(\delta,1,0)$. 
\end{cor}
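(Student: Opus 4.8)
The plan is to show that every map $f_x$ as in the statement is an honest local geodesic with local constant $\rho_g$, feed it into the quasigeodesic stability theorem (Theorem 3.2), and then argue that any two of these ``discrete axes'' of $g$ are uniformly Hausdorff close because they determine the same pair of endpoints at infinity — the size $d(x,y)$ of the a priori unbounded gap between the two orbits being ``absorbed'' into the passage to the boundary.

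\textbf{Step 1: $f_x$ is a $\rho_g$-local geodesic.} Property iii) together with the $g$-equivariance ii) shows that any interval $J$ of length $\le\rho_g$ meets at most two consecutive translates $f_x([k\rho_g,(k{+}1)\rho_g])$ of the geodesic segment $f_x([0,\rho_g])$, so $d(f_x(s),f_x(t))\le|s-t|$ whenever $|s-t|\le\rho_g$. When $|s-t|=\rho_g$ this is an equality, since $d(f_x(s),f_x(s{+}\rho_g))=d(f_x(s),g\cdot f_x(s))\ge\rho_g$ by minimality of $\rho_g$. Running the triangle inequality backwards, for $0\le\ell\le\rho_g$ one gets $\rho_g\le d(f_x(s),f_x(s{+}\ell))+d(f_x(s{+}\ell),f_x(s{+}\rho_g))\le d(f_x(s),f_x(s{+}\ell))+(\rho_g-\ell)$, hence $d(f_x(s),f_x(s{+}\ell))=\ell$. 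Thus $f_x$ is an $\rho_g$-local $(1,0)$-quasigeodesic. Since $\rho_g\ge L(\delta,1,0)$ by hypothesis, Theorem 3.2 applies (to the finite subintervals $f_x|_{[-T,T]}$, and then, using properness of $X$ to extract a limiting geodesic line, to all of $\mathbb R$): there is a bi-infinite geodesic $L_x$ with $d_{{\mathcal H}}(f_x(\mathbb R),L_x)\le R(\delta,1,0)=:R$. The same applies to any $y\in Min(g)$ and a $g$-equivariant extension $f_y$ of a geodesic segment $[y,gy]$ (which exists precisely because $y\in Min(g)$), yielding a bi-infinite geodesic $L_y$ with $d_{{\mathcal H}}(f_y(\mathbb R),L_y)\le R$.

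\textbf{Step 2: $L_x$ and $L_y$ share their endpoints at infinity.} We have $f_x(n\rho_g)=g^nx$ and $f_y(n\rho_g)=g^ny$, and $d(g^nx,g^ny)=d(x,y)$ is independent of $n$. Hence, based at any fixed point $p$, the Gromov product $(g^nx\mid g^ny)_p=\tfrac12\bigl(d(p,g^nx)+d(p,g^ny)-d(x,y)\bigr)\to\infty$ as $n\to+\infty$ (the orbit escapes to infinity because $f_x$ is a global quasigeodesic), so the rays $n\mapsto g^nx$ and $n\mapsto g^ny$ converge to a common point $\xi^+\in\partial X$; likewise they converge as $n\to-\infty$ to a common point $\xi^-\neq\xi^+$. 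Since $L_x$ stays within $R$ of the orbit $\{g^nx\}$, the geodesic line $L_x$ has endpoints $\xi^\pm$, and the same holds for $L_y$.

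\textbf{Step 3: conclusion.} Two bi-infinite geodesics in a $\delta$-hyperbolic space with the same pair of endpoints at infinity lie within Hausdorff distance at most $2\delta$ (a standard consequence of the slimness of triangles, Definition 3.1). Therefore, for any $y\in Min(g)$, using $y=f_y(0)\in f_y(\mathbb R)$,
$$
d\bigl(y,f_x(\mathbb R)\bigr)\;\le\; d\bigl(y,L_y\bigr)+d_{{\mathcal H}}(L_y,L_x)+d_{{\mathcal H}}(L_x,f_x(\mathbb R))\;\le\; R+2\delta+R,
$$
so $Min(g)\subset B\bigl(f_x(\mathbb R),C_0(\delta)\bigr)$ with $C_0(\delta)=2R(\delta,1,0)+2\delta+1$, a constant depending only on $\delta$.

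\textbf{Main obstacle.} The subtle point is Step 2: the final estimate must not involve $d(x,y)$, which is a priori unbounded and is in fact the very quantity being controlled. It drops out because orbits at bounded — but arbitrary — mutual distance still converge to a single ideal point, after which the comparison between geodesics sharing endpoints is purely a function of $\delta$. The remaining technical care is in Step 1, upgrading the finite-interval form of Theorem 3.2 to a genuine bi-infinite geodesic $L_x$ (where properness of $X$ enters) and in checking that an $L$-local geodesic with $L\ge L(\delta,1,0)$ is a global quasigeodesic with $\delta$-controlled constants, which is what makes the orbit escape to infinity in Step 2.
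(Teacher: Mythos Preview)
Your proof is correct and follows essentially the same strategy as the paper: show $f_x$ is a $\rho_g$-local geodesic, apply Theorem~3.2 to approximate both $f_x(\mathbb R)$ and $f_y(\mathbb R)$ by genuine geodesic lines within $R(\delta,1,0)$, and then argue these two lines are within $2\delta$ of each other. The only difference is in how you carry out this last step. The paper observes directly that $d(f_x(k\rho_g),f_y(k\rho_g))=d(g^kx,g^ky)=d(x,y)$ for all $k\in\mathbb Z$, so the Hausdorff distance between $f_x(\mathbb R)$ and $f_y(\mathbb R)$ (hence between the approximating geodesics) is finite, and then invokes the standard fact that two bi-infinite geodesics in a $\delta$-hyperbolic space at finite Hausdorff distance are actually within $2\delta$. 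You instead pass through the Gromov boundary, showing via Gromov products that both orbits converge to the same pair $\xi^\pm\in\partial X$, and then use that asymptotic geodesics are $2\delta$-close. These are two formulations of the same phenomenon; the paper's version is slightly more economical since it avoids introducing $\partial X$ and the global quasigeodesic property, but your route is equally valid and makes the ``absorption of $d(x,y)$'' that you flag as the main obstacle perhaps more transparent.
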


This is a "discrete analog" of the well known fact that 
the set of minimal displacement of a hyperbolic isometry of a 
complete, simply connected Riemannian manifold of negative curvature is an infinite geodesic.

\begin{proof}

Any map $f_x$ as in the assertion is a $\rho_g$-local geodesic segment as the following well known argument shows. Let $s,t\in{\mathbb R},\,0\leq t-s\leq \rho_g$. We may assume without loss of generality that $0< s < \rho_g <t$. Note that 
the restrictions of $f_x$ to $[0,\rho_g]$ and $[\rho_g
,2\rho_g]$ are geodesic segments.
Consequently $d(f_x(s),f_x(t))\leq d(f_x(s),f_x(\rho_g))+d(f_x(\rho_g),f_x(t))\,=\,(\rho_g-s)+(t-¸\rho_g)=t-s$
by the triangle inequality, and
$$
d(f_x(s),f_x(t))\geq d(f_x(s),f_x(s+\rho_g)) - d(f_x(t),f_x(s+\rho_g))
$$
$$
=\,
d(f_x(s),g\cdot f_x(s)) - d(f_x(t),f_x(s+\rho_g))\,\geq\, \rho_g-((s+\rho_g)-t)\,=\,t-s
$$
because $g$ displaces points by at least $\rho_g$. Altogether $d(f_x(s),f_x(t))\,=\,d_{\mathbb R}(s,t)$ as claimed. \\
Let now $y\in Min(g)$ and let $f_y:{\mathbb R}\to X$ be a map satisfying the assumptions of the proposition. According to theorem 3.2 there exist geodesics
$f'_x,f'_y:{\mathbb R}\to X$ such that 
$f'_x(0)=x,\,f'_y(0)=y$ and 
$$
d_{{\mathcal H}}(f_x({\mathbb R}),f'_x({\mathbb R}))
<R(\delta,1,0),\,
d_{{\mathcal H}}(f_y({\mathbb R}),f'_y({\mathbb R}))
<R(\delta,1,0),\,
$$
As 
$$
d(f_x(k\rho_g),f_y(k\rho_g))\,=\,
d(g^kx,g^ky)\,=\,d(x,y),\,\,\,\forall k\in{\mathbb Z},
$$ 
the Hausdorff distance between $f_x({\mathbb R})$ and $f_y({\mathbb R})$ and thus also between
$f'_x({\mathbb R})$ and $f'_y({\mathbb R})$ is finite. But if the Hausdorff distance between two infinite geodesics in a $\delta$-hyperbolic space is finite, then it is actually bounded by $2\delta$. Thus
$$
d(y,f_x({\mathbb R}))\,\leq\,d_{{\mathcal H}}(f_y({\mathbb R}),\,f_x({\mathbb R}))\,\leq\,2R(\delta,1,0)+2\delta=C_0(\delta).
$$
\end{proof}

In the sequel we note for a subset $Y\subset X$ of a metric space $(X,d)$ and for $\lambda\geq 0$ 
$$
geod_\lambda(Y)=\{x\in X,\,\exists y,y'\in Y:\,d(y,x)+d(x,y')\leq d(y,y')+\lambda\},
\eqno(3.7)
$$ 
and
$$
geod(Y)=geod_0(Y)=\{x\in X,\,\exists y,y'\in Y:\,d(y,x)+d(x,y')=d(y,y')\}.
\eqno(3.8)
$$ 

Recall that a (finite) metric tree is a (finite) one-dimensional, simply connected simplicial complex equipped with a length metric such that every closed one-simplex is isometric to a bounded closed interval.

\begin{prop} (Gromov)
Let $F=\{x_0,\ldots,x_n\}$ be a finite metric space with base point $x_0$.  Then there exists a finite metric tree $T_{(F,x_0)}$ and a map $\Phi:X\to T_{(F,x_0)}$ such that the following holds:
\begin{itemize}
\item $\Phi$ is contractive: $d(\Phi(x),\Phi(y))\leq d(x,y),\,\forall x,y\in F$.
\item $\Phi$ preserves the distance from the basepoint: $d(\Phi(x),\Phi(x_0))=d(x,x_0),\, x\in F$.
\item The image of $\Phi$ spans $T_{(F,x_0)}$: $T_{(F,x_0)}\,=\,geod(\Phi(F))$.
\item If $\Phi':F\to T'$ is a map to a metric tree satisfying the previous conditions,  
then there is a (unique) contractive map $f:T_{(F,x_0)}\to  T'$ such that $\Phi'\,=\,f\circ\Phi$.
\end{itemize}
$T_{(F,x_0)}$ is called the {\bf approximating tree} of $(F,x_0)$.
\end{prop}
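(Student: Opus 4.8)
The plan is to build $T_{(F,x_0)}$ as the metric tree carrying the \emph{largest} tree pseudometric on $F$ that is dominated by $d$ and coincides with $d$ on all distances to $x_0$; this ``best tree approximation from below'' is most conveniently described through Gromov products based at $x_0$. Write $d_i=d(x_i,x_0)$ and $(x_i|x_j)=\tfrac12(d_i+d_j-d(x_i,x_j))\ge 0$, so that $(x_i|x_0)=0$ and $(x_i|x_j)\le\min(d_i,d_j)$. Define
$$
g(x_i,x_j)=\max\bigl\{\,\min\nolimits_{1\le k\le m}(y_{k-1}\,|\,y_k)\ :\ x_i=y_0,\,y_1,\dots,\,y_m=x_j\ \text{a chain in}\ F\,\bigr\}.
$$
I would first record the elementary properties of this ``bottleneck'' function: it is symmetric, $g\ge(\cdot|\cdot)$, $g(x_i,x_j)\le\min(d_i,d_j)$, and $g(x_i,x_0)=0$ (the last edge of any chain ending at $x_0$ contributes $0$); it obeys the ultrametric inequality $g(x_i,x_k)\ge\min(g(x_i,x_j),g(x_j,x_k))$ (concatenate optimal chains); and it is the \emph{smallest} symmetric function $\ge(\cdot|\cdot)$ with that property (iterate the inequality along an optimal chain). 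Putting $\rho(x_i,x_j)=d_i+d_j-2g(x_i,x_j)$ then defines a pseudometric on $F$: symmetry, $\rho\ge0$ and $\rho(x_i,x_i)=0$ follow from $g(x_i,x_j)\le\min(d_i,d_j)$, and the triangle inequality reduces after cancellation to $g(x_i,x_j)+g(x_j,x_k)-g(x_i,x_k)\le d_j$, which follows from the ultrametric inequality and $g(x_j,x_k)\le d_j$. By construction the Gromov product of $\rho$ based at $x_0$ is again $g$, so $\rho$ is zero-hyperbolic relative to $x_0$.

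Next I would realize $(F,\rho)$ inside a finite metric tree by induction on $n$. Start with $T_0=\{*\}$, $\Phi(x_0)=*$. Given a finite metric tree $T_{k-1}$ spanned by $\Phi(x_0),\dots,\Phi(x_{k-1})$ with $d_{T_{k-1}}(\Phi(x_i),\Phi(x_j))=\rho(x_i,x_j)$ and $d_{T_{k-1}}(\Phi(x_i),*)=d_i$, choose $j_0<k$ maximizing $g(x_k,x_j)$, let $p_k$ be the point at distance $g(x_k,x_{j_0})\ (\le d_{j_0})$ from $*$ on the segment $[\,*,\Phi(x_{j_0})\,]$, and attach at $p_k$ a new segment of length $d_k-g(x_k,x_{j_0})\ (\ge0)$ with far endpoint $\Phi(x_k)$ (if this length is $0$, put $\Phi(x_k)=p_k$). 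A short case check, using the ultrametric inequality for $g$, shows that all new distances $d_{T_k}(\Phi(x_k),\Phi(x_j))$ come out equal to $\rho(x_k,x_j)$, and $d_{T_k}(\Phi(x_k),*)=d_k$ by construction; also $T_k=geod(\Phi(x_0),\dots,\Phi(x_k))$ because in a tree $[\Phi(x_k),\Phi(x_j)]\subset[\,*,\Phi(x_k)\,]\cup[\,*,\Phi(x_j)\,]$. Setting $T_{(F,x_0)}=T_n$ and $\Phi$ the resulting map, contractivity of $\Phi$ is exactly $g\ge(\cdot|\cdot)$, preservation of basepoint distances is built in, and $T_{(F,x_0)}=geod(\Phi(F))$ by construction; this gives the first three bullets.

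For the universal property, let $\Phi':F\to T'$ satisfy the three conditions, put $o'=\Phi'(x_0)$ and $g'(x_i,x_j)=(\Phi'(x_i)\,|\,\Phi'(x_j))_{o'}$. Contractivity of $\Phi'$ gives $g'\ge(\cdot|\cdot)$, and $g'$ obeys the ultrametric inequality since $T'$ is a tree, so by minimality of $g$ we get $g'\ge g$, i.e. $d_{T'}(\Phi'(x_i),\Phi'(x_j))\le\rho(x_i,x_j)$. Every $p\in T_{(F,x_0)}=geod(\Phi(F))$ lies on some $[\,*,\Phi(x_i)\,]$; writing $s=d_T(p,*)\le d_i$, define $f(p)$ to be the point of $[\,o',\Phi'(x_i)\,]$ at distance $s$ from $o'$, which exists because $d_{T'}(o',\Phi'(x_i))=d_i\ge s$. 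This is well defined: if $p$ also lies on $[\,*,\Phi(x_j)\,]$ then $s\le g(x_i,x_j)\le g'(x_i,x_j)$, so $[\,o',\Phi'(x_i)\,]$ and $[\,o',\Phi'(x_j)\,]$ share their initial segment up to distance $\ge s$ from $o'$, and the two prescriptions agree. One checks $f\circ\Phi=\Phi'$ (take $p=\Phi(x_i)$, $s=d_i$); that $f$ is contractive by comparing Gromov products at the two basepoints, since for $p\in[\,*,\Phi(x_i)\,]$, $q\in[\,*,\Phi(x_j)\,]$ elementary tree geometry gives $(p|q)_*=\min(d_T(p,*),d_T(q,*),g(x_i,x_j))\le\min(d_{T'}(f(p),o'),d_{T'}(f(q),o'),g'(x_i,x_j))=(f(p)|f(q))_{o'}$, whence $d_{T'}(f(p),f(q))\le d_T(p,q)$; and that $f$ is unique, since any contractive $\tilde f$ with $\tilde f\circ\Phi=\Phi'$ sends $*$ to $o'$, and for $p\in[\,*,\Phi(x_i)\,]$ the bounds $d_{T'}(o',\tilde f(p))\le s$ and $d_{T'}(\tilde f(p),\Phi'(x_i))\le d_i-s$ together with $d_{T'}(o',\Phi'(x_i))=d_i$ force $\tilde f(p)$ onto $[\,o',\Phi'(x_i)\,]$ at distance exactly $s$ from $o'$. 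I expect the last paragraph to be the main obstacle: one must consistently exploit the \emph{exact} preservation of basepoint distances --- not merely contractivity --- both to make $f$ well defined on all of $T_{(F,x_0)}$ and to pin it down uniquely, and one must carry out the case check that the inductive tree construction is consistent, which is the one place the zero-hyperbolicity of $\rho$ genuinely enters.
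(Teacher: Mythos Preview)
The paper does not actually prove this proposition: it is stated as a result of Gromov and left without proof (the references \cite{Gr}, \cite{GH} are given for the surrounding material). So there is no ``paper's own proof'' to compare against.

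Your argument is correct and is the standard construction. The key insight---defining $g$ as the smallest symmetric function on $F\times F$ that dominates the Gromov product $(\cdot\,|\,\cdot)_{x_0}$ and satisfies the ultrametric inequality, then setting $\rho(x_i,x_j)=d_i+d_j-2g(x_i,x_j)$---is exactly right, and your verification that $\rho$ is the largest $0$-hyperbolic pseudometric below $d$ that agrees with $d$ on basepoint distances is clean. The inductive tree realization works as you describe; the case check you flag (that attaching $\Phi(x_k)$ at the point $p_k$ on $[*,\Phi(x_{j_0})]$ at distance $g(x_k,x_{j_0})$ from $*$ gives the correct distances to all previous $\Phi(x_j)$) reduces to showing $\min\bigl(g(x_k,x_{j_0}),g(x_j,x_{j_0})\bigr)=g(x_k,x_j)$, and this follows from the ultrametric inequality together with the maximality of $j_0$, exactly as you anticipate. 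The universal property via the minimality of $g$ among ultrametric majorants of $(\cdot\,|\,\cdot)$ is also correct, and your uniqueness argument for $f$ is the right one: it genuinely requires that $\Phi'$ preserve basepoint distances exactly, not just contractively, since otherwise the two inequalities $d_{T'}(o',\tilde f(p))\le s$ and $d_{T'}(\tilde f(p),\Phi'(x_i))\le d_i-s$ would not saturate $d_{T'}(o',\Phi'(x_i))$.

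One small remark: in the statement as written in the paper the map is typed as $\Phi:X\to T_{(F,x_0)}$, but of course $\Phi$ is defined on $F$; your proof correctly treats it as such.
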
 

\begin{theorem}(Gromov)  \cite{Gr},\cite{GH}
Let $(F,x_0)$ be a finite metric space with base point $x_0$, and let 
$\Phi:F\to T_{(F,x_0)}$ be the canonical map to its approximating tree.
If $F$ is $\delta$-hyperbolic of cardinality $\vert F\vert\leq 2^k+2$, then 
$$
d(x,x')-2k\delta\,\leq\,d(\Phi(x),\Phi(x')),\,\,\,\forall x,x'\in F.\eqno(3.9)
$$
\end{theorem}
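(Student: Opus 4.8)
The plan is to realise the approximating tree by an explicit formula, derive the statement from the universal property of Proposition 3.5, and so reduce everything to an elementary estimate for Gromov products. Write $(x|y):=\halb\bigl(d(x,x_0)+d(y,x_0)-d(x,y)\bigr)$ for the Gromov product based at $x_0$, so that $\delta$-hyperbolicity of the finite metric space $F$ is the four-point inequality $(x|z)\ge\min\bigl\{(x|y),(y|z)\bigr\}-\delta$ for all $x,y,z\in F$. Following Gromov, set
$$
(x|x')^{\wedge}\ :=\ \max\Bigl\{\,\min\{(y_j|y_{j+1}):0\le j<m\}\ :\ x=y_0,y_1,\ldots,y_m=x'\ \text{ a chain in }F\,\Bigr\}
$$
and $\rho(x,x'):=d(x,x_0)+d(x',x_0)-2(x|x')^{\wedge}$. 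Classically $\rho$ is a tree pseudometric, realised by a finite metric tree $T'$ spanned by the images of a natural map $\Phi':F\to T'$; this $\Phi'$ is contractive (since $(x|x')^{\wedge}\ge(x|x')$, by the one-edge chain) and preserves the distance to $x_0$ (since every chain from $x$ to $x_0$ has a last edge of Gromov product $0$ while all products are $\ge 0$, so $(x|x_0)^{\wedge}=0$). Thus $\Phi'$ satisfies the first three conditions in Proposition 3.5, which therefore furnishes a contractive $f:T_{(F,x_0)}\to T'$ with $\Phi'=f\circ\Phi$, whence $d(\Phi(x),\Phi(x'))\ge d(\Phi'(x),\Phi'(x'))=\rho(x,x')$. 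Hence it is enough to prove $(x|x')^{\wedge}\le(x|x')+k\delta$ for all $x,x'\in F$.

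To prove this I would replace the maximising chain by a short one via two reductions. We may assume $x,x'\ne x_0$ and $x\ne x'$, the remaining cases being trivial. Fix a chain $x=y_0,\ldots,y_m=x'$ attaining the maximum defining $(x|x')^{\wedge}$. Repeating a vertex cannot increase the minimum, because deleting the loop between two occurrences of a vertex replaces the family of consecutive Gromov products by a subfamily of itself; so we may take $y_0,\ldots,y_m$ pairwise distinct. If some interior vertex equalled $x_0$, an incident edge would have Gromov product $0$, forcing $(x|x')^{\wedge}\le 0\le(x|x')+k\delta$; so we may also assume the chain avoids $x_0$ entirely. Then $\{y_0,\ldots,y_m\}$ consists of $m+1$ distinct points of $F\setminus\{x_0\}$, so $m+1\le\vert F\vert-1\le 2^k+1$, i.e.\ the chain has at most $2^k$ edges.

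The crux is the quasi-ultrametric estimate for Gromov products in a $\delta$-hyperbolic space:
$$
(z_0|z_m)\ \ge\ \min\{(z_j|z_{j+1}):0\le j<m\}\ -\ \lceil\log_2 m\rceil\,\delta\qquad(z_0,\ldots,z_m\in F).
$$
I would prove it by bisection induction from the four-point inequality: split the chain at an index $q$ so that each half has at most $\lceil m/2\rceil$ edges, apply the inductive bound to both halves and one instance of $(z_0|z_m)\ge\min\{(z_0|z_q),(z_q|z_m)\}-\delta$, and close with the identity $\lceil\log_2\lceil m/2\rceil\rceil+1=\lceil\log_2 m\rceil$ for $m\ge 2$. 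Applied to the chain produced above, which has $m\le 2^k$ edges and hence $\lceil\log_2 m\rceil\le k$, this gives $(x|x')=(y_0|y_m)\ge\min\{(y_j|y_{j+1})\}-k\delta=(x|x')^{\wedge}-k\delta$, as required.

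I expect the only genuine obstacle to be combining this bisection lemma with the counting that converts $\vert F\vert\le 2^k+2$ into ``at most $2^k$ edges'': the two reductions (to distinct vertices, then to chains avoiding the base point) are exactly what brings $\vert F\vert$ points down to $\vert F\vert-2\ge m$ edges, so that the dyadic loss is $k\delta$ and not $(k+1)\delta$. The facts that $\rho$ is a tree pseudometric realised by a spanning tree and that $\Phi'$ has the three properties of Proposition 3.5 are classical and purely formal --- I would cite rather than reprove them --- and the rest is bookkeeping.
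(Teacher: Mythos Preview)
The paper does not prove this theorem: it is quoted from \cite{Gr} and \cite{GH} and used as a black box. Your argument is essentially the standard proof from those references --- build the explicit tree via $(x|x')^{\wedge}$, invoke the universal property of the approximating tree to compare, and control $(x|x')^{\wedge}-(x|x')$ by the bisection lemma on Gromov products --- and it is correct.

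Two small remarks. First, a numbering slip: the universal property you invoke is Proposition~3.4 in the paper, not ``Proposition~3.5'' (which is the theorem you are proving). Second, your reduction step ``deleting the loop replaces the family of consecutive Gromov products by a subfamily of itself'' is correct precisely because $y_i=y_j$ makes $(y_i|y_{j+1})=(y_j|y_{j+1})$ already present in the original list; it is worth making this one-line justification explicit, since a careless reader might object that the new edge $(y_i,y_{j+1})$ was not there before.
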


\begin{cor} {\bf (Approximation by trees)}\cite{GH},\cite{BFS}
Let $(X,d)$ be a $\delta$-hyperbolic geodesic metric space. Let $F\subset X$ be a finite subset with base point $x_0$, let 
$$
\Phi:F\to T_{(F,x_0)}
$$ 
be the canonical map to its approximating tree and let 
$$
T'\,=\,\{x\in T_{(F,x_0)},\,d(x,\Phi(x_0))\in{\mathbb N}\}.
$$ 
For every $\lambda\geq 0$ there exists a constant $C_5(\delta,\vert F\vert,\lambda)\geq 0$ and maps 
$$
\varphi_\lambda:geod_\lambda(F)\,\longrightarrow\,T',\,\psi:
T'\to geod(F)
\eqno(3.10)
$$
such that
$$
d(\varphi_\lambda(x),\varphi_\lambda(x'))\,\leq\,d(x,x')\,+\, C_5(\delta,\vert F\vert,\lambda),\,\,\,\forall x,x'\in geod_\lambda(F),\eqno(3.11)
$$

$$
d(\psi(y),\psi(y'))\,\leq\,d(y,y')\,+\, C_5(\delta,\vert F\vert,\lambda),\,\,\,\forall y,y'\in T',\eqno(3.12)
$$

$$
d(\psi\circ\varphi_\lambda(x),x)\,\leq\, C_5(\delta,\vert F\vert,\lambda),\,\,\,\forall x\in geod_\lambda(F).\eqno(3.13)
$$
\end{cor}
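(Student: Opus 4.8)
The plan is to embed both the tree $T:=T_{(F,x_0)}=\mathrm{geod}(\Phi(F))$ and the $\lambda$-thickened hull $\mathrm{geod}_\lambda(F)\subset X$ into the single normed space $\ell^\infty(F)$, and to read off all three estimates there. For a metric space $(Y,d)$ write $\iota_Y(y)=(d(y,a))_{a\in F}\in\ell^\infty(F)$ (for $y$ in whatever subset is under discussion). First I would isolate two facts. (I) \emph{The map $\iota_T:T\to\ell^\infty(F)$, $p\mapsto(d_T(p,\Phi a))_{a\in F}$, is an isometric embedding.} It is $1$-Lipschitz by the triangle inequality; conversely, given $p,q\in\mathrm{geod}(\Phi F)$, the point $p$ lies on a tree-arc $[\Phi a,\Phi a']$ with $a,a'\in F$, and whichever of $\Phi a,\Phi a'$ lies in the component of $T\setminus\{p\}$ not containing $q$ — say $\Phi a$ — satisfies $d_T(q,\Phi a)=d_T(q,p)+d_T(p,\Phi a)$, so $|d_T(p,\Phi a)-d_T(q,\Phi a)|=d_T(p,q)$. (II) \emph{The map $\iota_X:\mathrm{geod}_\lambda(F)\to\ell^\infty(F)$ is $1$-Lipschitz and satisfies $\|\iota_X(x)-\iota_X(x')\|_\infty\ge d(x,x')-C_0$ with $C_0=C_0(\delta,\lambda)$.} For the lower bound one uses that membership in $\mathrm{geod}_\lambda(F)$ forces $x$ to lie within $\tfrac{\lambda}{2}+O(\delta)$ of a geodesic $[y,y']$ with endpoints in $F$; replacing $x,x'$ by nearby geodesic points and using $\delta$-thinness at $x$ (where the Gromov product $(y'|y)_x$ is $O(\delta)$, hence $\min((y|x')_x,(y'|x')_x)=O(\delta)$) shows that one of $y,y'$, say $y$, has $d(x',y)\ge d(x,y)+d(x,x')-O(\delta+\lambda)$.

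Theorem 3.7 applied to $F$ (with $k$ chosen so $|F|\le 2^k+2$) together with contractivity of $\Phi$ (Proposition 3.5) gives $|d_T(\Phi a,\Phi b)-d(a,b)|\le 2k\delta$ for all $a,b\in F$, hence $\|\iota_X(a)-\iota_T(\Phi a)\|_\infty\le 2k\delta$. A companion consequence of $\delta$-thinness — the comparison-tripod formula, that for $w$ on a geodesic $[a,a']$ the distance $d(w,b)$ equals, up to $O(\delta)$, the corresponding distance in the tripod on $\{a,a',b\}$ — yields an explicit compatibility: for $w\in[a,a']$ ($a,a'\in F$) and the point $p=p(w)$ of the tree-arc $[\Phi a,\Phi a']$ at tree-distance $\min(d(a,w),d_T(\Phi a,\Phi a'))$ from $\Phi a$, one has $\|\iota_X(w)-\iota_T(p)\|_\infty\le C_1$ for a constant $C_1=C_1(\delta,|F|,\lambda)$, and symmetrically going from a tree point back to an $X$-geodesic point. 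Now I would \emph{define} the maps. For $y\in T'$, choose $a,a'\in F$ with $y\in[\Phi a,\Phi a']$, choose an $X$-geodesic $[a,a']$, and let $\psi(y)$ be its point at distance $d_T(\Phi a,y)$ from $a$; then $\psi(y)\in\mathrm{geod}(F)=\mathrm{geod}_0(F)$ and $\|\iota_X(\psi y)-\iota_T(y)\|_\infty\le C_1$. For $x\in\mathrm{geod}_\lambda(F)$, pick $p\in T$ minimising $\|\iota_T(p)-\iota_X(x)\|_\infty$ (the minimum is $\le C_1$, taking $p=p(w)$ for $w$ the nearest point of a geodesic $[y,y']$, $y,y'\in F$, to $x$), and let $\varphi_\lambda(x)$ be a point of $T'$ within tree-distance $1$ of $p$: such a point exists since $\Phi x_0\in T'$, so following the geodesic of $T$ toward $\Phi x_0$ exhibits every tree point within distance $1$ of $T'$. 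As $\iota_T$ is isometric, $\|\iota_T(\varphi_\lambda x)-\iota_X(x)\|_\infty\le C_1+1$.

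The three estimates are then short triangle-inequality chases in $\ell^\infty(F)$, with $C_5:=2C_1+C_0+2$. For (3.11): $d_T(\varphi_\lambda x,\varphi_\lambda x')=\|\iota_T(\varphi_\lambda x)-\iota_T(\varphi_\lambda x')\|_\infty\le\|\iota_T(\varphi_\lambda x)-\iota_X(x)\|_\infty+\|\iota_X(x)-\iota_X(x')\|_\infty+\|\iota_X(x')-\iota_T(\varphi_\lambda x')\|_\infty\le(C_1+1)+d(x,x')+(C_1+1)$, using (I) and that $\iota_X$ is $1$-Lipschitz. For (3.12): $\psi y,\psi y'\in\mathrm{geod}_0(F)\subset\mathrm{geod}_\lambda(F)$, so (II) and (I) give $d(\psi y,\psi y')\le\|\iota_X(\psi y)-\iota_X(\psi y')\|_\infty+C_0\le C_1+\|\iota_T(y)-\iota_T(y')\|_\infty+C_1+C_0=d_T(y,y')+2C_1+C_0$. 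For (3.13): $\psi(\varphi_\lambda x)$ and $x$ both lie in $\mathrm{geod}_\lambda(F)$, so $d(\psi\varphi_\lambda x,x)\le\|\iota_X(\psi\varphi_\lambda x)-\iota_X(x)\|_\infty+C_0\le\|\iota_X(\psi\varphi_\lambda x)-\iota_T(\varphi_\lambda x)\|_\infty+\|\iota_T(\varphi_\lambda x)-\iota_X(x)\|_\infty+C_0\le C_1+(C_1+1)+C_0$.

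Step (I) and the final chases are routine; the technical heart — and the step I expect to be the main obstacle — is (II) together with the companion compatibility estimate, i.e. the claim that $X$ restricted to $\mathrm{geod}_\lambda(F)$ embeds $(1,O(\delta+\lambda))$-quasi-isometrically into $\ell^\infty(F)$ with image $O(\delta+\lambda)$-dense in $\iota_T(T)$. This is precisely the finite-scale incarnation of Theorem 3.7, and it is where $\delta$-hyperbolicity enters essentially: via the near-vanishing of $(y'|y)_x$ for $x$ near $[y,y']$, and via the $O(\delta)$-accuracy of the tripod formula for $d(w,b)$. Extra care is needed to handle the rounding into $T'$ and to track the (permitted) dependence of $C_0,C_1$ on $\lambda$ and $|F|$, but no geometric input beyond Theorem 3.7 and $\delta$-thinness is required.
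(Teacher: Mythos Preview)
Your proposal is correct and takes a genuinely different route from the paper's proof. The paper constructs $\varphi_\lambda$ and $\psi$ \emph{radially from the base point}: for $x\in\mathrm{geod}_\lambda(F)$ it first moves $x$ to a nearby point $x'$ on some geodesic $[x_0,x_j]$ with $d(x_0,x')\in\mathbb{N}$, then lets $\varphi_\lambda(x)$ be the point on the tree arc $[\Phi(x_0),\Phi(x_j)]$ at distance $d(x_0,x')$ from $\Phi(x_0)$; the map $\psi$ is defined symmetrically. The three estimates are then established one at a time, (3.11) by enlarging $F$ to $F\cup\{x',y'\}$ and invoking the universal property of the approximating tree, (3.12) directly from Theorem~3.5, and (3.13) by a Gromov-product computation locating the branch point of the relevant tripod. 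Your argument instead embeds both $T$ and $\mathrm{geod}_\lambda(F)$ into $\ell^\infty(F)$ via Kuratowski maps, observes that $\iota_T$ is isometric while $\iota_X$ is a $(1,C_0)$-quasi-isometric embedding whose image lies uniformly close to $\iota_T(T)$, and then defines $\varphi_\lambda$ essentially as nearest-point projection onto $\iota_T(T)$ followed by rounding into $T'$. The payoff is that all three inequalities become short triangle-inequality chases in a single ambient space, with no separate case analysis and no privileged role for $x_0$ beyond the rounding step. The cost is that the geometric content is front-loaded into facts (I), (II), and the tripod compatibility $\|\iota_X(w)-\iota_T(p(w))\|_\infty\le C_1$; these are standard but you should spell out the tripod estimate carefully, since it is where the dependence on $|F|$ (via Theorem~3.5) actually enters. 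The paper's approach is more elementary in that it never leaves $X$ and $T$, and its constants are more explicit; yours is cleaner conceptually and would adapt more readily to settings without a preferred base point.
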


\begin{proof} We proceed in several steps.\\
{\bf Step 1:} \\
Let $F=\{x_0,\ldots,x_n\}.$ By hyperbolicity any point $x\in geod_\lambda(F)$ lies at distance at most $\frac12\lambda+2\delta=C_1(\delta,\lambda)$ from $geod(F)$ and thus at distance at most $\frac12\lambda+3\delta+1=C_2(\delta,\lambda)$ from a point $x'\in geod(\{x_0,x_j\})\cap\{z\in X,\,d(x_0,z)\in{\mathbb N}\}$ for some $j\in\{0,\ldots,n\}.$ Let $\Phi:F\to T_{(F,x_0)}$ be the   map from $F$ to its approximating tree and define $\varphi(x)\in T'$ as the point at distance $d(x_0,x')$ from the basepoint on the unique geodesic segment 
joining $y_0=\Phi_{(F,x_0)}(x_0)$ and $\Phi_{(F,x_0)}(x_j)$ in $T_{(F,x_0)}$.
\\
\\
{\bf Step 2:}\\
Let $y\in T'$ and choose an index $k\in\{0,\ldots\,n\}$ such that $y$ lies on the unique geodesic segment in $T$ joining $y_0$ and $\Phi_{(F,x_0)}(x_k)$. Choose as 
$\psi(y)\in geod(F)$ a point in $geod(\{x_0,x_k\})$ situated at distance $d(y_0,y)$ from $x_0$.\\
\\
{\bf Step 3:}\\
Let $x,y\in geod_\lambda(F)$ and let $x'\in geod(\{x_0,x_i\}),\,y'\in geod(\{x_0,x_j\})$ be the auxiliary points used to define $\varphi(x)$ and $\varphi(y)$.
Put $F'=F\cup\{x',y'\}$. By the universal property of the approximating tree there is a commutative diagram
$$
\begin{array}{ccccc}
 & F & \hookrightarrow & F' & \\
& & & & \\
\Phi_{(F,x_0)} & \downarrow & & \downarrow & \Phi_{(F',x_0)} \\
& & & & \\
& T_{(F,x_0)} & \to & T_{(F',x_0)} & \\
\end{array}
$$
whose lower horizontal arrow is a bijective isometry. It follows that 
$$
d(\varphi(x),\varphi(y))= d(\Phi_{(F',x_0)}(x'),\Phi_{(F',x_0)}(y'))\leq
 d(x',y')\leq d(x,y)+2C_2(\delta,\lambda).
$$
\\
{\bf Step 4:}\\
Let $y,y'\in T'$ and put $F''=F\cup\{\psi(y),\psi(y')\}$.
By construction
$y\,=\,\Phi_{(F'',x_0)}(\psi(y))$ and $y'\,=\,\Phi_{(F'',x_0)}(\psi(y')),$ so that 
$d(\psi(y),\psi(y'))\leq d(y,y')+2\log_2(\vert F\vert)\delta$ by 3.5.

\newpage

\begin{figure}[htpb]
\begin{center}
\includegraphics[scale=0.15]{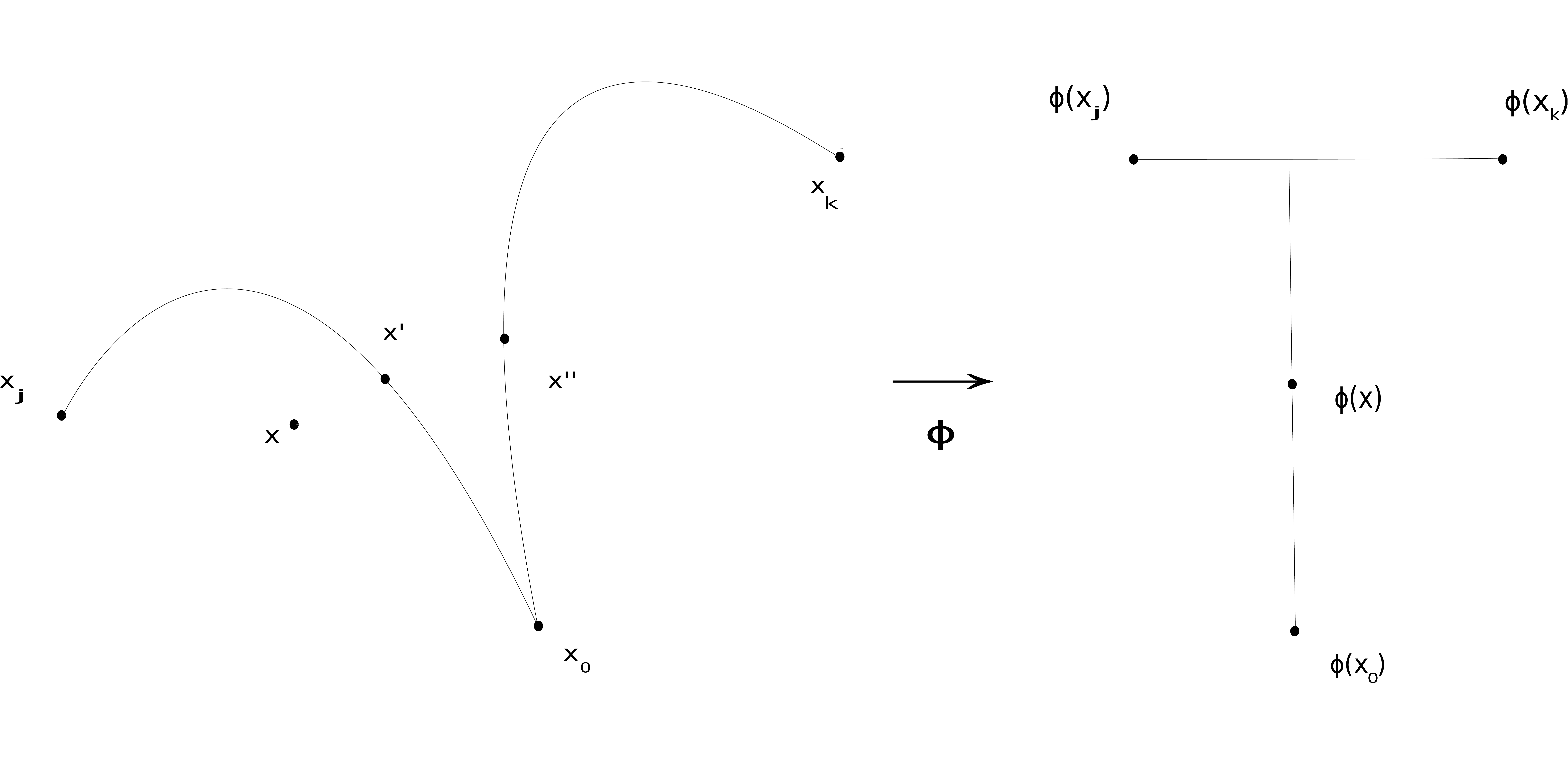}
\caption{Mapping $geod_\lambda(F)$ to the approximating tree of $F$}
\label{default}
\end{center}
\end{figure}

.\\
{\bf Step 5:}
\\
Let $x\in geod_\lambda(F)$ and let $x'\in geod(\{x_0,x_j\})\cap\{z\in X,\,d(x_0,z)\in{\mathbb N}\}$ be the auxiliary point satisfying $\varphi(x)=\Phi_{(F\cup\{x'\},x_0)}(x')$. 
The point $x''=\psi(\varphi(x))\in geod(\{x_0,x_k\})$ satisfies then $y''=\Phi_{(F\cup\{x',x''\},x_0)}(x'')=\Phi_{(F\cup\{x',x''\},x_0)}(x')$. The point $y''$ lies therefore on the geodesic segments $[y_0,\Phi_{(F,x_0)}(x_j)]$ and $[y_0,\Phi_{(F,x_0)}(x_k)]$ of the tree $T_{(F\cup\{x',x''\},x_0)}=T_{(F,x_0)}$. It follows that
$$
d(y_0,y'')\,\leq\,\frac{1}{2}\left(d(\Phi(x_0),\Phi(x_j))+d(\Phi(x_0),\Phi(x_k))-d(\Phi(x_j),\Phi(x_k))\right)
$$
and thus 
$$
d(x_0,x')=d(x_0,x'')\leq\frac12\left(d(x_0,x_j)+d(x_0,x_k)-d(x_j,x_k)\right)+2\log_2(\vert F\vert)\delta
$$ 
because $\Phi$ is contractive and satisfies (3.9). As $x'\in geod(\{x_0,x_j\})$ and $x''\in geod(\{x_0,x_k\})$ 
the previous estimate implies by hyperbolicity that 
$d(x',x'')\leq C_3(\delta)+4\log_2(\vert F\vert)\delta$ so that finally 
$$
d(x,\psi(\varphi(x)))\leq d(x,x')+d(x',x'')\leq C_2(\delta,\lambda)+C_3(\delta)+4\log_2(\vert F\vert)\delta=C_4(\delta,\vert F\vert, \lambda).
$$ 
Thus the assertions holds for $C_5(\delta,\vert F\vert,\lambda)=Max(2C_2(\delta,\lambda),2\log(\vert F\vert)\delta,C_4(\delta,\vert F\vert,\lambda))$.
\end{proof}

The key point of the previous result is that the {\bf distortion of the tree approximation of $geod_\lambda(F)$  does not depend on the diameter of $F$}.

\newpage

\subsection{Hyperbolic groups}

Let $(\Gamma,S)$ be a finitely generated group with associated word length function $\ell_S$ and word metric $d_S$. The corresponding Cayley graph ${\mathcal G}(\Gamma,S)$ is a proper geodesic metric space on which $\Gamma$ acts properly, isometrically and cocompactly by left translation. The group $\Gamma$ is called {\bf hyperbolic}
if its Cayley graph with respect to some (and thus to every) finite, symmetric set of generators is hyperbolic in the sense of 3.1. (The constant $\delta$ depends of course on the choice of $S$.) By abuse of language we call the pair $(\Gamma,S)$ a {\bf $\delta$-hyperbolic group} if ${\mathcal G}(\Gamma,S)$ is a $\delta$-hyperbolic space.
\\
We recall a few facts about hyperbolic groups. These are all due to 
M.~Gromov.

\begin{theorem} {\bf (Centralizers)} (\cite{BH}, III,\,3.7,\,3.9,\,3.10) \\
Let $\Gamma$ be a hyperbolic group. For $g\in\Gamma$ let $Z(g)\,=\,\{h\in\Gamma,\,gh=hg\}$ be its centralizer.\\
a) If $g$ is of infinite order, then the centralizer 
$Z(g)$ is virtually cyclic, i.e. the infinite cyclic central subgroup generated by $g$ is of finite index in $Z(g)$.\\
b) If $g$ is of finite order, then $Z(g)$ is itself a 
hyperbolic group and the restriction of the word metric of $\Gamma$ to $Z(g)$ is quasi-isometric to any word metric on $Z(g)$.
\end{theorem}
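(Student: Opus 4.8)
Both assertions are due to Gromov; the plan is to deduce them from the metric facts already assembled, above all the stability of quasigeodesics (Theorem~3.2) and the $\delta$-thinness of geodesic polygons.

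\textbf{Part a).} Suppose $g$ has infinite order. The one genuinely nontrivial input I would invoke is that such an element has positive stable translation length $\tau(g)=\lim_n\ell_S(g^n)/n>0$, equivalently that $\langle g\rangle$ is undistorted in $\Gamma$. Granting this, the orbit $n\mapsto g^n$ is a $(\lambda,C)$-quasigeodesic line in $\mathcal G(\Gamma,S)$ (in particular an $L$-local one for every $L$), so by Theorem~3.2 it lies within a bounded Hausdorff distance $R_0$ of a bi-infinite geodesic $\gamma$ with $\gamma(0)=e$. Now fix $h\in Z(g)$. Its left translate $h\gamma$ is a geodesic lying within bounded Hausdorff distance of $\{hg^n\}=\{g^nh\}$, which in turn lies within $\ell_S(h)$ of $\{g^n\}$ and hence within finite Hausdorff distance of $\gamma$. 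But two bi-infinite geodesics of a $\delta$-hyperbolic space at finite Hausdorff distance are in fact at Hausdorff distance $\leq 2\delta$ (the estimate used in the proof of Corollary~3.3), a bound independent of $h$. Since $h=h\gamma(0)\in h\gamma$, we obtain $d_S(h,\langle g\rangle)\leq 2\delta+R_0=:D$ for every $h\in Z(g)$. Hence for each $h$ there is an integer $n(h)$ with $g^{-n(h)}h\in Z(g)\cap B_{d_S}(e,D)=:W$, a finite set, so that $Z(g)=\langle g\rangle\cdot W$ is a union of at most $|W|$ cosets of $\langle g\rangle$. Since $\langle g\rangle\cong\mathbb Z$ is plainly central in $Z(g)$, this is exactly assertion a).

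\textbf{Part b).} Suppose $g$ has finite order and put $m=\ell_S(g)$. The plan is to show that $Z(g)$ is a quasiconvex subgroup of $\Gamma$ and then invoke the standard structure theory of such subgroups. Fix $h\in Z(g)$ and a geodesic edge-path $[e,h]$. Consider the geodesic quadrilateral with vertices $e,\,g,\,gh=hg,\,h$: its sides $[e,g]$ and $[h,hg]$ have length $m$, while the opposite sides $[e,h]$ and $[g,hg]=g\cdot[e,h]$ are geodesics of length $\ell_S(h)$. Cutting it along a diagonal into two $\delta$-thin triangles shows the quadrilateral is $2\delta$-thin, and a short chase of the triangle inequality then bounds $d_S(x,gx)$ by a constant depending only on $m$ and $\delta$ (roughly $m+4\delta$) for every vertex $x$ on $[e,h]$. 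Thus $x^{-1}gx$ lies in the finite set $E$ of group elements of length $\leq m+4\delta$, and it is conjugate to $g$; fixing once and for all, for each relevant $e'\in E$, a conjugator $c_{e'}$ with $c_{e'}^{-1}gc_{e'}=e'$, one checks that $z:=x\,c_{x^{-1}gx}^{-1}$ lies in $Z(g)$ with $d_S(x,z)=\ell_S(c_{x^{-1}gx})\leq\max_{e'}\ell_S(c_{e'})=:C<\infty$. Hence $[e,h]\subset B(Z(g),C)$ for all $h\in Z(g)$, i.e.\ $Z(g)$ is $C$-quasiconvex. It then follows by standard facts (see \cite{BH}, III.$\Gamma$) that $Z(g)$ is finitely generated, that the inclusion $(Z(g),d_S^{Z(g)})\hookrightarrow(\Gamma,d_S)$ is a quasi-isometric embedding (which is the metric statement in b)), and that $Z(g)$, acting geometrically on the $\delta$-hyperbolic subset $geod(Z(g))\subset\mathcal G(\Gamma,S)$, is itself a hyperbolic group.

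\textbf{Where the difficulty sits.} For a), everything is soft once one knows that an infinite-order element of a hyperbolic group has positive translation length, equivalently generates an undistorted infinite cyclic subgroup; this single fact is the real content (it is the same phenomenon that prevents a hyperbolic group from containing $\mathbb Z^2$), and I would quote it from \cite{BH}, III.$\Gamma$, or \cite{Gr} rather than reprove it. For b) the two points needing care are the polygon estimate controlling the displacement $d_S(x,gx)$ along $[e,h]$ and, more substantially, the packaged assertion that a quasiconvex subgroup of a hyperbolic group is an undistorted hyperbolic subgroup; the latter I would again cite. In all cases the result is classical and should be attributed, as the statement does, to Gromov.
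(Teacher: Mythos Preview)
The paper does not prove this theorem at all: it is stated with a citation to Bridson--Haefliger (III,\,$\Gamma$,\,3.7,\,3.9,\,3.10) and attributed to Gromov, and the text moves on immediately. So there is no ``paper's own proof'' to compare against.

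Your sketch is correct and is essentially the standard argument one finds in the cited source. A couple of remarks. For part a), the positive translation length you invoke is exactly Theorem~3.8 of the paper, and the rest of your argument is the same mechanism as the paper's Corollary~3.3 (replace the $\rho_g$-local geodesic there by the global quasigeodesic $n\mapsto g^n$); so your proof fits well with what is already in Section~3. For part b), your displacement bound ``roughly $m+4\delta$'' is right in the generic case where $gx$ is $2\delta$-close to a point of $[e,h]$; near the two short sides of the quadrilateral the bound degrades to something like $3m+4\delta$, but of course any constant depending only on $m$ and $\delta$ suffices. The step from quasiconvexity to ``finitely generated, undistorted, and hyperbolic'' is indeed a packaged fact that you are right to cite rather than reprove.
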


\begin{theorem} {\bf (Stable length)} (\cite{BH}, III,\,3.10,\,3.15)\\
Let $(\Gamma,S)$ be a hyperbolic group. Define the stable length of $g\in\Gamma$ by
$$
\ell_S^{stable}(g)\,=\,\underset{n\to\infty}{\lim}\,
\frac{\ell_S(g^n)}{n}.
\eqno(3.14)
$$
There exists $\epsilon_0>0$ such that $\ell_S^{stable}(g)\geq\epsilon_0$ for all $g\in\Gamma$ of infinite order.
\end{theorem}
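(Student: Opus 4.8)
The plan is to establish the three ingredients separately: that the limit exists, that there is a uniform lower bound for elements whose conjugacy class meets $\Gamma$ only in long words, and that every remaining infinite-order element is conjugate into a fixed finite set, reducing the problem to a finite minimisation. From $\ell_S(g^{m+n})\le\ell_S(g^m)+\ell_S(g^n)$ and Fekete's subadditive lemma, $\ell_S^{stable}(g)=\lim_n\ell_S(g^n)/n=\inf_n\ell_S(g^n)/n$ exists. Since $S$ is symmetric, $|\ell_S(hg^nh^{-1})-\ell_S(g^n)|\le 2\ell_S(h)$, so $\ell_S^{stable}$ is a conjugacy invariant, and clearly $\ell_S^{stable}(g^m)=m\,\ell_S^{stable}(g)$; hence $g$ may be replaced by any conjugate or power at will.

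For the geometric core, let $g$ have infinite order, set $\mu_0(g)=\min\{\ell_S(hgh^{-1}):h\in\Gamma\}$, and suppose $\mu_0(g)\ge L(\delta,1,0)$; after conjugating, assume $\ell_S(g)=\mu_0(g)$. Concatenating geodesic segments $[g^k,g^{k+1}]$ ($k\in{\mathbb Z}$) in the Cayley graph produces a bi-infinite path $f$ with $f(0)=e$ and nodes $f(k\mu_0(g))=g^k$; by the standard consequence of conjugacy-minimality and $\delta$-thinness of triangles — this is precisely the situation behind the axis construction of Corollary 3.3 — the path $f$ is a $\mu_0(g)$-local, hence $L(\delta,1,0)$-local, $(1,0)$-quasigeodesic. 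Theorem 3.2 then places $f$ within Hausdorff distance $R(\delta,1,0)$ of a geodesic line, and since $f$ does not backtrack locally this upgrades to a global estimate $d(f(s),f(t))\ge\lambda(\delta)^{-1}|s-t|-C(\delta)$ with constants depending only on $\delta$. Taking $s=0$, $t=k\mu_0(g)$ gives $\ell_S(g^k)=d(e,g^k)\ge\lambda(\delta)^{-1}k\mu_0(g)-C(\delta)$, so
$$
\ell_S^{stable}(g)\;\ge\;\lambda(\delta)^{-1}\mu_0(g)\;\ge\;\lambda(\delta)^{-1}L(\delta,1,0)\;=:\;\epsilon_1(\delta)\;>\;0 .
$$

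If instead $\mu_0(g)<L(\delta,1,0)$, then $g$ is conjugate into the ball of radius $L(\delta,1,0)$ about $e$, which is a finite subset of $\Gamma$ because $(\Gamma,S)$ is finitely generated. Hence every infinite-order $g$ with $\ell_S^{stable}(g)<\epsilon_1(\delta)$ lies in one of finitely many conjugacy classes $\langle v_1\rangle,\dots,\langle v_N\rangle$, and it suffices to know $\ell_S^{stable}(v_i)>0$ for each $i$: then $\epsilon_0:=\min\{\epsilon_1(\delta),\ell_S^{stable}(v_1),\dots,\ell_S^{stable}(v_N)\}>0$ works, and it depends only on $\delta$ and on $|S|$ (through $L(\delta,1,0),\lambda(\delta),C(\delta)$ and the ball size).

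The one nontrivial input, and the main obstacle, is exactly that every infinite-order $g\in\Gamma$ has $\ell_S^{stable}(g)>0$ — equivalently that $g$ acts on ${\cal G}(\Gamma,S)$ as a loxodromic (not a parabolic) isometry, equivalently that $\langle g\rangle$ is undistorted in $\Gamma$. Unlike everything above, this genuinely uses that $\Gamma$ acts cocompactly, not merely that ${\cal G}(\Gamma,S)$ is $\delta$-hyperbolic: a general proper $\delta$-hyperbolic space does carry infinite-order parabolic isometries with vanishing stable length. It is part of Gromov's structure theory — the same circle of ideas underlying the preceding theorem on centralizers — and can be obtained, for instance, from the linear isoperimetric inequality for hyperbolic groups (a geodesic word for $g^n$ together with the much shorter word for $g^{-n}$ bounds a long thin loop that cannot fill a sub-quadratic-area disc unless $\ell_S(g^n)$ already grows linearly in $n$), or from the dynamics on $\partial\Gamma$ and the structure of maximal virtually cyclic subgroups. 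Granting it, the steps above convert the positivity into the asserted uniform bound $\ell_S^{stable}(g)\ge\epsilon_0$.
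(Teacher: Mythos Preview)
The paper does not supply its own proof of this theorem: it is stated with a citation to Bridson--Haefliger (III, 3.10, 3.15) and used as a black box later in Section~5 (Step~4 of the proof of Proposition~5.4). So there is no argument in the paper to compare against.

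Your outline is correct in its architecture. The reduction is the standard one: Fekete gives existence and conjugacy invariance of $\ell_S^{stable}$; for conjugacy classes with $\mu_0(g)\ge L(\delta,1,0)$ the concatenated axis is an $L$-local geodesic (this is exactly the computation the paper carries out in the proof of Corollary~3.3), and the local-to-global principle (the paper's Theorem~3.2) promotes it to a global quasigeodesic, giving a uniform lower bound $\epsilon_1(\delta)$ depending only on $\delta$; the remaining conjugacy classes meet a fixed finite ball, so there are finitely many of them.

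You are also right to flag the genuine content: the finitely many short classes still need $\ell_S^{stable}(v_i)>0$, i.e.\ that infinite-order elements of a hyperbolic group are never parabolic. This is where cocompactness (not just $\delta$-hyperbolicity of the Cayley graph) enters, and it is precisely what Bridson--Haefliger III.$\Gamma$.3.15 establishes. Your sketches of how to obtain it (linear isoperimetric inequality, or boundary dynamics plus virtual cyclicity of elementary closures) are the standard routes; none is trivial, and none is shorter than simply citing the reference, which is what the paper does. So your write-up is a faithful unpacking of what lies behind the citation, with the honest admission that the last step is imported.
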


\begin{theorem} {\bf (Finite subgroups)} (\cite{BH},\,III,3.2) \\
A hyperbolic group contains only finitely many conjugacy classes of finite subgroups.
\end{theorem}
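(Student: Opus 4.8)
The plan is to prove the a priori stronger statement that there is a constant $R_0=R_0(\delta)$, depending only on the hyperbolicity constant of the Cayley graph $\mathcal{G}(\Gamma,S)$, such that \emph{every} finite subgroup of $\Gamma$ is $\Gamma$-conjugate to a subgroup contained in the word ball $B_S(e,R_0)=\{x\in\Gamma:\ell_S(x)\leq R_0\}$. Since $\Gamma$ is finitely generated this ball is a finite set, so it has only finitely many subsets and hence only finitely many subgroups; finiteness of the number of conjugacy classes of finite subgroups follows immediately.

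To find the conjugating element I would attach to a finite subgroup $H\leq\Gamma$ its orbit $Y=H\cdot e\subset\mathcal{G}(\Gamma,S)$, a bounded, $H$-invariant subset (indeed $hY=hHe=He=Y$ for $h\in H$), and then a \emph{quasi-center} of $Y$. Writing $r(Y)=\inf_z\sup_{y\in Y}d(z,y)$ for the circumradius of $Y$, and $C_\varepsilon(Y)=\{z:\sup_{y\in Y}d(z,y)\leq r(Y)+\varepsilon\}$ for the (non-empty) set of approximate circumcenters, the crucial geometric input is that $\mathrm{diam}\,C_\varepsilon(Y)\leq 4\delta+2\varepsilon$, a bound \emph{independent of $Y$}, in particular of $\mathrm{diam}\,Y$. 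I would deduce this from the standard midpoint estimate in a $\delta$-hyperbolic space: if $z_1,z_2\in C_\varepsilon(Y)$ and $m$ is the midpoint of a geodesic $[z_1,z_2]$, then $\delta$-thinness of the triangle $(z_1,z_2,y)$ (Definition 3.1) yields
$$
d(m,y)\;\leq\;\max\{d(z_1,y),\,d(z_2,y)\}\,-\,\tfrac12\,d(z_1,z_2)\,+\,2\delta\qquad(y\in Y);
$$
taking the supremum over $y\in Y$ and using $\sup_{y}d(m,y)\geq r(Y)$ together with $d(z_i,y)\leq r(Y)+\varepsilon$ forces $d(z_1,z_2)\leq 4\delta+2\varepsilon$. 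Because $C_\varepsilon(Y)$ is defined purely in metric terms and $H$ acts on $\mathcal{G}(\Gamma,S)$ by isometries fixing $Y$ setwise, the set $C_\varepsilon(Y)$ is $H$-invariant; fixing $\varepsilon=1$ and any point $c\in C_1(Y)$ we obtain $d(c,hc)\leq 4\delta+2$ for all $h\in H$.

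It remains to round $c$ to a group element. Let $g\in\Gamma$ be a vertex of $\mathcal{G}(\Gamma,S)$ with $d(c,g)\leq\tfrac12$. Then for every $h\in H$,
$$
\ell_S(g^{-1}hg)\;=\;d(g,hg)\;\leq\;d(g,c)+d(c,hc)+d(hc,hg)\;=\;2\,d(g,c)+d(c,hc)\;\leq\;4\delta+3,
$$
so $g^{-1}Hg\subseteq B_S(e,R_0)$ with $R_0=4\delta+3$, which establishes the reduction and hence the theorem. The main obstacle is the quasi-center estimate of the previous paragraph: everything depends on the bound on $\mathrm{diam}\,C_\varepsilon(Y)$ being a function of $\delta$ alone and not deteriorating as the finite subgroup, equivalently the diameter of its orbit, grows. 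This is precisely the point at which hyperbolicity (thin triangles) enters and where the argument breaks down for an arbitrary finitely generated group.
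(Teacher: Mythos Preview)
Your argument is correct and is essentially the standard quasi-center proof; note however that the paper does not supply its own proof of this statement but simply cites \cite{BH}, III, $\Gamma$, 3.2, where precisely this argument appears. So there is nothing to compare against beyond observing that your write-up matches the cited source.
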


\begin{theorem} {\bf (Conjugacy problem)}\\
Let $(\Gamma,S)$ be a $\delta$-hyperbolic group and let $v\in\Gamma$ be an element of minimal word length in its conjugacy class $\langle v\rangle $. 

\begin{itemize}
\item[a)] There exists a section $\sigma_v:\langle v\rangle\to\Gamma$
of the $\Gamma$-equivariant map $\Gamma\to\langle v\rangle,$\\
$g\mapsto gvg^{-1}$ satisfying
$$
\ell_S(\sigma_v(u))\,\leq\,\frac12\ell_S(u)\,+\, C_{11}(\Gamma,S,\delta)
\eqno(3.15)
$$
for all $u\in\langle v\rangle$ and a suitable constant $C_{11}$ depending only on $\Gamma$, $S$ and $\delta$.
\item[b)] There exist constants $C_{12},\,C_{13},$ depending only on $\Gamma,\,S,\,\delta$, such that any section as in a) satisfies in addition
$$
d_S(\sigma_v(gug^{-1}),\,g\cdot\sigma_v(u))\,\leq\,C_{12}(\Gamma,S,\delta)\cdot\ell_S(g)+\ell_S(\langle v\rangle)+C_{13}(\Gamma,S,\delta)
\eqno(3.16)
$$
for all $g\in\Gamma$ and all $u\in\langle v\rangle.$
\end{itemize}
\end{theorem}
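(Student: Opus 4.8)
The plan is to take, for $u\in\langle v\rangle$, the element $\sigma_v(u)\in\Gamma$ to be a word-length-minimal conjugator of $v$ onto $u$ — that is, $g$ with $gvg^{-1}=u$ of minimal $\ell_S$, with a fixed rule breaking ties so that $\sigma_v$ becomes an honest section — and to control these minimal conjugators through the geometry of the Cayley graph $\mathcal G(\Gamma,S)$. Throughout I write $L=\ell_S(\sigma_v(u))$, $m=\ell_S(\langle v\rangle)=\ell_S(v)$, $n=\ell_S(u)\ge m$.

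For part a) the basic picture is the \emph{conjugacy rectangle}. With $g=\sigma_v(u)$, concatenate a geodesic $[e,g]$ of length $L$, a geodesic $[g,gv]$ of length $m$ coming from a geodesic word for $v$, and the geodesic $[gv,u]=u\cdot[g,e]$ of length $L$ (here $gv=ug$ and $d(ug,u)=\ell_S(g)=L$); this is a path $P$ from $e$ to $u$ of total length $2L+m$. First I would argue that $P$ is a quasigeodesic with universal constants: the three pieces are geodesics, so a failure of $P$ to be an $L_0$-local geodesic (with $L_0=L(\delta,1,0)$ as in Theorem~3.2) can only occur near one of the two corners $g$, $gv$; minimality of the conjugator $g$ together with the fact that $v$ is cyclically reduced (no cyclic conjugate of $v$ is shorter, since cyclic conjugates of $v$ are conjugate to it) serves to rule out, or to remove by an explicit word reduction, such backtracking, after which Theorem~3.2 places $P$ within bounded Hausdorff distance of the genuine geodesic $[e,u]$. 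Combined with the local-geodesic property this bounds the length of $P$ by $\ell_S(u)+C$ for a constant $C=C(\Gamma,S,\delta)$, i.e.\ $2L+m\le n+C$, which is exactly (3.15) with $C_{11}=C/2$. (Conjugacy classes with $\ell_S(\langle v\rangle)$ below a fixed threshold are finite in number and are absorbed into the constant.)

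For part b), observe that both $\sigma_v(gug^{-1})$ and $g\cdot\sigma_v(u)$ conjugate $v$ onto $w:=gug^{-1}$, so that $z:=(g\,\sigma_v(u))^{-1}\sigma_v(gug^{-1})$ lies in the centralizer $Z(v)$ and $d_S(\sigma_v(gug^{-1}),\,g\,\sigma_v(u))=\ell_S(z)$. For any conjugator $h$ of $v$ onto $w$ one has $d(e,h)=\ell_S(h)$ and $d(h,w)=\ell_S(hv^{-1})\in[\ell_S(h)-m,\ell_S(h)+m]$, so the Gromov product is $(e\mid w)_h=\ell_S(h)-\frac12\ell_S(w)+O(m)$, and by $\delta$-thinness the nearest point of $h$ on a geodesic $[e,w]$ lies within $O(m+\delta)$ of the midpoint of $[e,w]$ and at distance $\le\ell_S(h)-\frac12\ell_S(w)+O(m+\delta)$ from $h$. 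I would apply this to $h=\sigma_v(gug^{-1})$, whose length is $\le\frac12\ell_S(w)+C_{11}$ by part a), and to $h=g\,\sigma_v(u)$, whose length is $\le\ell_S(g)+\ell_S(\sigma_v(u))\le\ell_S(g)+\frac12\ell_S(u)+C_{11}$ by part a), while $\ell_S(w)\ge\ell_S(u)-2\ell_S(g)$; the triangle inequality through the common near-midpoint then yields a bound of the form $C_{12}\,\ell_S(g)+\ell_S(\langle v\rangle)+C_{13}$, i.e.\ (3.16), after a routine accounting of the linear and constant terms.

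The principal difficulty is the corner analysis in part a): turning the minimality of the conjugator (and the cyclic reducedness of $v$) into a genuine local-geodesic — hence quasigeodesic — property of the conjugacy rectangle, and thereby extracting the sharp coefficient $\frac12$ rather than merely a linear bound with an uncontrolled slope. This last point relies on the refinement, beyond the bare statement of Theorem~3.2, that a sufficiently-local geodesic in a $\delta$-hyperbolic space not only is a quasigeodesic but exceeds the distance between its endpoints by at most $O(\delta)$.
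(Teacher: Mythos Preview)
Your approach to part a) is the same conjugacy-rectangle picture the paper uses. The paper resolves the corner difficulty you flag by a small but decisive twist: instead of taking a minimal conjugator of $u$ to $v$, it takes $h(u)$ of minimal length such that $h(u)^{-1}uh(u)$ is some \emph{cyclic permutation} $u'=ba$ of a fixed geodesic word $v=ab$; minimality over all cyclic permutations is precisely what forces the concatenation $[e,h(u)]\vee[h(u),uh(u)]\vee[uh(u),u]$ to be an $\ell_S(\langle v\rangle)$-local $(1,10\delta)$-quasigeodesic (this is cited from \cite{Pu5}, 4.2). The section is then $\sigma_v(u)=h(u)b$ or $h(u)a^{-1}$, whichever is shorter. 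Your direct minimal conjugator to $v$ itself does not obviously give a local geodesic at the corners, and the cyclic-permutation device is exactly the ``explicit word reduction'' you allude to. Short or torsion conjugacy classes are handled separately in both approaches.

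For part b) your argument is genuinely different. The paper again uses the specific rectangles: it compares the quasigeodesic $[e,gh(u)]\vee[gh(u),guh(u)]\vee[guh(u),gug^{-1}]$ with the one built for $gug^{-1}$, splitting into two subcases according to whether $\ell_S(g)$ is small or large relative to $\ell_S(u)-\ell_S(\langle v\rangle)$; in the small case both rectangles are global quasigeodesics with the same endpoints, hence their midpoints are $C_7(\delta)$-close, and each midpoint is within $\tfrac12\ell_S(\langle v\rangle)$ of the relevant $\sigma_v$-value. Your projection argument via Gromov products onto $[e,gug^{-1}]$ is cleaner and, notably, uses only the inequality (3.15) rather than the explicit construction, so it addresses the ``any section as in a)'' clause directly. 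One caveat: tracking your $O(m)$ terms gives a bound of the shape $C\,\ell_S(g)+C'm+C''$ with $C'$ a priori larger than $1$, whereas (3.16) asserts coefficient exactly $1$ on $\ell_S(\langle v\rangle)$; the paper's midpoint comparison is what pins this coefficient down to $\tfrac12+\tfrac12=1$. For the applications in the paper this coefficient is in fact immaterial.
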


\begin{proof}

As we have not found a precise reference for this result, we outline its proof. For a conjugacy class $\langle v\rangle$ denote by $\ell_S(\langle v\rangle)=\ell_S(v)$ the minimal word length of its elements. 
The notations of 2.2 are understood. We distinguish two cases. \\
\\
{\bf Case 1:} $\ell_S(\langle v\rangle)\geq Max(L(\delta,1,10\delta), L(\delta,1,12\delta))$ and $\vert v\vert=+\infty$.\\
\\
For given $u\in\langle v\rangle$ let $h(u)\in\Gamma$ be an element of minimal word length such that $u'=h(u)^{-1}uh(u)$ is represented by a cyclic permutation of a minimal word representing $v$. 
Pick then $a,b\in\Gamma$ such that $ab=v,\,ba=u',\,\ell_S(a)+\ell_S(b)=\ell_S(\langle v\rangle).$ 
If $\ell_S(b)\leq\frac12\ell_S(\langle v\rangle)$ put $\sigma_v(u)=h(u)b$, otherwise put $\sigma_v(u)=h(u)a^{-1}$.
By construction $\sigma_v(u)v\sigma_v(u)^{-1}=u$. Any concatenation 
$$
\gamma=[e,h(u)]\vee [h(u),uh(u)]\vee [uh(u),u]
$$ 
of geodesic segments joining the denoted endpoints is then an $\ell_S(\langle v\rangle)$-local $(\delta,1,10\delta)$-quasigeodesic segment in ${\mathcal G}(\Gamma,S)$ (see \cite{Pu5},\,4.2). By our assumption $\ell_S(\langle v\rangle)\geq L(\delta,1,10\delta)$, so that $\gamma$ is at uniformly bounded Hausdorff distance $R(\delta,1,10\delta)$ from any true geodesic segment $[e,u]$. Therefore  
$$
\vert 2\ell_S(h(u))\,+\,\ell_S(\langle v\rangle)\,-\,\ell_S(u)\vert\,\leq\,C_6(\delta)
$$
for a universal constant $C_6(\delta)$. It follows that 
$$
\ell_S(\sigma_v(u))\,\leq\,\ell_S(h(u))\,+\,\frac12\ell_S(\langle v\rangle)\,\leq\,\frac12\ell_S(u)\,+\,\frac12C_6(\delta).
$$
Concerning assertion b), we distinguish two subcases. If $$\ell_S(g)< \frac{\ell_S(u)-\ell_S(\langle v\rangle)}{2}-L(\delta,1,12\delta)-\delta,$$ 
then the concatenation 
$$
\gamma_0=[e,gh(u)]\vee [gh(u),guh(u)]\vee [guh(u),gug^{-1}]
$$ 
of any geodesic segments joining the denoted endpoints will form a $L(\delta,1,12\delta)$-local $(\delta,1,12\delta)$-quasigeodesic segment. By 3.2 
it will be at uniformly bounded Hausdorff distance $R(\delta,1,12\delta)$ from any true geodesic segment $[e,gug^{-1}]$ and thus at Hausdorff distance at most $R(\delta,1,12\delta)+R(\delta,1,10\delta)$ from the $L(\delta,1,10\delta)$-local $(\delta,1,10\delta)$-quasigeodesic segment 
$$
\gamma_1=[e,h(gug^{-1})]\vee [h(gug^{-1}),gug^{-1}h(gug^{-1})]\vee [gug^{-1}h(gug^{-1}),gug^{-1}].
$$
It follows that the distance between the midpoints of these quasigeodesic segments is bounded by a constant $C_7(\delta)$ depending only on $\delta$. As the midpoint of 
$\gamma_0$ is at distance at most $\frac12\ell_S(\langle v\rangle)$ from $g\sigma_v(u)$ and that of $\gamma_1$ is at distance at most $\frac12\ell_S(\langle v\rangle)$ from $\sigma_v(gug^{-1})$ the assertion follows. If however $\ell_S(g)\geq \frac{\ell_S(u)-\ell_S(\langle v\rangle)}{2}-L(\delta,1,12\delta)-\delta$, then
$$
d_S(\sigma_v(gug^{-1}),\,g\cdot\sigma_v(u))\leq\ell_S(\sigma_v(gug^{-1}))+\ell_S(g\cdot\sigma_v(u))
$$ 
$$
\leq 2\ell_S(g)+\ell_S(u)+2C_6(\delta)
$$
$$
\leq\,4\ell_S(g)+\ell_S(\langle v\rangle)+2(L(\delta,1,12\delta)+\delta+C_6(\delta))
$$ 
and assertion b) follows as well.

\begin{figure}[htpb]
\begin{center}
\includegraphics[scale=0.17]{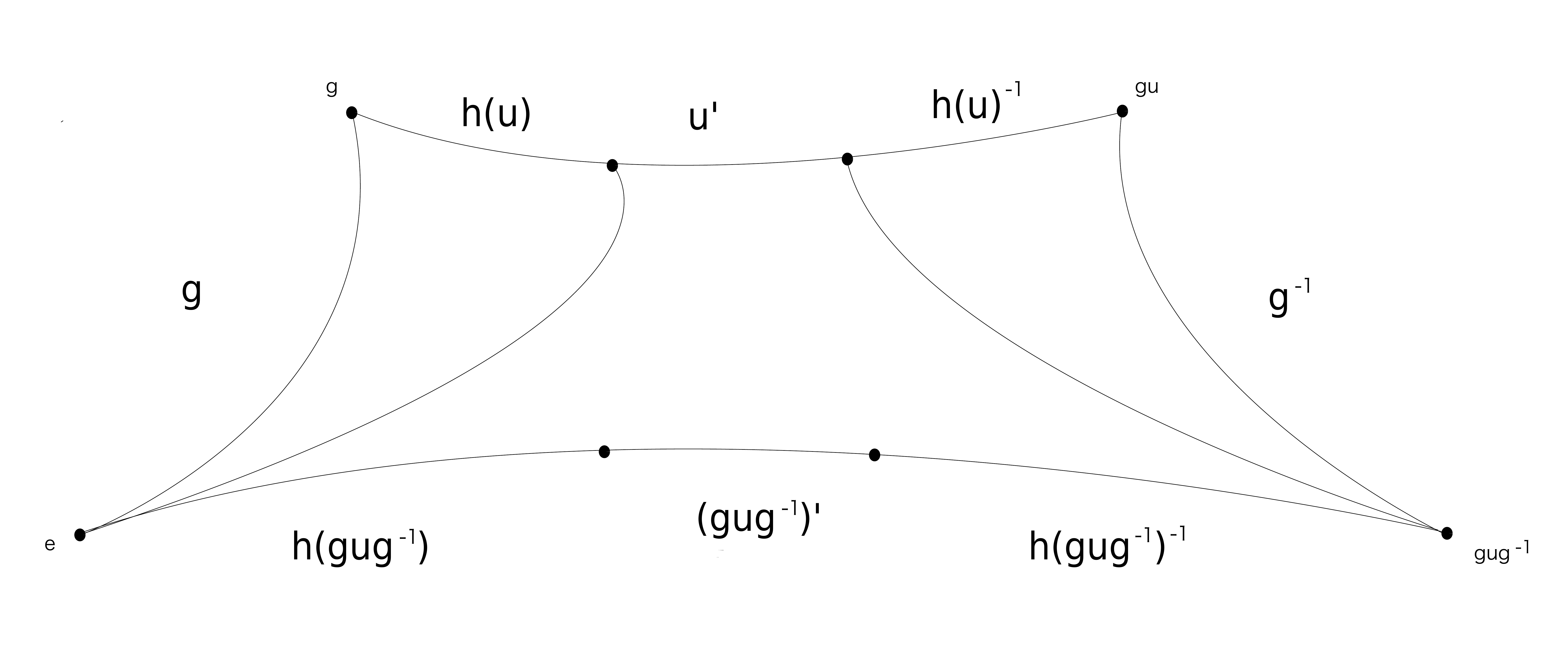}
\caption{Case 1 of 3.10}
\label{default}
\end{center}
\end{figure}

{\bf Case 2:} $\ell_S(\langle v\rangle)< Max(L(\delta,1,10\delta), L(\delta,1,12\delta))$ or $\vert v\vert<\infty$.\\
\\
This case concerns only a finite number of conjugacy classes. 
According to \cite{Pu5},\,4.1., for example, there exists a constant 
$C_{8}(\Gamma,S,\delta)$ such that some cyclic permutation of a minimal word representing 
an element $u$ in any of the finitely many conjugacy classes $\langle v\rangle$ under consideration will be of length less than $C_8(\Gamma,S,\delta)$. It follows that the midpoint of any geodesic segment $[e,u]$ will be situated at distance at most $\frac12C_8$ from an element $a\in\Gamma$ such that $u=ab,\,\ell_S(u)=\ell_S(a)+\ell_S(b)$ and $u'=ba$ satisfies $\ell_S(ba)\leq C_8$. Pick for any such $u'$ some $h(u')\in\Gamma$ of minimal word length satisfying 
$u'=h(u')vh(u')^{-1}$. The element  $\sigma_v(u)=ah(u')$ satisfies $\sigma_v(u)v\sigma_v(u)^{-1}=u$. Moreover it is at distance at most $C_{9}(\Gamma,S,\delta)$ from the midpoint of 
any geodesic segment $[e,u]$ because there are only finitely many choices for $u'$. It follows that 
$\ell_S(\sigma_v(u))\,\leq\,\frac12\ell_S(u)\,+\, C_{9}$ and $d_S(\sigma_v(gug^{-1}),\,g\cdot\sigma_v(u))\,\leq\,C_9\cdot\ell_S(g)+C_{10}$ for a suitable choice of $C_{10}(\Gamma,S,\delta)$. 
The assertion of the theorem holds then for $C_{11}=Max(\frac12 C_6(\delta),C_9(\Gamma,S,\delta)),C_{12}= Max(4,C_9(\Gamma,S,\delta))$ and $C_{13}=Max(C_7(\delta),2(L(\delta,1,12\delta)+\delta+C_6(\delta)),C_{10}(\Gamma,S,\delta))$.
\end{proof}

\begin{lemma}
Let $(\Gamma,S)$ be a $\delta$-hyperbolic group. Then there  exists a constant $C_{14}$ such that the following holds: if $g\in\Gamma$ is an element of infinite order and of minimal length in its conjugacy class, then there exists a set-theoretic section $\sigma':N(g)\to Z(g)$ of the extension
$$
0\,\to\,g^{\mathbb Z}\,\to\,Z(g)\,\to\,N(g)\,\to\,0
$$ 
which satisfies 
$$
\ell_S(\sigma'(h))\,\leq\,\ell_S(\langle g\rangle)\,+\,C_{14}(\Gamma,S,\delta),\,\forall h\in N(g).
\eqno(3.17)
$$
\end{lemma}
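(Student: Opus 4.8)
The plan is to choose, for every coset of $g^{\mathbb Z}$ in $Z(g)$, a representative lying near the quasi-axis of $g$; the key point will be that left translation by an element of $Z(g)$ displaces this quasi-axis only by a uniformly bounded amount, independent of the element and of the conjugacy class. Recall first that, since $g$ has infinite order, Theorem 3.7(a) makes $Z(g)$ virtually cyclic, so $N(g)$ is a finite group and a set-theoretic section certainly exists; the whole content of the statement is the length estimate.

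I would first dispose of the case in which $\ell_S(\langle g\rangle)$ lies below the threshold $L_0:=L(\delta,1,10\delta)$ of Theorem 3.2. In that case $\langle g\rangle$ contains an element of length $<L_0$, so only finitely many conjugacy classes of infinite order elements are concerned; for each of them $Z(g)$ is one fixed virtually cyclic group and $N(g)$ one fixed finite group, so choosing an arbitrary section in each and letting $M$ be the largest word length occurring among these finitely many sections, the asserted inequality holds for every such class with any $C_{14}\ge M$. From now on I assume $\ell_S(g)=\ell_S(\langle g\rangle)\ge L_0$.

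Since $g$ is of minimal word length in its conjugacy class, the bi-infinite concatenation $P_e=\bigcup_{n\in{\mathbb Z}}[g^n,g^{n+1}]$ of geodesic segments along the orbit $g^{\mathbb Z}$ is an $L_0$-local $(1,10\delta)$-quasigeodesic in the Cayley graph, by the cyclic-reducedness argument used in Case 1 of the proof of Theorem 3.10 (cf. \cite{Pu5}, 4.2); as $\ell_S(g)\ge L_0$, Theorem 3.2 (applied to both ends) produces a geodesic line $\ell_g$ with $d_{{\mathcal H}}(P_e,\ell_g)\le R:=R(\delta,1,10\delta)$. Now fix $h\in Z(g)$. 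Left translation by $h$ is an isometry of the Cayley graph taking $P_e$ to the analogous concatenation $P_h=h\cdot P_e=\bigcup_n[g^nh,g^{n+1}h]$ along the orbit $hg^{\mathbb Z}=g^{\mathbb Z}h$, and taking $\ell_g$ to a geodesic line $h\cdot\ell_g$ with $d_{{\mathcal H}}(P_h,h\cdot\ell_g)\le R$. Since $d(g^n,g^nh)=\ell_S(h)$ for all $n$, the paths $P_e$ and $P_h$ are at finite Hausdorff distance, hence so are the two geodesic lines $\ell_g$ and $h\cdot\ell_g$; but two geodesic lines in a $\delta$-hyperbolic space at finite Hausdorff distance are at Hausdorff distance at most $2\delta$ (the fact recalled in the proof of Corollary 3.3). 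Therefore
$$
d_{{\mathcal H}}(P_e,P_h)\,\le\,2R+2\delta=:C',
$$
a bound independent of $h$ and of the conjugacy class; this uniformity is the crux of the argument and the step I expect to be the main obstacle.

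It remains to extract short coset representatives. Given $h\in Z(g)$, the point $h\in P_h$ lies within $C'$ of some point $q\in P_e$, and $q$ lies on a segment $[g^m,g^{m+1}]$ of length $\ell_S(g)$, hence within $\frac12\ell_S(g)$ of one of its endpoints $g^k$, $k\in\{m,m+1\}$; thus $\ell_S(g^{-k}h)=d(g^k,h)\le \frac12\ell_S(\langle g\rangle)+C'$. Since $g^{-k}\in g^{\mathbb Z}$, the element $g^{-k}h$ still commutes with $g$ and represents the same class in $N(g)$ as $h$. I therefore set $\sigma'(\bar e)=e$ and, for each nontrivial $\bar h\in N(g)$, choose some $h$ representing it and put $\sigma'(\bar h)=g^{-k}h$; this is a set-theoretic section of $0\to g^{\mathbb Z}\to Z(g)\to N(g)\to 0$ with $\ell_S(\sigma'(h))\le \ell_S(\langle g\rangle)+C_{14}$, where $C_{14}=\max(M,\,2R(\delta,1,10\delta)+2\delta)$ depends only on $\Gamma$, $S$ and $\delta$. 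Apart from the bounded-displacement estimate above, everything here is bookkeeping, the one fussy point being the separate treatment of the finitely many short conjugacy classes, for which the local-quasigeodesic machinery is unavailable.
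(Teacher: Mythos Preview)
Your argument is correct and follows essentially the same route as the paper's proof: the paper observes that $Z(g)\subset Min(g)$ and invokes Corollary~3.3 to place $Z(g)$ in a tubular neighbourhood of the $g$-axis $f_e({\mathbb R})$, then extracts a short coset representative $g^{-k}h$ in each $g^{\mathbb Z}$-orbit; you have simply inlined the proof of Corollary~3.3 (the finite-Hausdorff-distance $\Rightarrow$ $2\delta$-close geodesic lines step) rather than citing it. The separate treatment of the finitely many short conjugacy classes is also identical in spirit, and your constant is even slightly sharper.
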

\begin{proof}
Let $g\in\Gamma$ be an element of infinite order which is of minimal word length $\ell_S(\langle g\rangle)$ in its conjugacy class $\langle g\rangle$. 
Consider the action of $\Gamma$ on its Cayley graph ${\mathcal G}(\Gamma,S)$. Because $g\in\Gamma$ is of infinite order (it suffices actually to demand that its order is different from two), it does not fix any edge of ${\mathcal G}(\Gamma,S)$ and its set of minimal displacement contains necessarily vertices: $Min(g)\cap {\mathcal G}_0(\Gamma,S)\neq\emptyset$. As $d_S(u,gu)\,=\,\ell_S(u^{-1}gu),\,g,u\in\Gamma,$ one finds
$$
\{u\in\Gamma,\,\ell_S(u^{-1}gu)=\ell_S(g)\}\,\subset\,Min_{{\mathcal G}(\Gamma,S)}(g).
\eqno(3.18)
$$
In particular 
$$
Z(g)\subset Min_{{\mathcal G}(\Gamma,S)}(g).
$$
Suppose that $\ell_S(g)\geq L(\delta,1,0)$ and consider a quasigeodesic $f_e:{\mathbb R}\to{\mathcal G}(\Gamma,S),$ which intertwines the translation by $\ell_S(g)$ on $\mathbb R$ and the 
translation by $g$ on ${\mathcal G}(\Gamma,S),$ and whose restriction to $[0,\ell_S(g)]$ is a geodesic segment joining $e$ and $g$.
Then  
$$
Min_{{\mathcal G}(\Gamma,S)}(g)\,\subset\,B(f_e({\mathbb R}),C_0(\delta)).
\eqno(3.19)
$$
according to 3.3. One finds therefore for every $h\in Z(g)$ an integer $k$ such that $d(h,g^k\cdot f_e([0,\ell_S(g)]))<C_0(\delta)$ or
$\ell_S(g^{-k}h)<\ell_S(g)+C_0(\delta).$
This shows that every $g^{\mathbb Z}$-orbit in $Z(g)$ contains an element of length at most $\ell_S(g)+C_0(\delta)$, which is equivalent to the assertion. It remains to deal with conjugacy classes of length less or equal to $L(\delta,1,0)$. There is only a finite number of them so that 
the claim still holds if the constant $C_{14}$ is chosen large enough.
\end{proof}

\section{Controlled resolutions of hyperbolic groups}

\subsection{Rips complexes}

\begin{definition}
Let $(X,d)$ be a metric space and let $R\geq 0$. The {\bf Rips-complex} $\Delta_\bullet^R(X)$ of $(X,d)$ is the simplicial subset of the Bar-complex $\Delta_\bullet(X)$ given by the Bar-simplices of diameter at most $R:$
$$
\Delta_n^R(X)\,=\,\{(x_0,\ldots,x_n)\in X^{n+1},\,d(x_i,x_j)\leq R,\,0\leq i,j\leq n\}.
\eqno(4.1)
$$
\end{definition}
If $f:X\to Y$ is a contractive map of metric spaces, i.e. 
$d(f(x),f(x'))\leq d(x,x'),$ $\forall x,x'\in X,$ then $f_\bullet$ preserves the Rips-complexes, i.e. $f_\bullet:\Delta_\bullet^R(X)\to \Delta_\bullet^R(Y),\,R\geq 0$. 
More generally, if $g:X\to Y$ is a map of metric spaces satisfying 
$$
d_Y(g(x),g(x'))\leq \varphi(d_X(x,x'))
$$ 
for $x,x'\in X$ and some monotone increasing function $\varphi:{\mathbb R}_+\to{\mathbb R}_+$, then 
the simplicial map $f_\bullet$ preserves Rips-complexes in the sense that 
$$
f_\bullet:\,\Delta_\bullet^R(X)\to \Delta_\bullet^{\varphi(R)}(Y).
\eqno(4.2)
$$
In particular, every isometric group action on a metric space $(X,d)$ gives rise to a simplicial action on the Rips-complexes $\Delta_\bullet^R(X)$ for any $R>0$.

\begin{definition}
The {\bf Rips chain complexes} $C_*^R(X,{\mathbb C})$
of a metric space are the chain complexes associated to the simplicial sets $\Delta_\bullet^R(X)$. They are subcomplexes of the Bar chain complex.
\end{definition}

The {\bf $\ell^1$-norm} $\parallel -\parallel_1$ is the largest norm on $C_*(\Gamma,{\mathbb C})$ such that any base vector corresponding to a simplex has norm one. The {\bf support} of a Bar-chain is the union of the support of the simplices occuring in it with nonzero multiplicity.

\subsection{Contracting metric trees}

A {\bf metric tree} $(T,d)$ is a one-dimensional, simply connected simplicial complex $T$, equipped with a length metric $d$ such that every edge is isometric to a closed bounded interval.
Let $y\in T$ be a base point and let $T'\,=\,\{x\in T,\,d(x,y)\in{\mathbb N}\}$.

\begin{lemma}
There exists a contracting chain homotopy
$$
\sigma_*(T,y):\,C_*(T',{\mathbb C})\,\to\,C_{*+1}(T',{\mathbb C})
\eqno(4.3)
$$
of the augmented Bar complex of $T'$ such that the following holds for $n\geq 1$:

\begin{itemize}
\item $Supp(\sigma_n(T,y)([x_0,\ldots,x_n]))\,\subset \{x\in T',\,d(x,geod(\{x_0,\ldots,x_n\}))<1\},$ 
\item $\parallel \sigma_n(T,y)([x_0,\ldots,x_n])\parallel_1\,\leq\,C_{15}(n,diam(\{x_0,\ldots,x_n\})).$
\end{itemize}

Note that the constant $C_{15}(n,diam(\{x_0,\ldots,x_n\}))$ does not depend on the given metric tree.
\end{lemma}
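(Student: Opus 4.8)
The plan is to realise $T'$ as the vertex set of a tree and to build $\sigma_*$ by an explicit downward induction on simplicial degree; essentially all the content sits in a single support estimate in degree $1$. Note first that the points of $T'$ lying on a geodesic segment of $T$ form, ordered by their distance from $y$, a chain of consecutive integers, consecutive members being at distance $1$. For a nonempty finite $F\subseteq T'$ the convex hull $geod(F)$ is a finite subtree of $T$; write $p(F)$ for its point closest to $y$ (equivalently, the deepest point lying on every geodesic $[y,x]$, $x\in F$), and fix once and for all a canonical vertex $\widetilde p(F)\in T'$ with $d(\widetilde p(F),p(F))<1$, say the $T'$-point of $geod(F)$ nearest $y$.

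Now put $\sigma_{-1}(1)=[y]$, let $\sigma_0([x])$ be the chain $\sum_j[w_j,w_{j+1}]$ where $y=w_0,w_1,\dots,w_{d(x,y)}=x$ are the $T'$-points on $[y,x]$ in order (so $\sigma_0([x])\in C_1(T',\mathbb C)$ has $\ell^1$-norm $d(x,y)$ and support $[y,x]\cap T'$), and for $n\ge1$ and a Bar-simplex $\xi=[x_0,\dots,x_n]$ set
$$
\sigma_n(\xi)\;=\;s_{\widetilde p(Supp\,\xi)}\bigl(\xi-\sigma_{n-1}(\partial\xi)\bigr),
$$
extended linearly, where $s_w$ is the operator $[y_0,\dots,y_m]\mapsto[w,y_0,\dots,y_m]$ of (2.29). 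The contraction identities in degrees $-1$ and $0$ are immediate; for $n\ge1$ the relations $\partial s_w+s_w\partial=\mathrm{id}$ (valid in degrees $\ge1$) together with the identity already known for $\sigma_{n-1}$ show that $\xi-\sigma_{n-1}(\partial\xi)$ is a cycle concentrated in degree $n\ge1$, whence $\partial\sigma_n(\xi)=\xi-\sigma_{n-1}(\partial\xi)$. Thus $\sigma_*$ is a contracting homotopy of the augmented Bar complex, manifestly independent of the metric of $T$ beyond the tree structure.

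For $n\ge2$ both assertions follow at once by induction: every simplex occurring in $\xi-\sigma_{n-1}(\partial\xi)$ is already supported within distance $1$ of $geod(Supp\,\partial_i\xi)\subseteq geod(Supp\,\xi)$, and $\widetilde p(Supp\,\xi)$ was chosen there as well; since $s_w$ preserves $\parallel\cdot\parallel_1$, this yields the recursion $C_{15}(n,D)=1+(n+1)C_{15}(n-1,D)$. The delicate case is $n=1$, where $\sigma_0(\partial_i\xi)$ runs all the way back to $y$: one must observe that in
$$
\xi-\sigma_0(\partial\xi)\;=\;[x_0,x_1]-\gamma_{y,x_1}+\gamma_{y,x_0}
$$
the two edge-paths coincide along their entire initial segment from $y$ up to the last common $T'$-vertex $\widetilde m$ — which lies within distance $1$ of the median $m=p(\{x_0,x_1\})\in[x_0,x_1]$ — so that this long segment cancels, leaving $[x_0,x_1]-\gamma_{\widetilde m,x_1}+\gamma_{\widetilde m,x_0}$. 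The latter is supported within distance $1$ of $[x_0,x_1]=geod(\{x_0,x_1\})$ and has $\ell^1$-norm $\le3+d(x_0,x_1)$, which starts the recursion with $C_{15}(1,D)=3+D$.

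I expect the cancellation of these ``tails towards $y$'' to be the only real obstacle. It works precisely because in a tree geodesics are unique and $geod(\partial_iF)\subseteq geod(F)$, so the parts of the various face contributions straying far from the hull are forced to agree on exactly what protrudes and therefore annihilate one another. Granting this, the chain-homotopy identity, the degree-$\ge2$ support inclusion and the norm estimates are a routine induction, and the constants $C_{15}(n,D)$ built above depend only on $n$ and $D=diam(Supp\,\xi)$, never on the tree $T$.
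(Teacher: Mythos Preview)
Your proposal is correct and follows essentially the same inductive scheme as the paper: define $\sigma_{-1}$ and $\sigma_0$ explicitly via the edge-path to the root, then set $\sigma_n(\xi)=s_w(\xi-\sigma_{n-1}\partial\xi)$ for $n\ge1$; the only substantive point is the tail cancellation in degree~$1$, which you identify and handle correctly, and the higher degrees follow by routine induction.

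The one difference is cosmetic: the paper cones at the \emph{first vertex} $x_0$ of the simplex (so $w=x_0$), whereas you cone at the auxiliary point $\widetilde p(\mathrm{Supp}\,\xi)$ near the root of the spanned subtree. Both choices lie within distance~$<1$ of $geod(\mathrm{Supp}\,\xi)$, so the support and norm estimates go through identically; the paper's choice is slightly simpler in that $x_0$ is automatically a vertex of $\xi$ and requires no tie-breaking convention, while your choice has no real advantage here (the symmetry it suggests is not actually used). Your constant $C_{15}(1,D)=3+D$ versus the paper's $R+2$ reflects only a different way of counting after the same cancellation.
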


\begin{proof}
Put $\sigma_{-1}(T,y):\,C_{-1}(T',{\mathbb C})={\mathbb C}\,\to\,C_0(T',{\mathbb C}),\,\,\,1\mapsto [y]$
and 
$$
\sigma_0(T,y):\,C_0(T',{\mathbb C})\,\to\,C_1(T',{\mathbb C}),\,\,\,[x]\mapsto\underset{i=0}{\overset{r-1}{\sum}}[x_i,x_{i+1}],\eqno(4.4)
$$ 
where $r=d(x,y),\,x_0=y, x_r=x$ and $d(x_i,x_{i+1})=1$, i.e. $x_0=y,\ldots,x_r=x$ are the points of $T'$ lying on the unique geodesic segment joining the base point $y$ and the point $x$ in $T$. For a $k$-simplex $[x_0,\ldots,x_k]\in\Delta_k(T'),\,k>0,$ define inductively 
$$
\sigma_{k}(T,y)([x_0,\ldots,x_k])\,=\,
s_{x_0}((Id-\sigma_{k-1,y}\circ\partial)([x_0,\ldots,x_k])).
\eqno(4.5)
$$ 
The linear maps $\sigma_*(T,y),*\geq -1,$ define then a contracting homotopy of 
the Bar complex $C_*(T',{\mathbb C}).$ Let $[x_0,x_1]\in C_1(T',{\mathbb C})$ be a one simplex.
The one chain $\sigma_0(T,y)\circ\partial([x_0,x_1])$ is supported on $\{x\in T',\,d(x,geod(\{x_0,x_1\}))<1\}$ and its $\ell^1$-norm is bounded by $d(x_0,x_1)+2.$ It follows that the assertion of the lemma hold for $n=1$ and $C_{15}(1,R)=R+2$. The assertions in higher degree follow by an easy induction argument.
\end{proof}

\begin{cor}
The restriction of $\sigma_*(T,y)$ to the Rips complexes $C^R_*(T',{\mathbb C}), R\geq 0,$ satisfies
$$
\sigma_n(T,y)(C_n^R(T',{\mathbb C}))\subset C_{n+1}^{R+2\delta+2}(T',{\mathbb C}),\,n\geq 0.\eqno(4.6)
$$
\end{cor}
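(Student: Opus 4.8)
The plan is to read the statement off from the support estimate of the preceding lemma (its first bullet), the only extra ingredient being the elementary fact that in a tree the geodesic hull of a finite set has the same diameter as the set. So I would first record this tree fact: for a finite $Y\subset T$ write $geod(Y)=\bigcup_{y,y'\in Y}[y,y']$; since the distance function of a tree is convex, for $p\in[a,b]$ and any $q\in T$ one has $d(p,q)\le\max(d(a,q),d(b,q))$, and applying this twice (first with a pair of points of $Y$ through which $p$ passes, then with a pair through which $q$ passes) yields $d(p,q)\le diam(Y)$ for all $p,q\in geod(Y)$; hence $diam(geod(Y))=diam(Y)$.

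With this in hand I would fix $n\ge 1$ and a Rips simplex $[x_0,\ldots,x_n]\in\Delta_n^R(T')$, so that $diam(\{x_0,\ldots,x_n\})\le R$. By the first bullet of the preceding lemma, every vertex $z$ occurring in $\sigma_n(T,y)([x_0,\ldots,x_n])$ satisfies $d(z,geod(\{x_0,\ldots,x_n\}))<1$. For two such vertices $z,z'$ I pick $p,p'\in geod(\{x_0,\ldots,x_n\})$ with $d(z,p)<1$ and $d(z',p')<1$; then, using $d(p,p')\le diam(geod(\{x_0,\ldots,x_n\}))=diam(\{x_0,\ldots,x_n\})\le R$,
$$
d(z,z')\,\le\,d(z,p)+d(p,p')+d(p',z')\,<\,1+R+1\,=\,R+2\,\le\,R+2\delta+2.
$$
Hence every $(n+1)$-simplex appearing in $\sigma_n(T,y)([x_0,\ldots,x_n])$ has diameter at most $R+2\delta+2$, i.e.\ lies in $\Delta_{n+1}^{R+2\delta+2}(T')$, and ${\mathbb C}$-linearity of $\sigma_n(T,y)$ then gives $\sigma_n(T,y)(C_n^R(T',{\mathbb C}))\subset C_{n+1}^{R+2\delta+2}(T',{\mathbb C})$.

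The degree $n=0$ I would treat separately, since the support bullet of the preceding lemma is stated only for $n\ge 1$: there the explicit formula for $\sigma_0$ exhibits $\sigma_0(T,y)([x])$ as a sum of edges $[x_i,x_{i+1}]$ with $d(x_i,x_{i+1})=1$, whence $\sigma_0(T,y)(C_0^R(T',{\mathbb C}))\subset C_1^{1}(T',{\mathbb C})\subset C_1^{R+2\delta+2}(T',{\mathbb C})$ for every $R\ge 0$ and $\delta\ge 0$. By linearity this completes the argument.

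I do not expect a genuine obstacle in this corollary: it is a direct bookkeeping of supports, and the only mildly non-formal ingredient is the tree-hull identity $diam(geod(Y))=diam(Y)$ recorded above. Note that the argument in fact produces the sharper bound $R+2$; the extra $2\delta$ in the statement is harmless slack, convenient for the later applications where $T$ is taken to be the approximating tree of a $\delta$-hyperbolic finite subset of a hyperbolic group.
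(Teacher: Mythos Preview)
Your proof is correct and follows essentially the same approach as the paper's: read off the diameter bound from the support estimate of the preceding lemma, using that the $1$-neighbourhood of $geod(\{x_0,\ldots,x_n\})$ has diameter at most $diam(\{x_0,\ldots,x_n\})+2\delta+2$, and handle $n=0$ from the explicit formula for $\sigma_0$. Your additional remark that in a tree one actually has $diam(geod(Y))=diam(Y)$, so the true bound is $R+2$ and the $2\delta$ is slack, is accurate; the paper simply invokes the general $\delta$-hyperbolic inequality $diam(geod(Y))\le diam(Y)+2\delta$ without specializing to $\delta=0$.
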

\begin{proof}
By construction $\sigma_0(T,y)(C_0(T',{\mathbb C}))\subset C_{1}^{1}(T',{\mathbb C})$. Let $[x_0,\ldots,x_n]\in\Delta_n(T')$ be a Bar $n$-simplex. According to 4.3 $\sigma_n(T,y)([x_0,\ldots,x_n])$ is supported in $\{x\in T',\,d(x,geod(\{x_0,\ldots,x_n\}))<1\}$. As this set is of diameter at most $diam(geod(\{x_0,\ldots,x_n\}))+2\leq diam(\{x_0,\ldots,x_n\})+2\delta+2$ the assertion follows from (4.5) by induction.
\end{proof}

\subsection{Comparing Bar- and Rips complexes for hyperbolic metric spaces (after Bader-Furman-Sauer)}

\begin{definition}
Let $(\Gamma,S)$ be a word-hyperbolic group and let $d_S$ be the associated word metric on $\Gamma$. An {\bf equivariant $\lambda$-geodesic bicombing of $(\Gamma,S)$ of weight $\mu>0$} is a $\Gamma$-equivariant linear map 
$$
\Theta_1: C_1(\Gamma,{\mathbb C})\to C_1^1(\Gamma,{\mathbb C})
\eqno(4.7)
$$
from the space of Bar-1-chains in $\Gamma$ to the space of Rips 1-1-chains such that
 the following assertions hold for $g_0,g_1\in\Gamma$:
\begin{itemize}
\item $\Theta_1$ is compatible with the simplicial differential, i.e. 
$$
\partial\Theta_1([g_0,g_1])\,=\,\partial[g_0,g_1]\,=\,[g_1]-[g_0].
\eqno(4.8)
$$
\item The support of the image of a simplex is contained in its $\lambda$-geodesic hull, i.e.
$$supp(\Theta_1([g_0,g_1])\subset geod_\lambda(\{g_0,g_1\}),\eqno(4.9)$$ 
\item $\Theta_1$ is of weight less or equal to $\mu$, i.e. 
$$\parallel\Theta_1([g_0,g_1])\parallel_1\,\leq\,
\mu(d_S(g_0,g_1)+1).\eqno(4.10)$$ 
\end{itemize}

\end{definition}

A bicombing is a simplicial analog of the geodesic segment joining two given points in a non-positively curved space.

\begin{theorem}(Bader-Furman-Sauer)\cite{BFS}
Let $(\Gamma,S)$ be a $\delta$-hyperbolic group. Let $\Theta_1$ be an equivariant $\lambda$-geodesic bicombing of $(\Gamma,S)$ of weight $\mu>0$. Then
the bicombing extends to a $\Gamma$-equivariant chain map $\Theta_*:C_*(\Gamma,{\mathbb C})\to C_*(\Gamma,{\mathbb C})$, which equals the identity in degree zero, and such that for suitable constants $R_n,\,\lambda_n,\,\mu_n,\,n\in{\mathbb N},$ the following holds: 

\begin{itemize}
\item $\Theta_n(C_n(\Gamma,{\mathbb C}))\,\subset\,C_n^{R_n}(\Gamma,{\mathbb C})$
\item $Supp(\Theta_n([g_0,\ldots,g_n]))\,\subset\,geod_{\lambda_n}(\{g_0,\ldots,g_n\}),\,\,\,\forall g_0,\ldots,g_n\in\Gamma.$
\item $\parallel \Theta_n([g_0,\ldots,g_n])\parallel_1\,\leq\,\mu_n(diam(\{g_0,\ldots,g_n\})+1),\,\,\,\forall g_0,\ldots,g_n\in\Gamma.$
\end{itemize}
\end{theorem}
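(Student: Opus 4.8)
The plan is to build $\Theta_*$ by induction on the simplicial degree, at each stage filling a boundary cycle by a controlled chain extracted from the tree approximation of the support of the simplex. Since $\Gamma$ acts freely on the set $\Gamma^{n+1}$ of Bar $n$-simplices, it suffices to define $\Theta_n$ on one representative of each $\Gamma$-orbit and to extend $\Gamma$-equivariantly; equivariance is then automatic, and since $R_n$, $\lambda_n$, $\mu_n$ and $diam$ are isometry invariant, the three estimates pass from orbit representatives to all simplices. Put $\Theta_0=id$ and let $\Theta_1$ be the given bicombing (so the estimates hold with $R_1=1$, $\lambda_1=\lambda$, $\mu_1=\mu$). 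Assume inductively, for $n\ge 2$, that $\Theta_k$, $k\le n-1$, has been constructed as a $\Gamma$-equivariant chain map equal to the identity in degree zero, with $\Theta_k(C_k(\Gamma,\mathbb C))\subset C_k^{R_k}(\Gamma,\mathbb C)$, with $Supp(\Theta_k([g_0,\dots,g_k]))\subset geod_{\lambda_k}(\{g_0,\dots,g_k\})$, and with $\|\Theta_k([g_0,\dots,g_k])\|_1\le\mu_k(diam(\{g_0,\dots,g_k\})+1)$, for already-fixed constants $R_k,\lambda_k,\mu_k$.

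Fix a representative $n$-simplex $\sigma=[g_0,\dots,g_n]$, write $F=\{g_0,\dots,g_n\}$, and set $c:=\Theta_{n-1}(\partial\sigma)$. From $\partial\Theta_{n-1}=\Theta_{n-2}\partial$ one gets $\partial c=0$; applying the support estimate to the faces of $\sigma$ (each supported in $F$) gives $Supp(c)\subset geod_{\lambda_{n-1}}(F)$ and $c\in C_{n-1}^{R_{n-1}}(\Gamma,\mathbb C)$; and $\|c\|_1\le(n+1)\mu_{n-1}(diam(F)+1)$. Let $T=T_{(F,g_0)}$ be the approximating tree of Proposition 3.4, $T'$ its set of integral points, $\Phi:F\to T'$ the canonical map, and let $\varphi:=\varphi_{\lambda_{n-1}}:geod_{\lambda_{n-1}}(F)\to T'$ and $\psi:T'\to geod(F)\subset\Gamma$ be the maps of Corollary 3.6, whose distortion is bounded by the constant $C:=C_5(\delta,n+1,\lambda_{n-1})$. \emph{This is the decisive point: $C$ does not depend on $diam(F)$.} Then $\varphi_\bullet(c)\in C_{n-1}(T',\mathbb C)$ is a cycle, each of its simplices has diameter at most $R_{n-1}+C$, and $\|\varphi_\bullet(c)\|_1\le\|c\|_1$. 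Applying the contracting homotopy $\sigma_*(T,\Phi(g_0))$ of Lemma 4.3 and Corollary 4.4, the chain $E:=\sigma_{n-1}(T,\Phi(g_0))(\varphi_\bullet(c))$ satisfies $\partial E=\varphi_\bullet(c)$ (since $\varphi_\bullet(c)$ is a cycle in degree $n-1\ge 1$), lies in $C_n^{R_{n-1}+C+2}(T',\mathbb C)$, and has $\|E\|_1\le C_{15}(n-1,R_{n-1}+C)\,\|c\|_1$ — the size constant $C_{15}$ being evaluated at the \emph{bounded} argument $R_{n-1}+C$, so this is mere multiplication by a constant. Finally $\psi\circ\varphi$ and the identity are two simplicial self-maps of $geod_{\lambda_{n-1}}(F)$, both inducing the identity on the homology of the Bar complex, so by Lemma 2.3 they are chain homotopic via the explicit operator $h:=h(id,\psi\circ\varphi)$; its value on any simplex of $c$ has vertices pairwise within $R_{n-1}+2C$ (using $d(x,\psi\varphi(x))\le C$) and lying in $geod_{\lambda_{n-1}}(F)$, and $\|h(c)\|_1\le n\|c\|_1$. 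Since $c$ is a cycle, $(\psi\circ\varphi)_\bullet(c)-c=\partial h(c)$. Define
$$
\Theta_n(\sigma)\;:=\;\psi_\bullet(E)\;-\;h(c).
$$

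One then checks the four properties on representatives. Boundary: $\partial\Theta_n(\sigma)=\psi_\bullet(\partial E)-\partial h(c)=(\psi\circ\varphi)_\bullet(c)-\bigl((\psi\circ\varphi)_\bullet(c)-c\bigr)=c=\Theta_{n-1}(\partial\sigma)$, so $\Theta_*$ extends to a chain map in degree $n$. Support: the vertices of the simplices of $\psi_\bullet(E)$ lie in $\psi(T')\subset geod(F)$, and those of $h(c)$ lie in $Supp(c)\cup\psi(T')\subset geod_{\lambda_{n-1}}(F)$, so $\lambda_n:=\lambda_{n-1}$ works (hence $\lambda_n=\lambda$ throughout). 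Rips bound: $\psi_\bullet(E)\in C_n^{R_{n-1}+2C+2}(\Gamma,\mathbb C)$ (an extra $C$ from the distortion of $\psi$) and $h(c)\in C_n^{R_{n-1}+2C}(\Gamma,\mathbb C)$, so $R_n:=R_{n-1}+2C+2$ works. Norm: $\|\Theta_n(\sigma)\|_1\le\|E\|_1+\|h(c)\|_1\le\bigl(C_{15}(n-1,R_{n-1}+C)+n\bigr)(n+1)\mu_{n-1}\,(diam(F)+1)$, which is linear in $diam(F)$; take the bracketed constant as $\mu_n$. Extending $\Gamma$-equivariantly over orbits and using $n=0,1$ as base of the induction completes the construction.

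The main difficulty is precisely to keep $R_n$, $\lambda_n$, $\mu_n$ independent of $diam(F)$ all along the induction. Two ingredients make this work: the diameter-independence of the tree-approximation distortion $C$ in Corollary 3.6 (the point emphasized after that corollary), and the discipline of only ever applying the tree contraction $\sigma_*(T,y)$ to Rips-bounded chains, so that its possibly superlinear size constant $C_{15}$ is always evaluated at a bounded argument; the size of $\Theta_n(\sigma)$ then inherits only the already-linear growth of $\|c\|_1$. A genuine but harmless complication is that the tree approximation is lossy, so $\psi_\bullet(E)$ alone cannot work: the correction term $h(c)$ furnished by Lemma 2.3 is unavoidable, but it is itself controlled. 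The remaining points — the low-degree base cases and the compatibility of the orbit-representative choices with the chain-map identity — are routine, since $\Theta_{n-1}$ is already globally and equivariantly defined.
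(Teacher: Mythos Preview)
Your proof is correct and follows essentially the same route as the paper: induct on degree, push $\Theta_{n-1}(\partial\sigma)$ into the approximating tree via $\varphi$, fill it there with the tree contraction $\sigma_{n-1}$, pull back via $\psi$, and correct with the canonical homotopy $h$ of Lemma~2.3. The only cosmetic differences are that the paper writes the filling as $\psi_\bullet\sigma_{n-1}\varphi_\bullet(c)+h(\psi\circ\varphi,id)(c)$ whereas you use $\psi_\bullet\sigma_{n-1}\varphi_\bullet(c)-h(id,\psi\circ\varphi)(c)$, and that the paper records the looser bound $\lambda_n=\lambda_{n-1}+2C_5$ while your sharper observation $\lambda_n=\lambda_{n-1}$ (since all new vertices lie in $geod(F)\cup Supp(c)\subset geod_{\lambda_{n-1}}(F)$) is also valid.
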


\begin{proof}
We recall the argument of Bader, Furman and Sauer based on induction with respect to the degree. As $\Theta_1$ is a $\Gamma$-equivariant and $\lambda$-geodesic bicombing of $(\Gamma,S)$ of weight $\mu$, the assertion holds for $n=1$ with $R_1=1,\,\lambda_1=\lambda,$ and $\mu_1=\mu$. Suppose that a chain map $\Theta_*$ satisfying the assertions of the theorem has been constructed up to degree $*=n-1$. The chain group $C_n(\Gamma,{\mathbb C})$ is a free $\Gamma$-module with basis given by the simplices $[e,g_1,\ldots,g_n],\,g_1,\ldots,g_n\in\Gamma$. We fix such a simplex $\alpha_n\in\Delta[\Gamma]_n$, let $(Supp(\alpha_n),e)$ be its support, equipped with the base point $e$, and consider its approximating tree $T=T_{(Supp(\alpha_n),e}$. Recall the roughly isometric maps
$$
\begin{array}{cc}
\varphi_{\lambda_{n-1}}:\,geod_{\lambda_{n-1}}(Supp(\alpha_n))\,\to\,T', & 
\psi:\,T'\,\to\,geod(Supp(\alpha_n))
\end{array}
$$
and put, following \cite{BFS},
$$
\Theta_n(\alpha_n)\,=\,\left(\psi\circ\sigma_{n-1}(T',e)\circ\varphi_{\lambda_{n-1}}\,+\,h(\psi\circ\varphi_{\lambda_{n-1}},id)\right)\circ\Theta_{n-1}\circ\partial(\alpha_n)
\eqno(4.11)
$$
Here $\sigma_{*}(T',e)$ is the contracting homotopy (4.3) of the discretized tree $T'$ with base point $e$ and $h(\psi\circ\varphi_{\lambda_{n-1}},id)$ demotes the canonical chain homotopy (2.30) 
between $\psi\circ\varphi_{\lambda_{n-1}}$ and the identity.  The map $\Theta_n$ is well defined because 
$$
\Theta_{n-1}(\partial(\alpha_n))\subset geod_{\lambda_{n-1}}(Supp(\alpha_n))
$$ 
by the induction assumption and extends in a unique way to a $\Gamma$-equivariant linear map of chain groups. Moreover $\Theta_{n-1}(\partial(\alpha_n))$ is a cycle in $C_{n-1}(\Gamma,{\mathbb C})$, and the chain $\Theta_n(\alpha_n)$ is a filling of it. Thus $\Theta_*:C_*(\Gamma,{\mathbb C})\to C_*(\Gamma,{\mathbb C})$ is a chain map for $*\geq 0$.\\
\\
Next we estimate the support of $\Theta_n(\alpha_n)$. The induction assumption, 3.6 and 4.3 imply 
$$
Supp(h(\psi\circ\varphi_{\lambda_{n-1}},id)\circ\Theta_{n-1}\circ\partial(\alpha_n))\,\subset\,geod_{2C_5(\delta,n+1,\lambda_{n-1})+\lambda_{n-1}}(Supp(\alpha_n))
\eqno(4.12)
$$
and
$$
Supp(\psi\circ\sigma_{n-1}(T',e)\circ\varphi_{\lambda_{n-1}}\circ\Theta_{n-1}\circ\partial(\alpha_n))\,\subset\,geod(Supp(\alpha_n))
\eqno(4.13)
$$
so that
$$
Supp(\Theta_n(\alpha_n))\,\subset\,geod_{\lambda_n}(Supp(\alpha_n))
$$
for $\lambda_n(\delta,\lambda,\mu)=2C_5(\delta,n+1,\lambda_{n-1}(\delta,\lambda,\mu))+\lambda_{n-1}(\delta,\lambda,\mu)$.\\
\\
Now we investigate the behavior of $\Theta_*$ on the Rips complexes. By induction assumption
$\Theta_{n-1}(\partial(\alpha_n))\,\subset\,C_{n-1}^{R_{n-1}}(\Gamma,{\mathbb C})$ for a suitable $R_{n-1}(\delta,\lambda,\mu)$. 
A look at (4.11), 3.6 and 4.4 shows then that 
$$
\Theta_n(\alpha_n)\,\subset\,C_n^{R_n}(\Gamma,{\mathbb C})
$$
$$
R_n(\delta,\lambda,\mu)=R_{n-1}(\delta,\lambda,\mu)+2C_5(\delta,n+1,\lambda_{n-1}(\delta,\lambda,\mu))+2\delta+2.
$$

For the $\ell^1$-norm of $\Theta_n$ we find, again by the induction assumption,
$$
\parallel\Theta_n(\alpha_n)\parallel_{\ell^1}\,\leq\,
(C_{15}(R_{n-1}(\delta,\lambda,\mu)+C_5(\delta,n+1,\lambda_{n-1}(\delta,\lambda,\mu)),n-1)+n)
\parallel\Theta_{n-1}(\partial(\alpha_n))\parallel_{\ell^1}
$$
$$
\leq (C_{15}(R_{n-1}(\delta,\lambda,\mu)+C_5(\delta,n+1,\lambda_{n-1}(\delta,\lambda,\mu),n-1)+n)\cdot(n+1)\cdot
\mu_{n-1}\cdot(diam(\{g_0,\ldots,g_n\})+1),
$$
so that the last assertion of the theorem holds with
$$
\mu_n(\delta,\lambda,\mu)= (C_{15}(R_{n-1}(\delta,\lambda,\mu)+C_5(\delta,n+1,\lambda_{n-1}(\delta,\lambda,\mu),n-1)+n)\cdot(n+1)\cdot\mu_{n-1}(\delta,\lambda,\mu).
$$
\end{proof}

The main result of this section is the 

\begin{theorem}
Let $(\Gamma,S)$ be a $\delta$-hyperbolic group and let $R>0$ be so large that the augmented Rips-complex $C^R_*(\Gamma,{\mathbb C})$ is contractible (the choice $R=6\delta +4$ suffices \cite{Gr}). Then there exists a $\Gamma$ equivariant chain map 
$$
\Theta'_*:C_*(\Gamma,{\mathbb C})\to C_*^R(\Gamma,{\mathbb C}),
$$
which equals the identity in degree zero, vanishes in sufficiently high degrees, and such that for suitable constants $C_{18},C_{19}>0$
the following holds:
$$
Supp(\Theta'_*(\alpha))\,\subset\,geod_{C_{18}}(Supp(\alpha)),\,\,\,
\forall\alpha\in\Delta_\bullet(\Gamma),
\eqno(4.14)
$$
$$
\parallel\Theta'_*(\alpha)\parallel_1\,\leq\,C_{19}\cdot(diam(Supp(\alpha))+1),\,\,\,
\forall\alpha\in\Delta_\bullet(\Gamma).
\eqno(4.15)
$$
Moreover one may suppose that $\Theta'_*$ vanishes on degenerate chains.
\end{theorem}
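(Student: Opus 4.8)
The plan is to obtain $\Theta'_*$ by re-running the inductive construction behind Theorem~4.6, interleaving it at every step with the Bader--Furman--Sauer subdivision so that the chains produced never leave the fixed Rips scale $R$. The construction of Theorem~4.6 as it stands has two defects relative to what is wanted: it lands in the ever-growing scales $C^{R_n}_n$, and it never terminates. Both are cured by the observation that the Rips \emph{simplicial} complex of $(\Gamma,d_S)$ has dimension $<N_0:=|B_{d_S}(e,R)|$, so that $C^R_*(\Gamma,\mathbb{C})$ is chain homotopy equivalent, $\Gamma$-equivariantly and with controlled distortion, to a finite-dimensional $\Gamma$-subcomplex carrying the full homology (for instance the distinct-vertex subcomplex $C^{R,\mathrm{dist}}_*(\Gamma)$, whose faces are again distinct-vertex): carrying the construction out there makes it stop in degree $N_0$, and, since only the degrees $<N_0$ then occur, all the degree-dependent constants are evaluated in a bounded range and hence are bounded by numbers depending only on $(\Gamma,S,\delta)$.

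Concretely I would induct on the degree $n$, keeping the construction inside $C^{R,\mathrm{dist}}_*(\Gamma)$ so that $\Theta'_n=0$ automatically for $n\ge N_0$. Set $\Theta'_0=\mathrm{id}$ and $\Theta'_1=\Theta_1$, an equivariant $\lambda$-geodesic bicombing of $(\Gamma,S)$ (one exists: pick, $\Gamma$-equivariantly in the unordered pair, a Rips $1$-chain along a geodesic); by (4.8) this has the correct boundary and lies in $C^1_1\subset C^R_1$. Assume $\Theta'_*$ is built up to degree $n-1$, $\Gamma$-equivariant, vanishing on degenerate chains, with $Supp(\Theta'_k(\alpha))\subset geod_{c_k}(Supp(\alpha))$ and $\|\Theta'_k(\alpha)\|_1\le\mu_k(diam(Supp(\alpha))+1)$. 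For a non-degenerate basis simplex $\alpha=[e,g_1,\dots,g_n]$, the chain $z_\alpha:=\Theta'_{n-1}(\partial\alpha)$ is a cycle in $C^R_{n-1}(\Gamma,\mathbb{C})$ supported in $geod_{c_{n-1}}(Supp(\alpha))$. Using the approximating tree $T=T_{(Supp(\alpha),e)}$, the roughly isometric maps $\varphi_{c_{n-1}}\colon geod_{c_{n-1}}(Supp(\alpha))\to T'$ and $\psi\colon T'\to geod(Supp(\alpha))$ of Corollary~3.6, the tree contraction $\sigma_{n-1}(T',e)$ of Lemma~4.3, and the homotopy $h$ of Lemma~2.3, I would form, as in (4.11),
$$\widetilde{w}_\alpha=\bigl(\psi\circ\sigma_{n-1}(T',e)\circ\varphi_{c_{n-1}}+h(\psi\circ\varphi_{c_{n-1}},\mathrm{id})\bigr)(z_\alpha),$$
which satisfies $\partial\widetilde{w}_\alpha=z_\alpha$ (since $z_\alpha$ is a cycle), $Supp(\widetilde{w}_\alpha)\subset geod_{c_{n-1}+2C_5(\delta,n+1,c_{n-1})}(Supp(\alpha))$, and $\|\widetilde{w}_\alpha\|_1$ linear in $diam(Supp(\alpha))$ by Corollary~4.4 and the induction hypothesis --- but $\widetilde{w}_\alpha$ only has some overshoot scale $R'_n=R+2\delta+2+2C_5(\delta,n+1,c_{n-1})$. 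I would then push it back to scale $R$ by the BFS subdivision operator $\mathrm{Sub}$: a $\Gamma$-equivariant chain map, homotopic to the identity, fixing $C^R_*$, carrying $C^{R'_n}_*$ into $C^{R}_*$, preserving degenerate subcomplexes, keeping supports within a bounded neighbourhood of the $geod$-hull of the original support, and inflating the $\ell^1$-norm by a factor depending only on $R'_n$ and $n$ (not on $diam(Supp(\alpha))$, since it subdivides simplices of diameter $\le R'_n$). Put $\Theta'_n(\alpha):=\mathrm{Sub}(\widetilde{w}_\alpha)\in C^R_n(\Gamma,\mathbb{C})$; then $\partial\Theta'_n(\alpha)=\mathrm{Sub}(z_\alpha)=z_\alpha$ because $z_\alpha$ already has scale $R$. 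Extend $\Gamma$-equivariantly and by $0$ on degenerate chains, and set $\Theta'_n=0$ for $n\ge N_0$; this is consistent, since the cycle received in the top nonzero degree already bounds inside the (acyclic-in-positive-degrees) finite-dimensional subcomplex, hence is $0$.

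Since $N_0$ depends only on $(\Gamma,S,\delta)$, the recursions $c_n\mapsto c_{n+1}$, $\mu_n\mapsto\mu_{n+1}$, $R'_n$ are iterated at most $N_0$ times, so $C_{18}:=\max_{n<N_0}c_n$ and $C_{19}:=$ the product of the finitely many norm-inflation factors are finite and give (4.14)--(4.15); the degree-zero normalization, the vanishing in high degrees, and the vanishing on degenerate chains are built into the construction. The step I expect to be the real obstacle is the subdivision operator $\mathrm{Sub}$: one needs the ``efficient subdivision algorithm for hyperbolic metric spaces'' of Bader--Furman--Sauer to deliver, out of the approximating trees, a chain-homotopy retraction onto the scale-$R$ part whose $\ell^1$-inflation is \emph{uniform} over simplices of bounded diameter, depending only on that bound and on the degree. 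Establishing this uniform efficiency is the heart of the matter; everything else is bookkeeping on top of Lemmas~2.3 and~4.3, Corollaries~3.6 and~4.4, and Theorem~4.6, together with the finite-dimensionality of the Rips simplicial complex.
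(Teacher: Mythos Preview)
Your approach is workable in outline but takes a genuinely harder route than the paper, and the piece you flag as ``the real obstacle'' --- the existence of a controlled subdivision operator $\mathrm{Sub}$ --- is precisely what the paper's argument shows to be unnecessary.

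The paper does not re-run the induction; it simply post-composes. Writing $\Theta_*$ for the chain map of Theorem~4.6 and $\pi_{as}$ for the antisymmetrization operator, the paper sets
\[
\Theta'_*\;=\;\pi_{as}\circ\eta_*\circ\pi_{as}\circ\Theta_*,
\]
where $\eta_*\colon C_*(\Gamma,\mathbb C)\to C^R_*(\Gamma,\mathbb C)$ is \emph{any} $\Gamma$-equivariant chain map equal to the identity in degree zero (it exists by standard homological algebra, since both complexes are free resolutions of $\mathbb C$). The inner $\pi_{as}$ already forces vanishing above degree $d=\sharp\{g:\ell_S(g)<R\}$, so only degrees $<d$ matter, and in those degrees $\Theta_*$ lands in $C^{R'}_*$ with $R'=\max_{n<d}R_n$. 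The crucial observation is that $C^{R'}_n$ is a \emph{finitely generated} free $\Gamma$-module: the finitely many basis simplices $[e,g_1,\dots,g_n]$ with $d_S(g_i,g_j)\le R'$ have images under $\eta_*$ supported in some ball of radius $C_{16}$ and of $\ell^1$-norm at most some $C_{17}$, constants depending only on $R'$ and $n<d$. So $\eta_*$ contributes only an additive constant to the support-hull radius and a multiplicative constant to the norm --- no efficiency hypothesis on $\eta_*$ is needed at all. The outer $\pi_{as}$ guarantees vanishing on degenerate chains.

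What this buys over your proposal: you never have to build $\mathrm{Sub}$ with uniform $\ell^1$-inflation; finite generation of $C^{R'}_*$ gives that for free to any chain map. Your proposal also has a small gap in the bookkeeping: you assert the construction stays in $C^{R,\mathrm{dist}}_*$, but the homotopy $h(\psi\circ\varphi_{c_{n-1}},\mathrm{id})$ manifestly produces repeated vertices, and the distinct-vertex chains do not form a direct summand as a subcomplex of the Bar complex unless you pass to antisymmetric chains --- which is exactly what the paper's $\pi_{as}$ does. Once you see that $\pi_{as}$ is the right projector, your termination argument and the paper's coincide, and the rest of your induction becomes superfluous.
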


\begin{proof}
By our hypothesis the Rips complex $C^R_*(\Gamma,{\mathbb C})$ defines, as well as the Bar-complex $C_*(\Gamma,{\mathbb C})$ a free resolution of the constant $\Gamma$-module $\mathbb C$. Consequently, there exists a $\Gamma$-equivariant chain map $\eta_*:C_*(\Gamma,{\mathbb C})\to C^R_*(\Gamma,{\mathbb C})$, which equals the identity in nonpositive degree. The antisymmetrization operator
$$
\begin{array}{cccc}
\pi_{as}: & C^R_*(\Gamma,{\mathbb C}) & \to & C^R_*(\Gamma,{\mathbb C}) \\
 & & & \\
 & [g_0,\ldots,g_n] & \mapsto & \frac{1}{(n+1)!}\,\underset{\sigma\in\Sigma_{n+1}}{\sum}\,
 Sig(\sigma)\,[g_{\sigma(0)},\ldots,g_{\sigma(n)}] \\
\end{array}
\eqno(4.16)
$$
is a chain map which equals the identity in degree zero. It vanishes in degrees above 
$d\,=\,\sharp\{ g\in\Gamma,\,\ell_S(g)<R\}$, because the vertices of Rips-simplices cannot be pairwise different in this case. Fix now a $\lambda$-geodesic  bicombing $\Theta_1$ of $\Gamma$ of weight $\mu$ and let $\Theta_*: C_*(\Gamma,{\mathbb C})\to C_*(\Gamma,{\mathbb C})$ the corresponding chain map. Define the desired chain map $\Theta'_*$ as the composition
$$
\Theta'_*\,=\,\pi_{as}\circ\eta_*\circ\pi_{as}\circ\Theta_*:\,C_*(\Gamma,{\mathbb C}) \to  C^R_*(\Gamma,{\mathbb C}).
\eqno(4.17)
$$
It vanishes in degrees above or equal to $d$. Let $R'=\underset{n<d}{Max}\,R_n(\delta,\lambda,\mu)$,\\
$\lambda'\,=\,\underset{n<d}{Max}\,\lambda_n(\delta,\lambda,\mu),$
$\mu'\,=\,\underset{n<d}{Max}\,\mu_n(\delta,\lambda,\mu)$ in the notations of 4.6. One has $Im(\Theta_*)\subset C^{R'}_*(\Gamma,{\mathbb C})$ by the previous theorem. The complex $C^{R'}_*(\Gamma,{\mathbb C})$ is given in degree $n$ by the finitely generated, free $\Gamma$-module with canonical basis the Bar-$n$-simplices of diameter at most $R'$ with first vertex $g_0=e$.
The support of the union of the images of these finitely many simplices is finite and therefore of finite diameter $C_{16}$. The $\ell^1$-norms of the images under $\eta_*$ of these finitely many simplices are bounded by some constant $C_{17}$.
It follows that 
$$
Supp(\Theta'_*(\alpha))\subset Supp(\eta_*(\Theta_*(\alpha)))\subset B((Supp(\Theta_*(\alpha))),C_{16})
\eqno(4.18)
$$
$$
\subset B(geod_{\lambda'}(Supp(\alpha)),C_{16})\,\subset\,
geod_{\lambda'+2C_{16}}(Supp(\alpha)).
$$
For the $\ell^1$-norm we note that
$$
\parallel\Theta'_*(\alpha)\parallel_1\,\leq\,
\parallel\eta_*(\Theta_*(\alpha)\parallel_1\,\leq\,
C_{17}\cdot\parallel\Theta'_*(\alpha)\parallel_1\,\leq\,
C_{17}\cdot\mu'\cdot(diam(Supp(\alpha))+1),
$$
which establishes the desired estimates. A further look at formula (4.17) shows finally that the subcomplex of degenerate chains is stable under $\Theta_*$. As the operator $\pi_{as}\circ\Theta_*$
 and therefore also $\Theta'_*$ vanishes on degenerate chains the last claim follows.
\end{proof}

The key point of the previous theorem is the fact that the {\bf $\ell^1$-norm of the image of a simplex is bounded by a linear function of its diameter}. 

\section{Norm estimates in Bar complexes}

Now the continuity properties of the operators constructed so far will be studied.
For this we introduce a family of norms on the bar resolution of a finitely
generated group.

\begin{definition}
Let $(\Gamma,S)$ be a finitely generated group and let $Ad(\Gamma)$ be its underlying set, equipped with the adjoint action. 
\begin{itemize}
\item[a)] The {\bf weight} of a simplex $\alpha=[g_0,\ldots,g_n;v]\in\Delta(\Gamma)_n\times
Ad(\Gamma)$ equals
$$
\vert\alpha\vert:=
d_S(g_0,g_1)+\ldots+d_S(g_{n-1},g_n)+d_S(g_n,vg_0)
\eqno(5.1)
$$ 
The weight of a chain is the maximum of the weights of its simplices.
\item[b)] For $\lambda\geq 1$ let $\parallel-\parallel_\lambda:\,C_*(\Gamma)\otimes Ad(\Gamma)\to{\mathbb R}_+$ be the weighted $\ell^1$-norm
$$
\parallel\underset{\alpha\in\Delta(\Gamma)\times Ad(\Gamma)}{\sum}\,a_\alpha\cdot u_\alpha\parallel_\lambda\,=\,
\underset{\alpha\in\Delta(\Gamma)\times Ad(\Gamma)}{\sum}\,\vert a_\alpha\vert\cdot\lambda^{\vert\alpha\vert}
\eqno(5.2)
$$
\end{itemize}
\end{definition} 

The action of $\Gamma$ on $(C_*(\Gamma)\otimes Ad(\Gamma),
\parallel\,\,\parallel_{\lambda})$ is
isometric.

\begin{prop}
Let $(\Gamma,S)$ be a word-hyperbolic group. Let $\Theta'_*:\,\overline{C}_*(\Gamma,{\mathbb C})\to \overline{C}_*^R(\Gamma,{\mathbb C})$ be a chain map as in 4.7. Then the linear operator 
$$
\widetilde{\nabla}_*\,=\,h(\Theta'_*,\,Id)\otimes Id_{Ad({\mathbb C}\Gamma)}:\,\, \overline{C}_*(\Gamma,{\mathbb C})\otimes Ad({\mathbb C}\Gamma)  \to  \overline{C}_{*+1}(\Gamma,{\mathbb C})\otimes Ad({\mathbb C}\Gamma) 
\eqno(5.3)
$$
satisfies
$$
\partial\circ\widetilde{\nabla}_*+\widetilde{\nabla}_*\circ\partial\,=\,Id_*
$$
in sufficiently large degrees $*\geq d>>0$ and there are constants $C_{20},\,C_{21}>0$ such that 
$$
\vert\widetilde{\nabla}_*(\alpha)\vert\,\leq\,\vert\alpha\vert+C_{20},
\eqno(5.4)
$$
and
$$
\parallel\widetilde{\nabla}_*(\alpha)\parallel_1\,\leq\,C_{21}\cdot\left(d_S(g_0,g_1)+\ldots+d_S(g_{k-1},g_k) \right),\,k=min(n,d),
\eqno(5.5)
$$
for all twisted Bar-simplices $\alpha=[g_0,\ldots,g_n;v]\in \Delta_n(\Gamma)\times Ad(\Gamma),\,n\in{\mathbb N}$.
\end{prop}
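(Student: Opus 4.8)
\begin{proofsketch}
The plan is to deduce the homotopy identity from Lemma~2.3 and Theorem~4.7, and then read both estimates off the explicit formula (2.30) for $h(\Theta'_*,Id)$ one simplex at a time. First I would regard $\Theta'_*$ as a chain endomorphism of $\overline{C}_*(\Gamma,{\mathbb C})$ via the inclusion of the Rips subcomplex. Since the augmented complexes $\overline{C}^R_*(\Gamma,{\mathbb C})$ and $\overline{C}_*(\Gamma,{\mathbb C})$ are both contractible and $\Theta'_0=Id$, the maps $\Theta'_*$ and $Id$ induce the identity in homology, so Lemma~2.3 gives $\partial\circ h(\Theta'_*,Id)+h(\Theta'_*,Id)\circ\partial=Id-\Theta'_*$, and since both maps annihilate degenerate chains this descends to the reduced complex. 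The differential on $\overline{C}_*(\Gamma,{\mathbb C})\otimes Ad({\mathbb C}\Gamma)$ is $\partial\otimes Id$, so tensoring with $Id_{Ad({\mathbb C}\Gamma)}$ yields $\partial\circ\widetilde{\nabla}_*+\widetilde{\nabla}_*\circ\partial=Id-\Theta'_*\otimes Id$; by Theorem~4.7 one has $\Theta'_*=0$ for $*\geq d$, which gives the first assertion.

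For the estimates I would start from the fact that, on a nondegenerate twisted simplex $\alpha=[g_0,\ldots,g_n;v]$, formula (2.30) reads
$$
\widetilde{\nabla}_n(\alpha)\,=\,\sum_{i=0}^{n}(-1)^i\,\bigl[\,\Theta'_i([g_0,\ldots,g_i]),\,g_i,g_{i+1},\ldots,g_n;\,v\,\bigr],
$$
where each simplex occurring in $\Theta'_i([g_0,\ldots,g_i])$ is concatenated with $[g_i,\ldots,g_n]$. The term $i=0$ equals $[g_0,g_0,g_1,\ldots,g_n;v]$ and vanishes in the reduced complex, while the terms with $i\geq d$ vanish because $\Theta'_i=0$; thus $i$ runs over $1\leq i\leq\min(n,d-1)$. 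Concatenation with a fixed simplex and tensoring by $v$ leave the $\ell^1$-norm unchanged, so (4.15) and $diam(\{g_0,\ldots,g_i\})\leq\sum_{j=0}^{i-1}d_S(g_j,g_{j+1})$ give
$$
\parallel\widetilde{\nabla}_n(\alpha)\parallel_1\,\leq\,\sum_{i=1}^{\min(n,d-1)}\parallel\Theta'_i([g_0,\ldots,g_i])\parallel_1\,\leq\,(d-1)\,C_{19}\,\Bigl(\sum_{j=0}^{k-1}d_S(g_j,g_{j+1})+1\Bigr),\qquad k=\min(n,d);
$$
since nondegeneracy forces each $d_S(g_j,g_{j+1})\geq1$, the bracket is at most twice the sum when $n\geq1$, while $\widetilde{\nabla}_0=0$, and (5.5) follows with $C_{21}=2(d-1)C_{19}$.

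The weight estimate (5.4) is the real point. A simplex of $\widetilde{\nabla}_n(\alpha)$ has the shape $[\beta_0,\ldots,\beta_i,g_i,\ldots,g_n;v]$, where $[\beta_0,\ldots,\beta_i]$ is a simplex of the Rips chain $\Theta'_i([g_0,\ldots,g_i])\in C_i^R(\Gamma,{\mathbb C})$ whose vertices all lie in $geod_{C_{18}}(\{g_0,\ldots,g_i\})$ (Theorem~4.7). By (5.1) its weight is
$$
\sum_{j<i}d_S(\beta_j,\beta_{j+1})\,+\,d_S(\beta_i,g_i)\,+\,\sum_{i\leq j<n}d_S(g_j,g_{j+1})\,+\,d_S(g_n,v\beta_0),
$$
where the first sum is $\leq iR\leq(d-1)R$ by the Rips bound and $d_S(g_n,v\beta_0)\leq d_S(g_n,vg_0)+d_S(g_0,\beta_0)$ by left invariance. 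Everything then reduces to the inequality $d_S(\beta_0,g_0)+d_S(\beta_i,g_i)\leq\sum_{j<i}d_S(g_j,g_{j+1})+2\rho_0+R$ for a constant $\rho_0=\rho_0(\delta,d,C_{18})$ independent of the mutual distances of the $g_j$. I would derive this from the geometric fact that $geod_{C_{18}}(\{g_0,\ldots,g_i\})$ lies in the $\rho_0$-neighbourhood of the polygonal path $[g_0,g_1]\cup\ldots\cup[g_{i-1},g_i]$: this combines Step~1 of the proof of Corollary~3.6 (the inclusion $geod_\lambda(F)\subset B(geod(F),\lambda/2+2\delta)$) with the standard consequence of $\delta$-thinness that a side $[g_a,g_b]$ of the geodesic polygon on $g_a,\ldots,g_b$ stays in a $(\delta\lceil\log_2(b-a)\rceil)$-neighbourhood of the remaining sides. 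Picking $p_0\in[g_a,g_{a+1}]$ with $d_S(\beta_0,p_0)\leq\rho_0$, and using $d_S(\beta_0,\beta_i)\leq R$ together with $d_S(p_0,g_0)+d_S(p_0,g_i)\leq\sum_{j<i}d_S(g_j,g_{j+1})$ (valid because $p_0$ sits on one of the segments), one gets the inequality; substituting back, the weight is at most $(i+1)R+2\rho_0+|\alpha|\leq dR+2\rho_0+|\alpha|$, which is (5.4) with $C_{20}=dR+2\rho_0$.

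The main obstacle is precisely that geometric lemma in the weight step: controlling $geod_{C_{18}}(\{g_0,\ldots,g_i\})$ by a uniform neighbourhood of the polygonal path, with a constant depending only on $\delta$, $C_{18}$ and the bounded degree $d$ and \emph{not} on the diameter of the simplex. This is the one place where hyperbolicity is genuinely used; the rest is bookkeeping with formula (2.30) and the support and norm bounds of Theorem~4.7.
\end{proofsketch}
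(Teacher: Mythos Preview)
Your argument is correct, and the $\ell^1$-estimate and the homotopy identity match the paper's proof almost verbatim. The only substantive difference is in the weight estimate (5.4), where you introduce an auxiliary geometric lemma (that $geod_{C_{18}}(\{g_0,\ldots,g_i\})$ lies in a $\rho_0$-tube around the polygonal path) and describe this as ``the one place where hyperbolicity is genuinely used''. In fact the paper dispenses with this lemma entirely and the step needs no hyperbolicity: it simply unfolds the definition (3.7) of $geod_{C_{18}}$. If $[g'_0,\ldots,g'_k]$ is a Rips simplex occurring in $\Theta'_k([g_0,\ldots,g_k])$, then $g'_0\in geod_{C_{18}}(\{g_0,\ldots,g_k\})$ means there are indices $l,m$ with $d(g_l,g'_0)+d(g'_0,g_m)\leq d(g_l,g_m)+C_{18}$; two triangle inequalities then give
\[
d(g_0,g'_0)+d(g'_0,g_k)\;\leq\;d(g_0,g_l)+d(g_l,g_m)+d(g_m,g_k)+C_{18}\;\leq\;\sum_{j=0}^{k-1}d(g_j,g_{j+1})+C_{18},
\]
and combining this with the Rips bound $\sum_{j<k}d(g'_j,g'_{j+1})+d(g'_k,g'_0)\leq(k+1)R$ and $d(g_n,vg'_0)\leq d(g_n,vg_0)+d(g_0,g'_0)$ yields $|\,\cdot\,|\leq|\alpha|+(d+1)R+C_{18}$ directly. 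So your tubular-neighbourhood argument works but is overkill; the hyperbolicity has already been spent in Theorem~4.7, and only the \emph{definition} of $geod_\lambda$ is needed here.
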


\begin{proof}
The chain map $\Theta'$ vanishes in in degree $*\geq d,$ so that in fact
$$
\widetilde{\nabla}([g_0,\ldots,g_n;v])=\underset{k=0}{\overset{min(n,d-1)}{\sum}}\,(-1)^i[\Theta'(g_0,\ldots,g_k),g_k,\ldots,g_n;v].
$$
From (4.15) one derives the estimate
$$
\parallel\widetilde{\nabla}([g_0,\ldots,g_n;v])\parallel_1\,\leq\,
\underset{k=0}{\overset{min(n,d-1)}{\sum}}\,C_{19}\cdot(diam(\{g_0,\ldots,g_k\})+1)
$$
for the $\ell^1$-norm, which proves the first assertion for $C_{21}=2dC_{19}$. To estimate the weights let $[g'_0,\ldots,g'_k]$ be a simplex appearing with non-zero multiplicity in the chain 
$\Theta'(g_0,\ldots,g_k)$. We find
$$\vert[g'_0,\ldots,g'_k,g_k,\ldots,g_n;v]\vert\,=\,
\underset{i=0}{\overset{k-1}{\sum}}\,d(g'_i,g'_{i+1})\,
+\,d(g'_k,g_k)\,+\,\underset{j=k}{\overset{n-1}{\sum}}\,d(g_j,g_{j+1})\,
+\,d(g_n,vg'_0)
$$
$$
\leq\,
\underset{i=0}{\overset{k-1}{\sum}}\,d(g'_i,g'_{i+1})\,
+\,d(g'_k,g'_0)+d(g'_0,g_k)+\,\underset{j=k}{\overset{n-1}{\sum}}\,d(g_j,g_{j+1})\,
+\,d(g_n,vg_0)+d(vg_0,vg'_0)
$$
$$
\leq\,d(g_0,g'_0)+d(g'_0,g_k)+\underset{j=k}{\overset{n-1}{\sum}}\,d(g_j,g_{j+1})\,
+\,d(g_n,vg_0)+(k+1)R
$$
because $[g'_0,\ldots,g'_k]\in\Delta_k^R(\Gamma)$. According to theorem 4.7 
$$
g'_0\in Supp(\Theta'_k([g_0,\ldots,g_k]))\subset geod_{C_{18}}(\{g_0,\ldots,g_k\}),
$$
so that there exist indices $0\leq l<m\leq k$ such that 
$$
d(g_l,g'_0)+d(g'_0,g_m)\leq d(g_l,g_m)+C_{18}.
$$
Therefore
$$
d(g_0,g'_0)+d(g'_0,g_k)+\underset{j=k}{\overset{n-1}{\sum}}\,d(g_j,g_{j+1})\,
+\,d(g_n,vg_0)+(k+1)R
$$
$$
\leq\,d(g_0,g_l)+d(g_l,g'_0)+d(g'_0,g_m)+d(g_m,g_k)+\underset{j=k}{\overset{n-1}{\sum}}\,d(g_j,g_{j+1})\,
+\,d(g_n,vg_0)+(k+1)R
$$
$$
\leq\,d(g_0,g_l)+d(g_l,g_m)+d(g_m,g_k)+\underset{j=k}{\overset{n-1}{\sum}}\,d(g_j,g_{j+1})\,
+\,d(g_n,vg_0)+(k+1)R+C_{18}
$$
$$
\leq\,\underset{j=0}{\overset{n-1}{\sum}}\,d(g_j,g_{j+1})\,
+\,d(g_n,vg_0)+(d+1)R+C_{18}\,=\,\vert [g_0,\ldots,g_n;v]\vert+(d+1)R+C_{18}
$$
as was to be shown.
\end{proof}

\begin{cor}
Let $(\Gamma,S)$ be a word-hyperbolic group and let $\widetilde{\nabla},\,\widetilde{B}$ be the
operators on $C_*(\Gamma)\otimes Ad(\Gamma)$ introduced in
(5.3) and (2.35). Let $\lambda_0>\lambda_1>1$ be a pair of real numbers and let
$\parallel-\parallel_{\lambda_0},\parallel-\parallel_{\lambda_1}$ be the
corresponding seminorms on $C_*(\Gamma)\otimes Ad(\Gamma)$. There exist constants $C_{22},C_{25}$ such that   
$$
\parallel\widetilde{\nabla}(\alpha)\parallel_{\lambda_1}\,
\leq\,C_{22}\cdot\parallel\alpha\parallel_{\lambda_0}
\eqno(5.6)
$$
and
$$
\frac{1}{k!}\parallel(\widetilde{\nabla}\circ
\widetilde{B})^k\,(\alpha)\parallel_{\lambda_1}\,\leq\,
C_{25}^k\cdot\parallel\alpha\parallel_{\lambda_0}
\eqno(5.7)
$$
for all $\alpha\in C_*(\Gamma)\otimes Ad(\Gamma)$ and
$k\in\N$. 
\end{cor}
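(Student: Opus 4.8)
The plan is to reduce both estimates to per-simplex norm bounds --- on $\widetilde{\nabla}$ for (5.6) and on the composite $\widetilde{\nabla}\circ\widetilde{B}$ for (5.7) --- and then to propagate the latter through the iteration by letting the weight parameter decrease gradually from $\lambda_0$ to $\lambda_1$. I work with a fixed twisted Bar-simplex $\alpha=[g_0,\ldots,g_n;v]$ of weight $W=|\alpha|$; since all operators in sight are linear and the norms $\|\cdot\|_\lambda$ are $\ell^1$ with respect to the simplex basis, the general case follows by summing over the simplices of an arbitrary chain. Write $d$ for the degree beyond which $\Theta'_*$ vanishes, so that $\widetilde{\nabla}$ is given by the truncated sum in the proof of Proposition 5.2.

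The first step consists of two observations. (i) The operator $\widetilde{B}$ preserves weights: its $i$-th term is $\beta_i=[v^{-1}g_i,\ldots,v^{-1}g_n,g_0,\ldots,g_i;v]$, and using the left-invariance of $d_S$ together with $d_S(v^{-1}g_n,g_0)=d_S(g_n,vg_0)$ one checks that the ordered sequence of edge lengths of $\beta_i$ is the $i$-th cyclic rotation of $(d_S(g_0,g_1),\ldots,d_S(g_{n-1},g_n),d_S(g_n,vg_0))$, followed by a trailing edge $d_S(g_i,\,vv^{-1}g_i)=0$; hence $|\beta_i|=W$ for every $i$. (ii) Consequently $\widetilde{\nabla}\widetilde{B}(\alpha)$ is a chain all of whose simplices have weight at most $W+C_{20}$ (apply (5.4) to each $\beta_i$), and whose $\ell^1$-norm satisfies
$$\|\widetilde{\nabla}\widetilde{B}(\alpha)\|_1\;\le\;\sum_{i=0}^{n}\|\widetilde{\nabla}(\beta_i)\|_1\;\le\;C_{21}\sum_{i=0}^{n}\bigl(\text{sum of the first }\min(n+1,d)\text{ edge lengths of }\beta_i\bigr)\;\le\;C_{21}\,d\,W,$$
where the middle step is (5.5) and the last holds because the edge-length sequences of the $\beta_i$ are the $n+1$ cyclic rotations of a single $(n+1)$-term sequence summing to $W$, so each of its terms is counted exactly $\min(n+1,d)\le d$ times. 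The decisive gain is that this bound does not see the degree $n$; without it the iteration would be hopeless, since $\widetilde{\nabla}\widetilde{B}$ raises degree by $2$.

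From (ii) one gets, for all reals $\mu>\nu\ge 1$ and any simplex $\beta$ of weight $w$, the estimate $\|\widetilde{\nabla}\widetilde{B}(\beta)\|_\nu\le C_{21}d\,w\,\nu^{w+C_{20}}=C_{21}d\,\nu^{C_{20}}\bigl(w(\nu/\mu)^{w}\bigr)\mu^{w}\le D(\mu,\nu)\,\|\beta\|_\mu$ with $D(\mu,\nu)=C_{21}d\,\nu^{C_{20}}\bigl(e\ln(\mu/\nu)\bigr)^{-1}$, using the elementary inequality $w\,t^{w}\le(e\ln(1/t))^{-1}$ valid for $0<t<1$ and all $w\ge 0$; this extends to arbitrary chains by $\ell^1$-summation. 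The same argument applied to $\widetilde{\nabla}$ alone, via (5.4)--(5.5), gives immediately $\|\widetilde{\nabla}(\alpha)\|_{\lambda_1}\le C_{21}W\lambda_1^{W+C_{20}}\le C_{22}\|\alpha\|_{\lambda_0}$ with $C_{22}=C_{21}\lambda_1^{C_{20}}\bigl(e\ln(\lambda_0/\lambda_1)\bigr)^{-1}$, which is (5.6).

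For (5.7), given $k\ge 1$ I would interpolate between the two radii by setting $\mu_j=\lambda_0-\tfrac{j}{k}(\lambda_0-\lambda_1)$ for $j=0,\ldots,k$, so $\mu_0=\lambda_0$, $\mu_k=\lambda_1$ and $\mu_j>1$; iterating the per-step bound gives $\|(\widetilde{\nabla}\widetilde{B})^{k}(\alpha)\|_{\lambda_1}\le\bigl(\prod_{j=1}^{k}D(\mu_{j-1},\mu_j)\bigr)\|\alpha\|_{\lambda_0}$. Since $\ln(\mu_{j-1}/\mu_j)=-\ln\!\bigl(1-\tfrac{(\lambda_0-\lambda_1)/k}{\mu_{j-1}}\bigr)\ge\tfrac{\lambda_0-\lambda_1}{k\lambda_0}$ and $\mu_j\le\lambda_0$, each factor is at most $\tfrac{C_{21}d\lambda_0^{C_{20}+1}}{e(\lambda_0-\lambda_1)}\,k$, so the product is bounded by $\bigl(\tfrac{C_{21}d\lambda_0^{C_{20}+1}}{e(\lambda_0-\lambda_1)}\bigr)^{k}k^{k}\le C_{25}^{k}\,k!$ with $C_{25}=\tfrac{C_{21}d\lambda_0^{C_{20}+1}}{\lambda_0-\lambda_1}$, using $k^{k}\le e^{k}k!$; dividing by $k!$ yields (5.7), the case $k=0$ being trivial as $\lambda_1<\lambda_0$. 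The main obstacle is observation (ii): establishing the degree-independent bound $\|\widetilde{\nabla}\widetilde{B}(\alpha)\|_1\le C_{21}dW$, for which the cyclic-rotation structure of $\widetilde{B}$ is exactly what makes the counting work. Once that is secured, the factor $k^{k}$ that appears from closing the gap $\lambda_0-\lambda_1$ at rate $1/k$ is precisely cancelled by the $1/k!$ in the statement.
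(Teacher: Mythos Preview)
Your argument is correct. For (5.6) it coincides with the paper's proof (modulo the harmless choice of using $wt^w\le(e\ln(1/t))^{-1}$ instead of the equivalent estimate $w\lambda_1^w\le(\log\lambda_0-\log\lambda_1)^{-1}\lambda_0^{w}$). The shared core of (5.7) is also the same: you and the paper both isolate the two facts that $\widetilde{B}$ preserves weights and that $\|\widetilde{\nabla}\widetilde{B}(\alpha)\|_1$ is bounded by a constant times $|\alpha|$ \emph{independently of the degree}. Your cyclic--rotation count for this last bound is in fact slightly sharper than the paper's, which simply bounds each of the $(d+1)$ inner edge sums by the full weight $|\alpha|$.

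Where you diverge is in how the iteration is organised. The paper keeps weight and $\ell^1$--norm apart through all $k$ steps, obtaining $|(\widetilde{\nabla}\widetilde{B})^k\alpha|\le|\alpha|+kC_{20}$ and $\|(\widetilde{\nabla}\widetilde{B})^k\alpha\|_1\le\bigl(C(|\alpha|+kC_{20})\bigr)^k$, and only then passes to the $\lambda$--norm via the single inequality $\frac{x^k}{k!}\le e^{x}$. You instead convert to a $\lambda$--norm at every step and close the gap $[\lambda_1,\lambda_0]$ in $k$ equal increments, so each step costs a factor $\sim k$ and the product is $k^k\le e^kk!$. Both routes produce a bound of the form $C_{25}^k\,k!$. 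The paper's bookkeeping is a bit more transparent about \emph{why} the $1/k!$ is exactly the right normalisation (it comes straight from the exponential series), while your interpolation argument is more modular: once the per--step operator bound $\|\widetilde{\nabla}\widetilde{B}\|_{\nu\leftarrow\mu}\le D(\mu,\nu)$ is in hand, the rest is pure real analysis and does not revisit the structure of $\widetilde{\nabla}$ or $\widetilde{B}$.
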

\begin{proof}

The norms $\parallel-\parallel_\lambda$ are weighted $\ell^1$-norms so that it suffices to verify the estimates on simplices
$\alpha=[g_0,\ldots,g_n;v]\in\Delta_n(\Gamma)\times Ad(\Gamma)$. Note that 
$$
diam(Supp(\alpha))=diam(\{g_0,\ldots,g_n\})\leq\vert\alpha\vert.
$$ 
Therefore, if 
$$
\widetilde{\nabla}(\alpha)=\underset{i\in I}{\sum}\,\mu_i\alpha_i,\,\mu_i\in{\mathbb C},\,\alpha_i\in\Delta_{n+1}(\Gamma),
$$ 
then 
$$
\parallel\widetilde{\nabla}(\alpha)\parallel_{\lambda_0}\,\leq\,
\lambda_0^{\vert\alpha\vert+C_{20}}\parallel\widetilde{\nabla}(\alpha)\parallel_{\ell^1}\,\leq\,\lambda_0^{\vert\alpha\vert+C_{20}}\cdot C_{21}\cdot diam(Supp(\alpha))
$$
$$
\leq C_{21}\cdot\lambda_0^{C_{20}}\cdot\vert\alpha\vert\cdot\lambda_0^{\vert\alpha\vert}\,
\leq\, C_{21}\cdot\lambda_0^{C_{20}}\cdot(\log\lambda_1-\log\lambda_0)^{-1}\cdot\lambda_1^{\vert\alpha\vert}\,=\,C_{22}\parallel\alpha\parallel_{\lambda_1}
$$
according to 5.2. This proves the first assertion. For the operator $\widetilde{\nabla}\circ \widetilde{B}$ we find
$$
\widetilde{\nabla}\circ\widetilde{B}(\alpha)\,=\,\underset{i=0}{\overset{n}{\sum}}\,
(-1)^{in}\,\widetilde{\nabla}([v^{-1}g_i,\ldots,v^{-1}g_n,g_0,\ldots,g_{i};v]).
$$
Proposition 5.4 leads therefore to the estimate
$$
\vert\widetilde{\nabla}\circ \widetilde{B}(\alpha)\vert\,\leq\,\vert\alpha\vert+C_{20}
$$
because the operator $\widetilde{B}$ preserves weights. To simplify notations put
$$
[h^{(i)}_0,\ldots,h^{(i)}_{n+1};v]=[v^{-1}g_i,\ldots,v^{-1}g_n,g_0,\ldots,g_i;v].
$$ 
For the $\ell^1$-norm of $\widetilde{\nabla}\circ\widetilde{B}$ we  
we find then 
$$
\parallel\widetilde{\nabla}\circ \widetilde{B}(\alpha)\parallel_{\ell^1}\,\leq\,
\underset{i=0}{\overset{n}{\sum}}\parallel\widetilde{\nabla}([h^{(i)}_0,\ldots,h^{(i)}_{n+1};v])\parallel_{\ell^1}
$$
$$
\leq\,C_{21}\underset{i=0}{\overset{n}{\sum}}\underset{j=0}{\overset{min(n,d)}{\sum}}\,d_S(h_j^{(i)},h_{j+1}^{(i)})\,\leq\,
C_{21}\cdot(d+1)\cdot\vert\alpha\vert.
$$
The previous calculations yield inductively the estimates
$$
\vert(\widetilde{\nabla}\circ \widetilde{B})^k(\alpha)\vert\,\leq\,\vert\alpha\vert+kC_{20}
$$
and
$$
\parallel(\widetilde{\nabla}\circ \widetilde{B})^k(\alpha)\parallel_{\ell^1}\,\leq\,
\underset{l=0}{\overset{k-1}{\prod}}\,
C_{21}(d+1)(\vert\alpha\vert+lC_{20})\,\leq\,
\left(C_{21}(d+1)(\vert\alpha\vert+kC_{20})\right)^k
$$
for all $k\in{\mathbb N}$. Put now $C_{23}=C_{21}\cdot(d+1)\cdot\lambda_1^{C_{20}}$, let $C_{24}>0$ 
satisfy $\exp(\frac{C_{23}}{C_{24}})\cdot\lambda_1\leq\lambda_0,$ and 
choose $C_{25}\geq C_{24}\cdot\exp\left(\frac{C_{23}\cdot C_{20}}
{C_{24}}\right)$. Then we find
$$
\frac{1}{k!}\parallel(\widetilde{\nabla}\circ \widetilde{B})^k(\alpha)\parallel_{\lambda_1}\,\leq\,\frac{1}{k!}
\lambda_1^{(\vert\alpha\vert+kC_{20})}\cdot\left(C_{21}(d+1)(\vert\alpha\vert+kC_{20})\right)^k
$$
$$
=\,\frac{\left(C_{23}(\vert\alpha\vert+kC_{20})\right)^k}{k!}\cdot\lambda_1^{\vert\alpha\vert}\,=\,C_{24}^k\cdot
\frac{\left(\frac{C_{23}}{C_{24}}(\vert\alpha\vert+kC_{20})
\right)^k}{k!}\cdot\lambda_1^{\vert\alpha\vert}
$$
$$
\leq\,C_{24}^k\cdot\exp
\left(\frac{C_{23}}{C_{24}}(\vert\alpha\vert+kC_{20})
\right)\cdot\lambda_1^{\vert\alpha\vert}\,=\,
\left(C_{24}\cdot\exp(\frac{C_{23}C_{20}}{C_{24}})\right)^k\cdot\left(
\exp(\frac{C_{23}}{C_{24}})\cdot\lambda_1\right)^{\vert\alpha\vert}
$$
$$
\leq\,C_{25}^k\cdot\lambda_0^{\vert\alpha\vert}\,=\,C_{25}^k\cdot\parallel\alpha\parallel_{\lambda^0}
$$
as claimed.
\end{proof}

\begin{prop}
Let $(\Gamma,S)$ be a word-hyperbolic group and let $v\in\Gamma$ be an element of infinite order and minimal word length in its conjugacy class 
$\langle v\rangle$. Let $\sigma:\langle v\rangle\to\Gamma$ be a section 
of the quotient map $\Gamma\to\langle v\rangle,\,g\mapsto gvg^{-1}$ satisfying (3.15) and (3.16), and let\\ $\sigma':N(v)\to Z(v)$ be a section of the epimorphism $Z(v)\to N(v)=
Z(v)/v^{\mathbb Z}$, which satisfies (3.17). Let 
$$
p_v: C_*(\Gamma,{\mathbb C}v)\to \widetilde{C}_*({\mathbb C}\Gamma)_{\langle v\rangle},\,I:\widetilde{C}_*({\mathbb C}\Gamma)_{\langle v\rangle}\to \widetilde{C}^\lambda_*({\mathbb C}\Gamma)_{\langle v\rangle}, \,\,\text{and}
$$
$$
s_{v,\sigma,\sigma'}:\,\widetilde{C}_*^\lambda({\mathbb C}\Gamma)_{\langle v\rangle}\to C_*(\Gamma,{\mathbb C}v)
$$ 
be the operators introduced in (2.47), (2.14) and (2.71). Fix finally real numbers $\lambda_0>\lambda_1>1$ such that $\lambda_1^3<\lambda_0$.
Then there exists a constant $C_{26}(\Gamma,S,\delta)$ independent of $\langle v\rangle$ such that
$$
\parallel s_{v,\sigma,\sigma'}\circ I\circ p_v(\omega)\parallel_{\lambda_1}\,
\leq\, C_{26}\cdot\parallel\omega\parallel_{\lambda_0}
\eqno(5.8)
$$
for all $\omega\in C_*(\Gamma,{\mathbb C}v)$.
\end{prop}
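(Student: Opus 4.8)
The plan is to use the explicit decomposition (2.71),
$$
s_{v,\sigma,\sigma'}\,=\,s'_{v,\sigma,\sigma'}\,-\,\chi_v\circ(\partial\circ s'_{v,\sigma,\sigma'}-s'_{v,\sigma,\sigma'}\circ\partial),
$$
together with the fact that $I\circ p_v$ is a chain map, to write
$$
s_{v,\sigma,\sigma'}\circ I\circ p_v\;=\;\Sigma\;-\;\chi_v\circ(\partial\circ\Sigma-\Sigma\circ\partial),\qquad \Sigma:=N_{\sigma'}\circ\iota_{v,\sigma}\circ N_{cyc}\circ I\circ p_v,
$$
and to estimate the two summands separately. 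Since $\|-\|_{\lambda_0},\|-\|_{\lambda_1}$ are weighted $\ell^1$-norms, it is enough to test on twisted Bar simplices; working with the reduced complexes (on which all operators in sight respect the degenerate subcomplex) one may assume $\alpha=[g_0,\ldots,g_n;v]$ is non-degenerate, so $|\alpha|\geq n$ because consecutive vertices are distinct.

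The first summand is harmless: I would check that $\Sigma$ is weight preserving and $\ell^1$-contractive on a simplex, by going through its five factors. The maps $p_v$ and $I$ only relabel, and the weight is constant on each $Z(v)$- resp. $\widetilde{T}_*^{\mathbb Z}$-orbit since those actions are isometric; $N_{cyc}$ replaces a simplex by a convex combination of its cyclic permutations, which have the same weight because $|[g_0,\ldots,g_n;v]|$ is a cyclically symmetric sum of distances; $\iota_{v,\sigma}$ sends $g_0\otimes\cdots\otimes g_n$ to $\sigma(g_0\cdots g_n)^{-1}\cdot[g_0,g_0g_1,\ldots,g_0\cdots g_n;g_0\cdots g_n]$, whose weight computes, using $\sigma(u)v\sigma(u)^{-1}=u$, to $\ell_S(g_0)+\cdots+\ell_S(g_n)$, independently of the choice of $\sigma$; and $N_{\sigma'}$ is a convex combination of isometric left translations. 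Hence every simplex occurring in $\Sigma(\alpha)$ has weight $|\alpha|$, so $\|\Sigma(\omega)\|_{\lambda_1}\leq\|\omega\|_{\lambda_1}\leq\|\omega\|_{\lambda_0}$, which contributes nothing to $C_{26}$.

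For the second summand put $D:=\partial\circ\Sigma-\Sigma\circ\partial$. By the discussion preceding Lemma 2.14 one has $D(\omega)\in Ker(I_v)$, so $\chi_v$ may be applied; moreover $\partial$ does not increase weights, so $D(\alpha)$ is supported on simplices of weight $\leq|\alpha|$ with $\|D(\alpha)\|_1=\mathrm{poly}(n)$. Now I would unwind $\chi_v=(Id-\widetilde{T}_*)\circ\chi\circ(Id-\widetilde{T}_*)^{-1}$. Both $(Id-\widetilde{T}_*)$ and $(Id-\widetilde{T}_*)^{-1}$ are weight preserving (each $\widetilde{T}_*$-orbit carries a single weight, as $\widetilde{T}_*$ preserves $|-|$), $(Id-\widetilde{T}_*)$ at most doubles the $\ell^1$-norm, while on an orbit $(Id-\widetilde{T}_*)^{-1}$ acts by partial summation, hence multiplies the $\ell^1$-norm by at most the length $w$ of the shortest window of consecutive orbit points supporting $D(\alpha)$. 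The decisive point is that $\chi$ itself, $[g_0,\ldots,g_{n-1};v]\otimes g_n\mapsto[g_n,g_0,\ldots,g_{n-1};v]\otimes g_n$, multiplies the weight by at most $3$: the prepended vertex adds $d(g_0,g_n)\leq|\alpha|$, and the new closing term obeys $d(g_n,vg_n)\leq d(g_n,vg_0)+d(g_0,g_n)$. Therefore $\chi_vD(\alpha)$ lives on simplices of weight $\leq 3|\alpha|+O(1)$ with $\ell^1$-norm $\leq \mathrm{poly}(n)\cdot w$.

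The remaining and main task is to bound $w$ by a polynomial in $|\alpha|$ uniformly in $\langle v\rangle$, which is where the inputs of Section 3 enter. Two simplices produced by $\Sigma$, hence by $D$, lie in the same $\widetilde{T}_*$-orbit only if they differ by $\widetilde{T}_*^{\,k}$ with $k=r+m(n+1)$; since the Hochschild cyclic operator satisfies $T_*^{\,n+1}=Id$ whereas $\widetilde{T}_*^{\,n+1}$ is left multiplication by $v^{-1}$, one gets $0\leq r\leq n$, while $v^{-m}$ is the ratio of two first vertices lying in $Z(v)$, each of $S$-length $O(|\alpha|)$ by the uniform section estimates (3.15), (3.17) and Lemma 3.12. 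Hence $\ell_S(v^{m})=O(|\alpha|)$, and the uniform stable-length bound $\ell_S^{stable}(v)\geq\epsilon_0$ of Theorem 3.8 forces $|m|=O(|\alpha|)$, so $w=O(|\alpha|^2)$. Assembling the estimates,
$$
\|\chi_vD(\alpha)\|_{\lambda_1}\;\leq\;\lambda_1^{\,3|\alpha|+O(1)}\cdot\mathrm{poly}(|\alpha|)\;\leq\;C\cdot\lambda_0^{|\alpha|}\;=\;C\,\|\alpha\|_{\lambda_0},
$$
the middle inequality using $\lambda_1^{3}<\lambda_0$ and $|\alpha|\geq n$, so that $(\lambda_0/\lambda_1^{3})^{|\alpha|}$ swallows all polynomial factors. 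Combining with the bound for $\Sigma$ gives the assertion with $C_{26}=1+C$, depending only on $\Gamma$, $S$, $\delta$ and the fixed pair $\lambda_0>\lambda_1$, since every constant used ($C_{11},\ldots,C_{14},\epsilon_0$ and the triangle-inequality bounds) is independent of the conjugacy class. I expect the window estimate in this last paragraph — reconciling the spreading caused by inverting $Id-\widetilde{T}_*$ with the factor-$3$ weight blow-up from $\chi$, using only conjugacy-class-independent control of the sections and of the stable length — to be the principal obstacle, and it is precisely what dictates the hypothesis $\lambda_1^3<\lambda_0$.
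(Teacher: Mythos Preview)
Your argument follows the same skeleton as the paper's proof: split $s_{v,\sigma,\sigma'}\circ I\circ p_v$ as $\Sigma-\chi_v\circ D$, observe that $\Sigma$ is weight-preserving, bound the weight blow-up of $\chi$ by a factor $3$, and control the $\ell^1$-blow-up of $(Id-\widetilde{T})^{-1}$ by bounding the orbit spread via Gromov's stable-length inequality (Theorem 3.8). The paper carries this out in five explicit steps, with the diameter of $\text{Supp}(D(\alpha))$ computed directly from the section estimates (3.15) and (3.17).

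One inaccuracy: the first vertices of the simplices produced by $\Sigma$ do \emph{not} lie in $Z(v)$; for the $i$-th cyclic permutation this vertex is $\sigma'(h)\sigma(g_{i-1}^{-1}vg_{i-1})^{-1}g_{i-1}^{-1}g_i$, which commutes with $v$ only if $g_i$ does. What you actually need---and what suffices---is that every vertex in $\text{Supp}(D(\alpha))$ has word length $O(|\alpha|)$, hence $\text{diam}(\text{Supp}(D(\alpha)))=O(|\alpha|)$; this is exactly what the paper's Step~2 establishes by writing out the support set $U$ explicitly and bounding $2\sup_{g\in U}\ell_S(g)$ using (3.15), (3.17) and $\ell_S(g_i^{-1}vg_i)\le|\alpha|$. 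Your reduction to non-degenerate simplices (to get $|\alpha|\ge n$ and absorb the polynomial-in-$n$ factors) is an attempt to address something the paper sweeps under a reference to ``lemma 5.2''; note however that $(Id-\widetilde{T}_*)^{-1}$ does not obviously preserve the degenerate subcomplex, so this reduction would need more justification.
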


\newpage

\begin{proof}
We proceed in several steps:\\
\\
{\bf Step 1:}\\
\\
At first we study the effect of the map $s_{v,\sigma,\sigma'_u}\circ I\circ p_v$ on the weight of chains. The weight of a Bar-simplex does not change under the action of the centralizer $Z(v)$ or the lifted cyclic operator $\widetilde{T}_*$. Moreover, it does not increase under the boundary map. These observations suffice to deduce that the operators $s'_{v,\sigma,\sigma'_u}\circ I\circ p_v$ and $(\partial\circ s'_{v,\sigma,\sigma'_u}-s'_{v,\sigma,\sigma'_u}\circ\partial)\circ I\circ p_v$ do not increase weights. The operator $Id-\widetilde{T}_*$ preserves weights and the same holds for its inverse on the subcomplex $Ker(I_v)_*$. For the homotopy operator $\chi$ (see 2.68) and a simplex $\beta=[h_0,\ldots,h_n;v]\in $ we find
$$
\vert \chi(\beta)\vert\,=\,\vert[h_n,h_0,\ldots,h_n;v]\vert\,=
$$
$$
=\,d_S(h_n,vh_n)+d_S(h_n,h_0)-d_S(h_n,vh_0)+\vert\beta\vert
\,\leq 2d_S(h_0,h_n)+\vert\beta\vert\,\leq\,3\vert\beta\vert,
$$
so that finally
$$
\vert 
s_{v,\sigma,\sigma'}\circ I\circ p_v(\alpha)\vert\,\leq\,3\vert\alpha\vert
\eqno(5.9)
$$
for all $\alpha\in \Delta_n(\Gamma)$. \\
\\
{\bf Step 2:}\\
\\
Estimating the $\ell^1$-norm of the operator in question is more involved. \\
Let $\alpha=[g_0,\ldots,g_n]\in\Delta_n(\Gamma)$ be a Bar-simplex. One has
$$
p_v(\alpha)\,=\,g_n^{-1}vg_0\otimes g_0^{-1}g_1\otimes\ldots g_{n-1}^{-1}g_n
$$
and $N_{cyc}\circ I\circ p_v(\alpha)$ is an average of tensors of the form
$$
g_i^{-1}g_{i+1}\otimes\ldots\otimes g_{n-1}^{-1}g_n\otimes g_n^{-1}vg_0\otimes g_0^{-1}g_1\otimes\ldots\otimes g_{i-1}^{-1}g_i,\,\,\,0\leq i\leq n,
$$
so that $s'_{v,\sigma,\sigma'}\circ I\circ p_v(\alpha)$ becomes an average of simplices of the type
$$
\sigma'(h)\sigma(g_i^{-1}vg_i)g_i^{-1}
[g_{i+1},\ldots,g_n,vg_0,\ldots, vg_{i}],\,\,\,0\leq i\leq n,\,\,h\in N(v).
$$
The support of the chain 
$$
(\partial\circ s'_{v,\sigma,\sigma'}-s'_{v,\sigma,\sigma'}\circ\partial)\circ I\circ p_v(\alpha)
$$ 
is therefore contained in the set
$$
U\,=\,\{\sigma'(h)\sigma(g_i^{-1}vg_i)g_i^{-1}g_j,\,\,\sigma'(h)\sigma(g_i^{-1}vg_i)g_i^{-1}vg_k,0\leq k\leq i<j\leq n,\,h\in N(v)\}.
$$
Its diameter is bounded by 
$$
diam(U)\leq 2\,\underset{g\in U}{\sup}\,\,\ell_S(g)\,\leq\,
\,2\underset{h\in N(v)}{\sup}\,\,\ell_S(\sigma'(h))\,+
\,2\underset{0\leq i\leq n}{\sup}\,\,\ell_S(\sigma(g_i^{-1}vg_i))\,+
$$
$$
+\,2Max\left( 
\underset{\,0\leq i<j\leq n}{\sup}\,\ell_S(g_i^{-1}g_j)\,,\,
\underset{\,0\leq i\leq k\leq n}{\sup}\,\ell_S(g_k^{-1}vg_i)
\right)
$$
$$
\leq\,
2(\ell_s(v)+C_{14})\,+\,2\underset{0\leq i\leq n}{\sup}\,(\ell_S(g_i^{-1}vg_i)+C_{11})\,+
\,2\vert\alpha\vert\,\leq\,6\vert\alpha\vert\,+\,2(C_{11}+C_{14})\,\leq\,C_{26}\vert\alpha\vert
$$
by (3.17), (3.15), and the minimality of the word length of $v$ in its conjugacy class.\\ 
\\
{\bf Step 3:}\\
\\
Every chain $c\in Ker(I_v)_n$ can be written as 
$$
c=\underset{i\in I}{\sum}\, \mu_i\,(Id-\widetilde{T}_n^{k_i})(\alpha_i),
\,\mu_i\in {\mathbb C},\,\alpha_i\in\Delta_n(\Gamma),
$$
with 
$$
\parallel c\parallel_{\ell^1}\,=\,\underset{i\in I}
{\sum}\,\vert \mu_i\vert,\, Supp(\alpha_i)\subset Supp(c),\,Supp(\widetilde{T}_n^{k_i}\alpha_i)\subset Supp(c).
$$
Consequently
$$
(1-\widetilde{T}_n)^{-1}(c)\,=\,
\underset{i\in I}{\sum}\,\mu_i\,\left(\underset{j=0}{\overset{k_i-1}{\sum}}\,
\widetilde{T}_n^j\right)(\alpha_i),
$$
and 
$$
\parallel (1-\widetilde{T}_n)^{-1}(c)\parallel_{\ell^1}\,\leq\,
\underset{i\in I}{\sum}\,k_i\vert \mu_i\vert\,\leq\,
\left(\underset{i\in I}{\sup}\,k_i\right)\,\underset{j\in I}{\sum}\,\vert \mu_i\vert\,=\,\left(\underset{i\in I}{\sup}\,k_i\right)\parallel c\parallel_{\ell^1}.
$$
\\
{\bf Step 4:}\\
\\
We want to bound $\left(\underset{i\in I}{\sup}\,k_i\right)$ in terms of the support of the chain $c$. One finds
$$
\left(\underset{i\in I}{\sup}\,k_i\right)\leq Max\{ k\in{\mathbb N},\,\exists \alpha',\alpha''\in \Delta_n(\Gamma),\,Supp(\alpha')\cup Supp(\alpha'')\subset Supp(c),\,\widetilde{T}_n^k(\alpha')=\alpha''\}.
$$
For a triple $(\alpha',\alpha'',k)$ let 
$k=j(n+1)+r,\,0\leq r\leq n$. Then the identity $\widetilde{T}_n^k(\alpha')=\alpha''$
implies $v^{-j}\cdot \widetilde{T}_n^r(\alpha')=\alpha''$ because $\widetilde{T}_n^{n+1}=v^{-1}$ on $\Delta_n(\Gamma)\times Ad(\Gamma)$, and therefore 
$$
v^{-j}\cdot Supp(\alpha')\cap Supp(\alpha'')\neq\emptyset
$$
as $Supp(\alpha')\cap Supp(\widetilde{T}_n^r(\alpha'))\neq\emptyset.$ 
Let $h\in Supp(\alpha')\subset Supp(c)$ and $h'\in Supp(\alpha'')\subset Supp(c)$ be such that $v^{-j}h=h'$. According to Gromov (see 3.8) the stable length of $v$ is bounded from below: 
$$
0<\epsilon_0\,\leq\,\frac{\ell_S(v^{-j})}{j}\,=\,\frac{d_S(h,h')}{j}\,\leq\,\frac{diam(Supp ( c ))}{j}
$$
and
$$
k\,\leq\,(n+2)\cdot j\,\leq\,\frac{(n+2)\cdot diam(Supp( c ))}{\epsilon_0}
$$
and therefore
$$
\left(\underset{i\in I}{\sup}\,k_i\right)\,\leq\,
\epsilon_0(\Gamma,S)^{-1}\cdot (n+2)\cdot diam(Supp(c)).
$$

{\bf Step 5:}\\
\\
Altogether we have established the estimate

$$
\parallel \chi_v\circ(\partial\circ s'_{v,\sigma,\sigma'}-s'_{v,\sigma,\sigma'}\circ\partial)\circ I\circ p_v(\alpha)\parallel_{\ell^1}
$$

$$
=\parallel (Id-\widetilde{T}_n)\circ\chi\circ(1-\widetilde{T}_{n-1})^{-1}\circ(\partial\circ s'_{v,\sigma,\sigma'}-s'_{v,\sigma,\sigma'}\circ\partial)\circ I\circ p_v(\alpha)\parallel_{\ell^1}
$$

$$
\leq 2\parallel \chi\circ (1-\widetilde{T}_{n-1})^{-1}\circ(\partial\circ s'_{v,\sigma,\sigma'}-s'_{v,\sigma,\sigma'}\circ\partial)\circ I\circ p_v(\alpha)\parallel_{\ell^1}
$$

$$
\leq 2\,\epsilon_0(\Gamma,S)^{-1}\cdot (n+2)\cdot diam(U)\cdot
\parallel (\partial\circ s'_{v,\sigma,\sigma'}-s'_{v,\sigma,\sigma'}\circ\partial)\circ I\circ p_v(\alpha)\parallel_{\ell^1}
$$
by step 4
$$
\leq 2\,\epsilon_0(\Gamma,S)^{-1}\cdot (n+2)\cdot C_{26}\cdot\vert\alpha\vert\cdot \parallel (\partial\circ s'_{v,\sigma,\sigma'}-s'_{v,\sigma,\sigma'}\circ\partial)\circ I\circ p_v(\alpha)\parallel_{\ell^1}
$$
by step 3
$$
\leq 4(n+1)(n+2)\,\epsilon_0(\Gamma,S)^{-1}\cdot C_{26}\cdot\vert\alpha\vert\cdot\parallel s'_{v,\sigma,\sigma'}(\alpha)\parallel_1\,\leq\,
4(n+1)(n+2)\,\epsilon_0(\Gamma,S)^{-1}\cdot C_{26}\cdot\vert\alpha\vert
$$
so that finally
$$
\parallel s_{v,\sigma,\sigma'}\circ I\circ p_v(\alpha)\parallel_{\ell^1}\,\leq\,
4(n+1)(n+2)\,\epsilon_0(\Gamma,S)^{-1}\cdot C_{26}\cdot\vert\alpha\vert\,+\,1
\eqno(5.10)
$$
for all $\alpha\in\Delta_n(\Gamma)$.
The conclusion follows then from (5.9), (5.10) and lemma 5.2.
\end{proof}

\section{Local cyclic homology of group Banach\\ algebras}

\subsection{Local cyclic cohomology \cite{Pu1}}

Local cyclic cohomology is a cyclic cohomology theory for topological algebras, for example Banach algebras. It is the target of a Chern character from
topological K-theory and possesses properties similar to the K-functor,
in particular continuous homotopy invariance, excision property and stability
\cite{Pu1}, \cite{Pu2}. It commutes with topological direct limits
and on the class of Banach algebras with approximation property it is stable under passage to smooth dense subalgebras. \cite{Pu1}.\\

The local cyclic homology of a Banach algebra is given by an inductive limit
of cyclic type homology groups of dense subalgebras, which makes it to some
extent computable. We recall now its definition.\\
\\
Let $A$ be a Banach algebra and let $U$ be its open unit ball. For a compact subset $K\subset U$ let ${\mathbb C}[K]\subset A$ be the abstract complex subalgebra of $A$ generated by $K$. There exists a homomorphism $\iota_K:\,{\mathbb C}[K]\to A_K$ to a Banach algebra, which has the following universal property: if $f:{\mathbb C}[K]\to B$ is a homomorphism into a Banach algebra such that $\parallel f(K)\parallel_B\leq 1$, then there is a unique contractive (i.e. of norm less or equal to one) homomorphism of Banach algebras $\widehat{f}:A_K\to B$ such that 
$\widehat{f}\circ\iota_K=f$. The Banach algebra $A_K$ is unique up to isometry and obtained from ${\mathbb C}[K]$ by completion with respect to the largest submultiplicative seminorm on ${\mathbb C}[K]$ for which the norm of $K$ is bounded by one. If $K\subset K'\subset U$ is another compact subset of the unit ball of $A$, then the inclusion ${\mathbb C}[K]\to{\mathbb C}[K']$ extends to a contractive homomorphism $A_K\to A_{K'}$ of Banach algebras. By definition $\underset{K\subset U}{\lim}\,A_K\,=\,A$ as Banach algebras. \\
\\
For an auxiliary Banach algebra
$B$ let $\parallel-\parallel_{\varrho,m},\,\varrho\geq 1,\,m\in\N,$ be
the largest seminorm on $\Omega B$ satisfying   
$$
\parallel b^0db^1\ldots db^n\parallel_{\varrho,m}\,\leq \,\frac{1}{c(n)!}(2+2c(n))^m
\varrho^{-c(n)} \parallel b^0\parallel_{B}\cdot\ldots\cdot\parallel
b^n\parallel_{B} 
\eqno(6.1)
$$
where $c(2n)=c(2n+1)=n$. Note that
$$
\parallel-\parallel_{(\varrho,0)}\,\leq\,\parallel-\parallel_{(\varrho,m)}\,\leq\,C(\varrho,\varrho',m)\cdot
\parallel-\parallel_{(\varrho',0)}
\eqno(6.2)
$$ 
for $\varrho'>\varrho$. Let $\Omega B_{(\varrho)}$ be the Fr\'echet space obtained by completion of $\Omega B$ with respect to the seminorms
$\parallel-\parallel_{\varrho,m},\,m\in\N$. The space $\Omega^nB_{(\varrho)}$ is then
just a projective tensor power of $B$ and $\Omega B_{(\varrho)}$ becomes a weighted topological direct sum of the subspaces $\Omega^nB_{(\varrho)}$. The cyclic differentials $b$ and $B$ extend to bounded operators on $\Omega B_{(\varrho)}$. (This would not hold after completion with respect to only one of the seminorms introduced above.) Denote by 
$CC_*^{(\varrho)}(B):=(\Omega B_{(\varrho)},\,b+B)$ the corresponding cyclic bicomplex. If $1\leq\varrho\leq\varrho'$, then the identity on $\Omega_B$ extends to a bounded linear map $\Omega B_{(\varrho)}\to\Omega B_{(\varrho')}$. It gives rise to a bounded chain map 
$CC_*^{(\varrho)}(B)\to CC_*^{(\varrho')}(B).$\\
\\
In the sequel the notion of a formal inductive limit 
or ind-object will be used. Recall that for a category $\Ch $ 
the category $ind\,{\mathcal C}$ of ind-objects or formal inductive limits over
$\Ch$ is defined as follows. The objects of $ind\,\Ch$ are small directed diagrams over $\Ch$:
\[
\begin{array}{rcl}
\mbox{ob} \;\; ind\,\Ch & = &\{
\mbox{``}\lim\limits_{\stackrel{\longrightarrow}{I}}\! \mbox{''} A_i \; \Big|
\; I \; \mbox{a partially ordered directed set}\; \}\\[0.3cm]
& =& \{ A_i, f_{ij} : A_i \to A_j, \, i \le j \in I \; \Big| \;f_{jk} \circ f_{ij} =
f_{ik} \}
\end{array}
\]

The morphisms between two ind-objects are given by
\[
\mbox{mor}_{ind\,\Ch} (\mbox{``}\lim\limits_{\stackrel{\longrightarrow}{I}}\!
\mbox{''} A_i,\mbox{ ``}\lim\limits_{\stackrel{\longrightarrow}{J}}\!
\mbox{''} B_j )\; := \; \lim\limits_{\stackrel{\longleftarrow}{I}}
\lim\limits_{\stackrel{\longrightarrow}{J}} \mbox{mor}_{\Ch} (A_i,B_j) \]
where the limits on the right hand side are taken in the
category of sets. There is a canonical fully faithful functor 
$\Ch\to ind\,\Ch$, which sends every object to the corresponding constant ind-object.
\\
\\
For a Banach algebra $A$ with open unit ball $U$ we define the associated 
ind-Banach algebra by 
$$
{\mathcal A}\,=\,"\underset{K\subset U}{\lim}"\,A_K,
\eqno(6.3)
$$
where the formal inductive limit is taken over the set of compact subsets of $U$, ordered by inclusion.\\
\\
For a Banach algebra $B$ we define a cyclic ind-complex
$$
{\mathcal C}{\mathcal C}(B)\,=\,"\underset{\varrho\to\infty}{\lim}"\,
CC_*^{(\varrho)}(B)
\eqno(6.4)
$$
as object in the homotopy category $Ho(ind\,{\mathcal C})$ of ${\mathbb Z}/2{\mathbb Z}$-graded Ind-complexes of Fr\'echet spaces.
The {\bf analytic cyclic bicomplex} of a Banach algebra $A$ is finally defined as
$$
{\mathcal C}{\mathcal C}^\omega_*(A)\,=\,{\mathcal C}{\mathcal C}_*({\mathcal A})\,=\,"\underset{K\subset U}{\lim}"\,{\mathcal C}{\mathcal C}_*(A_K)
\,=\,"\underset{\underset{\varrho\to\infty}{K\subset U}}{\lim}"\,CC_*^{(\varrho)}(A_K)
\eqno(6.5)
$$
where $\mathcal A$ denotes the ind-Banach algebra associated to $A$.\\
\\
An object of $Ho(ind\,{\mathcal C})$ is called {\bf weakly contractible} if every chain map from a constant ind-complex to it is chain homotopic to zero. The {derived ind-category} ${\mathcal D}(ind\,{\mathcal C})$ is the triangulated category obtained from $Ho(ind\,{\mathcal C})$ by inverting all morphisms whose mapping cone is weakly contractible \cite{Pu1}. We are now ready to give the 

\begin{definition}
The {\bf bivariant local cyclic cohomology} $HC^{loc}_*(A,B)$ of a pair of Banach algebras $(A,B)$ is the group of morphisms of the associated analytic cyclic bicomplexes in the derived ind-category
$$
HC^{loc}_*(A,B)\,=\,Mor_*^{{\mathcal D}(ind\,{\mathcal C})}({\mathcal C}{\mathcal C}^\omega(A),\,{\mathcal C}{\mathcal C}^\omega(B)).
\eqno(6.6)
$$
\end{definition}

This cohomology theory has particularly nice properties. It is a 
bifunctor (contravariant in the first and covariant in the second 
variable) on the category of (Ind)-Banach algebras. It is a continuous homotopy bifunctor, which satisfies excision in both variables 
for linearly contractible extensions of (Ind)-Banach algebras.
Moreover it is compatible with topological direct limits for algebras with the 
approximation property \cite{Pu1}. \\
The composition of morphisms in ${\mathcal D}(ind\,{\mathcal C})$  defines an associative product
$$
HC^{loc}_*(A,B)\,\widehat{\otimes}\,HC^{loc}_*(B,C)\,\to\,
HC^{loc}_*(A,C)
\eqno(6.7)
$$
on bivariant local cyclic cohomology. (The tensor product is the graded one.) There is a multiplicative {\bf bivariant Chern-Connes character}
$$
ch_{biv}:\,KK_*(A,B)\,\longrightarrow\,HC^{loc}_*(A,B)
\eqno(6.8)
$$
from Kasparov's bivariant $K$-theory for (separable) $C^*$-algebras
to bivariant local cyclic cohomology. It is uniquely characterized by 
its naturality and multiplicativity. It gives rise to an isomorphism
$$
ch_{biv}:\,KK_*(-,-)\otimes{\mathbb Q}\,\overset{\simeq}{\longrightarrow}\,HC_*^{loc}(-,-)
$$
on the bootstrap category of separable $C^*$-algebras which are $KK$-equivalent to commutative ones.

\subsection{Local cyclic homology of group Banach algebras}

Let $\Gamma$ be a countable discrete group and let $\ell^1(\Gamma)$ be the convolution algebra of summable functions on $\Gamma$. The operators of multiplication with characteristic functions of a finite subsets of $\Gamma$ form a net of contractive finite rank operators converging pointwise to the identity. In other words, group Banach algebras of discrete groups have the metric approximation property. For such Banach algebras the local cyclic cohomology groups can be calculated by 
a countable ind-complex.

\begin{lemma}\cite{Pu3} (4.2)(3.5)(3.7)
Let $(\Gamma,S)$ be a finitely generated discrete group. For $\lambda>1$
let $\ell^1_\lambda(\Gamma)$ be the Banach algebra obtained by completion of the complex group ring of $\Gamma$ with respect to the norm
$$
\parallel \underset{g}{\sum}\,a_gu_g\parallel_\lambda\,=\,
\underset{g}{\sum}\,\vert a_g\vert\cdot\lambda^{\ell_S(g)}.
\eqno(6.9)
$$
Then the canonical morphism
$$
"\underset{\lambda\to 1^+}{\lim}"\,\ell^1_\lambda(\Gamma)\,\to\,"\underset{K\subset U}{\lim}"\,\ell^1(\Gamma)_K
$$
of ind-Banach algebras is a stable diffeotopy equivalence 
and the induced morphism of cyclic bicomplexes
$$
"\underset{\lambda\to 1^+}{\lim}"\,{\mathcal C}{\mathcal C}_*(\ell^1_\lambda(\Gamma))\,\to\,
{\mathcal C}{\mathcal C}^\omega_*(\ell^1(\Gamma))
\eqno(6.10)
$$
is an isomorphism in the derived ind-category.
\end{lemma}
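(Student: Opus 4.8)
The plan is to reduce the assertion to the comparison machinery of \cite{Pu1} for analytic cyclic bicomplexes in the derived ind-category, after first making the auxiliary Banach algebras $\ell^1(\Gamma)_K$ attached to suitable compact sets $K$ completely explicit. For $\mu\in(0,1)$ put $K_\mu=\{\mu u_s:s\in S\}$, a finite — hence compact — subset of the open unit ball $U$ of $\ell^1(\Gamma)$; since $S$ generates $\Gamma$ the abstract subalgebra ${\mathbb C}[K_\mu]$ is all of ${\mathbb C}\Gamma$. I claim $\ell^1(\Gamma)_{K_\mu}$ is isometrically isomorphic to $\ell^1_{1/\mu}(\Gamma)$. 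Indeed $\ell^1(\Gamma)_{K_\mu}$ is by definition the completion of ${\mathbb C}\Gamma$ for the largest submultiplicative seminorm $p$ with $p(\mu u_s)\le 1$ for $s\in S$; on one hand $\|-\|_{1/\mu}$ of (6.9) is such a seminorm, being submultiplicative because $\ell_S(gh)\le\ell_S(g)+\ell_S(h)$ and satisfying $\|\mu u_s\|_{1/\mu}=\mu\cdot(1/\mu)=1$, so $p\ge\|-\|_{1/\mu}$; on the other hand, writing $g$ as a product of $\ell_S(g)$ generators and applying submultiplicativity of $p$ gives $p(u_g)\le(1/\mu)^{\ell_S(g)}$, whence $p(\sum a_g u_g)\le\sum|a_g|(1/\mu)^{\ell_S(g)}=\|\sum a_g u_g\|_{1/\mu}$ by the triangle inequality, so $p\le\|-\|_{1/\mu}$. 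Thus $p=\|-\|_{1/\mu}$ and the claim follows. Via $\lambda\mapsto K_{1/\lambda}$ this identifies the canonical morphism of the statement with the inclusion of the sub-ind-object $"\underset{\mu\to 1^-}{\lim}"\,\ell^1(\Gamma)_{K_\mu}$ into ${\mathcal A}="\underset{K\subset U}{\lim}"\,\ell^1(\Gamma)_K$.

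Next I would record that every algebra in sight carries the metric approximation property: convolution with the characteristic function $\chi_F$ of a finite set $F\subset\Gamma$ is a contractive finite-rank idempotent, these operators commute with the weight $\lambda^{\ell_S(-)}$ and hence stay contractive for $\|-\|_\lambda$, and they converge pointwise to the identity; moreover $\bigcup_{\lambda>1}\ell^1_\lambda(\Gamma)\supseteq{\mathbb C}\Gamma$ is dense, so $\ell^1(\Gamma)$ is the topological direct limit of the system $\{\ell^1_\lambda(\Gamma)\}$. It is one of the basic results of \cite{Pu1} that a stable diffeotopy equivalence of ind-Banach algebras induces an isomorphism of analytic cyclic bicomplexes in the derived ind-category, and that local cyclic homology is compatible with topological direct limits for algebras with the approximation property; granting the identification above, both parts of the lemma therefore follow once the inclusion $"\underset{\mu\to 1^-}{\lim}"\,\ell^1(\Gamma)_{K_\mu}\hookrightarrow{\mathcal A}$ is shown to be a stable diffeotopy equivalence.

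To produce the required stable diffeotopy inverse one cannot work at the level of ind-Banach algebras: a general compact $K\subset U$ carries no uniform exponential decay, the defining seminorm of $\ell^1(\Gamma)_K$ dominates the $\ell^1$-norm but need not dominate any $\|-\|_\lambda$, so there is no compatible homomorphism $\ell^1(\Gamma)_K\to\ell^1_\lambda(\Gamma)$ and the subsystem $\{K_\mu\}$ is genuinely not cofinal among the compacts. The remedy, following \cite{Pu3}, (4.2), (3.5), (3.7), is to pass to the stable diffeotopy category and use the finite-rank approximations $\chi_F$ above: they take values in ${\mathbb C}\Gamma$, hence in every $\ell^1_\lambda(\Gamma)$, so composed with the structure maps of the two ind-systems they assemble, after the standard stabilization by the algebra of smoothing operators, into an ind-morphism ${\mathcal A}\to"\underset{\mu\to 1^-}{\lim}"\,\ell^1(\Gamma)_{K_\mu}$; one then checks that both composites with the inclusion are diffeotopic to the identity. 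This verification — the fact that the isomorphism holds only after localization, and that the metric approximation property must be used in an essential way to build and control the homotopy inverse — is the main obstacle, and is precisely the quantitative content imported from \cite{Pu3}.
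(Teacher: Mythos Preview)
The paper does not give its own proof of this lemma: it is stated with a bare citation to \cite{Pu3}, (4.2), (3.5), (3.7), and the text that follows is expository rather than argumentative. Your proposal is therefore not competing with a proof in the paper but rather unpacking what the citation imports, and in that respect it is accurate. The explicit identification $\ell^1(\Gamma)_{K_\mu}\cong\ell^1_{1/\mu}(\Gamma)$ for $K_\mu=\{\mu u_s:s\in S\}$ is correct and is exactly the kind of computation underlying \cite{Pu3}, (4.2); your observation that the family $\{K_\mu\}$ is \emph{not} cofinal among all compact $K\subset U$, so that the inverse morphism must be produced in the stable diffeotopy category via the finite-rank approximations, is also the right diagnosis of where the real work lies. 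The one point worth sharpening is that the passage from ``stable diffeotopy equivalence of ind-Banach algebras'' to ``isomorphism of analytic cyclic bicomplexes in the derived ind-category'' is not a single basic result but the combination of diffeotopy invariance and stability for ${\mathcal C}{\mathcal C}^\omega$ established in \cite{Pu1}; you invoke this correctly but a reader might want the two ingredients named separately.
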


The family $\ell^1_\lambda(\Gamma),\,\lambda\geq 1,$ is the inductive
system of the Banach algebras of summable functions on $\Gamma$ of exponential decay in the word metric and the direct limit of this 
system in the category of Banach algebras equals the group Banach algebra $\ell^1(\Gamma)$. 

Recall the homogeneous decomposition
$$
CC_*({\mathbb C}\Gamma)\,\overset{\simeq}{\to}\,
\underset{\langle g\rangle}{\bigoplus}\,CC_*({\mathbb C}\Gamma)_{\langle g\rangle}
$$
of the cyclic bicomplex of the group ring ${\mathbb C}\Gamma$
into a direct sum of subcomplexes, labeled by the set of conjugacy classes in $\Gamma$. The defining seminorms
$\parallel-\parallel_\lambda$ of $\ell^1_\lambda(\Gamma)$ and 
$\parallel-\parallel_{\varrho,m}$ of ${\mathcal C}{\mathcal C}_*$
are weighted $\ell^1$-norms so that the algebraic homogeneous decomposition gives rise to a decomposition of the bicomplexes ${\mathcal C}{\mathcal C}_*(\ell^1_\lambda(\Gamma)),\,\lambda>1,$ into a topological direct sum of Fr\'echet-complexes, 
labeled by the conjugacy classes of $\Gamma$. It should be noted that there is no reason for the existence of homogeneous decompositions of 
the complexes ${\mathcal C}{\mathcal C}_*(\overline{{\mathbb C}\Gamma})$ for completions $\overline{{\mathbb C}\Gamma}$ of ${\mathbb C}\Gamma$ with respect to other norms than weighted $\ell^1$-norms. In the sequel we will only use decompositions of the analytic cyclic bicomplex of a group Banach algebra into finite direct sums corresponding to finite partitions of the set of all conjugacy classes in the given group. 

\begin{lemma}
Let $(\Gamma,S)$ be a finitely generated group which contains only a finite number of conjugacy classes of torsion elements. Then the homogeneous decomposition
$$
CC_*({\mathbb C}\Gamma)\,\overset{\simeq}{\to}\,
\underset{\langle g'\rangle,\,\vert g'\vert<\infty}{\bigoplus}\,CC_*({\mathbb C}\Gamma)_{\langle g'\rangle}\,\oplus\,\underset{\langle g''\rangle,\,\vert g''\vert=\infty}{\bigoplus}\,CC_*({\mathbb C}\Gamma)_{\langle g''\rangle}
$$
gives rise after completion to a finite direct sum decomposition of complexes
$$
CC_*^{(\varrho)}(\ell^1_\lambda(\Gamma))\,\overset{\simeq}{\longrightarrow}\,\underset{\langle g'\rangle,\,\vert g'\vert<\infty}{\bigoplus}\,CC_*^{(\varrho)}(\ell^1_\lambda(\Gamma))_{\langle g'\rangle}
\,\oplus\,CC_*^{(\varrho)}(\ell^1_\lambda(\Gamma))_{inhom}
$$
Consequently, there is a decomposition 
$$
"\underset{\lambda\to 1^+}{\lim}"\,{\mathcal C}{\mathcal C}(\ell^1_\lambda(\Gamma))\,\overset{\simeq}{\longrightarrow}\,\underset{\langle g'\rangle,\,\vert g'\vert<\infty}{\bigoplus}\,"\underset{\lambda\to 1^+}{\lim}"\,{\mathcal C}{\mathcal C}(\ell^1_\lambda(\Gamma))_{\langle g'\rangle}\,\oplus\,"\underset{\lambda\to 1^+}{\lim}"\,{\mathcal C}{\mathcal C}(\ell^1_\lambda(\Gamma))_{inhom}
$$
of ind-complexes and, in view of (5.6) a corresponding decomposition of (bivariant) local cyclic cohomology groups. The contribution of the conjugacy classes of torsion elements is called the homogeneous part, the contribution of the remaining summand is called the inhomogeneous part, respectively.
\end{lemma}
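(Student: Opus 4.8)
The plan is to deduce the statement from a single structural fact, already noted in the discussion preceding the Lemma: the seminorms $\parallel-\parallel_{\varrho,m}$ entering the definition of $CC_*^{(\varrho)}(\ell^1_\lambda(\Gamma))$ are weighted $\ell^1$-seminorms with respect to the canonical monomial basis of $\Omega(\mathbb C\Gamma)$, and a \emph{finite} partition of such a basis induces an honest --- not merely a completed --- direct sum decomposition of the completion.

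First I would make the weighted $\ell^1$-structure explicit. The monomials $u_{g_0}dg_1\cdots dg_n$, $g_i\in\Gamma$, together with the forms $1\cdot dg_1\cdots dg_n$ coming from the adjoined unit --- equivalently, the simplices $(g_0,\ldots,g_n)\in\Gamma^{n+1}$, $n\geq 0$ --- form a vector space basis of $\Omega(\mathbb C\Gamma)=\bigoplus_n\Omega^n(\mathbb C\Gamma)$. Since $\parallel-\parallel_{\varrho,m}$ is by definition the largest seminorm satisfying (6.1), since $\parallel u_g\parallel_{\ell^1_\lambda(\Gamma)}=\lambda^{\ell_S(g)}$, and since the factor $\varrho^{-c(n)}$ is strictly positive, the triangle inequality shows that $\parallel-\parallel_{\varrho,m}$ is exactly the weighted $\ell^1$-seminorm whose weight on the degree-$n$ monomial supported on $(g_0,\ldots,g_n)$ equals
$$
w(g_0,\ldots,g_n)\,=\,\frac{1}{c(n)!}\,(2+2c(n))^m\,\varrho^{-c(n)}\cdot\lambda^{\ell_S(g_0)+\cdots+\ell_S(g_n)}.
$$
Consequently, for any partition of the set of simplices into blocks $\{P_j\}_{j\in J}$ the completion $\Omega(\ell^1_\lambda(\Gamma))_{(\varrho)}$ splits isometrically as the $\ell^1$-direct sum of the completed linear spans of the $P_j$; and when $J$ is \emph{finite} this $\ell^1$-direct sum coincides with the algebraic direct sum of the completed summands, because completion commutes with finite direct sums.

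Next I would apply this to the homogeneous decomposition. By (2.42) the algebraic splitting $\Omega^*(\mathbb C\Gamma)=\bigoplus_{\langle v\rangle}\Omega^*(\mathbb C\Gamma)_{\langle v\rangle}$ corresponds precisely to the partition of the set of simplices by $(g_0,\ldots,g_n)\mapsto\langle g_0g_1\cdots g_n\rangle$. By hypothesis there are only finitely many conjugacy classes $\langle g_1'\rangle,\ldots,\langle g_N'\rangle$ of torsion elements, so I coarsen this partition to the $N+1$ blocks given by the singletons $\{\langle g_i'\rangle\}$, $i=1,\ldots,N$, together with the block of all non-torsion conjugacy classes. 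The preceding paragraph then gives
$$
CC_*^{(\varrho)}(\ell^1_\lambda(\Gamma))\,\cong\,\underset{i=1}{\overset{N}{\bigoplus}}\,CC_*^{(\varrho)}(\ell^1_\lambda(\Gamma))_{\langle g_i'\rangle}\,\oplus\,CC_*^{(\varrho)}(\ell^1_\lambda(\Gamma))_{inhom}
$$
as an honest finite direct sum of Fr\'echet spaces, each summand being the completion of the corresponding algebraic summand of $\Omega(\mathbb C\Gamma)$ for the restricted seminorms. Since the Hochschild differential $b$, the cyclic operator $T_*$ and Connes' operator $B$ all preserve the homogeneous decomposition (cf. (2.42) and the remarks following it), they a fortiori preserve each of the $N+1$ coarser blocks and hence extend to bounded operators on each completed summand; so the display is a decomposition of ${\mathbb Z}/2{\mathbb Z}$-graded Fr\'echet complexes. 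Passing to the formal inductive limit over $\lambda\to 1^+$, and using that finite direct sums commute with the colimit defining $ind\,{\mathcal C}$, yields the stated decomposition of ind-complexes. Finally, since ${\mathcal C}{\mathcal C}^\omega(\ell^1(\Gamma))$ is isomorphic to $"\underset{\lambda\to 1^+}{\lim}"\,{\mathcal C}{\mathcal C}(\ell^1_\lambda(\Gamma))$ in the derived ind-category (the preceding lemma, equation (6.10)) and ${\mathcal D}(ind\,{\mathcal C})$ is additive, the identity $HC^{loc}_*(\ell^1(\Gamma),B)=Mor_*^{{\mathcal D}(ind\,{\mathcal C})}({\mathcal C}{\mathcal C}^\omega(\ell^1(\Gamma)),{\mathcal C}{\mathcal C}^\omega(B))$ transports the ind-complex decomposition into the asserted finite direct sum decomposition of $HC^{loc}_*(\ell^1(\Gamma),B)$, and symmetrically of $HC^{loc}_*(B,\ell^1(\Gamma))$.

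The main obstacle is precisely the completion phenomenon flagged just before the Lemma: the full homogeneous decomposition has infinitely many summands, so after completion only a \emph{completed} direct sum indexed by all conjugacy classes survives, and the individual algebraic summands are not recovered as closed Fr\'echet subspaces. The finiteness hypothesis on torsion conjugacy classes is used exactly to permit coarsening to a \emph{finite} partition, for which completion and direct sum commute; everything else is bookkeeping. The one point deserving explicit verification is that the bound (6.1) is \emph{saturated} on the monomial basis, so that $\parallel-\parallel_{\varrho,m}$ genuinely is a weighted $\ell^1$-seminorm rather than merely being dominated by one; this uses only that $\varrho^{-c(n)}>0$ and the value of $\parallel u_g\parallel_{\ell^1_\lambda(\Gamma)}$, and is routine.
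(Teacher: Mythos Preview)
Your proposal is correct and follows essentially the same approach as the paper. The paper does not give a formal proof of this lemma; it simply records in the paragraph immediately preceding the statement that the seminorms $\parallel-\parallel_\lambda$ and $\parallel-\parallel_{\varrho,m}$ are weighted $\ell^1$-norms, so that the algebraic homogeneous decomposition passes to a topological direct sum after completion, and that one restricts attention to finite partitions of the set of conjugacy classes --- your argument is a faithful elaboration of exactly this reasoning.
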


Recall the canonical epimorphism $p:C_*(\Gamma,\,Ad({\mathbb C}\Gamma)) \to \widetilde{C}_*({\mathbb C}\Gamma)$ (see (2.32)) of chain complexes and its linear section $\iota\,=\,\underset{\langle v\rangle}{\bigoplus}\,\iota_{v,\sigma}:C_*({\mathbb C}\Gamma)\to \widetilde{C}_*(\Gamma,\,Ad({\mathbb C}\Gamma))$ (see (2.63)). It depends on the choice of an element of minimal word length $v\in\langle v\rangle$ and on the choice of a section of the map $\Gamma\to\langle v\rangle,\,u\mapsto uvu^{-1}$ for each conjugacy class $\langle v\rangle$ in $\Gamma$. We compare the various norms introduced 
on these complexes.

\begin{lemma}
Fix $\lambda,\varrho>1$ and $n\in{\mathbb N}$ and let $\alpha_n\in C_n(\Gamma,\,Ad({\mathbb C}\Gamma)),\,\beta_n\in \widetilde{C}_*({\mathbb C}\Gamma)$. Denote by $\parallel-\parallel_{(\lambda,\varrho,m)}$ the norm (6.1) on the cyclic complex of the Banach algebra $CC_*(\ell^1_\lambda(\Gamma))$. Then
$$
\parallel p(\alpha_n)\parallel_{(\lambda,\varrho,m)}\,\leq\,\frac{1}{c(n)!}\cdot(2+2c(n))^m\cdot\varrho^{-c(n)}\cdot\parallel\alpha\parallel_\lambda \eqno(6.11)
$$
and
$$ 
\parallel\iota(\beta_n)\parallel_\lambda\,\leq\,c(n)!\cdot
(2+2c(n))^{-m}\cdot\varrho^{c(n)}\cdot\parallel\beta_n\parallel_{(\lambda,\varrho,m)}
\eqno(6.12)
$$
in the notations of 5.1.
\end{lemma}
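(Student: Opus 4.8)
The plan is to reduce both estimates to a one-line computation on a single basis simplex, using that \emph{every} norm in the statement is a weighted $\ell^1$-norm. Recall $\parallel-\parallel_\lambda$ is by Definition 5.1 the weighted $\ell^1$-norm on $C_*(\Gamma)\otimes Ad(\Gamma)$ with weight $\lambda^{|\alpha|}$ on a twisted simplex $\alpha$, and $\ell^1_\lambda(\Gamma)$ carries the weighted $\ell^1$-norm with $\parallel u_g\parallel_\lambda=\lambda^{\ell_S(g)}$. Since $\Omega^nB_{(\varrho)}$ is a projective tensor power of $B=\ell^1_\lambda(\Gamma)$ rescaled by the scalar $\frac{1}{c(n)!}(2+2c(n))^m\varrho^{-c(n)}$ occurring in (6.1), the first step is to record that the restriction of $\parallel-\parallel_{(\lambda,\varrho,m)}$ to $\widetilde C_n({\mathbb C}\Gamma)=({\mathbb C}\Gamma)^{\otimes(n+1)}$ is the weighted $\ell^1$-norm sending $u_{h_0}\otimes\dots\otimes u_{h_n}$ to $\frac{1}{c(n)!}(2+2c(n))^m\varrho^{-c(n)}\lambda^{\ell_S(h_0)+\dots+\ell_S(h_n)}$. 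The upper bound here is immediate from (6.1); for the matching lower bound one checks that this candidate weighted $\ell^1$-norm itself satisfies (6.1) (in fact with equality, since the weight $\lambda^{\ell_S(h_0)+\dots+\ell_S(h_n)}$ of a pure tensor factors as $\prod_i\lambda^{\ell_S(h_i)}$, matching the right-hand side of (6.1) exactly), so maximality of $\parallel-\parallel_{\varrho,m}$ forces equality. Because $p$ and $\iota$ each carry a basis simplex to a scalar multiple of a single basis element, it then suffices to verify (6.11) and (6.12) on one simplex and sum over the simplices of a general chain.

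For (6.11) I would take $\alpha=[g_0,\dots,g_n;v]$ and write out $p(\alpha)=u_{g_n^{-1}vg_0}\otimes u_{g_0^{-1}g_1}\otimes\dots\otimes u_{g_{n-1}^{-1}g_n}$. By left-invariance of the word metric, $\ell_S(g_{i-1}^{-1}g_i)=d_S(g_{i-1},g_i)$ and $\ell_S(g_n^{-1}vg_0)=d_S(g_n,vg_0)$, so the exponent in the weight of $p(\alpha)$ equals $d_S(g_n,vg_0)+\sum_{i=1}^n d_S(g_{i-1},g_i)=|\alpha|$. Hence $\parallel p(\alpha)\parallel_{(\lambda,\varrho,m)}=\frac{1}{c(n)!}(2+2c(n))^m\varrho^{-c(n)}\lambda^{|\alpha|}$, which is (6.11), with equality on a simplex.

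For (6.12) I would take $\beta_n=u_{g_0}\otimes\dots\otimes u_{g_n}$ with $w:=g_0g_1\cdots g_n$ lying in a conjugacy class $\langle v\rangle$, and rewrite $\iota(\beta_n)=\sigma(w)^{-1}\cdot[g_0,g_0g_1,\dots,w;w]$ as the single simplex $[\sigma(w)^{-1}g_0,\sigma(w)^{-1}g_0g_1,\dots,\sigma(w)^{-1}w;v]$, using $\sigma(w)^{-1}w\sigma(w)=v$. Left-invariance gives $d_S(\sigma(w)^{-1}g_0\cdots g_{i-1},\sigma(w)^{-1}g_0\cdots g_i)=\ell_S(g_i)$ for $1\le i\le n$, while $w=\sigma(w)v\sigma(w)^{-1}$ yields $\sigma(w)^{-1}w=v\sigma(w)^{-1}$, so the last distance is $d_S(v\sigma(w)^{-1},v\sigma(w)^{-1}g_0)=\ell_S(g_0)$. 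Therefore $|\iota(\beta_n)|=\ell_S(g_0)+\ell_S(g_1)+\dots+\ell_S(g_n)$ and $\parallel\iota(\beta_n)\parallel_\lambda=\lambda^{\ell_S(g_0)+\dots+\ell_S(g_n)}=c(n)!\,(2+2c(n))^{-m}\varrho^{c(n)}\parallel\beta_n\parallel_{(\lambda,\varrho,m)}$, which is (6.12).

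I do not expect a serious obstacle: the lemma is essentially bookkeeping. The two places that need a little care are the identification of $\parallel-\parallel_{(\lambda,\varrho,m)}$ on the tensor subspace (the maximality argument in the first paragraph) and the weight computation for $\iota_{v,\sigma}$, where one must track the $\Gamma$-action $\sigma(w)^{-1}\cdot(-)$ on the last slot of the twisted simplex and invoke $w=\sigma(w)v\sigma(w)^{-1}$. It is worth noting that neither computation uses minimality of $v$ in $\langle v\rangle$ or the estimates (3.15)--(3.17), which is consistent with (6.12) being stated independently of those choices.
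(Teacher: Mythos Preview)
Your proof is correct and follows essentially the same route as the paper's: both reduce to a single basis simplex using that all norms involved are weighted $\ell^1$-norms, then compute the weight of $p(\alpha_n)$ and of $\iota(\beta_n)$ directly from left-invariance of $d_S$ and the defining relation $\sigma(w)v\sigma(w)^{-1}=w$. Your explicit maximality argument for identifying $\parallel-\parallel_{(\lambda,\varrho,m)}$ on $({\mathbb C}\Gamma)^{\otimes(n+1)}$ is a small refinement the paper leaves implicit, and your bookkeeping for the last edge in (6.12) (rewriting $\sigma(w)^{-1}w=v\sigma(w)^{-1}$) is just a rearrangement of the paper's computation $(hg_0\cdots g_n)^{-1}vhg_0=g_0$.
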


\begin{proof}
In the sequel we write $C(\varrho,m,n)$ for $\frac{1}{c(n)!}\cdot(2+2c(n))^m\cdot\varrho^{-c(n)}$. Because the norms in question are weighted $\ell^1$-norms it suffices to verify the claim for Bar-simplices $\alpha_n=[g_0,\ldots,g_n;v]$ and elementary differential forms\\
$\beta_n=g_0dg_1\ldots dg_n$. One finds
$$
\parallel p(\alpha_n)\parallel_{(\varrho,m,0)}\,=\,
\parallel (g_n^{-1}vg_0)d(g_0^{-1}g_1)\ldots d(g_{n-1}^{-1}g_n)\parallel_{(\varrho,m,0)}
$$
$$
=\,C(\varrho,m,n)\cdot\parallel
g_n^{-1}vg_0\parallel_{\ell^1_\lambda(\Gamma)}\cdot\underset{i=1}{\overset{n}{\prod}}\,
\parallel
g_{i-1}^{-1}g_i\parallel_{\ell^1_\lambda(\Gamma)}
$$
$$
=\,C(\varrho,m,n)\cdot
\lambda^{\ell_S(g_n^{-1}vg_0)
+\ell_S(g_0^{-1}g_1)+\ldots+\ell_S(g_{n-1}^{-1}g_n)}\,=\,
$$
$$
=\,C(\varrho,m,n)\cdot\lambda^{d_S(g_n,vg_0)+d_S(g_0,g_1)+\ldots+d_S(g_{n-1},g_n)}
$$
$$
=\,C(\varrho,m,n)\cdot\lambda^{\vert[g_0,\ldots,g_n;v]\vert}\,=\,
C(\varrho,m,n)\cdot\parallel\alpha_n\parallel_\lambda
$$
To study the other estimate let $v$ be the distinguished element of the conjugacy class of
$g_0g_1\ldots g_n$ and let $\sigma:\langle v\rangle\to\Gamma$ be a fixed section of $\Gamma\to\langle v\rangle,\,g\mapsto gvg^{-1}$ (see (2.61)).
We calculate
$$
\parallel\iota(\beta_n)\parallel_\lambda\,=\,\parallel\iota_{v,\sigma}(g_0dg_1\ldots dg_n)\parallel_\lambda\,=\,
\parallel [hg_0,hg_0g_1,\ldots,hg_0g_1\ldots g_n;v]\parallel_\lambda,
$$
where $h=\sigma(g_0g_1\ldots g_n)^{-1}$. One finds $\parallel\iota(\beta_n)\parallel_\lambda\,=\,
\lambda^{\vert [hg_0,hg_0g_1,\ldots,hg_0g_1\ldots g_n;v]\vert},$ where 
$$
\vert [hg_0,hg_0g_1,\ldots,hg_0g_1\ldots g_n;v]\vert\,
=\,
d_S(hg_0,hg_0g_1)+ d_S(hg_0g_1,hg_0g_1g_2)+\ldots
$$
$$
\ldots+
d_S(hg_0g_1\ldots g_{n-1},hg_0g_1g_2\ldots g_{n-1}g_n)+
d_S(hg_0g_1g_2\ldots g_n, vhg_0)
$$
$$
=\ell_S(g_1)+\ell_S(g_2)+\ldots+\ell_S(g_n)+\ell_S((hg_0\ldots g_n)^{-1}vhg_0)
$$
and
$$
(hg_0\ldots g_n)^{-1}vhg_0\,=\,(g_0\ldots g_n)^{-1}(h^{-1}vh)g_0
$$
$$
=\,(g_0\ldots g_n)^{-1}\left(\sigma(g_0\ldots g_n)v\sigma(g_0\ldots g_n)^{-1}\right)g_0
\,=\,(g_0\ldots g_n)^{-1}(g_0\ldots g_n)g_0\,=\,g_0,
$$
so that 
$$
\parallel\iota(\beta_n)\parallel_\lambda\,=\,\lambda^{\ell_S(g_0)+\ell_S(g_1)+\ldots+\ell_S(g_n)}\,
=\,\underset{i=0}{\overset{n}{\prod}}\,\parallel g_i\parallel_{\ell^1_\lambda(\Gamma)}\,=\,
C(\varrho,m,n)^{-1}\cdot\parallel\beta_n\parallel_{(\varrho,m,0)}.
$$
\end{proof}

In this chapter we want to finish our calculation of the bivariant local cyclic cohomology groups of various Banach algebras. In view of the definition of these groups as groups of morphisms in the derived ind-category, I would like to say a few words what "calculation" means in this context. A general object of ${\mathcal D}(ind\,{\mathcal C})$ is a formal inductive limit of ${\mathbb Z}/2{\mathbb Z}$-graded complexes of Fr\'echet spaces, for example the analytic cyclic bicomplex of a Banach algebra. The most basic objects of the derived ind-category are constant inductive systems of finite dimensional complexes with differential zero. "Calculating" the local cyclic cohomology of a Banach algebra will mean for us to exhibit an isomorphism in ${\mathcal D}(ind\,{\mathcal C})$ between its analytic cyclic bicomplex and such a constant, finite dimensional ind-complex. The full subcategory generated by these is in fact equivalent to the category of finite dimensional, ${\mathbb Z}/2{\mathbb Z}$-graded vector spaces, so that bivariant local cyclic cohomology groups in question are easily determined once such an identification has been achieved. \\
\\
The calculation of the local cyclic cohomology of
$\ell^1(\Gamma)$ for a word hyperbolic group will now be done in three
steps. In the first it will be shown that the ind-complexes
$"\underset{\lambda\to 1^+}{\lim}"\,Fil^n_{Hodge}{\mathcal C}{\mathcal C}(\ell^1_\lambda(\Gamma))$ are contractible for large $n$. In the second 
we calculate the contribution of a given elliptic conjugacy class to 
the analytic cyclic bicomplex. In the last we show that the remaining inhomogeneous part is contractible. In view of 6.2 this identifies 
the analytic cyclic bicomplex of $\ell^1(\Gamma)$ in the derived ind-category with an explicitely given constant, finite dimensional ind-complex.\\
\\
For a Banach algebra $B$ and an integer $n$ we let
$$
Fil^n_{Hodge}{\mathcal C}{\mathcal C}_*(B)\,=\,
"\underset{\varrho\to \infty}{\lim}"\,Fil^n_{Hodge}\,CC_*^{(\varrho)}(B)
\eqno(6.13)
$$
and define $Fil^n_{Hodge}\,CC_*^{(\varrho)}(B)$ as the closure of 
$Fil^n_{Hodge}\,CC_*(B)$ inside $CC_*^{(\varrho)}(B)$.

\begin{prop}

Let $(\Gamma,S)$ be a word hyperbolic group and let $N\in{\mathbb N}$ be sufficiently large. Then the following holds. 
\begin{itemize}
\item[a)]
The ind-Fr\'echet complex
$$
"\underset{\lambda\to 1^+}{\lim}"\,Fil^N_{Hodge}\,{\mathcal C}{\mathcal C}_*(\ell^1_\lambda(\Gamma))
\eqno(6.14)
$$
is contractible.
\item[b)]
The canonical
map of $\ind$-complexes 
$$
"\underset{\lambda\to 1^+}{\lim}"\,
{\mathcal C}{\mathcal C}_*(\ell^1_{\lambda} (\Gamma))\,\longrightarrow\, 
"\underset{\lambda\to 1^+}{\lim}"\,{\mathcal C}{\mathcal C}_*(\ell^1_{\lambda} (\Gamma))/Fil^N_{Hodge}\,{\mathcal C}{\mathcal C}_*(\ell^1_\lambda(\Gamma))
\eqno(6.15)
$$
is an isomorphism in the homotopy category of formal inductive limits of ${\mathbb Z}/2{\mathbb Z}$-graded complexes 
of Fr\'echet spaces. 

\item[c)] The isomorphism of b) is compatible with the harmonic
decomposition 5.3.

\end{itemize}
\end{prop}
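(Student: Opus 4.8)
The plan is to deduce all three assertions from (a), and to prove (a) by transporting the homotopy operator $\widetilde{\nabla}_*$ of 5.4 down to the reduced differential forms and then imitating the proof of 2.8. Since $\widetilde{\nabla}_*=h(\Theta'_*,Id)\otimes Id_{Ad(\mathbb{C}\Gamma)}$ is $\Gamma$-equivariant and annihilates degenerate simplices (Theorem 4.7), it descends, through the identification of the $\Gamma$-coinvariants of the twisted Bar complex with $\widetilde{C}_*(\mathbb{C}\Gamma)$ (Lemma 2.6) and the reduction map, to an operator $\overline{\nabla}\colon\overline{\Omega}^*(\mathbb{C}\Gamma)\to\overline{\Omega}^{*+1}(\mathbb{C}\Gamma)$ which respects the homogeneous decomposition; by 2.3 and the vanishing of $\Theta'_*$ in degrees $\ge d$ one gets $b\overline{\nabla}+\overline{\nabla}b=Id$ on $\overline{\Omega}^{\ge N}$ and on $b\overline{\Omega}^N$ for every $N>d$, while $\widetilde{B}$ descends to $B$ by (2.38). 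Fix from now on such an $N$.

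Following the proof of 2.8 verbatim, I would set $\nu=Id+\overline{\nabla}\circ B$ on $Fil^N_{Hodge}$ and check, using $b\overline{\nabla}=Id-\overline{\Theta'}-\overline{\nabla}b$, $bB=-Bb$, $B^2=0$ and the vanishing $\overline{\Theta'}B=0$ in degrees $\ge N-1$, that $b\circ\nu=\nu\circ(b+B)$; thus $\nu$ is an isomorphism of $(Fil^N_{Hodge},b+B)$ onto $(Fil^N_{Hodge},b)$ with inverse the series $\nu^{-1}=\sum_{k\ge0}(-\overline{\nabla}B)^k$, each of whose terms raises the Hodge degree by two. Since $\overline{\nabla}$ strictly contracts $(Fil^N_{Hodge},b)$, the operator $H:=\nu^{-1}\circ\overline{\nabla}\circ\nu$ then satisfies $(b+B)H+H(b+B)=\nu^{-1}(b\overline{\nabla}+\overline{\nabla}b)\nu=Id$ on $Fil^N_{Hodge}$. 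The same formulas define $\nu,\nu^{-1},H$ on the whole periodic — and, after completion, analytic — cyclic complex, and $H$ carries $Fil^N_{Hodge}$ into itself.

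The substantial step is to promote $H$ to a contracting homotopy in the homotopy category of $\ind$-Fréchet complexes, and this is where the estimates of 5.5 are used. Given $\lambda_0>\lambda_1>1$ and $\varrho_0\ge 1$, I would estimate the series for $\nu^{-1}$ term by term. Transporting (5.6) and (5.7) from the twisted Bar complex by $\overline{p}$ and its section $\iota$ through the norm comparisons (6.11), (6.12) of 6.4 shows that $\frac{1}{k!}\|(\overline{\nabla}B)^k(\omega)\|$ grows no faster than $C_{25}^k\|\omega\|$ when one passes from weight $\lambda_0$ to weight $\lambda_1$. On the other hand $(\overline{\nabla}B)^k$ raises the form degree by $2k$, which in the norm (6.1) of $CC_*^{(\varrho)}$ contributes a damping factor of order $\frac{1}{k!}\varrho^{-k}$ (times a factor polynomial in $k$ and in the Fréchet index $m$), while $\iota$ costs only a fixed factorial/power at the source degree. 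The two factorials cancel, and choosing $\varrho_1\ge\max(\varrho_0,2C_{25})$ the series for $\nu^{-1}$ — hence also $\nu$ and $H$ — converges to a bounded operator $Fil^N_{Hodge}CC_*^{(\varrho_0)}(\ell^1_{\lambda_0}(\Gamma))\to Fil^N_{Hodge}CC_*^{(\varrho_1)}(\ell^1_{\lambda_1}(\Gamma))$. As $(\lambda_0,\varrho_0)$ was arbitrary, (a) follows. I expect this bookkeeping — balancing the geometric growth of 5.5 against the factorial and power decay of the norm (6.1), uniformly over all conjugacy classes — to be the only real obstacle, and it is precisely what forces one to shrink $\lambda$ and enlarge $\varrho$ simultaneously, i.e. to pass to the $\ind$-category.

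Parts (b) and (c) should then be formal. Because $H$ is a globally defined bounded operator on the $\ind$-complex with $H(Fil^N_{Hodge})\subseteq Fil^N_{Hodge}$, the chain map $Id-E$, where $E=(b+B)H+H(b+B)$ equals the identity on $Fil^N_{Hodge}$, vanishes on $Fil^N_{Hodge}$ and therefore factors as $\bar{\phi}\circ\pi$ through the quotient map $\pi$ of (6.15); the operator $\hat{H}$ induced by $H$ on $\mathcal{CC}_*/Fil^N_{Hodge}$ then exhibits $\bar{\phi}\circ\pi=Id-E\simeq Id$ and $\pi\circ\bar{\phi}=Id-(b+B)\hat{H}-\hat{H}(b+B)\simeq Id$, so $\pi$ is an isomorphism in the homotopy category of $\ind$-complexes, which is (b). For (c) it suffices to note that every operator entering the construction — $\Theta'_*$, $\widetilde{\nabla}_*$, $\widetilde{B}$, and hence $\overline{\nabla}$, $B$, $\nu^{\pm1}$, $H$ — is a $\Gamma$-equivariant map tensored with $Id_{Ad(\mathbb{C}\Gamma)}$, hence respects the splitting of $Ad(\mathbb{C}\Gamma)$ along conjugacy classes and in particular the finite decomposition of 6.3 into homogeneous and inhomogeneous parts; so the contraction of (a) and the equivalence of (b) split into direct sums along the harmonic decomposition.
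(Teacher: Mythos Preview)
Your proposal is correct and follows essentially the same strategy as the paper. You transport $\widetilde{\nabla}$ to $\overline{\nabla}$ on differential forms, use the estimates (5.6)--(5.7) together with the norm comparisons of Lemma 6.4 to bound the series $\sum_k(-\overline{\nabla}B)^k$ after shrinking $\lambda$ and enlarging $\varrho$, and deduce contractibility of $Fil^N_{Hodge}$; the paper does precisely this, writing the contracting homotopy directly as $\nabla_{cyc}=\sum_k(-\nabla B)^k\circ\nabla$ rather than via your conjugation $H=\nu^{-1}\overline{\nabla}\,\nu$, but the two formulas are equivalent and the analytic bookkeeping is identical.

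The only noticeable difference is in part (b): the paper argues via the triangulated structure of $Ho(ind\,\mathcal{C})$, showing that the extension $Fil^N\hookrightarrow\mathcal{CC}_*\twoheadrightarrow\mathcal{CC}_*/Fil^N$ yields a distinguished triangle by exhibiting a bounded \emph{linear} (not chain) splitting $s_*=Id-b\circ\nabla$ in top degree, whereupon contractibility of $Fil^N$ gives the isomorphism. Your argument is slightly more direct: you use that $H$ is globally defined and preserves $Fil^N$ to produce an explicit chain-map inverse $\bar\phi$ to the projection. Both are valid; your route avoids invoking the triangulated machinery, the paper's route avoids checking that $H$ descends nicely to the quotient. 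Part (c) is handled identically in both.
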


\begin{proof}

We proceed in several steps.\\
\\
{\bf Step 1:}\\
Let $A$ be a complex algebra and let $\nabla:C_*(A)\to C_{*+1}(A)$ 
be an endomorphism of the Hochschild complex of degree one. 
If the operator $\nabla\circ b+b\circ\nabla$ equals the identity in degrees $*\geq N$, then $\nabla$ defines a contracting chain homotopy 
of $Fil^N_{Hodge}\,C_*(A)$ and the operator
$$
\nabla_{cyc}\,=\,\underset{k=0}{\overset{\infty}{\sum}}\,
(-\nabla\circ B)^k\circ\nabla:\,Fil^N_{Hodge}\,\widehat{CC}_*(A)\,
\to\,Fil^N_{Hodge}\,\widehat{CC}_{*+1}(A)
\eqno(6.16)
$$
will define a contracting chain homotopy of the $N$-th step of the Hodge filtration of the periodic cyclic bicomplex $\widehat{CC}_*(A)$, 
as a straightforward calculation shows.\\
\\
{\bf Step 2:}\\
Consider the linear operator 
$$
\widetilde{\nabla}:\,C_*(\Gamma,Ad({\mathbb C}\Gamma))
\to C_{*+1}(\Gamma, Ad({\mathbb C}\Gamma))
$$ 
defined in (5.3). It is 
$\Gamma$-equivariant and descends therefore to an operator 
$$
\nabla:C_*({\mathbb C}\Gamma)\,\to\,C_{*+1}({\mathbb C}\Gamma)
\eqno(6.17)
$$
on the Hochschild complex, which satisfies
$$
\nabla\circ b+b\circ\nabla\,=\,Id 
$$
in sufficiently high degrees. 
We want to show that $\nabla$ extends to a bounded operator 
on the ind-complex $"\underset{\lambda\to 1^+}{\lim}"\,{\mathcal C}{\mathcal C}_*(\ell^1_\lambda(\Gamma))$. 
So fix $\lambda_0,\varrho_0>1$ and let $\lambda_1,\varrho_1$ satisfy \\
$\lambda_0>\lambda_1>1$ and $\varrho_1\geq\varrho_0$. Recall that 
$$
p\circ\iota=Id:C_*({\mathbb C}\Gamma)\to C_*({\mathbb C}\Gamma)
$$
and thus 
$$
\nabla\,=\,\nabla\circ p\circ\iota\,=\,p\circ\widetilde{\nabla}\circ\iota
$$
in the notations of 2.4 and 6.3. We find then for a chain $c_n\in CC_n({\mathbb C}\Gamma)$ the estimate
$$
\parallel\nabla(c_n)\parallel_{(\lambda_1,\varrho_1,0)}\,=\,
\parallel p(\widetilde{\nabla}(\iota(c_n)))\parallel_{(\lambda_1,\varrho_1,0)}
$$
$$
\leq\,
\frac{1}{c(n+1)!}\cdot\varrho_1^{-c(n+1)}
\cdot\parallel\widetilde{\nabla}(\iota(c_n))\parallel_{\lambda_1} \,\,\,\text{by (6.10)}
$$
$$
\leq\,
\frac{1}{c(n+1)!}\cdot\varrho_1^{-c(n+1)}
\cdot C_{22}\cdot\parallel\iota(c_n)\parallel_{\lambda_0} \,\,\,\text{by (5.6)}
$$
$$
\leq\,
c(n)!\cdot\varrho_0^{c(n)}\cdot
\frac{1}{c(n+1)!}\cdot\varrho_1^{-c(n+1)}
\cdot C_{22}\cdot\parallel c_n\parallel_{(\lambda_0,\varrho_0,0)} \,\,\,\text{by (6.11)}
$$
$$
\leq\,C_{22}\cdot\parallel c_n\parallel_{(\lambda_0,\varrho_0,0)}.
$$
The previous inequalities imply similar estimates for the norms\\ 
$\parallel\nabla(c_n)\parallel_{(\lambda_0,\varrho_0,m)},\,m>0$. 
This shows that $\nabla$ extends to a bounded operator
$$
\nabla:\,CC_*^{(\varrho_0)}(\ell^1_{\lambda_0}(\Gamma))\,\to\,
CC_{*+1}^{(\varrho_1)}(\ell^1_{\lambda_1}(\Gamma)).
\eqno(6.18)
$$
It therefore gives rise to a linear morphism of ind-Fr\'echet spaces
$$
\nabla:\,"\underset{\lambda\to 1^+}{\lim}"\,{\mathcal C}{\mathcal C}_*(\ell^1_\lambda(\Gamma))\,\to\,
"\underset{\lambda\to 1^+}{\lim}"\,{\mathcal C}{\mathcal C}_{*+1}(\ell^1_\lambda(\Gamma))
\eqno(6.19)
$$
which still satisfies the identity $\nabla\circ b+b\circ\nabla\,=\,Id$  in sufficiently high degrees.\\
\\
{\bf Step 3:}\\
We want to show now that the operator $\nabla_{cyc}$ of (6.16) extends to a 
linear morphism of the ind-complex $"\underset{\lambda\to 1^+}{\lim}"\,{\mathcal C}{\mathcal C}(\ell^1_\lambda(\Gamma))$
as well. To this end let $k\in{\mathbb N}$ and fix again $\lambda_0,\varrho_0>1$. Choose now $\lambda_1,\varrho_1$ such that
$\lambda_0>\lambda_1>1$ and $\varrho_1\,>\,Max(2\cdot C_{25},\,\varrho_0)$. One finds for a Hochschild 
chain $c_n\in CC_n({\mathbb C}\Gamma)$ 
$$
\parallel(\nabla\circ B)^k(c_n)\parallel_{(\lambda_1,\varrho_1,0)}\,=\,
\parallel p\circ(\widetilde{\nabla}\circ\widetilde{B})^k\circ\iota(c_n)\parallel_{(\lambda_1,\varrho_1,0)}
$$
$$
\frac{1}{c(n+2k)!}\cdot\varrho_1^{-c(n+2k)}\cdot\parallel 
(\widetilde{\nabla}\circ\widetilde{B})^k\circ\iota(c_n)\parallel_{\lambda_1} \,\,\,\text{by (6.11)}
$$
$$
\leq\,\frac{1}{(c(n)+k)!}\cdot\varrho_1^{-(c(n)+k)}\cdot k!\cdot C_{25}^k\cdot\parallel\iota(c_n)\parallel_{\lambda_0} \,\,\,\text{by (5.7)}
$$
$$
\leq\,\frac{1}{(c(n)+k)!}\cdot\varrho_1^{-(c(n)+k)}\cdot k!\cdot C_{25}^k\cdot c(n)!\cdot \varrho_0^{c(n)}\cdot
\parallel c_n\parallel_{(\lambda_0,\varrho_0,0)} \,\,\,\text{by (6.12)}
$$
$$
\leq\,\left(\frac{c(n)!\cdot k!}{(c(n)+k)!}\right)\cdot(\varrho_1^{-1}\cdot\varrho_0)^{c(n)}\cdot(\varrho_1^{-1}\cdot C_{25})^k\cdot
\parallel c_n\parallel_{(\lambda_0,\varrho_0,0)}
$$
$$
\leq\,2^{-k}\cdot\parallel c_n\parallel_{(\lambda_0,\varrho_0,0)}.
$$
Altogether
$$
\parallel\underset{k=0}{\overset{\infty}{\sum}}\,(-\nabla\circ B)^k(c_n)\parallel_{(\lambda_1,\varrho_1,0)}\,\leq\,
\underset{k=0}{\overset{\infty}{\sum}}\,
\parallel(-\nabla\circ B)^k(c_n)\parallel_{(\lambda_1,\varrho_1,0)}
$$
$$
\leq\,\underset{k=0}{\overset{\infty}{\sum}}\,2^{-k}\cdot
\parallel c_n\parallel_{(\lambda_0,\varrho_0,0)}\,=\,
2\parallel c_n\parallel_{(\lambda_0,\varrho_0,0)}
$$
so that the linear map $\underset{k=0}{\overset{\infty}{\sum}}\,(-\nabla\circ B)^k:\,CC_*({\mathbb C}\Gamma)\,\to\,\widehat{C}_{*+1}({\mathbb C} \Gamma)$ 
extends to a bounded operator 
$$
\underset{k=0}{\overset{\infty}{\sum}}\,(-\nabla\circ B)^k:
\,CC_*^{(\varrho_0)}(\ell^1_{\lambda_0}(\Gamma))\,\to\,
CC_{*+1}^{(\varrho_1)}(\ell^1_{\lambda_1}(\Gamma))
\eqno(6.20)
$$
and gives rise to a linear morphism
$$
\underset{k=0}{\overset{\infty}{\sum}}\,(-\nabla\circ B)^k:
"\underset{\lambda\to 1^+}{\lim}"\,{\mathcal C}{\mathcal C}_*(\ell^1_\lambda(\Gamma))\,\to\,
"\underset{\lambda\to 1^+}{\lim}"\,{\mathcal C}{\mathcal C}_{*+1}(\ell^1_\lambda(\Gamma))
\eqno(6.21)
$$
of ind-Fr\'echet spaces. \\
\\
{\bf Step 4:}\\
Step 1, 2 and 3 together prove that the operator
$$
\nabla_{cyc}\,=\,\underset{k=0}{\overset{\infty}{\sum}}\,(-\nabla\circ B)^k\circ\nabla:\,\widehat{CC}_*({\mathbb C}\Gamma)\to 
\widehat{CC}_{*+1}({\mathbb C}\Gamma)
$$ 
extends to a linear morphism
$$
\nabla_{cyc}:\,"\underset{\lambda\to 1^+}{\lim}"\,{\mathcal C}{\mathcal C}_*(\ell^1_\lambda(\Gamma))\,\to\,
"\underset{\lambda\to 1^+}{\lim}"\,{\mathcal C}{\mathcal C}_{*+1}(\ell^1_\lambda(\Gamma))
\eqno(6.22)
$$
of ind-Fr\'echet spaces. Its restriction to a sufficiently high layer 
$"\underset{\lambda\to 1^+}{\lim}"\,Fil^N_{Hodge}\,{\mathcal C}{\mathcal C}_*(\ell^1_\lambda(\Gamma))$ of the Hodge filtration satisfies 
$\nabla_{cyc}\circ(b+B)\,+\,(b+B)\circ\nabla_{cyc}\,=\,Id$, which shows 
that 
$$
"\underset{\lambda\to 1^+}{\lim}"\,Fil^N_{Hodge}\,{\mathcal C}{\mathcal C}_*(\ell^1_\lambda(\Gamma))
$$ 
is indeed a contractible ind-complex for $N>>0$.\\
\\
{\bf Step 5:}\\
The homotopy category of chain complexes is the key example of a triangulated category. This structure is inherited by the homotopy 
category $Ho(ind\,{\mathcal C})$ of ${\mathbb Z}/2{\mathbb Z}$ graded ind-Fr\'echet complexes. To derive assertion b) from a) it suffices therefore to show that the triangle
$$
"\underset{\lambda\to 1^+}{\lim}"\,Fil^N_{Hodge}\,{\mathcal C}{\mathcal C}_*(\ell^1_\lambda(\Gamma)) \overset{i_*}{\to} 
"\underset{\lambda\to 1^+}{\lim}"\,{\mathcal C}{\mathcal C}_*(\ell^1_\lambda(\Gamma))\to "\underset{\lambda\to 1^+}{\lim}"\,{\mathcal C}{\mathcal C}_*(\ell^1_\lambda(\Gamma))/\,Fil^N_{Hodge}\,{\mathcal C}{\mathcal C}_*(\ell^1_\lambda(\Gamma))
$$
is distinguished. This is equivalent to the assertion that the canonical map
$$
Cone(i_*)\,\to\,"\underset{\lambda\to 1^+}{\lim}"\,{\mathcal C}{\mathcal C}_*(\ell^1_\lambda(\Gamma))/\,Fil^N_{Hodge}\,{\mathcal C}{\mathcal C}_*(\ell^1_\lambda(\Gamma))
$$
is a chain homotopy equivalence. To construct a homotopy inverse it suffices to find a linear morphism of ind-Fr\'echet spaces splitting 
the chain map $i_*$. Such a splitting is provided on the dense subcomplex
$CC_*({\mathbb C}\Gamma)/Fil^N_{Hodge}\,CC_*({\mathbb C}\Gamma)$ by the linear map
$$
s_*:\,\Omega^{<N}({\mathbb C}\Gamma)/\,b\Omega^N({\mathbb C}\Gamma)\,\to\,\Omega^*({\mathbb C}\Gamma),\,
s_*\,=\,
\begin{cases}
Id & *<N-1, \\
(Id-b\circ\nabla) & *=N-1. \\
\end{cases}
\eqno(6.23)
$$
It extends by step 2 to the desired linear morphism 
$$
s_*: "\underset{\lambda\to 1^+}{\lim}"\,{\mathcal C}{\mathcal C}_*(\ell^1_\lambda(\Gamma))/\,Fil^N_{Hodge}\,{\mathcal C}{\mathcal C}_*(\ell^1_\lambda(\Gamma))\,\to\,
"\underset{\lambda\to 1^+}{\lim}"\,{\mathcal C}{\mathcal C}_{*}(\ell^1_\lambda(\Gamma))
\eqno(6.24)
$$
of ind-Fr\'echet spaces.\\
\\
{\bf Step 6:}\\
To verify c) we observe that the operator $\widetilde{\nabla}$ is natural with respect to the chosen coefficient module and therefore compatible with the decomposition of $Ad(\Gamma)$ into irreducible subspaces. This shows that $\widetilde{\nabla}$, and therefore also the operators $\nabla$ and $\nabla_{cyc}$ are compatible with the homogeneous decomposition.
\end{proof}

{\vskip 3mm}

The constant, finite dimensional ind-complexes we will have to deal with are attached to finite dimensional, integer graded vector spaces. 
To such a vector space we associate the ${\mathbb Z}/2{\mathbb Z}$ graded complex given by the direct sum of its even, respectively odd components,
equipped with the differential zero.\\
With this being understood we may formulate

\begin{prop}
Let $(\Gamma,S)$ be a word-hyperbolic group and let $v\in\Gamma$ be a torsion element. Let $Z(v)$ be the centralizer of $\Gamma$. Then
there exists a canonical isomorphism
$$
H_*(Z(v),{\mathbb C}) \,\,\overset{\simeq}{\longrightarrow}\,\,
"\underset{\lambda\to 1^+}{\lim}"\,{\mathcal C}{\mathcal C}_*(\ell^1_\lambda(\Gamma))_{\langle v\rangle}
\eqno(6.25)
$$
in the homotopy category of ind-complexes $Ho(ind\,{\mathcal C})$.
\end{prop}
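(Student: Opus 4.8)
The plan is to transport the purely algebraic identifications of Sections 2.7--2.8 to the analytic setting, keeping every chain map and homotopy under the weighted-norm control of Section 5 so that it descends to a morphism of the ind-Fr\'echet complexes in question. By Lemma 6.3 the summand $"\underset{\lambda\to 1^+}{\lim}"\,{\mathcal C}{\mathcal C}_*(\ell^1_\lambda(\Gamma))_{\langle v\rangle}$ is a well-defined object of $Ho(ind\,{\mathcal C})$, and by Proposition 6.5(b),(c) we may at the outset replace the analytic cyclic bicomplex by its Hodge truncation without changing its isomorphism class. The substantial moves are then: (i) realise the isomorphism $\nu_v$ of Proposition 2.7 analytically, passing from the elliptic summand of the analytic cyclic bicomplex to the elliptic summand of the analytic periodic Hochschild complex $"\underset{\lambda\to 1^+}{\lim}"\,\widehat{\mathcal C}_*(\ell^1_\lambda(\Gamma))_{\langle v\rangle}$ (the $\varrho$-completion of (2.52)); (ii) retract the latter, via a controlled homotopy, onto a bounded-degree Rips subcomplex; (iii) use the hyperbolic geometry of $Z(v)$ to cut this down to a genuinely finite-dimensional complex; (iv) observe that a finite-dimensional complex over $\mathbb{C}$ is chain homotopy equivalent to its homology equipped with the zero differential, and that here the homology is $H_*(Z(v),\mathbb{C})$ (in the $\mathbb{Z}/2\mathbb{Z}$-graded sense of the direct sum of even, resp. odd, components), in agreement with Corollary 2.8.

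For step (i) I claim that $\nu_v=\mathrm{id}+\overline h(\mu_v,\mathrm{id})\circ B$ and its inverse $\sum_{k\ge 0}(-\overline h(\mu_v,\mathrm{id})\circ B)^k$ extend to mutually inverse bounded morphisms of the relevant ind-Fr\'echet complexes, and, being strictly compatible with the Hodge filtration, do so compatibly with the truncations. The estimate needed is the exact analogue of Corollary 5.3, with $\overline h(\mu_v,\mathrm{id})\circ B$ in place of $\widetilde\nabla\circ\widetilde B$. This is the point where it is essential that $v$ is a torsion element: $\mu_v=\pi_{as}\circ\pi_U$ and the homotopy $h(\mu_v,\mathrm{id})$ of (2.30) move the vertices of a twisted Bar simplex only by elements of the finite cyclic group $U=\langle v\rangle$, so on a simplex of degree $n$ the weight is increased by at most an additive constant times $n$, while the $\ell^1$-norm (after the forced antisymmetrization) grows at most factorially in $n$; both are absorbed by the factorial weights $1/c(n)!$ built into the norms (6.1). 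Using the norm comparisons (6.11), (6.12) of Lemma 6.4 exactly as in Step 3 of the proof of Proposition 6.5, one obtains for $\lambda_0>\lambda_1>1$ and $\varrho_1$ sufficiently large an estimate $\tfrac{1}{k!}\|(\overline h(\mu_v,\mathrm{id})\circ B)^k(c_n)\|_{(\lambda_1,\varrho_1,0)}\le 2^{-k}\|c_n\|_{(\lambda_0,\varrho_0,0)}$, which forces the geometric series to converge to a bounded ind-morphism; since $\overline\mu_v\circ B=0$, the formal computation of the proof of Proposition 2.7 then shows it is inverse to $\nu_v$.

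For steps (ii)--(iv): lifting along the $Z(v)$-equivariant epimorphism $p_{v*}$ of (2.47), the elliptic analytic Hochschild complex is the $Z(v)$-coinvariants of the analytic completion of the Bar complex $C_*(\Gamma,\mathbb{C}v)$ with boundary $b$, a free $\mathbb{C}Z(v)$-resolution of $\mathbb{C}$ tensored with the trivial module $\mathbb{C}v$. Apply the controlled chain map $\Theta'_*\otimes\mathrm{id}_{\mathbb{C}v}$ of Theorem 4.7 together with its homotopy $\widetilde\nabla=h(\Theta'_*\otimes\mathrm{id},\mathrm{id})$ of (5.3): $\Theta'_*$ is $\Gamma$-equivariant, equals the identity in degree zero and vanishes above some degree $d$, and by Corollary 5.3 both $\Theta'_*\otimes\mathrm{id}$ and $\widetilde\nabla$ extend to bounded ind-morphisms, so the elliptic analytic Hochschild complex is homotopy equivalent in $Ho(ind\,{\mathcal C})$ to the $Z(v)$-coinvariants of the Rips subcomplex $\mathrm{Im}(\Theta'_*\otimes\mathrm{id})\subset C_*^R(\Gamma,\mathbb{C}v)$, which is concentrated in degrees $\le d$ and has homology $H_*(Z(v),\mathbb{C})$. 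Now invoke Theorem 3.7(b): since $v$ is torsion, $Z(v)$ is itself a word-hyperbolic group whose own word metric is quasi-isometric to the restriction of $d_S$. Running the approximation-by-trees machinery (Corollary 3.6, Theorem 4.7, Corollary 5.3) for $Z(v)$ and comparing the two free $\mathbb{C}Z(v)$-resolutions $C_*(\Gamma,\mathbb{C})$ and $C_*^{R'}(Z(v),\mathbb{C})$ of $\mathbb{C}$ by $Z(v)$-equivariant chain maps that are metrically controlled for both (quasi-isometric) metrics, one identifies the previous ind-complex with the $Z(v)$-coinvariants of the Rips complex of $Z(v)$ --- a genuinely finite-dimensional complex concentrated in bounded degrees, whose underlying ind-object is essentially constant. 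Such a complex is chain homotopy equivalent to its homology with the zero differential, and since all spaces are finite-dimensional the equivalence is automatically bounded; its homology is $H_*(Z(v),\mathbb{C})$. Composing the chain of identifications and checking naturality at each stage produces the canonical isomorphism (6.25).

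The hard part is step (iii), where one leaves the geometry of $\Gamma$ for that of $Z(v)$: one must produce $Z(v)$-equivariant comparison maps between the Bar complex of $\Gamma$ and the Rips resolution of $Z(v)$ that are simultaneously controlled for the $\Gamma$-metric and the $Z(v)$-metric, with constants good enough to survive the ind-limit $\lambda\to 1^+$. This is exactly where the detailed hyperbolic geometry of Section 3 --- the fact that centralizers of torsion elements are quasiconvex hyperbolic subgroups with comparable word metric --- is indispensable; by contrast the convergence estimate for the $\nu_v$-series in step (i) is routine once the torsion of $v$ has been used, provided it is carried out with the same bookkeeping as Corollary 5.3.
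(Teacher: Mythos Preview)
Your overall architecture matches the paper's: truncate by Proposition 6.5, pass from cyclic to Hochschild via $\nu_v$, then compare with the Rips resolution of the hyperbolic subgroup $Z(v)$. The organization of steps (ii)--(iv) differs slightly from the paper (the paper does not first retract to a Rips complex of $\Gamma$, but goes directly to $Z(v)$ via the explicit $Z(v)$-equivariant maps $j_v,\kappa_v$ built from the section $\sigma$ of Theorem 3.10, with the estimates (6.36)--(6.39)), but this is a matter of packaging.

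There is, however, a genuine error in your justification of step (i). You claim that $h(\mu_v,\mathrm{id})$ increases the weight of a degree-$n$ simplex by at most an additive constant times $n$, because ``the vertices are moved only by elements of the finite cyclic group $U$''. This overlooks the antisymmetrization $\pi_{as}$ inside $\mu_v$: permuting the first $i+1$ vertices of $[g_0,\dots,g_n;v]$ replaces consecutive edges by arbitrary pairwise distances $d_S(g_s,g_t)$, each of which can be as large as $\vert\alpha\vert$. The correct bound is multiplicative,
\[
\vert h(\mu_v,\mathrm{id})(\alpha_n)\vert\ \le\ (n+2)\,\vert v\vert\cdot\vert\alpha_n\vert,
\]
which is the paper's estimate (6.30). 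With a multiplicative weight bound the ``exact analogue of Corollary 5.3'' fails: to control $\lambda_1^{(n+2)\vert v\vert\cdot\vert\alpha\vert}$ by $\lambda_0^{\vert\alpha\vert}$ one needs $\lambda_1^{(n+2)\vert v\vert}\le\lambda_0$, so the admissible $\lambda_1$ depends on $n$ and there is no uniform choice over all degrees. The paper says this explicitly after (6.32): one \emph{cannot} claim that $\nu_v$ extends to a bounded morphism of the full ind-complex. Hence your claimed convergence of the geometric series $\sum_{k\ge 0}(-\overline h(\mu_v,\mathrm{id})\circ B)^k$ on the whole complex is unfounded.

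The repair is already implicit in your own first sentence. Since you have truncated by $Fil^N_{Hodge}$ at the outset, you only need $\nu_v$ and $\nu_v^{-1}$ on the quotient, where degrees are bounded by $N$. There the series is finite (each application of $\overline h(\mu_v,\mathrm{id})\circ B$ raises degree by $2$), and for any $\lambda_0>1$ the single condition $\lambda_1^{(N+1)\vert v\vert}\le\lambda_0$ (the paper's (6.31)) gives boundedness of every term; this is the content of (6.32). So drop the appeal to a Corollary-5.3-type estimate and argue directly on the truncation. With that correction your outline is sound, though in step (iii) you should make the comparison maps explicit: the paper's $\kappa_v$ of (6.34), built from the controlled section $\sigma$ of Theorem~3.10, is exactly the ``metrically controlled $Z(v)$-equivariant chain map'' you invoke, and the estimates (6.37), (6.39) are where the constants $C_{12},C_{13}$ from the conjugacy problem enter.
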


\begin{proof}
Proposition 2.7 indicates how to calculate the contribution of the elliptic conjugacy class $\langle v\rangle$ to the cyclic homology of the group ring ${\mathbb C}\Gamma$. Because the involved operators turn out to be bounded on algebraic differential forms 
of a given degree, but not on the space of all differential forms we will have to invoke our results about the degeneration of the Hodge filtration.\\
We proceed in several steps:\\
\\
{\bf Step 1:}\\
The previous proposition 6.5 shows that the canonical morphisms
$$
"\underset{\lambda\to 1^+}{\lim}"\,\overline{{\mathcal C}}_*(\ell^1_\lambda(\Gamma))_{\langle v\rangle}
\,\to\,
"\underset{\lambda\to 1^+}{\lim}"\,\overline{{\mathcal C}}_*(\ell^1_\lambda(\Gamma))/\,Fil^N_{Hodge}\,\overline{{\mathcal C}}_*(\ell^1_\lambda(\Gamma))_{\langle v\rangle}
\eqno(6.26)
$$
and
$$
"\underset{\lambda\to 1^+}{\lim}"\,\overline{{\mathcal C}{\mathcal C}}_*(\ell^1_\lambda(\Gamma))_{\langle v\rangle}
\,\to\,
"\underset{\lambda\to 1^+}{\lim}"\,\overline{{\mathcal C}{\mathcal C}}_*(\ell^1_\lambda(\Gamma))/\,Fil^N_{Hodge}\,\overline{{\mathcal C}{\mathcal C}}_*(\ell^1_\lambda(\Gamma))_{\langle v\rangle}
\eqno(6.27)
$$
of reduced complexes are isomorphisms in $Ho(ind\,{\mathcal C})$ for $N>>0$. \\
\\
{\bf Step 2:}\\
We show next that the isomorphism
$$
\nu_{v}:\,\widehat{\overline{CC}}_*({\mathbb C}\Gamma)_{\langle v\rangle}\,\to\,
\widehat{\overline{C}}_*({\mathbb C}\Gamma)_{\langle v\rangle}
$$
of chain complexes, constructed in 2.7, gives rise to 
an isomorphism
$$
\nu_{v}:"\underset{\lambda\to 1^+}{\lim}"\,\overline{{\mathcal C}{\mathcal C}}_*(\ell^1_\lambda(\Gamma))/\,Fil^N_{Hodge}\,\overline{{\mathcal C}{\mathcal C}}_*(\ell^1_\lambda(\Gamma))_{\langle v\rangle}\,\to\,
"\underset{\lambda\to 1^+}{\lim}"\,\overline{{\mathcal C}}_*(\ell^1_\lambda(\Gamma))/\,Fil^N_{Hodge}\,\overline{{\mathcal C}}_*(\ell^1_\lambda(\Gamma))_{\langle v\rangle}
\eqno(6.28)
$$
of ind-Fr\'echet complexes in $Ho(ind\,{\mathcal C})$. To this end we have to verify its boundedness. Recall that 
$\nu_{v}=Id+h(\overline{\chi}_v,Id)\circ B:\,\widehat{\overline{\Omega}}^*
({\mathbb C}\Gamma)_{\langle v\rangle}\,\to\,\widehat{\overline{\Omega}}^*
({\mathbb C}\Gamma)_{\langle v\rangle}$ and that the operator $B$ 
extends to a bounded endomorphism of the ind-complexes in question.
We have thus to study the boundedness of the operator $h(\overline{\mu}_v,Id)$ in detail. \\
\\For a Bar-simplex $\alpha_n=[g_0,\ldots,g_n;v]\in
\Delta_n(\Gamma)\times\{v\}$ one has 
$$
h(\mu_v,Id)(\alpha_n)\,=\,
\underset{i=0}{\overset{n}{\sum}}\,(-1)^i[\mu_v(g_0,\ldots,g_i),g_i,\ldots,g_n;v]
$$
where $\mu_v$ is the composition of the antisymmetrization and the averaging operator with respect to the action of the finite cyclic group 
generated by $v$ on the Bar-complex. For the $\ell^1$-norm one finds therefore 
$$
\parallel h(\mu_v,id)(\alpha_n)\parallel_1\,\leq\,(n+1).
\eqno(6.29)
$$
Concerning the weights we observe
$$
\vert h(\mu_v,id)([g_0,\ldots,g_n;v])\vert
$$
$$
\leq\,\underset{0\leq i\leq n}{Max}\,\,
\underset{\sigma\in\Sigma_i}{Max}\,\,
\underset{k_0,\dots,k_i}{Max}\,\,
\vert[v^{k_0}g_{\sigma(0)},v^{k_1}g_{\sigma(1)},\ldots,
v^{k_i}g_{\sigma(i)},g_i,\ldots,g_n;v]\vert
$$
$$
\leq\,(n+2)\,\underset{0\leq s,t\leq n}{Max}\,\,\underset{l}{Max}\,\,\ell_S(g_s^{-1}v^lg_t)
$$
Now
$$
\ell_S(g_s^{-1}v^lg_t)\,=\,\ell_S((g_s^{-1}vg_s)^l(g_s^{-1}g_t))
\,\leq\,(\vert v\vert-1)\cdot\ell_S(g_s^{-1}vg_s)+\ell_S(g_s^{-1}g_t)\,\leq\,\vert v\vert\cdot\vert[g_0,\ldots,g_n;v]\vert,
$$
so that  
$$
\vert h(\mu_v,Id)([g_0,\ldots,g_n;v])\vert\,\leq\,
(n+2)\cdot\vert v\vert\cdot\vert[g_0,\ldots,g_n;v]\vert.
\eqno(6.30)
$$
Let now $\lambda_0,\varrho_0>1$ and choose $\lambda_1,\varrho_1>1$ such that 
$$
\lambda_1^{(N+1)\cdot\vert v\vert}\leq\lambda_0\eqno(6.31)
$$ 
and $\varrho_1\geq\varrho_0$. For elementary differential forms $\omega_n\in C_n({\mathbb C}\Gamma)_{\langle v\rangle},\,n<N$ we find, noting that 
$$
h(\overline{\mu}_v,id)=h(\overline{\mu}_v,id)\circ \overline{p}_v\circ\iota_{v,\sigma}=\overline{p}_v\circ h(\mu_v,Id)\circ\iota_{v,\sigma},
$$
the estimate 
$$
\parallel h(\overline{\mu},id)(\omega_n)\parallel_{(\lambda_1,\varrho_1,0)}\,=\,\parallel \overline{p}_v\circ h(\mu_v,id)\circ\iota_{v,\sigma}(\omega_n)\parallel_{(\lambda_1,\varrho_1,0)}
$$
$$
\leq\,\frac{1}{c(n+1)!}\cdot\varrho_1^{-c(n+1)}\cdot\parallel
h(\mu_v,id)\circ\iota_{v,\sigma}(\omega_n)\parallel_{\lambda_1}
$$
$$
\leq\,(n+1)\cdot\frac{1}{c(n+1)!}\cdot\varrho_1^{-c(n+1)}\cdot
\lambda_1^{\vert h(\mu_v,id)(\iota_{v,\sigma}(\omega_n))\vert}\,
\leq\,(n+1)\cdot\frac{1}{c(n+1)!}\cdot\varrho_1^{-c(n+1)}\cdot
\lambda_1^{(N+1)\cdot\vert v\vert\cdot\vert\iota_{v,\sigma}(\omega_n)\vert}
$$
$$
\leq\,(n+1)\cdot\frac{1}{c(n+1)!}\cdot\varrho_1^{-c(n+1)}\cdot
\lambda_0^{\vert\iota_{v,\sigma}(\omega_n)\vert}\,=\,
(n+1)\cdot\frac{1}{c(n+1)!}\cdot\varrho_1^{-c(n+1)}\cdot
\parallel\iota_{v,\sigma}(\omega_n)\parallel_{\lambda_0}
$$
$$
\leq\,(n+1)\cdot\frac{1}{c(n+1)!}\cdot\varrho_1^{-c(n+1)}\cdot
c(n)!\cdot\varrho_0^{c(n)}\cdot
\parallel\omega_n\parallel_{\lambda_0,\varrho_0,0}
$$
$$
\leq\,N\cdot
\parallel\omega_n\parallel_{(\lambda_0,\varrho_0,0)}.
\eqno(6.32)
$$
The norm in question being a weighted $\ell^1$-norm, this estimate extends to arbitrary elements of $C_n({\mathbb C}\Gamma)_{\langle v\rangle}$. It follows that $\nu_v$ extends to a morphism of ind-complexes as claimed. Its inverse is given by a finite geometric series and is thus bounded as well. Note however that we cannot claim 
that the operator $\nu_v$ extends to a morphism on the whole cyclic 
ind-complex in question because for a given index $\lambda_0>1$ the 
choice of the appropriate $\lambda_1$ depends on the truncation level $N$ and cannot be made uniformly.

\newpage

{\bf Step 3:}\\
Fix a section $\sigma:\,\langle v\rangle\to\Gamma$ of the map
$\Gamma\to\langle v\rangle,\,g\mapsto gvg^{-1}$ satisfying (3.15), (3.16)
and such that $\sigma(v)=e$. Consider the inclusion
$$
\begin{array}{cccc}
j_v: & C_*(Z(v),{\mathbb C}) & \to & C_*(\Gamma,\,{\mathbb C}v) \\
 & & & \\
 & [h_0,\ldots,h_n] & \mapsto & [h_0,\ldots,h_n;v] \\
 \end{array}
 \eqno(6.33)
$$
and the linear map
$$
\begin{array}{cccc}
\kappa_v: & C_*(\Gamma,\,{\mathbb C}v) & \to & C_*(Z(v),\,{\mathbb C}) \\
 & & & \\
 & [g_0,\ldots,g_n;v] & \mapsto & 
 [g_0\cdot\sigma(g_0^{-1}vg_0),\ldots,g_n\cdot\sigma(g_n^{-1}vg_n)].
\end{array}
\eqno(6.34)
$$
These are both $Z(v)$-equivariant chain maps compatible with augmentations and preserving the degenerate sub complexes. One has $\kappa_v\circ j_v=Id_{C_*(Z(v),{\mathbb C})}$
and $j_v\circ\kappa_v$ is chain homotopic to the identity via the 
$Z(v)$-equivariant homotopy operator
$h_*(j_v\circ\kappa_v,id):\,C_*(\Gamma,\,{\mathbb C}v)\to
C_{*+1}(\Gamma,\,{\mathbb C}v).$ 
After passing to $Z(v)$-coinvariants we obtain a corresponding diagram of chain maps
$$
\begin{array}{ccccc}
\overline{C}_*(Z(v),{\mathbb C})_{Z(v)} & \overset{j_v}{\longrightarrow} & \overline{C}_*(\Gamma,\,{\mathbb C}v)_{Z(v)} & \overset{\simeq}{\longrightarrow} & \overline{C}_*(\Gamma,\,{\mathbb C}\langle v\rangle)_{\Gamma} \\
 & & & & \\
 \parallel & & \parallel & & \parallel \\
  & & & & \\
 \overline{C}_*({\mathbb C}Z(v))_{\langle e\rangle} &  \overset{\overline{j}_v}{\longrightarrow} & \overline{C}_*({\mathbb C}\Gamma)_{\langle v\rangle} & \overset{\simeq}{\longrightarrow} & 
 \overline{C}_*({\mathbb C}\Gamma)_{\langle v\rangle}
\end{array}
\eqno(6.35)
$$
where $\overline{j}_v$ is a chain homotopy equivalence with chain homotopy inverse $\overline{\kappa}_v.$\\
We check the analytic properties of these maps. The operator $j_v$ is isometric and $\kappa_v$ is contractive with respect to the $\ell^1$-norms on $C_*(\Gamma,\,{\mathbb C}v)$ and $C_*(Z(v),{\mathbb C})$. Concerning the weights 
we find on the one hand
$$
\vert j_v([h_0,\ldots,h_n])\vert\,\leq\,\vert[h_0,\ldots,h_n]\vert+\ell_S(\langle v\rangle)
\eqno(6.36)
$$
because $h_n$ commutes with $v$. On the other hand
$$
\vert \kappa_v([g_0,\ldots,g_n;v])\vert\,=\,
\vert[g_0\cdot\sigma(g_0^{-1}vg_0),\ldots,g_n\cdot\sigma(g_n^{-1}vg_n)]\vert
$$
$$
=\,\underset{i=0}{\overset{n}{\sum}}\,
d_S(g_i\cdot\sigma(g_i^{-1}vg_i),\,g_{i+1}\cdot\sigma(g_{i+1}^{-1}vg_{i+1})).
$$
For the sum of the first $n$ terms one finds
$$
\underset{i=0}{\overset{n-1}{\sum}}\,
d_S(g_i\cdot\sigma(g_i^{-1}vg_i),\,g_{i+1}\cdot\sigma(g_{i+1}^{-1}vg_{i+1}))
$$
$$
=\underset{i=0}{\overset{n-1}{\sum}}\,
d_S(\sigma((g_i^{-1}g_{i+1})(g_{i+1}^{-1}vg_{i+1})(g_{i}^{-1}g_{i+1})^{-1}),\,(g_{i}^{-1}g_{i+1})\cdot\sigma(g_{i+1}^{-1}vg_{i+1}))
$$
$$
\leq\,\underset{i=0}{\overset{n-1}{\sum}}\,C_{12}(\Gamma,S,\delta)\cdot \ell_S(g_i^{-1}g_{i+1})\,+\,\ell_S(\langle v\rangle)+\,C_{13}(\Gamma,S,\delta)
$$
by (3.16), whereas
$$
d_S(g_n\cdot\sigma(g_n^{-1}vg_n),\,g_{0}\cdot\sigma(g_{0}^{-1}vg_{0}))
$$
$$
\leq d_S(g_n\cdot\sigma(g_n^{-1}vg_n),\,vg_{0}\cdot\sigma(g_{0}^{-1}vg_{0}))
+d_S(vg_0\cdot\sigma(g_0^{-1}vg_0),\,g_{0}\cdot\sigma(g_{0}^{-1}vg_{0}))
$$
$$
=d_S(g_n\cdot\sigma(g_n^{-1}vg_n),\,vg_{0}\cdot\sigma(g_{0}^{-1}vg_{0}))
+\ell_S(\sigma(g_{0}^{-1}vg_{0})^{-1}(g_0^{-1}vg_0)\sigma(g_0^{-1}vg_0))
$$
$$
=d_S(\sigma((g_n^{-1}vg_0)(g_0^{-1}vg_0)(g_n^{-1}vg_0)^{-1}),\,g_n^{-1}vg_{0}\cdot\sigma(g_{0}^{-1}vg_{0}))+\ell_S(v)
$$
$$
\leq C_{12}\cdot \ell_S(g_n^{-1}vg_{0})\,+\,2\ell_S(\langle v\rangle)+\,C_{13} = C_{12}\cdot d_S(g_n,vg_{0})\,+\,2\ell_S(\langle v\rangle)+\,C_{13}
$$
so that altogether
$$
\vert \kappa_v([g_0,\ldots,g_n;v])\vert\,\leq\,
C_{12}\cdot\vert[g_0,\ldots,g_n;v]\vert\,+\,(n+2)\cdot\ell_S(\langle v\rangle)+\,(n+1)\cdot C_{13}
\eqno(6.37)
$$
\\
{\bf Step 4:}\\
\\
For the $Z(v)$-equivariant homotopy operator connecting $j_v\circ\kappa_v$ and the identity one observes that
$$
\parallel  h( j_v\circ\kappa_v,id)\parallel_{\ell^1}\,\leq\,(n+1)
\eqno(6.38)
$$
on $C_n(\Gamma,{\mathbb C}v)$. Concerning the behavior of the homotopy operator with respect to weights  we find for a Bar-$n$-simplex 
$\alpha_n\,=\,[g_0,\ldots,g_n;v]\in\Delta_n(\Gamma)\times\{v\}$ the estimate
$$
\vert h( j_v\circ\kappa_v,id)(\alpha_n)\vert\,=\,\underset{0\leq i\leq n}{Max}\vert
[g_0\sigma(g_0^{-1}vg_0),\ldots,g_i\sigma(g_i^{-1}vg_i),g_i,\ldots,g_n;v]  \vert
$$
and
$$
\vert
[g_0\sigma(g_0^{-1}vg_0),\ldots,g_i\sigma(g_i^{-1}vg_i),g_i,\ldots,g_n;v]  \vert\,=\,
\underset{j=0}{\overset{i-1}{\sum}}\,d_S(g_j\sigma(g_j^{-1}vg_j),g_{j+1}\sigma(g_{j+1}^{-1}vg_{j+1}))
$$
$$
+d_S(g_i\sigma(g_i^{-1}vg_i),g_i)\,+\,\underset{k=i}{\overset{n-1}{\sum}}\,d_S(g_k,g_{k+1})\,
+\,s_S(g_n,vg_0\sigma(g_0^{-1}vg_0))
$$
$$
\leq\underset{j=0}{\overset{i-1}{\sum}}\,\left( C_{12}\cdot d_S(g_j,g_{j+1})\,+\,\ell_S(\langle v\rangle\,+\,C_{13}\right)
$$
$$
+\underset{k=i}{\overset{n-1}{\sum}}\,d_S(g_k,g_{k+1})\,+\,d_S(g_n,vg_0)\,+\,d_S(vg_0,vg_0\sigma(g_0^{-1}vg_0))\,+\,
d_S(g_i\sigma(g_i^{-1}vg_i),g_i)
$$
$$
\leq\,C_{12}\cdot\vert\alpha_n\vert\,+\,n\cdot\ell_S(\langle v\rangle)\,+\,n\cdot C_{13}\,+\,2\underset{0\leq i\leq n}{Max}\,
\ell_S(g_i^{-1}vg_i)
$$
$$
\leq\,(C_{12}+2)\cdot\vert\alpha_n\vert\,+\,n\cdot\ell_S(\langle v\rangle)\,+\,n\cdot C_{13}
\eqno(6.39)
$$
because
$$
\ell_S(g_i^{-1}vg_i)\,=\,d_S(g_i,vg_i)\,\leq
$$
$$
\leq\,\underset{k=i}{\overset{n-1}{\sum}}\,d_S(g_k,g_{k+1})\,+\,d(g_n,vg_0)\,+\,\underset{j=0}{\overset{i-1}{\sum}}\,d_S(vg_k,vg_{k+1})\,=\,\vert\alpha_n\vert.
$$
{\bf Step 5:}\\
\\
According to Gromov, the centralizer $Z(v)$ of a torsion element in a word-hyperbolic group $\Gamma$ is a word-hyperbolic group itself and the restriction of a word metric on $\Gamma$ to $Z(v)$ is quasi-isometric to any word-metric on $Z(v)$. So if $S'$ is any finite symmetric set of generators of $Z(v)$ (any hyperbolic group is finitely generated), then there exist constants $C_{28},C_{29}$ such that
$$
C_{28}^{-1}\cdot\ell_S\vert_{Z(v)}-C_{29}\,\leq\,\ell_{S'}\,\leq\,C_{28}\cdot\ell_S\vert_{Z(v)}+C_{29}
\eqno(6.40)
$$
\\
{\bf Step 6:}\\
Step 3 and Step 4 show in conjunction with Lemma 6.4 that the previously defined chain maps extend to bounded morphisms
$$
\overline{j}_v:\,"\underset{\lambda\to 1^+}{\lim}"\,\overline{{\mathcal C}}_*(\ell^1_\lambda(Z(v)))_{\langle e\rangle}/Fil^N_{Hodge} \,\longrightarrow\,
"\underset{\lambda\to 1^+}{\lim}"\,\overline{{\mathcal C}}_*(\ell^1_\lambda(\Gamma))_{\langle v\rangle}/Fil^N_{Hodge} 
\eqno(6.41)
$$
and
$$
\overline{\kappa}_v:\,"\underset{\lambda\to 1^+}{\lim}"\,\overline{{\mathcal C}}_*(\ell^1_\lambda(\Gamma))_{\langle v\rangle}/Fil^N_{Hodge} \,\longrightarrow\,"\underset{\lambda\to 1^+}{\lim}"\,
\overline{{\mathcal C}}_*(\ell^1_\lambda(Z(v)))_{\langle e\rangle}/Fil^N_{Hodge} 
\eqno(6.42)
$$
of ind-complexes. Moreover $\overline{\kappa}_v\circ\overline{j}_v\,=\,id$ and $\overline{j}_v\circ\overline{\kappa}_v$ is chain homotopic to the identity via the bounded homotopy operator 
$h(\overline{j}_v\circ\overline{\kappa}_v,id)$. So (6.41) is in fact a chain homotopy equivalence of ind-complexes.\\
\\
{\bf Step 7:}
Recall that the inclusion $i_*^R:\,C_*^R(Z(v),{\mathbb C})\,\hookrightarrow C_*(Z(v),{\mathbb C})$ of the Rips-complex into the Bar-complex of the hyperbolic group $Z(v)$ is a chain homotopy equivalence, a chain homotopy inverse being given by the chain map $\Theta'_*$ of 4.7. The composition $\Theta'_*\circ i_*^R$ is a quasi-isomorphism of complexes of finitely generated free $Z(v)$-modules whereas 
$i_*^R\circ\Theta'_*$ is chain homotopic to the identity via the $Z(v)$-equivariant operator $h(\Theta',id)$ of 5.2, this time for the hyperbolic group $Z(v)$. Proposition 5.3 shows that the canonical morphism 
$$
"\underset{\lambda\to 1^+}{\lim}"\,\overline{C}_*^R(Z(v),{\mathbb C})_{Z(v)}\,\overset{i_R}{\longrightarrow} "\underset{\lambda\to 1^+}{\lim}"\,\overline{{\mathcal C}}_*(\ell^1_\lambda(Z(v)))\langle e\rangle/Fil^N_{Hodge} 
\eqno(6.43)
$$
is a chain homotopy equivalence of ind-complexes for $N>>0$. The Rips complex in question being a constant, finite dimensional ind complex calculating the homology of $Z(v)$ with complex coefficients for $R$ sufficiently large, we obtain finally canonical isomorphisms
$$
"\underset{\lambda\to 1^+}{\lim}"\,\overline{C}_*^R(Z(v),{\mathbb C})_{Z(v)}\,\overset{\simeq}{\longrightarrow}
\,\overline{C}_*^R(Z(v),{\mathbb C})_{Z(v)}\,\overset{\simeq}{\longrightarrow}\,H_*(Z(v),{\mathbb C})
\eqno(6.44)
$$
in $Ho(ind\,{\mathcal C})$. Altogether we have established the chain
$$
\begin{array}{c}
H_*(Z(v),{\mathbb C}) \\
\downarrow \\
 C_*^R(Z(v),{\mathbb C})_{Z(v)}  \\
\downarrow \\
"\underset{\lambda\to 1^+}{\lim}"\,{\mathcal C}_*(\ell^1_\lambda(Z(v)))\langle e\rangle/Fil^N_{Hodge}  \\
\downarrow \\
"\underset{\lambda\to 1^+}{\lim}"\,{\mathcal C}_*(\ell^1_\lambda(\Gamma))\langle v\rangle/Fil^N_{Hodge} \\
\downarrow \\
"\underset{\lambda\to 1^+}{\lim}"\,{\mathcal C}{\mathcal C}_*(\ell^1_\lambda(\Gamma))\langle v\rangle/Fil^N_{Hodge}
 \\
\uparrow \\
"\underset{\lambda\to 1^+}{\lim}"\,{\mathcal C}{\mathcal C}_*(\ell^1_\lambda(\Gamma))\langle v\rangle  \\
\end{array}
\eqno(6.45)
$$
 of isomorphisms in the homotopy category of ind-complexes. This is the assertion.
\end{proof}

\begin{prop}

For $\lambda>1$ let
${\mathcal C}{\mathcal C}_*(\ell^1_\lambda(\Gamma))_{inhom}$
be the closure of $CC_*({\mathbb C}\Gamma)_{inhom}$ in
${\mathcal C}{\mathcal C}_*(\ell^1_\lambda(\Gamma))$. 
Then the canonical morphism
$$
"\underset{\lambda\to 1^+}{\lim}"\,Fil_N^{Hodge}{\mathcal C}{\mathcal C}_*(\ell^1_\lambda(\Gamma))_{inhom}\,\longrightarrow\,
"\underset{\lambda\to 1^+}{\lim}"\,{\mathcal C}{\mathcal C}_*(\ell^1_\lambda(\Gamma))_{inhom}
\eqno(6.46)
$$
is a chain homotopy equivalence of ind-complexes for $N>>0$.
\end{prop}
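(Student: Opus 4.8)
\begin{proofsketch}
The plan is to transpose, to the level of ind-Fr\'echet complexes, the algebraic argument that produces the chain homotopy equivalence $Fil^N_{Hodge}\widehat{CC}_*({\mathbb C}\Gamma)_{\langle v\rangle}\hookrightarrow\widehat{CC}_*({\mathbb C}\Gamma)_{\langle v\rangle}$ for a single hyperbolic conjugacy class (the last corollary of Section 2.8), while keeping track of all norms so that every estimate is uniform in $\langle v\rangle$. Uniformity is indispensable here: since the seminorms $\parallel-\parallel_{(\lambda,\varrho,m)}$ and $\parallel-\parallel_\lambda$ are weighted $\ell^1$-norms, ${\mathcal C}{\mathcal C}_*(\ell^1_\lambda(\Gamma))_{inhom}$ is, for each fixed $\lambda$, the completed $\ell^1$-direct sum of the blocks ${\mathcal C}{\mathcal C}_*(\ell^1_\lambda(\Gamma))_{\langle v\rangle}$ over the conjugacy classes of elements of infinite order (by Theorem 3.9 there are only finitely many torsion classes, so the inhomogeneous part consists precisely of these), and a block-diagonal operator on such a direct sum is bounded exactly when the block bounds can be chosen independently of $\langle v\rangle$. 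The geometry that makes this possible is all in place: for $v$ of infinite order in a hyperbolic group $v^{\mathbb Z}$ has finite index in $Z(v)$ and $N(v)=Z(v)/v^{\mathbb Z}$ is finite (Theorem 3.7), so the hypotheses used in Section 2.8 (finiteness of $N(v)$ for the averaging operator of (2.65), and $[Z(v):v^{\mathbb Z}]<\infty$ for the splitting) hold uniformly; Theorem 3.10 and Lemma 3.11 supply sections $\sigma:\langle v\rangle\to\Gamma$ and $\sigma':N(v)\to Z(v)$ with the uniform length bounds (3.15), (3.16), (3.17); the stable length of $v$ is bounded below by a constant $\epsilon_0>0$ independent of $v$ (Theorem 3.8); and, crucially, every simplex $[g_0,\ldots,g_n;v]$ of $C_*(\Gamma,{\mathbb C}v)$ has weight at least $\ell_S(\langle v\rangle)$, since the edge path $g_0,g_1,\ldots,g_n,vg_0$ joins $g_0$ to $vg_0$ and $v$ is of minimal length in its class, so the terms $\ell_S(\langle v\rangle)$ occurring in the estimates may be absorbed into the weight.

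First I would record the analytic inputs, fixing $\lambda_0>\lambda_1>1$ with $\lambda_1^3<\lambda_0$ and $\varrho_1\geq\varrho_0>1$. The uniform estimate (5.8) asserts that the chain endomorphism $s_{v,\sigma,\sigma'}\circ I\circ p_v$ of $C_*(\Gamma,{\mathbb C}v)$ distorts the weighted $\ell^1$-norm by a factor $C_{26}$ that is independent of $\langle v\rangle$; inspection of its proof shows, moreover, that each of the constituent operators $s'_{v,\sigma,\sigma'}=N_{\sigma'}\circ\iota_{v,\sigma}\circ N_{cyc}$, the degree $-1$ operator $\partial\circ s'_{v,\sigma,\sigma'}-s'_{v,\sigma,\sigma'}\circ\partial$, the operator $Id-\widetilde{T}_*$ together with its inverse on $Ker(I_v)$, the homotopies $\chi$ and $\chi_v$ of (2.68) and (2.70), and the finite averaging operator $N_{\sigma'}$ of (2.65), enlarges the weight by at most a fixed affine function and the $\ell^1$-norm by at most a polynomial in the simplicial degree, with all these data uniform in $\langle v\rangle$. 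Propagating such weight-and-$\ell^1$ bounds through the comparison inequalities (6.11), (6.12) of Lemma 6.4 --- the growing factors $c(n)!$ being absorbed by the decaying factors $\frac{1}{c(n)!}$ in the defining inequality (6.1) of $\parallel-\parallel_{(\lambda,\varrho,m)}$, exactly as in Steps 2 and 3 of the proof of Proposition 6.5 --- I obtain that $s_{v,\sigma,\sigma'}$, $I\circ p_v$ and $\chi_v$ all extend to bounded operators between the ind-complexes $"\underset{\lambda\to 1^+}{\lim}"\,{\mathcal C}{\mathcal C}_*(\ell^1_\lambda(\Gamma))_{\langle v\rangle}$ and their Hochschild and reduced variants, with bounds not depending on $\langle v\rangle$.

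Next I would run the Hodge-degeneration argument blockwise. From $I\circ p_v\circ s_{v,\sigma,\sigma'}=Id$ on $\widetilde{C}_*^\lambda({\mathbb C}\Gamma)_{\langle v\rangle}$ and the extension (2.69) --- whose kernel $Ker(I_v)$ is contractible in positive degrees via the bounded homotopy $\chi_v$, while the quotient $C_*(\Gamma,{\mathbb C}v)_{\widetilde{T}_*^{\mathbb Z}}$ is a bounded retract of $\widetilde{C}_*^\lambda({\mathbb C}\Gamma)_{\langle v\rangle}$ through $p_v^\lambda$ and its $N(v)$-averaged section --- one obtains an explicit contracting chain homotopy $\eta_*$ of $C_*^\lambda({\mathbb C}\Gamma)_{\langle v\rangle}$ in strictly positive degrees, bounded on the ind-complex uniformly in $\langle v\rangle$ by the preceding paragraph. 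From $\eta_*$ I would form $\theta^{(N)}_*$, $\psi^{(N)}_*=Id-(\theta^{(N)}_*\circ\partial+\partial\circ\theta^{(N)}_*)$ and $\zeta^{(N)}_*=\psi^{(N-1)}_*\circ\psi^{(N)}_*$ exactly as in (2.76)--(2.78). Since $\theta^{(N)}_*$ is a composite of the canonical projection $\beta_*$, two fixed components $\eta_{N-1}$, $\eta_N$ of $\eta_*$ and the canonical inclusion $\alpha_*$, it is supported near degrees $N-1$ and $N$, hence bounded with a constant depending only on the fixed integer $N$, and it is strictly compatible with the Hodge filtration; therefore $\zeta^{(N)}_*$ is a bounded chain endomorphism of $"\underset{\lambda\to 1^+}{\lim}"\,{\mathcal C}{\mathcal C}_*(\ell^1_\lambda(\Gamma))_{\langle v\rangle}$ carrying it into $Fil^{N+1}_{Hodge}$ and chain homotopic to the identity via a bounded, Hodge-filtration-preserving homotopy vanishing on $Fil^{N+2}_{Hodge}$. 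Restricting that homotopy to $Fil^{N+1}_{Hodge}$ shows that $\zeta^{(N)}_*$ and the inclusion of $Fil^{N+1}_{Hodge}$ are mutually inverse up to bounded homotopy. Forming the direct sum of these block maps over all conjugacy classes of infinite order (legitimate because the bounds are uniform), passing to the formal inductive limit over $\lambda\to 1^+$, and replacing $N+1$ by the original $N$, yields the assertion; one may work throughout with reduced complexes by (2.23). Alternatively, parts a) and c) of Proposition 6.5 show that $"\underset{\lambda\to 1^+}{\lim}"\,Fil^N_{Hodge}\,{\mathcal C}{\mathcal C}_*(\ell^1_\lambda(\Gamma))_{inhom}$ is a split contractible direct summand of a contractible ind-complex, so the statement is equivalent to the contractibility of $"\underset{\lambda\to 1^+}{\lim}"\,{\mathcal C}{\mathcal C}_*(\ell^1_\lambda(\Gamma))_{inhom}$, which the construction above establishes.

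The main obstacle is the construction of the uniformly bounded contracting homotopy $\eta_*$ of $C_*^\lambda({\mathbb C}\Gamma)_{\langle v\rangle}$ in positive degrees. The naive basepoint contraction of the twisted Bar complex $C_*(\Gamma,{\mathbb C}v)$ is not bounded --- it enlarges the weight of $[g_0,\ldots,g_n;v]$ by $\ell_S(g_0)$ --- so one is forced to route the contraction through the Nistor extension (2.69), using the bounded homotopy $\chi_v$ on $Ker(I_v)$ together with the finite $N(v)$-average to descend from $C_*(\Gamma,{\mathbb C}v)_{\widetilde{T}_*^{\mathbb Z}}$ to $\widetilde{C}_*^\lambda({\mathbb C}\Gamma)_{\langle v\rangle}$. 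The delicate quantitative points --- the control of $(Id-\widetilde{T}_{n-1})^{-1}$ on $Ker(I_v)$, resting on the uniform stable-length bound $\epsilon_0>0$, and the weight control of the conjugating elements $\sigma(g_i^{-1}vg_i)$ and of $\sigma'$, resting on Theorem 3.10, Lemma 3.11 and the inequality $\ell_S(\langle v\rangle)\leq\vert[g_0,\ldots,g_n;v]\vert$ --- are exactly those already carried out in the proof of (5.8), which is why that estimate was isolated in Section 5. A secondary, purely formal point to check is that $\theta^{(N)}_*$, $\psi^{(N)}_*$, $\zeta^{(N)}_*$ and the homotopies relating them respect the \emph{closed} Hodge filtration steps $Fil^n_{Hodge}CC_*^{(\varrho)}$ defined after (6.13), and that the resulting equivalence is compatible with the decomposition of ${\mathcal C}{\mathcal C}_*(\ell^1_\lambda(\Gamma))_{inhom}$ into blocks indexed by single conjugacy classes, as will be needed in the sequel.
\end{proofsketch}
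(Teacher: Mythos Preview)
Your overall architecture is the paper's: work blockwise over hyperbolic conjugacy classes, lift from $\widetilde{C}_*^\lambda({\mathbb C}\Gamma)_{\langle v\rangle}$ to $C_*(\Gamma,{\mathbb C}v)$ via the section $s_{v,\sigma,\sigma'}$, contract there, project back via $I\circ p_v$, then run the $\theta^{(N)},\psi^{(N)},\zeta^{(N)}$ machinery of Corollary~2.14. The uniformity considerations you raise are exactly the right ones, and invoking (5.8) for the lift is correct.

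The gap is in your construction of the contracting homotopy $\eta_*$. Routing it ``through the Nistor extension (2.69)'' does not produce a bounded contraction of $C_*(\Gamma,{\mathbb C}v)$ (or of its $\widetilde{T}_*^{\mathbb Z}$-coinvariants) in positive degrees. The homotopy $\chi_v$ only contracts the kernel $Ker(I_v)$; to pass from that to a contraction of the middle term or of the quotient you would need, in addition, a bounded contracting homotopy of one of the other two terms of (2.69) --- but a bounded contraction of the quotient $C_*(\Gamma,{\mathbb C}v)_{\widetilde{T}_*^{\mathbb Z}}$ is essentially what you are trying to build (it contains $\widetilde{C}_*^\lambda({\mathbb C}\Gamma)_{\langle v\rangle}$ as a retract, not the other way around), so your argument is circular at this point. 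Put differently: the basepoint contraction $s_e$ of the Bar complex is the only input that makes $C_*(\Gamma,{\mathbb C}v)$ contractible, and you have correctly observed that it is unbounded; nothing in $\chi$, $\chi_v$, or the $N(v)$-average manufactures a replacement.

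What the paper does instead is to use, for the contraction step, the Bader--Furman--Sauer operator $\widetilde{\nabla}=h(\Theta'_*,Id)\otimes Id$ of Proposition~5.2 and Corollary~5.3. This is an equivariant contracting homotopy of $C_*(\Gamma,{\mathbb C}v)$ in all degrees $*\geq d$ (where $d$ is the dimension of the Rips resolution), with the uniform bound (5.6). The required nullhomotopy of $\widetilde{C}_*^\lambda({\mathbb C}\Gamma)_{\langle v\rangle}$ in high degrees is then simply the composite $I\circ p_v\circ\widetilde{\nabla}\circ s_{v,\sigma,\sigma'}$, bounded uniformly in $\langle v\rangle$ by (5.6) and (5.8) together. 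Since (2.76) only uses $\eta_{N-1}$ and $\eta_N$, having the contraction in degrees $\geq d$ suffices once $N\geq d+1$, which is why the statement is only for $N\gg 0$.
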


\begin{proof}
The inhomogeneous part of the cyclic complex equals by definition the direct sum
$$
CC_*({\mathbb C}\Gamma)_{inhom}\,=\,\underset{\underset{\vert v\vert=\infty}{\langle v\rangle}}{\bigoplus}
\,CC_*({\mathbb C}\Gamma)_{\langle v\rangle}.
$$
As the Banach spaces $\ell^1_\lambda(\Gamma)$ are weighted $\ell^1$-spaces there is a similar decomposition
$$
{\mathcal C}{\mathcal C}_*(\ell^1_\lambda(\Gamma))_{inhom} \,\simeq\,
\underset{\underset{\vert v\vert=\infty}{\langle v\rangle}}{\bigoplus}
{\mathcal C}{\mathcal C}_*(\ell^1_\lambda(\Gamma))_{\langle v\rangle} 
\eqno(6.47)
$$
of the inhomogeneous part of ${\mathcal C}{\mathcal C}_*(\ell^1_\lambda(\Gamma))$ into a topological direct sum labeled by the conjugacy classes of elements of infinite order in $\Gamma$. 
Recall that for such a conjugacy class the canonical inclusion 
$Fil_{Hodge}^N\widehat{CC}_*({\mathbb C}\Gamma)_{\langle v\rangle}
\hookrightarrow\widehat{CC}_*({\mathbb C}\Gamma)_{\langle v\rangle}$ is a chain homotopy equivalence (Proposition 2.14). A look at the operators (2.78), (2.77) and (2.76) shows that the proof of Proposition 2.14 applies to our claim as well if the natural chain map of cyclic complexes
$$
{\mathcal C}_*^\lambda(\ell^1_{\lambda_0}(\Gamma))_{\langle v\rangle}\,\longrightarrow\,
{\mathcal C}_*^\lambda(\ell^1_{\lambda_1}(\Gamma))_{\langle v\rangle}
\eqno(6.48)
$$
is nullhomotopic in strictly positive degrees for $\lambda_0>\lambda_1>1$ via a homotopy operator 
whose norm is bounded uniformly for all hyperbolic conjugacy classes. Recall the diagram (2.73)
$$
C_*^{\lambda}({\mathbb C}\Gamma)_{\langle v\rangle}\,\longrightarrow\,
\widetilde{C}_*^{\lambda}({\mathbb C}\Gamma)_{\langle v\rangle}\,\overset{s_{v,\sigma,\sigma'}}{\longrightarrow}\,C_*(\Gamma,{\mathbb C}v)
\,\overset{I\circ p_v}{\longrightarrow}\,\widetilde{C}_*^{\lambda}({\mathbb C}\Gamma)_{\langle v\rangle}\,\longrightarrow\,C_*^{\lambda}({\mathbb C}\Gamma)_{\langle v\rangle}
$$
The projection operator $C_*^{\lambda}({\mathbb C}\Gamma)\to
\widetilde{C}_*^{\lambda}({\mathbb C}\Gamma)$ and its chain homotopy inverse, which is given in each degree by a universal (noncommutative) polynomial in the Hochschild operators $b,b'$ and the cyclic operator $T_*$, extend to bounded chain maps ${\mathcal C}_*^{\lambda}(\ell^1_{\lambda}(\Gamma))\to \widetilde{{\mathcal C}}_*^{\lambda}(\ell^1_{\lambda}(\Gamma))$ and
 $\widetilde{{\mathcal C}}_*^{\lambda}(\ell^1_{\lambda}(\Gamma))\to {\mathcal C}_*^{\lambda}(\ell^1_{\lambda}(\Gamma))$ for all $\lambda>1$. Consider now the subcomplex $C_*(\Gamma,{\mathbb C}v)$ of the Bar complex. It is contractible via the homotopy operator (2.29), but this operator is unbounded with respect to the norms we work with. Instead, we will use the operator 5.3, which defines an equivariant contracting homotopy of this complex in sufficiently high degree.
 So let $\lambda_0>\lambda_2>\lambda_1>1$. Then there exist constants $C_{22},\,C_{26}$ depending only on 
 $\Gamma, S,\delta,\lambda_0,\lambda_1,\lambda_2$, but not on $\langle v\rangle$, such that 
 $$
 \parallel s_{v,\sigma,\sigma'}\circ I\circ p_*(\alpha)\parallel_{\lambda_2}\,\leq\,C_{26}\parallel\alpha\parallel_{\lambda_0},\,\forall \alpha\in C_*(\Gamma,{\mathbb C}v)
 $$ 
 by 5.4 and 
 $$
 \parallel 
 \widetilde{\nabla}(\beta)\parallel_{\lambda_1}\,\leq\,C_{22}\parallel\beta\parallel_{\lambda_2},
 \forall\beta\in C_*(\Gamma,{\mathbb C}v).
 $$ 
 by 5.3, so that
$$
\widetilde{C}_*^{\lambda}({\mathbb C}\Gamma)_{\langle v\rangle}\,\overset{s_{v,\sigma,\sigma'}}{\to}\,C_*(\Gamma,{\mathbb C}v)\,\overset{\widetilde{\nabla}}{\to}\,C_{*+1}(\Gamma,{\mathbb C}v)\,\overset{I\circ p_v}{\to}
 \widetilde{C}_{*+1}^{\lambda}({\mathbb C}\Gamma)_{\langle v\rangle}
$$ 
 extends to a bounded operator
 $$
 I\circ p_v\circ\widetilde{\nabla}\circ s_{v,\sigma,\sigma'}:
 \widetilde{{\mathcal C}}_*^{\lambda}(\ell^1_{\lambda_0}(\Gamma))_{\langle v\rangle}\to\widetilde{{\mathcal C}}_{*+1}^{\lambda}(\ell^1_{\lambda_1}(\Gamma))_{\langle v\rangle},
 \eqno(6.49)
 $$
whose norm is bounded uniformly with respect to $\langle v\rangle$. It defines a nullhomotopy of the canonical chain map (6.48). 
\end{proof}

\section{Main results}

\begin{theorem}
Let $\Gamma$ be a word-hyperbolic group. Then there is a canonical isomorphism
$$
H_*(\Gamma,{\mathbb C}\Gamma_{tors}) \, \overset{\simeq}{\longrightarrow}\, "\underset{\lambda\to 1^+}{\lim}"\,{\mathcal C}{\mathcal C}_*(\ell^1_\lambda(\Gamma))
\eqno(7.1)
$$
in the homotopy category $Ho(ind\,{\mathcal C})$ of ${\mathbb Z}/2{\mathbb Z}$-graded ind-complexes of Fr\'echet spaces.
\end{theorem}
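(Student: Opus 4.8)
The plan is to assemble the structural results of Section~6. Since a word-hyperbolic group contains only finitely many conjugacy classes of finite subgroups (Theorem~3.9), hence only finitely many conjugacy classes of torsion elements, Lemma~6.3 applies and gives a finite decomposition
$$
"\underset{\lambda\to 1^+}{\lim}"\,{\mathcal C}{\mathcal C}_*(\ell^1_\lambda(\Gamma))\,\simeq\,
\underset{\langle v\rangle,\,\vert v\vert<\infty}{\bigoplus}\,"\underset{\lambda\to 1^+}{\lim}"\,{\mathcal C}{\mathcal C}_*(\ell^1_\lambda(\Gamma))_{\langle v\rangle}\,\oplus\,"\underset{\lambda\to 1^+}{\lim}"\,{\mathcal C}{\mathcal C}_*(\ell^1_\lambda(\Gamma))_{inhom}
$$
in $Ho(ind\,{\mathcal C})$, with a finite sum over the elliptic classes.

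Next I would discard the inhomogeneous summand. By Proposition~6.5(a) the ind-complex $"\underset{\lambda\to 1^+}{\lim}"\,Fil^N_{Hodge}\,{\mathcal C}{\mathcal C}_*(\ell^1_\lambda(\Gamma))$ is contractible for $N\gg0$; since the Hodge filtration and the homogeneous decomposition are both defined by weighted $\ell^1$-norms they are compatible, so $"\underset{\lambda\to 1^+}{\lim}"\,Fil^N_{Hodge}\,{\mathcal C}{\mathcal C}_*(\ell^1_\lambda(\Gamma))_{inhom}$ is contractible too. On the other hand Proposition~6.7 says the inclusion of this subcomplex into $"\underset{\lambda\to 1^+}{\lim}"\,{\mathcal C}{\mathcal C}_*(\ell^1_\lambda(\Gamma))_{inhom}$ is a chain homotopy equivalence for $N\gg0$. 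Hence the inhomogeneous summand is contractible in $Ho(ind\,{\mathcal C})$, and the displayed decomposition collapses to the finite sum of the elliptic contributions.

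It remains to identify each elliptic contribution. Proposition~6.6 provides, for every torsion element $v$, a canonical isomorphism $H_*(Z(v),{\mathbb C})\overset{\simeq}{\to}"\underset{\lambda\to 1^+}{\lim}"\,{\mathcal C}{\mathcal C}_*(\ell^1_\lambda(\Gamma))_{\langle v\rangle}$ in $Ho(ind\,{\mathcal C})$. Combining with the Shapiro-type identification (2.48), $H_*(Z(v),{\mathbb C})\simeq H_*(\Gamma,\,Ind_{Z(v)}^\Gamma{\mathbb C})\simeq H_*(\Gamma,\,{\mathbb C}\langle v\rangle)$, where ${\mathbb C}\langle v\rangle\subset Ad({\mathbb C}\Gamma)$ is the $\Gamma$-submodule spanned by the conjugacy class. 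Since $Ad({\mathbb C}\Gamma_{tors})=\bigoplus_{\langle v\rangle,\,\vert v\vert<\infty}{\mathbb C}\langle v\rangle$ as $\Gamma$-modules and group homology carries finite direct sums of coefficient modules to direct sums, summing over the elliptic classes yields $\bigoplus_{\langle v\rangle,\,\vert v\vert<\infty}H_*(Z(v),{\mathbb C})\simeq H_*(\Gamma,{\mathbb C}\Gamma_{tors})$. Concatenating the three steps produces the isomorphism (7.1).

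All the genuinely hard work — the norm estimates of Section~5 together with the extension of the algebraic chain maps and chain homotopies to bounded morphisms of ind-Fr\'echet complexes — is already contained in Propositions~6.5, 6.6 and 6.7, so what remains for Theorem~7.1 is essentially formal. The one point deserving attention is that the finiteness of the set of elliptic conjugacy classes is used twice: once to invoke Lemma~6.3, and once to ensure that a common truncation level $N$ (and, in the argument behind Proposition~6.6, a common auxiliary pair $\lambda_0>\lambda_1$) serves all elliptic summands at once, so that the three steps can be threaded together inside a single ind-category; Proposition~6.5(b),(c) is what lets one pass back and forth between ${\mathcal C}{\mathcal C}_*$ and its truncation compatibly with the homogeneous decomposition. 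I expect the only remaining routine check to be that the resulting composite isomorphism is independent, up to canonical isomorphism, of the chosen minimal-length representatives and the sections $\sigma,\sigma'$.
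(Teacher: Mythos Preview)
Your proposal is correct and follows essentially the same approach as the paper's own proof: both use the finite homogeneous decomposition from Lemma~6.3, identify each elliptic summand via Proposition~6.6 together with the Shapiro isomorphism, and kill the inhomogeneous summand by combining Proposition~6.7 with the contractibility of the high Hodge filtration from Proposition~6.5. The only cosmetic differences are the order of the two steps and that you deduce contractibility of $"\underset{\lambda\to 1^+}{\lim}"\,Fil^N_{Hodge}\,{\mathcal C}{\mathcal C}_*(\ell^1_\lambda(\Gamma))_{inhom}$ as a direct summand of a contractible ind-complex, whereas the paper appeals (implicitly via 6.5(c) and Step~6 of its proof) to the compatibility of the contracting homotopy $\nabla_{cyc}$ with the homogeneous decomposition; both justifications are valid.
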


\begin{proof}
Let 
$$
"\underset{\lambda\to 1^+}{\lim}"\,{\mathcal C}{\mathcal C}_*(\ell^1_\lambda(\Gamma))\,\overset{\simeq}{\to}\,
\underset{\underset{\vert v\vert<\infty}{\langle v\rangle}}{\bigoplus}"\underset{\lambda\to 1^+}{\lim}"\,{\mathcal C}{\mathcal C}_*(\ell^1_\lambda(\Gamma))_{\langle v\rangle}\,
\oplus\,"\underset{\lambda\to 1^+}{\lim}"\,{\mathcal C}{\mathcal C}_*(\ell^1_\lambda(\Gamma))_{inhom}
$$
be the homogeneous decomposition of the ind-complex into the finite topological direct sum of the contributions of the finitely 
many elliptic conjugacy classes and the inhomogeneous part. Fix an elliptic conjugacy class $\langle v\rangle$. By proposition 6.6 there is a canonical isomorphism
$$
H_*(Z(v),{\mathbb C})\,\overset{\sim}{\to}"\underset{\lambda\to 1^+}{\lim}"\,{\mathcal C}{\mathcal C}_*(\ell^1_\lambda(\Gamma))_{\langle v\rangle}
$$ 
in the homotopy category $Ho(ind\,{\mathcal C})$, where the left hand side, i.e. the homology of the centralizer of $v$ with its grading into even and odd parts, is viewed as a constant, finite dimensional ind-complex with zero differentials. The Shapiro lemma tells us that
$$
H_*(Z(v),{\mathbb C})\,\overset{\simeq}{\to}H_*(\Gamma,\,Ind_{Z(v)}^{\Gamma}{\mathbb C})\,\overset{\simeq}{\to}H_*(\Gamma,{\mathbb C}\langle v\rangle),
\eqno(7.2)
$$
so that we obtain a canonical isomorphism 
$$
H_*(\Gamma,{\mathbb C}\Gamma_{tors})\,\overset{\simeq}{\to}\underset{\underset{\vert v\vert<\infty}{\langle v\rangle}}{\bigoplus}H_*(\Gamma,{\mathbb C}\langle v\rangle)
\overset{\simeq}{\to}\underset{\underset{\vert v\vert<\infty}{\langle v\rangle}}{\bigoplus}"\underset{\lambda\to 1^+}{\lim}"\,{\mathcal C}{\mathcal C}_*(\ell^1_\lambda(\Gamma))_{\langle v\rangle}
\eqno(7.3)
$$
in $Ho(ind\,{\mathcal C})$. For the inhomogeneous part we learn from 6.7 that the identity morphism factors in $Ho(ind\,{\mathcal C})$ as
$$
id:\,"\underset{\lambda\to 1^+}{\lim}"\,{\mathcal C}{\mathcal C}_*(\ell^1_\lambda(\Gamma))_{inhom}\to
\,"\underset{\lambda\to 1^+}{\lim}"\,Fil_{Hodge}^N{\mathcal C}{\mathcal C}_*(\ell^1_\lambda(\Gamma))_{inhom}\to
\,"\underset{\lambda\to 1^+}{\lim}"\,{\mathcal C}{\mathcal C}_*(\ell^1_\lambda(\Gamma))_{inhom}
\eqno(7.4)
$$
for $N>>0$ large enough. As the ind-complex $"\underset{\lambda\to 1^+}{\lim}"\,Fil_{Hodge}^N{\mathcal C}{\mathcal C}_*(\ell^1_\lambda(\Gamma))_{inhom}$ is contractible by 6.5, the same holds for $"\underset{\lambda\to 1^+}{\lim}"\,{\mathcal C}{\mathcal C}_*(\ell^1_\lambda(\Gamma))_{inhom}$ and completes the proof of the theorem.
\end{proof}

\begin{theorem}
Let $\Gamma$ be a word-hyperbolic group. Then there are canonical isomorphisms
$$
H_*(\Gamma,{\mathbb C}\Gamma_{tors})
\overset{\simeq}{\longrightarrow}
\underset{\lambda\to 1^+}{\lim}\,HP^{cont}_*(\ell^1_\lambda(\Gamma))
\eqno(7.5)
$$
and
$$
H^*(\Gamma,{\mathbb C}\Gamma_{tors})
\overset{\simeq}{\longrightarrow}
\underset{1^+\leftarrow\lambda}{\lim}\,HP_{cont}^*(\ell^1_\lambda(\Gamma))
\eqno(7.6)
$$
where $HP^{cont}$ denotes continuous periodic cyclic (co)homology.
\end{theorem}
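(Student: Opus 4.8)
The plan is to run the argument of Section 6 and of Theorem 7.1 once more, this time with the continuous periodic cyclic bicomplex $\widehat{CC}^{cont}_*(\ell^1_\lambda(\Gamma))$ --- the $b+B$ complex built from the projective tensor powers of $\ell^1_\lambda(\Gamma)$, equipped with the product Fr\'echet topology --- in place of the analytic bicomplex ${\mathcal C}{\mathcal C}_*(\ell^1_\lambda(\Gamma))$, and keeping only the formal inductive limit in the direction $\lambda\to 1^+$; no auxiliary parameter $\varrho$ occurs in this picture. The decisive point is that, under the maps $p_v$ and $\iota_{v,\sigma}$ (whose norm comparisons of Lemma 6.4 become, in the absence of $\varrho$ and of the factorials $c(n)!$, degreewise isometric for the weighted $\ell^1$-norms $\parallel-\parallel_\lambda$, without extra factors), every operator used in Section 6 transports to an operator on $\widehat{CC}^{cont}_*(\ell^1_\lambda(\Gamma))$ whose mapping behaviour is controlled by exactly the estimates (5.4), (5.5), (5.6), (5.7), (5.8) and (4.15), (6.30), (6.37), (6.39). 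These estimates are already in the required shape: each of $\nabla,\ \nabla_{cyc},\ \nu_v,\ h(\mu_v,\mathrm{Id}),\ j_v,\ \kappa_v,\ s_{v,\sigma,\sigma'},\ I\circ p_v\circ\widetilde\nabla\circ s_{v,\sigma,\sigma'}$ and the Rips retractions $\Theta'_*,\ h(\Theta'_*,\mathrm{Id})$, while generally unbounded on the continuous periodic complex of a single $\ell^1_\lambda(\Gamma)$, extends to a continuous morphism from that complex for $\ell^1_{\lambda_0}(\Gamma)$ to the one for $\ell^1_{\lambda_1}(\Gamma)$ as soon as $\lambda_0>\lambda_1>1$ (with a suitable gap for $s_{v,\sigma,\sigma'}$, and --- on a fixed layer $Fil^N_{Hodge}$ only --- a gap depending on $N$ for $\nu_v$), the loss in the weight of a simplex being absorbed by the strict decrease exactly as in Steps 2, 3 of the proof of Proposition 6.5. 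This is in fact easier than the analytic case: in the product topology the series defining $\nabla_{cyc}$ and the operators $\theta^{(N)}_*,\psi^{(N)}_*,\zeta^{(N)}_*$ of Proposition 2.14 are finite in each degree, so no summability estimate is needed there.

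Granting this, the proofs of Propositions 6.5, 6.3, 6.6 and 6.7 transfer verbatim: (i) $"\underset{\lambda\to 1^+}{\lim}"\,Fil^N_{Hodge}\widehat{CC}^{cont}_*(\ell^1_\lambda(\Gamma))$ is contractible for $N\gg0$ via $\nabla_{cyc}$, so one may replace the ind-complex by its quotient modulo $Fil^N_{Hodge}$ and carry out everything below on these truncated complexes; (ii) since the norms are weighted $\ell^1$-norms and $\Gamma$ has only finitely many conjugacy classes of torsion elements, one obtains a finite direct sum decomposition into elliptic components $\widehat{CC}^{cont}_*(\ell^1_\lambda(\Gamma))_{\langle v\rangle}$, $|v|<\infty$, and an inhomogeneous part; (iii) for a torsion element $v$ the elliptic component is canonically isomorphic in $Ho(ind\,{\mathcal C})$ to $H_*(Z(v),{\mathbb C})$, through $\nu_v$ (Proposition 2.7) on the truncated complex, the comparison maps $j_v,\kappa_v$ with $\ell^1_\lambda(Z(v))$ (using that $Z(v)$ is again hyperbolic with quasi-isometrically embedded word metric), and the Rips-complex retraction for $Z(v)$; (iv) the inhomogeneous part is contractible, since on each hyperbolic component the transition map of cyclic complexes is null-homotopic via $I\circ p_v\circ\widetilde\nabla\circ s_{v,\sigma,\sigma'}$ with a homotopy of norm bounded uniformly over $\langle v\rangle$, which together with (i) and the machinery of Proposition 2.14 annihilates that summand. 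Assembling (i)--(iv) and applying the Shapiro isomorphism $\bigoplus_{|v|<\infty}H_*(Z(v),{\mathbb C})\cong\bigoplus_{|v|<\infty}H_*(\Gamma,{\mathbb C}\langle v\rangle)=H_*(\Gamma,{\mathbb C}\Gamma_{tors})$ produces a canonical isomorphism in $Ho(ind\,{\mathcal C})$ from $H_*(\Gamma,{\mathbb C}\Gamma_{tors})$, viewed as a constant finite-dimensional ind-complex with zero differential, to $"\underset{\lambda\to 1^+}{\lim}"\,\widehat{CC}^{cont}_*(\ell^1_\lambda(\Gamma))$.

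It remains to pass from this ind-complex isomorphism to the stated isomorphisms of groups. Put $V=H_*(\Gamma,{\mathbb C}\Gamma_{tors})$ and $C_\lambda=\widehat{CC}^{cont}_*(\ell^1_\lambda(\Gamma))$; an isomorphism $V\xrightarrow{\ \sim\ }"\underset{\lambda}{\lim}"\,C_\lambda$ in $Ho(ind\,{\mathcal C})$ is given by an actual chain map $f\colon V\to C_{\lambda_0}$ for some $\lambda_0$ and a family $g_\lambda\colon C_\lambda\to V$ of chain maps, compatible with the transition maps up to homotopy, with $g_{\lambda_0}\circ f\simeq\mathrm{id}_V$ and, on a cofinal set of indices, $f\circ g_\lambda$ composed with a transition map homotopic to a transition map. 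Taking homology turns $f$ and the $g_\lambda$ into mutually inverse isomorphisms $V\cong\varinjlim_\lambda H_*(C_\lambda)=\varinjlim_{\lambda\to1^+}HP^{cont}_*(\ell^1_\lambda(\Gamma))$, which is (7.5). For (7.6) I would dualise: the continuous dual complexes compute $HP_{cont}^*(\ell^1_\lambda(\Gamma))$, the transposed data exhibit $H^*(\Gamma,{\mathbb C}\Gamma_{tors})$ as a compatible retract of each $HP_{cont}^*(\ell^1_\lambda(\Gamma))$, whence the transition maps of the inverse system are eventually isomorphisms onto this finite-dimensional space, the $\lim^1$-term vanishes, and $\underset{1^+\leftarrow\lambda}{\lim}\,HP_{cont}^*(\ell^1_\lambda(\Gamma))\cong H^*(\Gamma,{\mathbb C}\Gamma_{tors})$.

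The main obstacle is the boundedness claim of the first paragraph, the only genuinely new ingredient: for a fixed $\lambda$ the operators in question really are unbounded on the projective periodic complex, so one cannot simply quote Section 6 but must verify that each estimate there whose convergence was secured by the $\varrho$-weights $(c(n)!)^{-1}\varrho^{\mp c(n)}$ still converges once those weights are discarded and the loss is reabsorbed into the $\lambda$-parameter. This works precisely because the $\ell^1$-norm of $\Theta'_*$ --- and hence of $\widetilde\nabla$, of $s_{v,\sigma,\sigma'}$ and of $h(\mu_v,\mathrm{Id})$ --- grows at most polynomially in the diameter of the support of a simplex, the feature emphasised after Theorem 4.7 and encoded in (5.4)--(5.8); it is this that forces the continuous periodic theory, in the limit $\lambda\to1^+$, to coincide with the local cyclic theory computed in Theorem 7.1.
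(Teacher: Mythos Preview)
Your proposal is correct and would yield the theorem, but it does more work than necessary. The paper's proof is considerably shorter: instead of re-running all of Section 6 for the continuous periodic bicomplex, it compares the two theories through the commutative square
\[
\begin{array}{ccc}
"\underset{\lambda\to 1^+}{\lim}"\,{\mathcal C}{\mathcal C}_*(\ell^1_\lambda(\Gamma)) & \longrightarrow & "\underset{\lambda\to 1^+}{\lim}"\,\widehat{CC}^{cont}_*(\ell^1_\lambda(\Gamma)) \\[1ex]
\downarrow & & \downarrow \\[1ex]
"\underset{\lambda\to 1^+}{\lim}"\,{\mathcal C}{\mathcal C}_*/Fil^N_{Hodge} & \overset{\simeq}{\longrightarrow} & "\underset{\lambda\to 1^+}{\lim}"\,\widehat{CC}^{cont}_*/Fil^N_{Hodge}
\end{array}
\]
and observes that the bottom arrow is an \emph{isomorphism of ind-complexes}: for a fixed degree $n$ the Fr\'echet spaces $\Omega^n(\ell^1_\lambda(\Gamma))_{(\varrho)}$ and the projective tensor power coincide, so the two quotients by $Fil^N_{Hodge}$ are literally the same complex. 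The left vertical arrow is a homotopy equivalence by Proposition 6.5, and the paper simply notes that the proof of 6.5 applies verbatim to the right vertical arrow (exactly the boundedness of $\nabla$ and $\nabla_{cyc}$ you verify in your first paragraph). Hence the top arrow is an isomorphism in $Ho(ind\,{\mathcal C})$, and composing with Theorem 7.1 gives the result.

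The upshot: the only genuinely new verification is the continuous-periodic analogue of Proposition 6.5, which you also carry out. Your re-derivation of the analogues of 6.6 and 6.7 is redundant once one notices that the Hodge-truncated complexes agree on the nose. What your approach buys is self-containedness (it does not invoke Theorem 7.1); what the paper's buys is brevity and a clean conceptual reason why the answer is the same.
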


\begin{proof}
For $N>>0$ and $\lambda>1$ we consider the commutative diagram
$$
\begin{array}{ccc}
{\mathcal C}{\mathcal C}_*(\ell^1_\lambda(\Gamma))   & \longrightarrow & \widehat{CC}^{cont}_*(\ell^1_\lambda(\Gamma)) \\
 & & \\
 \downarrow & & \downarrow \\
 & & \\
 {\mathcal C}{\mathcal C}_*/Fil_{Hodge}^N{\mathcal C}{\mathcal C}_*(\ell^1_\lambda(\Gamma))    & \overset{\simeq}{\longrightarrow} &  \widehat{CC}^{cont}_*/Fil_{Hodge}^N\widehat{CC}^{cont}_*(\ell^1_\lambda(\Gamma)) \\
 \end{array}
\eqno(7.7)
$$
of ind-complexes (the r.h.s. is a constant ind-complex) and note that its bottom horizontal morphism is actually an isomorphism.
Letting $\lambda$ tend to 1 formally one obtains
$$
\begin{array}{ccc}
 "\underset{\lambda\to 1^+}{\lim}"\,{\mathcal C}{\mathcal C}_*(\ell^1_\lambda(\Gamma))  & \longrightarrow &     "\underset{\lambda\to 1^+}{\lim}"\,\widehat{CC}^{cont}_*(\ell^1_\lambda(\Gamma))  \\
 & & \\
 \downarrow & & \downarrow \\
 & & \\
"\underset{\lambda\to 1^+}{\lim}"\,{\mathcal C}{\mathcal C}_*/Fil_{Hodge}^N{\mathcal C}{\mathcal C}_*(\ell^1_\lambda(\Gamma))      & \overset{\simeq}{\longrightarrow} & "\underset{\lambda\to 1^+}{\lim}"\,\widehat{CC}^{cont}_*/Fil_{Hodge}^N\widehat{CC}^{cont}_*(\ell^1_\lambda(\Gamma)) \\
\end{array}
\eqno(7.8)
$$
Its  left vertical arrow is a chain homotopy equivalence according to 6.5. But the proof of this proposition applies verbatim to the continuous periodic cyclic bicomplex so that 
the right vertical arrow in the previous diagram is a chain homotopy equivalence as well. Thus the canonical morphism
$$
"\underset{\lambda\to 1^+}{\lim}"\,{\mathcal C}{\mathcal C}_*(\ell^1_\lambda(\Gamma)) \, \longrightarrow \,"\underset{\lambda\to 1_+}{\lim}"\,\widehat{CC}^{cont}_*(\ell^1_\lambda(\Gamma))  
\eqno(7.9)
$$
is an isomorphism in the homotopy category of ind-complexes. Theorem 7.1 implies that the composite morphism
$$
H_*(\Gamma,{\mathbb C}\Gamma_{tors}) \, \longrightarrow \,"\underset{\lambda\to 1^+}{\lim}"\,\widehat{CC}^{cont}_*(\ell^1_\lambda(\Gamma))  
\eqno(7.10)
$$
is an isomorphism in $Ho(ind\,{\mathcal C})$ as well. Taking homology termwise and passing from formal to true inductive limits in the category of abstract vector spaces yields finally the isomorphism
$$
H_*(\Gamma,{\mathbb C}\Gamma_{tors}) \, \overset{\simeq}{\longrightarrow} \,\underset{\lambda\to 1^+}{\lim}\,HP^{cont}_*(\ell^1_\lambda(\Gamma))  
\eqno(7.11)
$$
of ind-vector spaces. Applying the dual argument to the diagram of topologically dual pro-complexes proves the corresponding cohomological assertion.
\end{proof}

Note that our methods provide no information whatsoever about the individual groups $HP^{cont}(\ell^1_\lambda(\Gamma)),\,\lambda\geq 1$.

\begin{theorem}
Let $\Gamma$ be a word-hyperbolic group. Then there is a canonical isomorphism
$$
H^*(\Gamma,{\mathbb C}\Gamma_{tors})
\overset{\simeq}{\longrightarrow}
\underset{1^+\leftarrow\lambda}{\lim}\,HC_\epsilon^*(\ell^1_\lambda(\Gamma))
\eqno(7.12)
$$
where $HC_\epsilon$ denotes entire cyclic (co)homology \cite{Co2}.
\end{theorem}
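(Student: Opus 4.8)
The plan is to deduce Theorem 7.3 from Theorem 7.1 by passing to topological duals, exactly as the cohomological half of Theorem 7.2 was derived from its homological half. The only new ingredient is the behaviour of entire cyclic cohomology under \emph{inverse} limits, and that is where the difficulty lies.

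First I would recall the relevant identification of complexes. The weighted norms (6.1) are precisely Connes' entireness conditions, so that for a Banach algebra $B$ the ind-Fr\'echet complex ${\mathcal C}{\mathcal C}_*(B)="\underset{\varrho\to\infty}{\lim}"\,CC^{(\varrho)}_*(B)$ is an ind-model of the entire cyclic chain complex of $B$, while its topological dual, the pro-Fr\'echet complex $"\underset{\varrho\to\infty}{\lim}"\,(CC^{(\varrho)}_*(B))'$ whose elements — the continuous linear functionals on ${\mathcal C}{\mathcal C}_*(B)$ — are precisely the entire cochains of $B$, computes entire cyclic cohomology: $HC^*_\epsilon(B)=H^*\!\bigl("\underset{\varrho\to\infty}{\lim}"\,(CC^{(\varrho)}_*(B))'\bigr)$. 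For $B=\ell^1_\lambda(\Gamma)$ this is legitimate because $\ell^1_\lambda(\Gamma)$, like $\ell^1(\Gamma)$, has the metric approximation property, so that the cohomology of the honest projective limit over $\varrho$ agrees with the limit of the termwise cohomologies. Consequently the contravariant functor $(-)'$ of topological duals sends the ind-complex $"\underset{\lambda\to 1^+}{\lim}"\,{\mathcal C}{\mathcal C}_*(\ell^1_\lambda(\Gamma))$ to a pro-complex whose termwise cohomology groups are the $HC^*_\epsilon(\ell^1_\lambda(\Gamma))$ and whose limit of cohomology groups is $\underset{1^+\leftarrow\lambda}{\lim}\,HC^*_\epsilon(\ell^1_\lambda(\Gamma))$.

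Next I would dualize Theorem 7.1, which furnishes a chain homotopy equivalence $H_*(\Gamma,{\mathbb C}\Gamma_{tors})\to"\underset{\lambda\to 1^+}{\lim}"\,{\mathcal C}{\mathcal C}_*(\ell^1_\lambda(\Gamma))$ in $Ho(ind\,{\mathcal C})$ whose source is the constant ind-complex, with zero differential, on the finite-dimensional graded vector space $H_*(\Gamma,{\mathbb C}\Gamma_{tors})$. Every map and homotopy realising this equivalence — the cyclic contraction $\nabla_{cyc}$ of Proposition 6.5, the comparison maps $\overline{j}_v,\overline{\kappa}_v$ and the homotopy $h(\overline{j}_v\circ\overline{\kappa}_v,id)$ of Proposition 6.6, the isomorphism $\nu_v$, and the Rips retraction $\Theta'_*$ together with $h(\Theta',id)$ — was given by an explicit bounded formula for the weighted $\ell^1$-norms. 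Hence $(-)'$, which carries $ind\,{\mathcal C}$ to the homotopy category of pro-complexes of Fr\'echet spaces and chain homotopy equivalences to chain homotopy equivalences, turns Theorem 7.1 into a chain homotopy equivalence of pro-complexes
$$
"\underset{1^+\leftarrow\lambda}{\lim}"\,\bigl(\,{\mathcal C}{\mathcal C}_*(\ell^1_\lambda(\Gamma))\,\bigr)'\;\overset{\simeq}{\longrightarrow}\;{\rm Hom}_{\mathbb C}\bigl(H_*(\Gamma,{\mathbb C}\Gamma_{tors}),{\mathbb C}\bigr),
$$
the right-hand side being the constant, finite-dimensional pro-complex with zero differential. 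One then identifies ${\rm Hom}_{\mathbb C}(H_*(\Gamma,{\mathbb C}\Gamma_{tors}),{\mathbb C})$ with $H^*(\Gamma,{\mathbb C}\Gamma_{tors})$: since $\Gamma$ and the centralizers $Z(v)$ of its finitely many conjugacy classes of torsion elements are hyperbolic and of finite type, the Rips complexes supply finite free resolutions for which group homology and cohomology with the relevant coefficients (for the $Z(v)$ with ${\mathbb C}$-coefficients, via Shapiro's lemma) are computed by mutually dual complexes of finite-dimensional spaces. Taking cohomology termwise of the displayed equivalence and then applying $\varprojlim$ — which fixes constant pro-systems — gives the asserted canonical isomorphism $H^*(\Gamma,{\mathbb C}\Gamma_{tors})\overset{\simeq}{\to}\underset{1^+\leftarrow\lambda}{\lim}\,HC^*_\epsilon(\ell^1_\lambda(\Gamma))$.

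The main obstacle is the passage to limits, not the algebra. Entire cyclic cohomology is built from an \emph{inverse} limit (over $\varrho$, and here also over $\lambda\to 1^+$), and inverse limits are not exact, so a priori one must control the ${\varprojlim}^{1}$-terms measuring the failure of $H^*$ to commute with them — a difficulty that simply does not arise on the homology side of Theorems 7.1 and 7.2, where inductive limits suffice. What rescues the argument is Theorem 7.1 itself: it identifies the entire homotopy type with a \emph{finite-dimensional} one, so every pro-system of cohomology groups that occurs is pro-isomorphic to a constant finite-dimensional system, whence all ${\varprojlim}^{1}$-obstructions vanish and the interchange of $H^*$ with the limits is harmless. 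Carrying out this bookkeeping — and checking that the dualized operators of Sections 5 and 6 stay bounded, which their explicit estimates already guarantee — is the whole of the proof.
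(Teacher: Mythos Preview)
Your overall strategy---dualize Theorem 7.1 and read off entire cyclic cohomology from the resulting pro-complex---is sound, and the final paragraph correctly isolates the only genuine issue (commutation of $H^*$ with inverse limits, controlled by the finite-dimensional homotopy type). But two points deserve comment.

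First, a correction: the metric approximation property plays no role in the interchange of $H^*$ with $\varprojlim_\varrho$. That property is used in the paper (Lemma 6.2 and \cite{Pu1}) to replace $\ell^1(\Gamma)$ by the countable ind-system $\ell^1_\lambda(\Gamma)$; it says nothing about Mittag-Leffler conditions on cochain complexes. What actually makes the dualized operators from Propositions 6.5--6.7 act on the entire cochain complex $CC^*_\epsilon(\ell^1_\lambda(\Gamma))=\varprojlim_\varrho\bigl(CC^{(\varrho)}_*(\ell^1_\lambda(\Gamma))\bigr)'$ is exactly the explicit boundedness you invoke later: for every $\varrho_0$ one can choose $\varrho_1$ so that the original operator is bounded from $(\lambda_0,\varrho_0)$ to $(\lambda_1,\varrho_1)$, hence its transpose maps the $\varrho_1$-dual, and a fortiori the full projective limit, into the $\varrho_0$-dual. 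Once this is said, your $\varprojlim^1$-discussion becomes unnecessary: the dual of $\nabla_{cyc}$ already furnishes an honest contracting homotopy of $Fil^N_{Hodge}CC^*_\epsilon$ in the $\lambda$-limit.

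Second, the paper's own route is slightly different and arguably cleaner. Rather than dualize Theorem 7.1 wholesale, it first observes (from the proof of Proposition 6.5, dualized) that $\varprojlim_\lambda HC^*_\epsilon(\ell^1_\lambda(\Gamma))$ is computed by the Hodge-truncated cochain complexes $CC^*_\epsilon/Fil^N_{Hodge}$ for $N\gg0$. But these truncations are finite direct sums of projective tensor powers of $\ell^1_\lambda(\Gamma)$ and hence coincide \emph{as complexes} with the truncations $\widehat{CC}^*_{cont}/Fil^N_{Hodge}$ of the continuous periodic theory. One then invokes Theorem 7.2 directly. The advantage is that after truncation the $\varrho$-dependence disappears entirely, so no double-limit bookkeeping is needed; your approach trades that simplification for a more conceptual, one-step dualization.
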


\begin{proof}
Fix $\lambda>1$. By definition, the entire cyclic bicomplex $CC_\epsilon^*(\ell^1_\lambda(\Gamma))$ of $\ell^1_\lambda(\Gamma)$ is the cochain complex of those linear functionals on $CC_*({\mathbb C}\Gamma)\,=\,(\Omega({\mathbb C}\Gamma),\,b+B)$, which are with respect to the seminorms $\parallel -\parallel_{(\lambda,\varrho,m)},\,\varrho>1,m\in{\mathbb N}$ (see \cite{Co2}). A look at the proof 
of proposition (6.5) shows then that the canonical chain map
$$
\underset{1^+\leftarrow\lambda}{\lim}\,H^*(CC_\epsilon^*(\ell^1_\lambda(\Gamma))/Fil^N_{Hodge}CC_\epsilon^*(\ell^1_\lambda(\Gamma)))\,\overset{\simeq}{\longrightarrow}\,\underset{1^+\leftarrow\lambda}{\lim}\,HC_\epsilon^*(\ell^1_\lambda(\Gamma))
$$
is an isomorphism for $N>>0$. As the complexes
$$
CC_\epsilon^*(\ell^1_\lambda(\Gamma))/Fil^N_{Hodge}CC_\epsilon^*(\ell^1_\lambda(\Gamma))\,\overset{\simeq}{\longrightarrow}\,
\widehat{CC}_{cont}^*(\ell^1_\lambda(\Gamma))/Fil^N_{Hodge}\widehat{CC}_{cont}^*(\ell^1_\lambda(\Gamma))
$$
coincide, we deduce from the fact that all morphisms in (7.8) are isomorphisms that the canonical morphism
$$
\underset{1^+\leftarrow\lambda}{\lim}\,HP_{cont}^*(\ell^1_\lambda(\Gamma)) \,\overset{\simeq}{\longrightarrow}\  \underset{1^+\leftarrow\lambda}{\lim}\,HC_\epsilon^*(\ell^1_\lambda(\Gamma))
$$
is an isomorphism as well. The assertion follows then from theorem 7.2.
\end{proof}

We turn now to local cyclic (co)homology where much more precise results may be obtained.

\begin{theorem}
Let $\Gamma$ be a word-hyperbolic group. Then there is a canonical isomorphism
$$
H_*(\Gamma,{\mathbb C}\Gamma_{tors})
\,\overset{\simeq}{\longrightarrow}\,{\mathcal C}{\mathcal C}^\omega_*(\ell^1(\Gamma))
\eqno(7.13)
$$
between the homology of $\Gamma$ with coefficients in ${\mathbb C}\Gamma_{tors}$ and the analytic cyclic ind-bicomplex of $\ell^1(\Gamma)$ 
in the derived ind-category ${\mathcal D}(ind\,{\mathcal C})$.
\end{theorem}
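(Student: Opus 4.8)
The plan is to obtain Theorem 7.4 as a purely formal consequence of Theorem 7.1 and Lemma 6.2, so that no new estimates or homotopies are needed. First I would recall the construction of the derived ind-category: ${\mathcal D}(ind\,{\mathcal C})$ is the localization of the homotopy category $Ho(ind\,{\mathcal C})$ at the class of morphisms whose mapping cone is weakly contractible, so the canonical functor $Ho(ind\,{\mathcal C})\to{\mathcal D}(ind\,{\mathcal C})$ carries isomorphisms --- in particular chain homotopy equivalences --- to isomorphisms.

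Next I would invoke Theorem 7.1, which supplies a canonical isomorphism
$$
H_*(\Gamma,{\mathbb C}\Gamma_{tors})\,\overset{\simeq}{\longrightarrow}\,"\underset{\lambda\to 1^+}{\lim}"\,{\mathcal C}{\mathcal C}_*(\ell^1_\lambda(\Gamma))
$$
in $Ho(ind\,{\mathcal C})$, the left hand side being viewed as the constant, finite dimensional ind-complex associated to the ${\mathbb Z}/2{\mathbb Z}$-graded vector space $H_*(\Gamma,{\mathbb C}\Gamma_{tors})$ with zero differential. Applying the localization functor turns this into an isomorphism with the same source and target in ${\mathcal D}(ind\,{\mathcal C})$. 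Finally I would compose with the isomorphism of Lemma 6.2, which identifies $"\underset{\lambda\to 1^+}{\lim}"\,{\mathcal C}{\mathcal C}_*(\ell^1_\lambda(\Gamma))$ with the analytic cyclic ind-bicomplex ${\mathcal C}{\mathcal C}^\omega_*(\ell^1(\Gamma))$ in the derived ind-category; this in turn rests on the metric approximation property of the group Banach algebra together with the cofinality of the family $(\ell^1_\lambda(\Gamma))_{\lambda>1}$ among the Banach subalgebras $\ell^1(\Gamma)_K$, $K\subset U$, of the associated ind-Banach algebra. The composite is the desired canonical isomorphism.

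There is essentially no obstacle in this final step. The only point that merits a word of attention is that Theorem 7.1 is formulated in the homotopy category $Ho(ind\,{\mathcal C})$ while the present statement lives in the coarser derived ind-category ${\mathcal D}(ind\,{\mathcal C})$; but since the localization functor preserves isomorphisms, the identification persists automatically. All of the substantive work has already been carried out in the preceding sections: the degeneration of the Hodge filtration on the ind-cyclic bicomplex (Proposition 6.5), the computation of the contribution of each elliptic conjugacy class via its centralizer (Proposition 6.6), and the contractibility of the inhomogeneous part (Proposition 6.7), all of them resting on the metric control over the Bar resolution established in Sections 4 and 5.
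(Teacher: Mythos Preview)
Your proposal is correct and follows precisely the paper's own approach: the paper's proof consists of the single sentence ``This follows immediately from 7.1 and 6.2,'' which is exactly the composition of the isomorphism of Theorem~7.1 in $Ho(ind\,{\mathcal C})$ with the isomorphism of Lemma~6.2 in ${\mathcal D}(ind\,{\mathcal C})$ that you describe. Your additional remark that the localization functor $Ho(ind\,{\mathcal C})\to{\mathcal D}(ind\,{\mathcal C})$ preserves isomorphisms is the only point needing justification, and you handle it correctly.
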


\begin{proof}
This follows immediately from 7.1 and 6.2.
\end{proof}

As an immediate consequence one obtains

\begin{theorem}
Let $\Gamma$ be a word-hyperbolic group. Then there are canonical isomorphisms
$$
H_*(\Gamma,{\mathbb C}\Gamma_{tors})
\overset{\simeq}{\longrightarrow}
HC^{loc}_*(\ell^1(\Gamma))
\eqno(7.14)
$$
and
$$
H^*(\Gamma,{\mathbb C}\Gamma_{tors})
\overset{\simeq}{\longrightarrow}
HC_{loc}^*(\ell^1(\Gamma))
\eqno(7.15)
$$
identifying the local cyclic (co)homology groups of $\ell^1(\Gamma)$.
\end{theorem}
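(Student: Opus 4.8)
The plan is to read off both isomorphisms directly from Theorem 7.4 together with the definition of local cyclic (co)homology; the argument is then essentially formal. First I would recall from Definition 6.1 that the (non-bivariant) local cyclic homology and cohomology of a Banach algebra $A$ are the specialisations $HC^{loc}_*(A)=HC^{loc}_*({\mathbb C},A)$ and $HC_{loc}^*(A)=HC^{loc}_*(A,{\mathbb C})$ of the bivariant theory, i.e. the graded groups $\mathrm{Mor}^{{\mathcal D}(ind\,{\mathcal C})}_*({\mathcal C}{\mathcal C}^\omega_*({\mathbb C}),{\mathcal C}{\mathcal C}^\omega_*(A))$ and $\mathrm{Mor}^{{\mathcal D}(ind\,{\mathcal C})}_*({\mathcal C}{\mathcal C}^\omega_*(A),{\mathcal C}{\mathcal C}^\omega_*({\mathbb C}))$ in the derived ind-category, where ${\mathbb C}=\ell^1(\{1\})$ and the trivial group is (trivially) word-hyperbolic.

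Next I would evaluate the two analytic cyclic bicomplexes that occur here. Applying Theorem 7.4 to the trivial group identifies ${\mathcal C}{\mathcal C}^\omega_*({\mathbb C})$ in ${\mathcal D}(ind\,{\mathcal C})$ with the constant, one-dimensional ind-complex $H_*(\{1\},{\mathbb C})={\mathbb C}$ placed in even degree with zero differential (this can also be checked directly from the seminorms (6.1)); applying Theorem 7.4 to $\Gamma$ itself identifies ${\mathcal C}{\mathcal C}^\omega_*(\ell^1(\Gamma))$ with the constant, finite-dimensional ind-complex $H_*(\Gamma,{\mathbb C}\Gamma_{tors})$, graded by parity and equipped with the zero differential. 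Then I would invoke the fact recalled in Section 6 that the full subcategory of ${\mathcal D}(ind\,{\mathcal C})$ generated by constant, finite-dimensional, zero-differential ind-complexes is equivalent to the category of finite-dimensional ${\mathbb Z}/2{\mathbb Z}$-graded complex vector spaces, so that $\mathrm{Mor}_*({\mathbb C},V)\cong V$ and $\mathrm{Mor}_*(V,{\mathbb C})\cong\mathrm{Hom}_{\mathbb C}(V,{\mathbb C})$ as ${\mathbb Z}/2{\mathbb Z}$-graded spaces. This gives $HC^{loc}_*(\ell^1(\Gamma))\cong H_*(\Gamma,{\mathbb C}\Gamma_{tors})$ at once, and $HC_{loc}^*(\ell^1(\Gamma))\cong\mathrm{Hom}_{\mathbb C}(H_*(\Gamma,{\mathbb C}\Gamma_{tors}),{\mathbb C})$; since $H_*(\Gamma,{\mathbb C}\Gamma_{tors})\cong\bigoplus_{\langle v\rangle,\,|v|<\infty}H_*(Z(v),{\mathbb C})$ is finite-dimensional in each degree, its graded dual is computed degreewise and equals $\bigoplus_{\langle v\rangle,\,|v|<\infty}H^*(Z(v),{\mathbb C})=H^*(\Gamma,{\mathbb C}\Gamma_{tors})$, which is the cohomological statement. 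Canonicity of both maps is inherited from the canonical isomorphism of Theorem 7.4.

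Since every step is a formal manipulation, I expect no genuine obstacle: the theorem is an immediate corollary of Theorem 7.4. The only two points that deserve an explicit remark are, first, that the ground field ${\mathbb C}$ is covered by Theorem 7.4 as $\ell^1$ of the trivial group, so that the constant ind-complex representing ${\mathcal C}{\mathcal C}^\omega_*({\mathbb C})$ is the expected one (if one prefers, one avoids this by a direct one-line computation of ${\mathcal C}{\mathcal C}^\omega_*({\mathbb C})$); and second, in the cohomological assertion, the identification of the degreewise ${\mathbb C}$-linear dual of $H_*(\Gamma,{\mathbb C}\Gamma_{tors})$ with group cohomology, which one makes precise via the Shapiro isomorphism $H^*(Z(v),{\mathbb C})\cong H^*(\Gamma,\mathrm{Coind}_{Z(v)}^{\Gamma}{\mathbb C})$ for the coefficient module dual to ${\mathbb C}\langle v\rangle\cong\mathrm{Ind}_{Z(v)}^{\Gamma}{\mathbb C}$, using that $\Gamma$ has only finitely many conjugacy classes of torsion elements.
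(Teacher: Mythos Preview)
Your proposal is correct and follows precisely the route the paper intends: the paper states this theorem as ``an immediate consequence'' of Theorem~7.4, and in the introduction (after Theorem~1.1) and in the proof of Theorem~7.6 it makes explicit that one obtains the univariant groups by specialising the bivariant theory to $\Gamma=1$ or $\Gamma'=1$, exactly as you do. Your added remarks on the identification ${\mathcal C}{\mathcal C}^\omega_*({\mathbb C})\simeq{\mathbb C}$ and on dualising via Shapiro for the cohomological statement are the natural way to spell out what the paper leaves implicit.
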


More generally one has 

\begin{theorem}
For any word-hyperbolic groups $\Gamma,\,\Gamma'$ and Banach algebras $A,B$ there is a canonical isomorphism
$$
HC^*_{loc}(\ell^1(\Gamma)\otimes_\pi A,\ell^1(\Gamma')\otimes_\pi B)\,\overset{\simeq}{\longrightarrow}\,Hom(H_*(\Gamma,{\mathbb C}\Gamma_{tors}),H_*(\Gamma',{\mathbb C}\Gamma'_{tors}))\otimes HC^*_{loc}(A,B)
\eqno(7.16)
$$
of bivariant local cyclic (co)homology groups, which is compatible with composition products.
\end{theorem}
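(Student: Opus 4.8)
\begin{proofsketch}
The plan is to deduce the coefficient-twisted bivariant statement from the absolute calculation of Theorem 7.4 (and its evident variant for a pair of groups, which gives (1.8)) by means of the exterior product on local cyclic cohomology. Write $V_\Gamma=H_*(\Gamma,{\mathbb C}\Gamma_{tors})$ and $V_{\Gamma'}=H_*(\Gamma',{\mathbb C}\Gamma'_{tors})$; by Theorem 7.4 these finite dimensional ${\mathbb Z}/2{\mathbb Z}$-graded vector spaces, regarded as constant ind-complexes with zero differential, are isomorphic in ${\mathcal D}(ind\,{\mathcal C})$ to ${\mathcal C}{\mathcal C}^\omega_*(\ell^1(\Gamma))$ and ${\mathcal C}{\mathcal C}^\omega_*(\ell^1(\Gamma'))$. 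The key first step is to establish, for every Banach algebra $A$, an isomorphism
$$
{\mathcal C}{\mathcal C}^\omega_*(\ell^1(\Gamma)\otimes_\pi A)\,\overset{\simeq}{\longrightarrow}\,V_\Gamma\otimes{\mathcal C}{\mathcal C}^\omega_*(A)
$$
in ${\mathcal D}(ind\,{\mathcal C})$, the right hand side being a finite direct sum of suspensions of ${\mathcal C}{\mathcal C}^\omega_*(A)$ indexed by a homogeneous basis of $V_\Gamma$, and likewise with $(\Gamma,A)$ replaced by $(\Gamma',B)$.

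There are two routes to this isomorphism, and I would take whichever is cleaner to write out. The first is to invoke the natural exterior product ${\mathcal C}{\mathcal C}^\omega_*(\ell^1(\Gamma))\,\widehat{\otimes}\,{\mathcal C}{\mathcal C}^\omega_*(A)\to{\mathcal C}{\mathcal C}^\omega_*(\ell^1(\Gamma)\otimes_\pi A)$ in the derived ind-category: although this map need not be an isomorphism in general, it becomes one once the first factor is replaced, via Theorem 7.4, by the finite dimensional complex $V_\Gamma$ with zero differential, because tensoring with such an object amounts to forming a finite direct sum of suspensions, for which the exterior product is trivially an equivalence. The second, more self-contained route is to rerun the analysis of Sections 2--6 for the algebra $\ell^1_\lambda(\Gamma)\otimes_\pi A$ in place of $\ell^1_\lambda(\Gamma)$: the homogeneous decomposition labelled by the conjugacy classes of $\Gamma$ is insensitive to the coefficient algebra, and every comparison operator used there --- the Hodge-contracting homotopy $\nabla_{cyc}$ of Proposition 6.5, the operator $\nu_v$ of Proposition 6.6, the maps $j_v$ and $\kappa_v$, and the homotopies built from the operator $h(-,-)$ and from the Bader--Furman--Sauer chain map $\Theta'_*$ --- is induced by a $\Gamma$-equivariant operator on the twisted Bar complex given by explicit combinatorial or geometric formulas whose norm estimates (Sections 4 and 5) involve only the $\Gamma$-variables. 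Such operators extend by $-\,\widehat{\otimes}\,\mathrm{id}_A$ to bounded morphisms of the corresponding ind-Fr\'echet complexes with unchanged exponential bounds, while the only product-dependent ingredient, the Hochschild differential $b$, is precisely what the exterior-product decomposition of ${\mathcal C}{\mathcal C}^\omega_*(\ell^1(\Gamma)\otimes_\pi A)$ accounts for. Either way one arrives at the displayed isomorphism.

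Granting the first step, the remainder is formal. Using the two identifications,
$$
HC^*_{loc}(\ell^1(\Gamma)\otimes_\pi A,\ell^1(\Gamma')\otimes_\pi B)\,\simeq\,Mor_*^{{\mathcal D}(ind\,{\mathcal C})}\bigl(V_\Gamma\otimes{\mathcal C}{\mathcal C}^\omega(A),\,V_{\Gamma'}\otimes{\mathcal C}{\mathcal C}^\omega(B)\bigr).
$$
Since $V_\Gamma$ and $V_{\Gamma'}$ are finite dimensional graded vector spaces with zero differential, both arguments are finite direct sums of suspensions of ${\mathcal C}{\mathcal C}^\omega(A)$ and ${\mathcal C}{\mathcal C}^\omega(B)$, and morphisms in the triangulated category ${\mathcal D}(ind\,{\mathcal C})$ between finite direct sums split into the matrix of their components; hence the right hand side is canonically $Hom_{\mathbb C}(V_\Gamma,V_{\Gamma'})\otimes Mor_*^{{\mathcal D}(ind\,{\mathcal C})}({\mathcal C}{\mathcal C}^\omega(A),{\mathcal C}{\mathcal C}^\omega(B))=Hom_{\mathbb C}(V_\Gamma,V_{\Gamma'})\otimes HC^*_{loc}(A,B)$, which is (7.16). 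Compatibility with composition products follows because all the identifications employed are induced by honest chain maps compatible with the associative exterior products, so the resulting isomorphism intertwines the Yoneda composition of morphisms in ${\mathcal D}(ind\,{\mathcal C})$ on both sides.

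The main obstacle is the first step, and specifically securing enough naturality that the final isomorphism respects composition products. In the exterior-product route one must know that the exterior product is genuinely an isomorphism in the situation at hand --- this is where the finiteness supplied by Theorem 7.4 is essential and where Künneth-type subtleties for completed cyclic complexes would otherwise obstruct the argument. In the self-contained route one must verify that the numerous auxiliary choices made in Section 6 --- a minimal representative and a conjugating section for each conjugacy class, the section $\sigma'$, an equivariant geodesic bicombing, the Rips radius $R$ --- can be fixed once and for all so that the resulting equivalences, tensored with $\mathrm{id}_A$, assemble into a diagram commuting up to coherent homotopy with the exterior products. Once this bookkeeping is carried out, the computation of the morphism groups is routine.
\end{proofsketch}
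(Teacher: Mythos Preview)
Your first route is essentially the paper's argument, and the formal Mor-group computation at the end is identical. Two points of precision deserve mention, and one omission.

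First, you phrase the exterior product step as ``this map need not be an isomorphism in general, it becomes one once the first factor is replaced by $V_\Gamma$.'' The paper organizes this differently and more cleanly: the Eilenberg--Zilber map ${\mathcal C}{\mathcal C}^\omega_*(A)\otimes_\pi{\mathcal C}{\mathcal C}^\omega_*(B)\to{\mathcal C}{\mathcal C}^\omega_*(A\otimes_\pi B)$ \emph{is} a natural chain homotopy equivalence for unital Banach algebras (this is cited from \cite{Pu3},\cite{Pu1}), so there is nothing conditional there. The nontrivial step is rather that the isomorphism $V_\Gamma\overset{\simeq}{\to}{\mathcal C}{\mathcal C}^\omega_*(\ell^1(\Gamma))$ of Theorem~7.4, which lives only in the \emph{derived} ind-category, remains an isomorphism after applying $-\otimes_\pi{\mathcal C}{\mathcal C}^\omega_*(A)$. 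The paper justifies this by observing that the null-system of weakly contractible ind-complexes is stable under projective tensor products with arbitrary ind-complexes, hence so is the class of isomorphisms in ${\mathcal D}(ind\,{\mathcal C})$. This is the genuine content of the first step, and your sketch glosses over it.

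Second, you omit the non-unital case. The paper's Eilenberg--Zilber equivalence is stated for unital $A,B$; the general case is then deduced by adjoining units and invoking excision in local cyclic cohomology \cite{Pu2}.

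Your route~2 (rerunning Sections~4--6 with an auxiliary tensor factor) would work in principle but is considerably more laborious than necessary; the paper avoids it entirely.
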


\begin{proof}
Taking $A=B={\mathbb C}$ and $\Gamma=1$ or $\Gamma'=1$, one obtains Theorem 7.4 as a particular case of 7.5. Recall that the derived ind-category ${\mathcal D}(ind\,{\mathcal C})$ is obtained from the homotopy category $Ho(ind\,{\mathcal C})$ of ind-complexes by inverting all morphisms with weakly contractible mapping cone \cite{Pu1}, \cite{KS}. An ind-complex ${\mathcal C}\,=\,"\underset{\underset{I}{\longrightarrow}}{\lim}"\,C_*^{(i)}$ is weakly contractible if any chain map from a constant ind-complex to $\mathcal C$ is nullhomotomic. The null-system \cite{KS} of weekly contractible ind-complexes is stable under taking projective tensor products with auxiliary ind-complexes. It follows that the class of isomorphisms in the derived ind-category is stable under projective tensor products as well. In particular, the 
canonical morphism
$$
H_*(\Gamma,{\mathbb C}\Gamma_{tors})\otimes{\mathcal C}{\mathcal C}^{\omega}_*(A)\,\overset{\simeq}{\longrightarrow}\,{\mathcal C}{\mathcal C}^{\omega}_*(\ell^1(\Gamma))\otimes_{\pi}{\mathcal C}{\mathcal C}^{\omega}_*(A)
\eqno(7.17)
$$
is an isomorphism in ${\mathcal D}(ind\,{\mathcal C})$  for any hyperbolic group $\Gamma$ and any unital Banach algebra $A$. Recall that there is a natural chain homotopy 
equivalence of ind-complexes (an Eilenberg-Zilber quasi-isomorphism) 
$$
{\mathcal C}{\mathcal C}^{\omega}_*(A)\otimes_{\pi}{\mathcal C}{\mathcal C}^{\omega}_*(B)\,\overset{\simeq}{\longrightarrow}\,{\mathcal C}{\mathcal C}^{\omega}_*(A\otimes_{\pi}B)
\eqno(7.18)
$$
for unital Banach algebras $A$ and $B$ \cite{Pu3}, \cite{Pu1}. Altogether one obtains a natural isomorphism
$$
H_*(\Gamma,{\mathbb C}\Gamma_{tors})\otimes{\mathcal C}{\mathcal C}^{\omega}_*(A)\,\overset{\simeq}{\longrightarrow}\,{\mathcal C}{\mathcal C}^{\omega}_*(\ell^1(\Gamma))\otimes_{\pi}{\mathcal C}{\mathcal C}^{\omega}_*(A)
\,\overset{\simeq}{\longrightarrow}\,{\mathcal C}{\mathcal C}^{\omega}_*(\ell^1(\Gamma)\otimes_{\pi}A)
\eqno(7.19)
$$
in the derived ind-category. We are now ready to calculate the bivariant local cyclic cohomology group in question.
$$
HC_*^{loc}(\ell^1(\Gamma)\otimes_{\pi}A,\ell^1(\Gamma')\otimes_{\pi}B)\,=\,
Mor_{ind\,{\mathcal D}}({\mathcal C}{\mathcal C}^{\omega}_*(\ell^1(\Gamma)\otimes_{\pi}A),{\mathcal C}{\mathcal C}^{\omega}_*(\ell^1(\Gamma')\otimes_{\pi}B))
$$
by the very definition of local cyclic cohomology
$$
\simeq\,Mor_{ind\,{\mathcal D}}(H_*(\Gamma,{\mathbb C}\Gamma_{tors})\otimes{\mathcal C}{\mathcal C}^{\omega}_*(A),H_*(\Gamma',{\mathbb C}\Gamma'_{tors})\otimes{\mathcal C}{\mathcal C}^{\omega}_*(B))
$$
by the previous considerations
$$
\simeq\,Hom(H_*(\Gamma,{\mathbb C}\Gamma_{tors}),H_*(\Gamma',{\mathbb C}\Gamma'_{tors}))\,\otimes\,
Mor_{ind\,{\mathcal D}}({\mathcal C}{\mathcal C}^{\omega}_*(A),{\mathcal C}{\mathcal C}^{\omega}_*(B))
$$
because $H_*(\Gamma,{\mathbb C}\Gamma_{tors})$ and $H_*(\Gamma',{\mathbb C}\Gamma'_{tors})$ are finite dimensional and $ind\,{\mathcal D}$ is additive
$$
\simeq\,Hom(H_*(\Gamma,{\mathbb C}\Gamma_{tors}),H_*(\Gamma',{\mathbb C}\Gamma'_{tors}))\,\otimes\,HC_*^{loc}(A,B).
\eqno(7.20)
$$
This shows the assertion if $A$ and $B$ are unital. The general case follows by adjoining units to the algebras in question and by applying excision in local cyclic cohomology \cite{Pu2}.

\end{proof}

\end{document}